\theoremstyle{plain}
\newtheorem{thm}{Theorem}[section]
\newtheorem{lem}[thm]{Lemma}
\newtheorem{prop}[thm]{Proposition}
\theoremstyle{definition}
\theoremstyle{remark}
\newtheorem{rem}[thm]{Remark}
\renewcommand{\div}{\operatorname{div}}
\begin{document}

\title{Constructive analysis of the incompressible Navier-Stokes equation}
\author{J\"org Kampen }
\maketitle

\begin{abstract}
 A global time-discretized scheme for the Navier-Stokes equation system in its Leray projection form is defined.  It is shown that the scheme converges to a bounded global classical solution for smooth data which have polynomial decay at infinity. Furthermore, the algorithm proposed is extended to the situation of initial-boundary value problems. Algorithms constructed in a different context (cf. \cite{FrKa, KKS, FrKa2, K2}) may be used within the proposed scheme in order to compute the solution of Leray's form of the Navier-Stokes system. The main idea for global existence is to define a control function dynamically and show explicitly that the scheme which solves a controlled Navier-Stokes type equation can control the modulus of velocity and the first derivatives of velocity to be bounded. The method described here can be extended to Navier-Stokes equations on compact manifolds which is done in a subsequent paper.

\end{abstract}


2000 Mathematics Subject Classification. 35K40, 35Q30.
\section{Introduction}
In its Leray projection form the incompressible Navier-Stokes equation is a semilinear partial integro-differential equation system, where the integral term is quadratic with respect to the gradient of velocity. This integral term requires a careful treatment in order to control the growth of the solution (whatever scheme you propose). This is the main difference to the multivariate Burgers equation.
Indeed, for the multivariate Burgers equation, i.e., the Cauchy problem
\begin{equation}\label{qparasyst1}
\left\lbrace \begin{array}{ll}
\frac{\partial u_i}{\partial t}=\nu\sum_{j=1}^n \frac{\partial^2 u_i}{\partial x_j^2} 
-\sum_{j=1}^n u_j\frac{\partial u_i}{\partial x_j},\\
\\
\mathbf{u}(0,.)=\mathbf{h},
\end{array}\right.
\end{equation}
on $[0,\infty)\times {\mathbb R}^n$ (where $\nu$ is some strictly positive constant, i.e., $\nu>0$), and $1\leq i\leq n$)  global solutions may be constructed via the
a priori estimate 
\begin{equation}\label{apriorimax}
\max_{j}\sup_{x\in \Omega}|u_j(t,x)|\leq \max_{j}\sup_{x\in \Omega}|h_j(x)|,
\end{equation}
which may be obtained from estimates of the form
\begin{equation}\label{aprioriest}
\frac{\partial}{\partial t}\|u(t,.)\|_{H^s}\leq \|u(t,.)\|_{H^{s+1}}\sum_{i,j}\sum_{|\alpha|+|\beta|\leq s}\|D^{\alpha}u_iD^{\beta}u_j\|_{L^2}-2\|\nabla u\|^2_{H^s}
\end{equation}
for some positive $s\in {\mathbb R}$ (this is the standard notation for Sobolev spaces $H^s$ which will be defined below). This type of estimate is valid also for initial-value problems   with periodic boundary conditions which are are essentially initial-value problems defined on the $n$-torus ${\mathbb T}^n$. From this point of view the incompressible Navier-Stokes equation system is a multivariate Burgers equation system (of Cauchy type) with an external force, and with the negative gradient pressure as source terms, where it is the pressure term which makes the equation global and difficult to control. The pressure may be determined by an incompressibility condition for the velocity which is satisfied if the divergence of the velocity field is zero. Indeed, classically, the Navier-Stokes equation system is defined via three equations, the nonlinear diffusion equation
  \begin{equation}\label{nav}
    \frac{\partial\mathbf{v}}{\partial t}-\nu \Delta \mathbf{v}+ (\mathbf{v} \cdot \nabla) \mathbf{v} = - \nabla p + \mathbf{f}_{ex}~~~~t\geq 0,~~x\in{\mathbb R}^n,
\end{equation}
the incompressibility condition 
\begin{equation}\label{navdiv}
\nabla \cdot \mathbf{v} = 0,~~~~t\geq 0,~~x\in{\mathbb R}^n, 
\end{equation}
and the initial condition
\begin{equation}\label{navinit}
\mathbf{v}(0,x)=\mathbf{h}(x),~~x\in{\mathbb R}^n.
\end{equation}
The Leray projection for this equation system is obtained from a Poisson equation for the pressure, i.e., we have
\begin{equation}\label{poissonp}
\begin{array}{ll} 
- \Delta p=\sum_{j,k=1}^n \left( \frac{\partial}{\partial x_k}v_j\right) \left( \frac{\partial }{\partial x_j}v_k\right),
\end{array}
\end{equation}
with the solution
\begin{equation}\label{pp}
p(t,x)=-\int_{{\mathbb R}^n}K_n(x-y)\sum_{j,k=1}^n\left( \frac{\partial v_k}{\partial x_j}\frac{\partial v_j}{\partial x_k}\right) (t,y)dy.
\end{equation}
Here,
\begin{equation}
K_n(x):=\left\lbrace \begin{array}{ll}\frac{1}{2\pi}\ln |x|,~~\mbox{if}~~n=2,\\
\\ \frac{1}{(2-n)\omega_n}|x|^{2-n},~~\mbox{if}~~n\geq 3\end{array}\right.
\end{equation}
is the Poisson kernel. Furthermore, $|.|$ denotes the Euclidean norm and $\omega_n$ denotes the area of the unit $n$-sphere. Since the function $x\rightarrow |x|^{\mu}$ is locally integrable if and only if $\mu >-n$ we observe that for $n\geq 3$ the functions
\begin{equation}\label{Kkernel}
x\rightarrow \frac{\partial}{\partial x_l}K_n(x)=\omega_n^{-1}\frac{x_l}{|x|^n}
\end{equation}
are integrable around $x=0$ for $1\leq l\leq n$ (you may observe this explicitly by writing the derivative of the kernel $K_n$ in (\ref{Kkernel}) in polar coordinates). Note that  the derivatives of the velocity in the expression for the pressure in (\ref{pp}) should have some decay at spatial infinity in order that the Leray projection form makes sense. 
Later, we shall observe that a solution $\mathbf{v}$ can be constructed which decays at spatial infinity such that the integral on the right side of (\ref{pp}) exists globally. More precisely, we shall see that for initial data $\mathbf{h}=(h_1,\cdots ,h_n)^T$ which are in Sobolev spaces $\left[ H^s\left( {\mathbb R}^n\right)\right]^n$ for $s$ large enough (i.e., for $h_i\in H^s\left({\mathbb R}^n\right)$ for $1\leq i\leq n$) we have a certain decay at spatial infinity of the solution such that
\begin{equation}\label{vnorm}
\int_{{\mathbb R}^n}\sum_{j,k=1}^n{\Big |}\left( \frac{\partial v_k}{\partial x_j}\frac{\partial v_j}{\partial x_k}\right) (t,y){\Big |}dy<\infty .
\end{equation}
An $L^1$-(resp. $H^1$)-estimate can be naturally obtained for (\ref{vnorm}) if we have $H^1$-(resp. $H^2$)-estimates for the velocity, since
\begin{equation}
\sum_{j,k=1}^n{\Big |}\left( \frac{\partial v_k}{\partial x_j}\frac{\partial v_j}{\partial x_k}\right) (t,y){\Big |}\leq \frac{n^2}{2}\left( \left( \frac{\partial v_k}{\partial x_j}\right)^2(t,y)+\left( \frac{\partial v_j}{\partial x_k}\right)^2(t,y)\right).
\end{equation}

It is well-known that a global classical solution justifies that we speak of 'the' solution. Hence, it makes sense to define the Navier-Stokes Cauchy problem for divergence free velocity fields by
\begin{equation}\label{Navleray}
\left\lbrace \begin{array}{ll}
\frac{\partial v_i}{\partial t}-\nu\sum_{j=1}^n \frac{\partial^2 v_i}{\partial x_j^2} 
+\sum_{j=1}^n v_j\frac{\partial v_i}{\partial x_j}=\\
\\ \hspace{1cm}\int_{{\mathbb R}^n}\left( \frac{\partial}{\partial x_i}K_n(x-y)\right) \sum_{j,k=1}^n\left( \frac{\partial v_k}{\partial x_j}\frac{\partial v_j}{\partial x_k}\right) (t,y)dy,\\
\\
\mathbf{v}(0,.)=\mathbf{h},
\end{array}\right.
\end{equation}
for $1\leq i\leq n$, where external forces $\mathbf{f}_{ex}$ are set to zero for simplicity. In order to have polynomial decay at spatial infinity of the data $\mathbf{h}$ we take a standard assumption that we have $\mathbf{h}\in \left[ H^s\right]^n$ for all $s\in {\mathbb R}$. This assumption may be weakened a bit (this may depend on the regularity which you want to achieve- cf. below), but from an algorithmic or physical perspective it is satisfying. Note that here and in the following we write
\begin{equation}
\left( \frac{\partial v_k}{\partial x_j}\frac{\partial v_j}{\partial x_k}\right) (t,y)
:= \frac{\partial v_k}{\partial x_j}(t,y)\frac{\partial v_j}{\partial x_k}(t,y)
\end{equation}
for the sake of brevity (similar for sums of functions etc.). We look for classical solutions in the space of divergence free vector fields, i.e., in the space
\begin{equation}
\left\lbrace \mathbf{v}\in \left[ C^{1,2}\left( \left[0,\infty\right)\times {\mathbb R}^n\right) \right]^n|\div \mathbf{v}=0\right\rbrace ,
\end{equation}
where the solution $\mathbf{v}$ should be bounded and regular (i.e. derivatives should exist in a classical sense such that uniqueness is guaranteed), and where some decay at spatial infinity guarantees that (\ref{vnorm}) is satisfied. 
Here, the function space  $C^{1,2}\left( \left[0,\infty\right)\times {\mathbb R}^n\right)$ denotes the classical function space of scalar functions which have continuous first derivatives with respect to time and continuous spatial derivatives up to second order, and if we add a subscript $b$, then the function space  $C^{1,2}_b\left( \left[0,\infty\right)\times {\mathbb R}^n\right)\subset C^{1,2}\left( \left[0,\infty\right)\times {\mathbb R}^n\right)$ is the space of scalar functions which have bounded time derivatives up to first order and bounded spatial derivatives up to second order.
Furthermore, the function space $\left[ C^{1,2}_b\left( \left[0,\infty\right)\times {\mathbb R}^n\right) \right]^n$ is the space of vector-valued functions $\mathbf{v}:=\left(v_1,\cdots ,v_n\right)^T$ with components $v_i\in C^{1,2}_b\left( \left[0,\infty\right)\times {\mathbb R}^n\right)$ for all $1\leq i\leq n$. Here, and in the following we write vector-valued functions in boldface letters.
Prima facie it seems reasonable to measure the construction of the Navier-Stokes solution $\mathbf{v}=(v_1,\cdots ,v_n)$ in a $|.|_{1,2}$-norm of functions with globally bounded time derivatives up to first order and spatial derivatives up to second order (for each component function $v_i$) plus an integral norm related to (\ref{vnorm}). Note that the norm $|.|_{1,2}$ does not lead to a Banach space for infinite domains, i.e., the limit of successive approximations with finite $|.|_{1,2}$ norms may not have a finite $|.|_{1,2}$-norm. However, this is different for finite domains, and may be exploited when Navier Stokes equations are considered on a compact manifold, for example (cf. \cite{KM}).

 The incompressible Navier-Stokes equation cannot be solved by a simple global fixed point iteration. Similar as in the case of the multidimensional Burgers equation (cf. \cite{K}) we choose a time-discretized scheme and construct fixed points which are local in time. However, compared to the multidimensional Burgers equation controlling the growth of the integral term in (\ref{Navleray}) is an  additional difficulty. We considered this problem in \cite{KB2}.  For $l\geq 1$ let
\begin{equation}
\begin{array}{ll}
\mathbf{v}^{\rho,l}=\left(v^{\rho,l}_1,\cdots, v^{\rho,l}_n\right)^T:[l-1,l]\times{\mathbb R}^n\rightarrow {\mathbb R}^n
\end{array}
\end{equation}
be a solution of (\ref{Navleray}) on the domain $[l-1,l]\times{\mathbb R}^n$ in transformed time coordinates $\tau$ with
\begin{equation}
t=\rho_l\tau
\end{equation}
and with initial data $\mathbf{v}^{\rho,l}(l-1,.)=\mathbf{v}^{\rho,l-1}(l-1,.)$ being the final data of the previous time step number $l-1$, where $\mathbf{v}^{\rho,1}(0,.)=\mathbf{h}(,.)$.
 Note that $\mathbf{v}^{\rho,l}(l-1,.)=\mathbf{v}\left(\sum_{m=1}^{l-1}\rho_m,.\right)$ for $l\geq 1$, where $\mathbf{v}$ denotes the velocity in original time coordinates. 
Here, the idea is that choices $\rho_l<1$ small enough lead to a local converging iteration scheme in transformed coordinates $\tau$ on domains $[l-1,l]\times {\mathbb R}^n$, 
while on the other side the numbers $\rho_l$ may be chosen large enough such that the sum of time-step size $\rho_l$ diverges.  The latter condition implies that the scheme is global in time (the discussion here is preliminary; cf. below for a more extensive discussion of this time discretization). However, it seems difficult to control the growth of the solutions $v^{\rho,l}_i$ directly if the time step size series $(\rho_l)_l$ is large enough such that
\begin{equation}\label{rhosum}
 \sum_{l=1}^{N}\rho_l\uparrow \infty~\mbox{ as }~N\uparrow \infty.
\end{equation}
Note that the requirement (\ref{rhosum}) for a global scheme is weaker than the requirement of a scheme where the time step size is bounded from below independently of the time step number $l$. Indeed the requirement (\ref{rhosum}) allows us to choose time step sizes $\rho_l$ of order
\begin{equation}\label{rhosize}
\rho_l\sim \frac{1}{l}
\end{equation}
in order to define a global scheme. This does not mean that we shall construct a solution which has a linear bound with respect to time. Indeed we shall construct a bounded solution $\mathbf{v}$ where all components $v_i$ are gobally bounded with respect to the $|.|_{1,2}$ norm. However, choosing time step sizes of order (\ref{rhosize}) may be useful in order to control the 
integral magnitude (\ref{vnorm}). This depends on the control function which is chosen - as we shall see below. Linear growth of the solution with respect to a supremum norm up to a certain order of derivatives is not sufficient in order to define a global scheme for problems which are defined on an infinite domain.

Therefore we shall construct a bounded   solution of an equivalent problem with solution $\mathbf{v}^{r,\rho}$ defined recursively via a series $\mathbf{v}^{r,\rho,l}$. Here 'bounded' means bounded with respect to the supremum norm. For the integral magnitude (\ref{vnorm}) it suffices to have linear growth in time. The choice of the time step size (\ref{rhosize}) will ensure the local convergence of the scheme.
Here for each $l\geq 1$ the functions $\mathbf{v}^{r,\rho,l},~\mathbf{v}^{\rho,l},~\mathbf{r}^l$ are all defined on the domain $[l-1,l]\times {\mathbb R}^n$. For $\tau=l-1$ we have $\mathbf{v}^{r,\rho,l}=\mathbf{v}^{r,\rho,l-1}$ for all $l\geq 1$.
The series $\left( \mathbf{r}^l\right)_l$ is designed in order to control the source terms of the equations for $\mathbf{v}^{r,\rho,l}$.
For integers $l\geq 1$ (time-steps) we shall construct a series of real numbers $\left(\rho_l\right)$ and a family of recursively defined functions $\left(r^l_i\right),~l\in {\mathbb N},~1\leq i\leq n$, where
\begin{equation}
\begin{array}{ll}
r^l_i:[l-1,l]\times {\mathbb R}^n\rightarrow {\mathbb R},
\end{array}
\end{equation}
and consider the equation system for the functions
\begin{equation}
v^{r,\rho,l}_i=v^{\rho ,l}_i+r^l_i.
\end{equation}
It is not sufficient to construct a global upper bound for $\mathbf{v}^{\rho}+\mathbf{r}$ of course.
We shall construct a global upper bound for $\mathbf{r}=(\mathbf{r}^l)_{l\geq 1}=(r^l_1,\cdots ,r^l_n)_{l\geq 1}$ and a global upper bound for $\mathbf{v}^{r,\rho}$ and shall conclude that $\mathbf{v}=\mathbf{v}^{\rho}=\mathbf{v}^{r,\rho}-\mathbf{r}$ has itself a global upper bound. The global upper bound for $v_i(\tau,.)$ with respect to the $|.|_{2}$ and with respect to the $|.|_{H^2}$-norm. Moreover, we shall construct $\mathbf{r}^l$ such that the spatial derivatives of $\mathbf{v}^{r,\rho}$ and of $\mathbf{r}$ are bounded up to first order (up to first order is essential). 
For $l\geq 1$ the functions $r^l_i,~1\leq i\leq n$ (which determine a global function $\mathbf{r}$) satisfy themselves for each time step number $l$ a multivariate partial differential equation system which is determined dynamically within the solution scheme for $\mathbf{v}^{r,\rho}$ and $\mathbf{r}$. In order to explain this in more detail we first consider the equations for the functional series  $\mathbf{v}^{r,\rho,l},~l\geq 1$ for a rather arbitrary class of functions $r^l_i\in C^{1,2}_b\left(\left[ l-1,l\right] \times {\mathbb R}^n \right) $, i.e., for all $1\leq i\leq n$ the functions $(\tau,x)\rightarrow r^{l}_{i}(\tau,x)$ are assumed to be bounded functions with a bounded time derivative ( in a weak sense at the integer values $l\geq 1$ and in a classical sense elsewhere) and bounded spatial derivatives up to second order on the domain $\left(l-1,l\right] \times {\mathbb R}^n$. Furthermore, the functions are defined recursively with respect to the time step number $l$. For $\mathbf{r}=(r_1,\cdots ,r_n)^T:[0,\infty)\times {\mathbb R}^n\rightarrow {\mathbb R}^n$ we have
\begin{equation}
r_i(\tau,x)=r^l_i(\tau,x)~\mbox{iff}~(\tau,x)\in [l-1,l]\times {\mathbb R}^n.
\end{equation}
We construct global functions $\mathbf{v}^{r,\rho}:[0,\infty)\times {\mathbb R}^n\rightarrow {\mathbb R}^n$ and $\mathbf{r}:[0,\infty)\times {\mathbb R}^n$ time step by time step. First we prove that for some bounded function $\mathbf{r}$ the function $(t,x)\rightarrow \mathbf{v}^r(t,x)=\mathbf{v}^{r,\rho}(\tau,x)$ is bounded and a solution of an equation system which is equivalent to the Navier-Stokes equation system. This solution is classical locally for all domains $\left[l-1,l \right) \times {\mathbb R}^n$ and is a weak solution at the points $\tau =l$ in transformed time coordinates. Then we conclude that there is a solution of the Navier-Stokes equation system $\mathbf{v}=\mathbf{v}^r+\mathbf{r}$ which is bounded and in $C^{1,2}\left(\left[0,\infty \right)\times {\mathbb R}^n  \right) $. Bounded classical solutions with decay to zero at infinity lead to further regularity and to uniqueness. Indeed, it is well-known that energy estimates imply that the Navier-Stokes equation 'determines' $\mathbf{v}$ among the smooth solutions. Next for each time step number $l$ the restriction $\mathbf{v}^{r,\rho,l}$ of the function $\mathbf{v}^{r,\rho}$ to the domain $[l-1,l]\times {\mathbb R}^n$, i.e., the function
\begin{equation}\label{vvr}
\begin{array}{ll}
\mathbf{v}^{r,\rho,l}=\left(v^{r,\rho,l}_1,\cdots, v^{r,\rho,l}_n\right)^T:[l-1,l]\times{\mathbb R}^n\rightarrow {\mathbb R}^n\\
\\
v^{r,\rho,l}_i= v^{\rho,l}_i+r^l_i,~
\end{array}
\end{equation}
satisfies
\begin{equation}\label{Navlerayrrhol}
\left\lbrace \begin{array}{ll}
\frac{\partial v^{r,\rho,l}_i}{\partial \tau}-\rho_l\nu\sum_{j=1}^n \frac{\partial^2 v^{r,\rho,l}_i}{\partial x_j^2} 
+\rho_l\sum_{j=1}^n v^{r,\rho,l}_j\frac{\partial v^{r,\rho,l}_i}{\partial x_j}=L^{\rho ,l}_i(\mathbf{r}^l;\mathbf{v}^{r,\rho,l})+\\
\\ \hspace{1cm}\rho_l\int_{{\mathbb R}^n}\left( \frac{\partial}{\partial x_i}K_n(x-y)\right) \sum_{j,k=1}^n\left( \frac{\partial v^{r,\rho,l}_k}{\partial x_j}\frac{\partial v^{r,\rho,l}_j}{\partial x_k}\right) (\tau,y)dy+r^l_{i,\tau},\\
\\
\mathbf{v}^{r,\rho,l}(l-1,.)=\mathbf{v}^{r,\rho,l-1}(l-1,.),
\end{array}\right.
\end{equation}
 where 
\begin{equation}
\begin{array}{ll}
L^{\rho ,l}_i(\mathbf{r}^l;\mathbf{v}^{r,\rho,l})\equiv -\rho_l\nu \Delta r^l_i+\rho_l\sum_{j=1}^n r^{l}_j\frac{\partial r^{l}_i}{\partial x_j}\\
\\
+\rho_l\sum_{j=1}^n r^l_j\frac{\partial v^{r,\rho,l}_i}{\partial x_j}+\rho_l\sum_{j=1}^n v^{r,\rho,l}_j\frac{\partial r^{l}_i}{\partial x_j}\\
\\ -2\rho_l\int_{{\mathbb R}^n}\left( \frac{\partial}{\partial x_i}K_n(x-y)\right) \sum_{j,k=1}^n\left( \frac{\partial r^{l}_k}{\partial x_j}\frac{\partial v^{r,\rho,l}_j}{\partial x_k}\right) (\tau,y)dy\\
\\
-\rho_l\int_{{\mathbb R}^n}\left( \frac{\partial}{\partial x_i}K_n(x-y)\right) \sum_{j,k=1}^n\left( \frac{\partial r^{l}_k}{\partial x_j}\frac{\partial r^{l}_j}{\partial x_k}\right) (\tau,y)dy.
\end{array}
\end{equation}
Note that the functional $L^{\rho ,l}_i(\mathbf{r}^l;\mathbf{v}^{r,\rho,l})$ is affine with respect to  $\mathbf{v}^{r,\rho,l}$ and its first derivatives. Sometimes it is useful to consider the linear part $L^{\rho ,l,0}_i(\mathbf{r}^l;\mathbf{v}^{r,\rho,l})$ of the functional $L^{\rho ,l}_i(\mathbf{r}^l;\mathbf{v}^{r,\rho,l})$. Therefore we write 
\begin{equation}
\begin{array}{ll}
L^{\rho ,l}_i(\mathbf{r}^l;\mathbf{v}^{r,\rho,l})
=:-\rho_l\nu \Delta r^l_i+\rho_l\sum_{j=1}^n r^{l}_j\frac{\partial r^{l}_i}{\partial x_j}\\
\\
-\rho_l\int_{{\mathbb R}^n}\left( \frac{\partial}{\partial x_i}K_n(x-y)\right) \sum_{j,k=1}^n\left( \frac{\partial r^{l}_k}{\partial x_j}\frac{\partial r^{l}_j}{\partial x_k}\right) (\tau,y)dy+L^{\rho ,l,0}_i(\mathbf{r}^l;\mathbf{v}^{r,\rho,l})\\
\\
=:S^{\rho,l}_i\left(\mathbf{r}^l\right)+L^{\rho ,l,0}_i(\mathbf{r}^l;\mathbf{v}^{r,\rho,l}).
\end{array}
\end{equation}
 In (\ref{Navlerayrrhol}) we have $\mathbf{v}^{r,\rho,0}(0,.)=\mathbf{h}$ in case $l=1$. Note that $L^{\rho ,l,0}_i(\mathbf{r}^l;\mathbf{v}^{r,\rho,l})$ is a linear differential operator with respect to the function $\mathbf{v}^{r,\rho,l}$. Note that all terms of the functional $L^{\rho,l}_i$ have a factor $\rho_l$.
The numbers $\rho_l$ measure the time step size in original coordinates and will be chosen such that at each time step we can construct a local time solution of the incompressible Navier-Stokes equation by an iteration procedure where we use $H^2$-estimates implying a decay at spatial infinity of the iterative approximations of the solution. Furthermore, the functions $\mathbf{r}^l$ are chosen  such that at each time step $l$ we can control  the source terms on the right side of (\ref{Navlerayrrhol}) depending on the computation of the data from the last time step. The control functions $\mathbf{r}^l$ are defined such that we do not only control the absolute value of the functions  $\mathbf{r}^l;\mathbf{v}^{r,\rho,l}$ but also the absolute value of the first derivatives. This is a crucial idea of our method. More specifically
\begin{itemize}
 \item[$\alpha )$] the choice of $\rho_l$ ensures the time-local convergence of an iteration scheme which computes approximations $\mathbf{v}^{r,\rho,k,l}$ for the local solution $\mathbf{v}^{r,\rho,l}$ of the incompressible Navier-Stokes equation at each time step $l$, i.e., the solution on the domain $[l-1,l]\times {\mathbb R}^n$ with respect to transformed time coordinates $\tau$. For each time step number $l\geq 1$ the local iteration determines the function $\mathbf{v}^{r,\rho,l}$ as a limit of the functional series $\mathbf{v}^{r,\rho,k,l}, k\geq 0$. For given $\mathbf{r}$ and each $k\geq 0$ the function $\mathbf{v}^{r,\rho,k,l}$ satisfies
\begin{equation}\label{Navlerayrrhol+}
\left\lbrace \begin{array}{ll}
\frac{\partial v^{r,\rho,k,l}_i}{\partial \tau}-\rho_l\nu\sum_{j=1}^n \frac{\partial^2 v^{r,\rho,k,l}_i}{\partial x_j^2} 
+\rho_l\sum_{j=1}^n v^{r,\rho,k-1,l}_j\frac{\partial v^{r,\rho,k,l}_i}{\partial x_j}\\
\\
=L^{\rho ,l}_i(\mathbf{r}^l;\mathbf{v}^{r,\rho,k-1,l})+\\
\\
\rho_l\int_{{\mathbb R}^n}\left( \frac{\partial}{\partial x_i}K_n(x-y)\right) \sum_{j,m=1}^n\left( \frac{\partial v^{r,\rho,k-1,l}_m}{\partial x_j}\frac{\partial v^{r,\rho,k-1,l}_j}{\partial x_m}\right) (\tau,y)dy+r^l_{i,\tau},\\
\\
\mathbf{v}^{r,\rho,k,l}(l-1,.)=\mathbf{v}^{r,\rho,l-1}(l-1,.).
\end{array}\right.
\end{equation}
Here, in case of $k=0$ (resp. $k-1=-1$) we define
\begin{equation}
\mathbf{v}^{r,\rho,-1,l}(\tau,x)=\mathbf{v}^{r,\rho,l-1}(l-1,x)\mbox{ for all }(\tau,x)\in [l-1,l]\times {\mathbb R}^n.
\end{equation}
Note that in \cite{KB2} we determined a the time step size 
\begin{equation}\label{rhoCl}
 \rho_l\geq \frac{C}{l}
\end{equation}
for some constant $C>0$ where all local schemes for $\mathbf{v}^{r,\rho,k,l}$ converge while $\sum_{l= 1}^M\rho_l\uparrow \infty$ as $M\uparrow \infty$. The latter property of the time step sizes makes the scheme global. There is a difference to the previous scheme for the multivariate Burgers equation which has a time step size of order $1/l$ (cf. \cite{K}) in order to compensate for a bound on the coefficients which is linear in time. In the present scheme we show that we can use a constant time step size $\rho_>0$
 in order to keep the additional integral term of the Navier Stokes equation ('additional' compared to the multidimensional Burgers equation) under control. Note that it is only for analytical reasons that a decreasing time step size is used in \cite{KB2} in order to make the scheme global (because the sum of time step sizes goes to infinity as the number of time steps goes to infinity). A scheme with constant or even increasing time step size is preferable from a numerical/computational point of view. Once we have proved that the solution function $\mathbf{v}$ is bounded itself we can alter the scheme for numerical purposes to a scheme with increasing time step size. 
 \item[$\beta )$] 
   In order to determine the function $\mathbf{r}^l$ for each $l$ recursively we assume that $\mathbf{v}^{r,l-1}(l-1,.)$ and $\mathbf{r}^{l-1}(l-1,.)$ have been computed (in case $l=1$ the function $\mathbf{v}^{r,0}(l-1,.)$ equals the initial data function $\mathbf{h}$ which is given, and $\mathbf{r}^{0}$ will be set to zero). Given this information of the previous time step we first determine a function $\phi^l_i$ which is constructed in order to control the growth of the functions $r^l_i$ and of the function $\mathbf{v}^{r,\rho,l}$. The functions $\phi^l_i$ are introduced as source terms of a linear equation for $r^l_i$ at each time step. This implies the control of the function $\mathbf{v}^{\rho,l}$. It is essential to control a spatial H\"{o}lder norm (which includes the supremum norm). For this purpose it is sufficient to control the supremum and the first derivatives to be bounded, of course. The functions $\phi^l_i$ determine the functions $r^l_i$ first in terms of the functions $\mathbf{r}^{l-1}$ and $\mathbf{v}^{r,\rho,l-1}$ and then in turn  control the growth the local Navier-Stokes solution via the control of growth of the functions  $v^{r,\rho,l}_i$ and $r^l_i$ at time step $l$. At each time step we control the growth of the local solution $\mathbf{v}^{r,\rho,l}$ as a  limit of an iteration procedure of successive approximations
   \begin{equation}\label{vkapprox0}
   \mathbf{v}^{r,\rho,k,l}=\mathbf{v}^{r,\rho,0,l}+\sum_{m=1}^k\left( \mathbf{v}^{r,\rho,m,l}-\mathbf{v}^{r,\rho,m-1,l}\right) ,
   \end{equation}
where we control the growth of the successive approximations of this series, and then show that  this control is preserved in the limit $k\uparrow \infty$. The advantage of this approach is that all members of the series in (\ref{vkapprox0}) are solutions of linear equations. 
At time step $l\geq 1$ we compute the function $\mathbf{r}^l$ in terms of the data of the previous time step where we look at the values itself and at the values of the derivatives of the functions $\mathbf{r}^{l-1}$ and $\mathbf{v}^{r,\rho,l-1}$. Having determined this function $\mathbf{r}^l$  appropriately we show that the growth of the  function  $\mathbf{v}^{r,\rho,0,l}$ is controlled in terms of a  a norm which includes second order derivatives and that the functions $v^{r,\rho,0,l}_i(\tau,.)$ can be estimated with repsect to ab $|.|_{H^2}$-norm independent of time $\tau \in [l-1,l]$. Then we shall show that for small time step size $\rho>0$ the sum with the summands $\delta\mathbf{v}^{r,\rho,k,l}=\left( \mathbf{v}^{r,\rho,m,l}-\mathbf{v}^{r,\rho,m-1,l}\right)$ of (\ref{vkapprox0})  is small enough such that some growth property established for $\mathbf{v}^{r,\rho,0,l}$ is essentially preserved for all approximations $\mathbf{v}^{r,\rho,k,l}$  and in the limit for $\mathbf{v}^{r,\rho,l}$. Moreover we show that there is a time step size $\rho$ which is independent of the time step number $l>0$ - this is an effect of the control function $r^l_i$- in \cite{KB2} we have a lower bound of form (\ref{rhoCl}) with a fixed constant $C>0$. In any case, the scheme is global. We exploit that we have some freedom in the choice of the functions $\mathbf{r}^l$.  The construction is such that the global function $\mathbf{r}$ which equals the local function $\mathbf{r}^l$ on each domain $[l-1,l]\times {\mathbb R}^n$ is bounded. Each $\mathbf{r}^l$ is itself a solution of a linear equation 
with certain source terms $\phi^l_i$ which serve as 'consumption terms' in certain regions of the domain $[l-1,l]\times {\mathbb R}^n$ and control the growth of  the functions for $\mathbf{v}^{r,\rho,0,l}$ and $\mathbf{r}^{l}$. This linear equation is essentially a linearized equation related to the terms on the right side of the equation for $\mathbf{v}^r$ above.  For each $1\leq i\leq n$ the function $\phi^l_i$ is determined in terms of the information gained at the previous time step, i.e., in terms of the functions $x\rightarrow r^{l-1}_i(l-1,x)$ and $x\rightarrow v^{r,\rho,l-1}_i(l-1,x)$. For simplicity of notation we denote these functions by $r^{l-1}_i(l-1,.)$ and $ v^{r,\rho,l-1}_i(l-1,.)$ sometimes.
   The functions $\phi^l_i$ are constructed grosso modo as follows (details will be given in the proof of the main theorem below). For each $1\leq i\leq n$ and at each time step $l$ the function $\phi^l_i$ is constructed as a sum $\phi^l_i=\phi^{v,l}_i+\phi^{r,l}_i$. 
 We define
\begin{equation}
\phi^{r,l}_i(\tau,x)=\phi^{r,s,l}_i(x):=
 -\frac{2}{C}r^{l-1}_i(l-1,.).
\end{equation}
Similarly for the source term $\phi^{v,l}_i$, but with the difference, that we put it on a diiferent scale, i.e., make it smaller, such that it does not interfer essentially with the growth of the control function. We define
\begin{equation}\label{phiv}
\phi^{r,l}_i(\tau,x)=\phi^{r,s,l}_i(x):=
 -\frac{2}{C^2}v^{r,l}_i(l-1,.).
\end{equation}
The main idea is that $C>0$ may be large such that the effect of the control source term for $\phi^{v,s,l}_i$ does not interfer with the effect of  the control source term for $\phi^{r,s,l}_i$. Therefore the square in the denominator of (\ref{phiv}). Nevertheless, we may have  $\rho\sim \frac{1}{C^3}$ such that for $C>0$ large enough even the intergation pf $\phi^{v,\rho,l}_i$ of one unit (in $\tau$-coordinates) is large compared to all the other source terms in the scheme which all have a factor $\rho$. The choice of the generic $C>0$ is step by step. 
We shall construct $\mathbf{r}^l$ such that $|r^l_i|_0\leq C$ for some $C>0$ independent of the time step number $l\geq 1$ (a bound with respect to the supremum norm $|.|_0$), and $|r^l_i|_1\leq C$ (a bound with respect to the classical norm including the first order derivatives) for all $l\geq 1$, and all $1\leq i\leq n$. Moreover, we shall ensure that
\begin{equation}
|r^{l}_i(\tau,.)|_{H^2}\leq C,
\end{equation}
independent of the time step number $l\geq 1$.

\begin{rem}
The definition of the source term functions $\phi^l_i$ is such that we may have bounded jumps at the time discretization points $l$ of the scheme. However, in the representation of the functions $v^{r,\rho,0,l}_i$ and $r^{l}_i$ the bounded source terms become integrated over time starting from $\tau=l-1$ at each time step $l$. This will ensure that the construction of the global solution function $\mathbf{v}^{r,\rho}$ and $\mathbf{r}^l$ are uniformly bounded H\"{o}lder -continuous over time. We shall see that this is sufficient for us. However it is also possible to construct global functions $\phi_i$ which equal $\phi^l_i$ on each domain $(l-1,l]\times {\mathbb R}^n$ which are differentiable from the beginning defining
\begin{equation}
\begin{array}{ll}
\phi^{v,l}_i:[l-1,l]\times {\mathbb R}^n\rightarrow {\mathbb R}\\
\\
\phi^{v,l}_i(\tau,x):=\sin^2\left(\pi(\tau-(l-1))\right)\phi^{v,s,l}_i(x),
\end{array}
\end{equation}
and 
\begin{equation}
\begin{array}{ll}
\phi^{r,l}_i:[l-1,l]\times {\mathbb R}^n\rightarrow {\mathbb R}\\
\\
\phi^{r,l}_i(\tau,x):=\sin^2\left(\pi(\tau-(l-1))\right)\phi^{r,s,l}_i(x),
\end{array}
\end{equation}
Then we get global bounded solution functions $\mathbf{v}^{r,\rho}$ and $\mathbf{r}$ which are differentiable across time points $\tau=l$. However the growth control of the local functions $\mathbf{v}^{r,\rho,l}$ and $\mathbf{r}^l$ becomes a little bit cumbersome. Hence we proceed with the definition which leads to global solutions which are bounded continuous at time points $\tau=l$ first.  
\end{rem}
We continue to outline the main ideas within item $\beta)$. We use the fact that we have some freedom in order to define the control functions $r^l_i$.
The functions $\phi^l_i$ serve as source terms or 'consumption terms' in the following parabolic equation for $\mathbf{r}^l$ ( which is a linearising of a time-local Navier-Stokes type equation with source term $\phi^l_i$ among other source terms), where for $1\leq i\leq n$ we have
   \begin{equation}\label{Navlerayrrholr++}
\left\lbrace \begin{array}{ll}
r^l_{i,\tau}-\rho_l\nu \Delta r^l_i+\rho_l\sum_{j=1}^n r^{l-1}_j(l-1,.)\frac{\partial r^{l}_i}{\partial x_j}=\\
\\
+\rho_l\int_{{\mathbb R}^n}\left( \frac{\partial}{\partial x_i}K_n(x-y)\right) \sum_{j,k=1}^n\left( \frac{\partial r^{l-1}_k}{\partial x_j}\frac{\partial r^{l-1}_j}{\partial x_k}\right) (l-1,y)dy\\
\\
-L^{\rho ,l,0}_i(\mathbf{r}^{l-1}(l-1,.);\mathbf{v}^{r,\rho,l-1}(l-1,.))\\
\\-\rho_l\int_{{\mathbb R}^n}\left( \frac{\partial}{\partial x_i}K_n(x-y)\right) \sum_{j,m=1}^n\left( \frac{\partial v^{r,\rho,l-1}_m}{\partial x_j}\frac{\partial v^{r,\rho,l-1}_j}{\partial x_m}\right) (l-1,y)dy+\phi^l_i\\
\\
r^l_i(l-1,.)=r^{l-1}_i(l-1,.),
\end{array}\right.
\end{equation}
and where
\begin{equation}
\begin{array}{ll}
L^{\rho ,l,0}_i(\mathbf{r}^{l-1};\mathbf{v}^{r,\rho,l-1})\equiv 
+\rho_l\sum_{j=1}^n r^{\rho,l-1}_j\frac{\partial v^{r,\rho,l-1}_i}{\partial x_j}+\rho_l\sum_{j=1}^n v^{r,\rho,l-1}_j\frac{\partial r^{l-1}_i}{\partial x_j}\\
\\ -2\rho_l\int_{{\mathbb R}^n}\left( \frac{\partial}{\partial x_i}K_n(x-y)\right) \sum_{j,k=1}^n\left( \frac{\partial r^{l-1}_k}{\partial x_j}\frac{\partial v^{r,\rho,l-1}_j}{\partial x_k}\right) (\tau,y)dy.
\end{array}
\end{equation}
Note the signs on the right side of (\ref{Navlerayrrholr++}). 
Note that dependence on the function $\mathbf{v}^{r,\rho}$ involves only the function $x\rightarrow \mathbf{v}^{r,\rho,l-1}(l-1,.)$ which is known at time step $l$.
Assume that this equation has a classical solution $\mathbf{r}^l=(r^l_1,\cdots ,r^l_n)$ in $C^{1,2}$ on the domain $(l-1,l]\times{\mathbb R}^n$ (we shall show existence of such classical solutions for time-local Navier-Stokes type systems below). Plugging (\ref{Navlerayrrholr++}) in (\ref{Navlerayrrhol+}) for $k=0$ (note that for $k=0$ $v^{r,\rho,k-1,l}_j=v^{r,\rho,-1,l}_j=v^{r,\rho,l-1}_j$) we get
the equation
\begin{equation}\label{Navlerayrrhol+++}
\left\lbrace \begin{array}{ll}
\frac{\partial v^{r,\rho,0,l}_i}{\partial \tau}-\rho_l\nu\sum_{j=1}^n \frac{\partial^2 v^{r,\rho,0,l}_i}{\partial x_j^2} 
+\rho_l\sum_{j=1}^n v^{r,\rho,l-1}_j(l-1,.)\frac{\partial v^{r,\rho,0,l}_i}{\partial x_j}=\phi^l_i+\delta^l_i,\\
\\
\mathbf{v}^{r,\rho,0,l}(l-1,.)=\mathbf{v}^{r,\rho,l-1}(l-1,.),
\end{array}\right.
\end{equation}
where we shall point out that $\delta^l_i$ is small compared to $\phi^l_i$ with respect to a $|.|_1$-norm (provided that $\rho_l$ is small enough). 

The choice of $\phi_l$ is such that in these relevant regions the behavior of the solutions $\mathbf{v}^{r,\rho,0,l}$ and $\mathbf{r}^{l}$ is controlled. Moreover, we know that the function  
\begin{equation}
\mathbf{v}^{r,\rho,l}=\mathbf{v}^{r,\rho,0,l}+\sum_{k\geq 1}\delta \mathbf{v}^{r,\rho,k,l},
\end{equation}
is small where the sum of functions
\begin{equation}
\delta \mathbf{v}^{r,\rho,k,l}=\mathbf{v}^{r,\rho,k,l}-\mathbf{v}^{r,\rho,k-1,l},
\end{equation}
is a 'perturbation' of the function $\mathbf{v}^{r,\rho,0,l}$ if $\rho_l$ is small. We may choose $\rho_l$ in such a way that a certain control of the behavior established for $\mathbf{v}^{r,\rho,0,l}$ is also valid for all functions $\mathbf{v}^{r,\rho,k,l},~k\geq 1$ and for the limit $\mathbf{v}^{r,\rho,l}$. Proceeding with time step numbers $l$ in such a way while the sum $\sum_{l=1}^m\rho_l$ of the time step numbers $\rho_l$ goes to infinity ensures boundedness of the function $\mathbf{v}^{r,\rho}$ and of its first spatial derivatives. Having obtained this you obtain global classical solution in each time interval which is H\"{o}lder continuous across the integers $l\geq 1$. Note that  boundedness of the function $\mathbf{v}^{r,\rho}$ does not guarantee boundedness of the difference $\mathbf{v}-\mathbf{r}=\mathbf{v}^{\rho}-\mathbf{r}$. We have to ensure that $\mathbf{r}$ is bounded, i.e., there is a global bound of all functions $\mathbf{r}^l$ independently of the time step number $l$. Furthermore we shall ensure that the control function $\mathbf{r}$ is globally H\"{o}lder contiuous. Then we may substract $\mathbf{r}$ from $\mathbf{v}^{r,\rho}$ and get a global H\"{o}lder continuous solution $\mathbf{v}^{\rho}$ to the time-transformed incompressible Navier-Stokes equation. These H\"{o}lder-coninuous functions figure as first order coefficients of a system which is linear in terms of these coefficients. Then classical theory of linear parbolic equations tells us that we have a global classical solution. 
We shall see later in detail that the construction outlined can be implemented such that the functions $\mathbf{v}^{r,\rho,l}$ and $\mathbf{r}^l$ are uniformly H\"{o}lder-bounded, i.e., there is a number $C>0$ such that $|v^{r,\rho,l}_i|_{\alpha}\leq C$ and $|r^l_i|_{\alpha}\leq C$ for all $l\geq 1$ and $1\leq i\leq n$, and such that the H\"{o}lder constant $\alpha$ is independent of the time-step number $l$, of course. The constant $C>0$ can be computed a priori and depends only on the dimension $n$, the viscosity $\nu >0$, and the initial data $\mathbf{h}$. Moreover, the sequence $\mathbf{r}^l$ defines a globally bounded function $\mathbf{r}:[0,\infty)\times {\mathbb R}^n$ which is (weakly) differentiable with respect to time and has spatial derivatives up to second order in a classical sense.
  The functions $\mathbf{v}^{r,\rho,l}:[l-1,l]\times {\mathbb R}^n\rightarrow {\mathbb R}^n,~l\geq 1$ which solve the equations (\ref{Navlerayrrhol}) for each $l$ with $\mathbf{v}^{\rho,r,l}(l-1,.)=\mathbf{v}^{\rho,r,l-1}(l-1,.)$ for $l\geq 2$ and  $\mathbf{v}^{\rho,r,l}(l-1,.)=\mathbf{h}(l-1,.)$ for $l=1$ define a global function
\begin{equation}
\mathbf{v}^{r,\rho}:=\mathbf{v}^{\rho}+\mathbf{r}:[0,\infty)\times {\mathbb R}^n\rightarrow {\mathbb R}^n,
\end{equation}
which equals $\mathbf{v}^{r,\rho,l}$ on each subdomain $[l-1,l]\times {\mathbb R}^n,~l\geq 1$. Since $\mathbf{r}$ is bounded this function $\mathbf{v}^{\rho,r}$ satisfies a system of equations equivalent to the Navier-Stokes equation system and a bounded global solution of the Navier-Stokes equation system is given by
\begin{equation}
\mathbf{v}(t,.):=\mathbf{v}^{r,\rho}(\tau,.)-\mathbf{r}(\tau,.).
\end{equation}
We have already pointed our that the regularity $\mathbf{v}\in \left[ C^{1,2}\left(\left[0,\infty\right) \times {\mathbb R}^n\right)\right]^n$ is easily obtained if $\mathbf{v}^{r,\rho}$ and $\mathbf{r}$ are globally H\"{o}lder continuous and have a certain decay at infinity. This leads to uniqueness.
\end{itemize}

However, before we get involved with the construction of the sequence $(\rho_l)$ and $(\mathbf{r}^l)$ in detail let us consider (\ref{Navleray}) again.
From the point of view that the incompressible Navier-Stokes equation is an extension of the multivariate Burgers equation a solution of the equation (\ref{Navleray}) is an element of the class of divergence-free solutions of a certain family of multivariate Burgers equations where for each function $\mathbf{f}$ we add a certain source term to (\ref{qparasyst1}) involving functions $\mathbf{f}\in\left[ C^{1,2}_b\left( \left[0,\infty\right)\times {\mathbb R}^n\right)\right]^n$. More precisely, consider the family $\left( \mathbf{v}^{f}\right)_{\mathbf{f}}$ where for each fixed $\mathbf{f}$ the function $\mathbf{v}^{f}$ satisfies the Cauchy  problem
\begin{equation}\label{Navleraymultburg}
\left\lbrace \begin{array}{ll}
\frac{\partial v^{f}_i}{\partial t}-\nu\sum_{j=1}^n \frac{\partial^2 v^{f}_i}{\partial x_j^2} 
+\sum_{j=1}^n v^{f}_j\frac{\partial v^{f}_i}{\partial x_j}=\\
\\ \hspace{3cm}\int_{{\mathbb R}^n}\left( \frac{\partial}{\partial x_i}K_n(x-y)\right) \sum_{j,k=1}^n\left( \frac{\partial f_k}{\partial x_j}\frac{\partial f_j}{\partial x_k}\right) (t,y)dy,\\
\\
\mathbf{v}^{f}(0,.)=\mathbf{h}.
\end{array}\right.
\end{equation}
Then a fixed point of the map (defined on regular divergence free vector fields with values in divergence free vector fields, all depending on time)
\begin{equation}
F_{\mbox{\tiny loc}}:\mathbf{f}\rightarrow \mathbf{v}^{f},~\mbox{where}~\div\mathbf{f}=0,~\mbox{and}~\div\mathbf{v}^{f}=0
\end{equation}
is a solution of the incompressible Navier-Stokes system. Let us denote a fixed point of the latter map by $\mathbf{v}^{*}$.
Formally, such a fixed point has a representation in terms of the fundamental solution $\Gamma^*$ of the scalar equation
\begin{equation}
\frac{\partial \Gamma^*}{\partial t}-\nu\sum_{j=1}^n \frac{\partial^2 \Gamma^*}{\partial x_j^2} 
+\sum_{j=1}^n v^{*}_j\frac{\partial \Gamma^*}{\partial x_j}=0,
\end{equation}
where we denote $\mathbf{v}^{*}=\left(v^{*}_1,\cdots ,v^{*}_n\right)^T$, i.e., we
have the formal representation ($1\leq i\leq n$)
\begin{equation}\label{navstokesburgrep}
\begin{array}{ll}
v^{*}_i(t,x)=\int_{{\mathbb R}^n}h_i(y)\Gamma^*(t,x,0,y)dy+\\
\\ \int_0^t\int_{{\mathbb R}^n}\left( \frac{\partial}{\partial x_i}K_n(y-z)\right) \sum_{j,k=1}^n\left( \frac{\partial v^{*}_k}{\partial x_j}\frac{\partial v^{*}_j}{\partial x_k}\right) (s,z)\Gamma^*(t,x,s,y)dzdyds .
\end{array}
\end{equation}
Note that for all $1\leq i\leq n$ we have the same fundamental solution $\Gamma^*$ of a scalar equation involving first order coefficients $v^{*}_j, 1\leq j\leq n$ which are the same for all $1\leq i\leq n$.
Well, if we construct such fixed points in a time-discretized scheme, then we may use the a priori estimates of the multivariate Burgers equation of type (\ref{apriorimax}) and a representation of type (\ref{navstokesburgrep}) at each time step in order analyze the growth of the solution. Note that we cannot control the integral terms involving the Poisson kernel in the Leray projection form this way. Nevertheless the representation (\ref{navstokesburgrep}) gives us a first hint how the integral term can be controlled in a time-discretized scheme using our considerations above. There is a mutual dependence of the  choice of the functions $r^l_{i}$ and the choice of the numbers $\rho_l$ which ensure the local convergence in time. The choice of the latter numbers depends on the local scheme. It is not necessary to start each time step with the solution of the corresponding multidimensional Burgers equation.
Indeed, an alternative way of constructing a divergence-free fixed point
\begin{equation}
F_{\mbox{\tiny glob}}:\mathbf{f}\rightarrow \mathbf{v}^f
\end{equation}
via a time-discretized scheme may be the following: locally in time the family $\left( \mathbf{v}^f\right)_{\mathbf{f}}$
of vector-valued functions  $\mathbf{v}^f=\left(v^f_1,\cdots ,v^f_n \right)^T$ may satisfy for each $\mathbf{f}=\left(f_1,\cdots ,f_n \right) $ the equation
\begin{equation}\label{Navleraymultburgf}
\left\lbrace \begin{array}{ll}
\frac{\partial v^f_i}{\partial t}-\nu\sum_{j=1}^n \frac{\partial^2 v^f_i}{\partial x_j^2} 
+\sum_{j=1}^n f_j\frac{\partial v^f_i}{\partial x_j}=\\
\\ \hspace{3cm}\int_{{\mathbb R}^n}\left( \frac{\partial}{\partial x_i}K_n(x-y)\right) \sum_{j,k=1}^n\left( \frac{\partial f_k}{\partial x_j}\frac{\partial v^f_j}{\partial x_k}\right) (t,y)dy,\\
\\
\mathbf{v}^f(0,.)=\mathbf{h}.
\end{array}\right.
\end{equation}
Here, we search for a fixed point solution $\mathbf{f}^*$ with $F_{\mbox{\tiny glob}}(\mathbf{f}^*)=\mathbf{v}^{f^*}$, where 
\begin{equation}
\begin{array}{ll}
\div \mathbf{f}^*=\div \mathbf{v}^{f^*}=0.
\end{array}
\end{equation}
Let us consider this map more closely. Start with the equation
\begin{equation}\label{navf}
    \frac{\partial\mathbf{v}^f}{\partial t}-\nu \Delta \mathbf{v}^f+ (\mathbf{f} \cdot \nabla) \mathbf{v}^f = - \nabla p^f~~~~t\geq 0,~~x\in{\mathbb R}^n,
\end{equation}
for some scalar function $p^f$, and where
\begin{equation}
\mathbf{f}\in \left\lbrace \mathbf{u}\in \left[ C^{1,2}_b\left( \left[0,\infty\right)\times {\mathbb R}^n\right)\right]^n|\div \mathbf{u}=0,~\mathbf{u}(0,.)=\mathbf{h}\right\rbrace .
\end{equation}
In the following we denote ordinary spatial derivatives in the form $f_{i,j}:=\frac{\partial f_i}{\partial x_j}$ and $f_{i,j,k}:=\frac{\partial f^2_i}{\partial x_j\partial x_k}$ etc. as is usual for example in the literature on the theory of general relativity. Sometimes we feel free to denote time derivatives in the form $f_{i,t}=\frac{\partial f_i}{\partial t}$. For the divergence
$\div \mathbf{v}^f$ we have 
\begin{equation}\label{diveq}
\frac{\partial}{\partial t}\div \mathbf{v}^f+\nu\Delta \div\mathbf{v}^f+\sum_j f_j\frac{\partial}{\partial x_j}\div \mathbf{v}^f=-\sum_{i,j=1}^n f_{i,j}v^f_{j,i}-\Delta p^f,
\end{equation}
where $\div \mathbf{v}^f(0,.)=\div \mathbf{h}=0$. Now let $\Gamma^f$ be the fundamental solution of
\begin{equation}
\frac{\partial}{\partial t}\Gamma^f-\nu\Delta \Gamma^f+\sum_{j=1}^n f_j\Gamma^f=0. 
\end{equation}
Then the solution to equation (\ref{diveq}) with zero initial data has the representation
\begin{equation}
\div \mathbf{v}^f(t,x)=\int_0^t\int_{{\mathbb R}^n}\left( -\sum_{i,j=1}^n f_{i,j}v^f_{j,i}-\Delta p^f\right)(s,y)\Gamma^f(t,x,s,y)dyds .
\end{equation}
Now let $\mathbf{v}^f$ be a solution of (\ref{Navleraymultburgf}). Then we have the representation
\begin{equation}\label{virep*+}
\begin{array}{ll}
v^f_i(t,x)=\int_{{\mathbb R}^n}h_i(y)\Gamma^f(t,x,0,y)dy\\
\\
+\int_0^t\int_{{\mathbb R}^n}\int_{{\mathbb R}^n}\left( \frac{\partial}{\partial x_i}K_n(x-z)\right) \sum_{j,k=1}^n\left( \frac{\partial f_k}{\partial x_j}\frac{\partial v^f_j}{\partial x_k}\right) (s,z)\Gamma^f(t,x,s,y)dydzds
\end{array}
\end{equation}
Hence if we solve (\ref{Navleraymultburgf}) for $\mathbf{v}^f$ in the form (\ref{virep*+}), then this is the same is solving (\ref{navf}) in the form
\begin{equation}\label{virep*++}
\begin{array}{ll}
v^f_i(t,x)=\int_{{\mathbb R}^n}h_i(y)\Gamma^f(t,x,0,y)dy\\
\\
+\int_0^t\int_{{\mathbb R}^n}\left( \frac{\partial}{\partial x_i}p\right) (s,y)\Gamma^f(t,x,s,y)dyds
\end{array}
\end{equation}
along with
\begin{equation}
p^f(t,x)=-\int_{{\mathbb R}^n}\left( K_n(x-y)\right) \sum_{j,k=1}^n\left( \frac{\partial f_k}{\partial x_j}\frac{\partial v^f_j}{\partial x_k}\right) (t,y)dy,
\end{equation}
and such that
\begin{equation}
\div \mathbf{v}^f=0
\end{equation}
is ensured. Hence, if we find a fixed point $\mathbf{v}^{*}$ of the map $\mathbf{f}\rightarrow \mathbf{v}^f$, then this function satisfies a) the equation (\ref{Navleraymultburgf}), and b) the equation (\ref{nav}) (with $\mathbf{f}_{ex}\equiv 0$) along with
\begin{equation}\label{pressurefix}
p(t,x)=-\int_{{\mathbb R}^n}\left( K_n(x-y)\right) \sum_{j,k=1}^n\left( \frac{\partial v^*_k}{\partial x_j}\frac{\partial v^*_j}{\partial x_k}\right) (t,y)dy,
\end{equation} 
and $\div \mathbf{v}^*=0$, and $\mathbf{v}^*(0,.)=\mathbf{h}(.)$. The latter ansatz has the advantage that we can easily preserve the incompressibility condition at each step of approximation of the local solution, i.e., the solution at each time step. The disadvantage is that we need a priori estimates for partial-integral linear differential equations, and these are a lttle more cumbersome.  Note that we speak of 'the' local solution here in the sense of our construction, i.e., the  approximating equations involved in each time step have a unique solution. Moreover, we shall prove that the local solution is a classical solution, and then it can be shown that the classical solution is indeed unique. In any case (whether we start with the multivariate Burgers equation or with a linearized Navier-Stokes equation), it seems that such a fixed point cannot be obtained by a global iteration scheme.

Recall that it is indeed sufficient to solve (\ref{Navleray}) (this is well-known). A regular solution $\mathbf{v}$ of equation (\ref{Navleray}) satisfies
\begin{equation}\label{Navleraydiv}
\left\lbrace \begin{array}{ll}
\frac{\partial v_i}{\partial t}-\nu\sum_{j=1}^n \frac{\partial^2 v_i}{\partial x_j^2} 
+\sum_{j=1}^n v_j\frac{\partial v_i}{\partial x_j}=-\frac{\partial}{\partial x_i}p
\\
\mathbf{v}(0,.)=\mathbf{h},
\end{array}\right.
\end{equation}
along with $p$ of form (\ref{pressurefix}). The associated equation for the divergence $\div\mathbf{v}$ has zero initial data and a source term
\begin{equation}
-\sum_{i,j=1}^n \frac{\partial v_j}{\partial x_i}\frac{\partial v_i}{\partial x_j}-\Delta_xp,
\end{equation}
which becomes zero, since
\begin{equation}\label{pressure}
\sum_{i,j=1}^n \left( \frac{\partial v_j}{\partial x_i}\frac{\partial v_i}{\partial x_j}\right) (t,x)=\Delta_x\int_{{\mathbb R}^n}K_n(x-y) \sum_{j,k=1}^n\left( \frac{\partial v_k}{\partial x_j}\frac{\partial v_j}{\partial x_k}\right) (t,y)dy
\end{equation}
for regular $\mathbf{v}$ with the indicated decay at infinity. Here, $\Delta_x$ denotes the Laplacian (where derivatives are with respect to the variables $x_i$), and where some regularity of the function $\mathbf{v}$ is sufficient. Note that higher order differentiability is easily obtained if a solution in $C^{1,2}_b$ (the space with a bounded time derivative of first order and bounded second order dervatives up to second order) has been obtained. You just differentiate the Navier-Stokes equations. You get some additional terms upon each step of differntiation, but you can treat them as source terms (as if they were known functions), because you know the function as a function of lower regularity (cf. Section 4 for details).

Back to the time discretization, the time step sizes are given by a series of positive real numbers $(\rho_l)_{l\in {\mathbb N}}$ where ${\mathbb N}$ denotes the natural numbers (starting with $l=1$) and the size will be chosen in such a way that the iteration at each time step $l$ converges and such that we have global convergence, i.e., the time step sizes are large enough. In order to ensure global convergence it is sufficient to have a time step size of order 
$\rho_l=\frac{c}{l}$
for a constant $c>0$ which depends only on the viscosity $\nu$, the dimension $n$ and the initial data $\mathbf{h}$. Prima facie it seems that our recursive construction of a bounded function $\mathbf{r}$ which equals the functions $r^l_i$ locally on $[l-1,l]\times {\mathbb R}^n$ leads us to the conclusion that a uniform lower bound for the time step size $\rho_l$ may be possible (note that the numbers $\rho_l$ are time step sizes from the point of view of the original time coordinates). However, we need to have a bound for the integral magnitude in (\ref{vnorm}) too, and this leads us to our  choice of a decreasing time step size. From a numerical point of view we hope that the opposite may be possible, i.e., that smoothing effects may allow to increase the time step size $\rho_l$ as time goes by, i.e., as the time step number $l$ increases. Once we have proved that there is a bounded classical solution we may discard the control function $\mathbf{r}$ and consider this possibility. We shall ensure that the global solution $\mathbf{v}^{r,\rho}$ (which equals $\mathbf{v}^{r,\rho,l}$ on each domain $[l-1,l]\times {\mathbb R}^n$) is bounded. Since $\mathbf{r}$ is bounded, this implies that we have a bounded global solution $\mathbf{v}^{\rho}=\mathbf{v}^{r,\rho}+\mathbf{r}$ (here, $\mathbf{v}^{\rho}$ equals $\mathbf{v}^{\rho ,l}$ on each domain $[l-1,l]\times {\mathbb R}^n$). Note that $\mathbf{v}(t,.)=\mathbf{v}^{\rho}(\tau,.)$ where the former function is a solution of the Navier Stokes equation. Note that we construct the function $\mathbf{r}$ time step by time step together with the function $\mathbf{v}^{r,\rho}$. Furthermore, note that $\mathbf{v}^{r}=\mathbf{v}^{r,\rho}$, where the first function refers to the equivalent system in the original time coordinate $t$. Accordingly $\mathbf{v}^{r}$ satisfies the equation
\begin{equation}\label{Navlerayrrhol****}
\left\lbrace \begin{array}{ll}
\frac{\partial v^{r}_i}{\partial t}-\nu\sum_{j=1}^n \frac{\partial^2 v^{r}_i}{\partial x_j^2} 
+\sum_{j=1}^n v^{r}_j\frac{\partial v^{r}_i}{\partial x_j}=L_i(\mathbf{r};\mathbf{v}^r)+\\
\\ \hspace{1cm}\rho_l\int_{{\mathbb R}^n}\left( \frac{\partial}{\partial x_i}K_n(x-y)\right) \sum_{j,k=1}^n\left( \frac{\partial v^{r}_k}{\partial x_j}\frac{\partial v^{r}_j}{\partial x_k}\right) (t,y)dy+r_{i,t},\\
\\
\mathbf{v}^{r}(0,.)=\mathbf{h}(0,.),
\end{array}\right.
\end{equation}
 where 
\begin{equation}
\begin{array}{ll}
L_i(\mathbf{r};\mathbf{v}^{r})\equiv
 -\nu \Delta r^l_i+\sum_{j=1}^n r_j\frac{\partial r_i}{\partial x_j}\\
\\
+\sum_{j=1}^n r_j\frac{\partial v^{r}_i}{\partial x_j}+\sum_{j=1}^n v^{r}_j\frac{\partial r_i}{\partial x_j}\\
\\ -2\int_{{\mathbb R}^n}\left( \frac{\partial}{\partial x_i}K_n(x-y)\right) \sum_{j,k=1}^n\left( \frac{\partial r_k}{\partial x_j}\frac{\partial v^{r}_j}{\partial x_k}\right) (t,y)dy\\
\\
-\int_{{\mathbb R}^n}\left( \frac{\partial}{\partial x_i}K_n(x-y)\right) \sum_{j,k=1}^n\left( \frac{\partial r_k}{\partial x_j}\frac{\partial r_j}{\partial x_k}\right) (t,y)dy\\
\\
=:\nu \Delta r_i+\sum_{j=1}^n r_j\frac{\partial r_i}{\partial x_j}\\
\\
\int_{{\mathbb R}^n}\left( \frac{\partial}{\partial x_i}K_n(x-y)\right) \sum_{j,k=1}^n\left( \frac{\partial r_k}{\partial x_j}\frac{\partial r_j}{\partial x_k}\right) (t,y)dy+L^{0}_i(\mathbf{r};\mathbf{v}^{r}).
\end{array}
\end{equation}
Our solution scheme computes $\mathbf{v}^{r}$ and $\mathbf{r}$ (in original coordinates) simultaneously, and 
a global solution of the Navier-Stokes equation system is then obtained by addition $\mathbf{v}=\mathbf{v}^r+\mathbf{r}$ (we use the same symbol $\mathbf{r}$ for simplicity). Note also that $\mathbf{r}(t,.)=\mathbf{r}(\tau,.)$, hence $\mathbf{v}$ is bounded since $\mathbf{v}^r$ is bounded and $\mathbf{r}$ is bounded. Note that on the domain $\left[\sum_{m=1}^{l-1}\rho_m,\sum_{m=1}^l\rho_m \right]\times {\mathbb R}^n$ the solution of the Navier-Stokes equation $\mathbf{v}$ equals $\mathbf{v}^{\rho,l}$ on the domain $[l-1,l]\times {\mathbb R}^n$. As already remarked dependence of the numbers $\rho_l$ on the time step number $l$ is due to infinite domains from an analytic point of view and may also be due to numerical purposes (with opposite consequences for the time step size) because the scheme with flexible time step may take advantage of the smoothing effect of the scalar densities involved in our scheme (implying larger time step sizes as time goes by (cf. remarks in section 5)).
\begin{rem}
Note that in \cite{K} we used a scheme with time step sizes  
\begin{equation}
\rho_l=\frac{1}{C^*_nl},
\end{equation}
in order to solve the multidimensional Burgers equation,
where $C^*_n>0$ is a constant which does not depend on the time step number $l$ (it depends only on the data $\mathbf{h}$, and the dimension $n$ , and the viscosity constant $\nu >0$ and will be determined below). The scheme considered there is not sufficient in order to prove global existence of solutions for the incompressible Navier-Stokes equation since we have an additional source term which is quadratic with respect to the gradient of the velocity. The introduction of the functions $r^l_i$ for $l\geq 1$ and $1\leq i\leq n$ is the crucial difference which makes it possible to control the source terms in the Leray projection form of the incompressible Navier-Stokes equation.
\end{rem}
Let us have a closer look at the time transformations $t\rightarrow \tau$, and the introduction of the function $\mathbf{r}$. 
We consider the Cauchy problem (\ref{Navleray}) on the domain $D=\left[0,\infty\right) \times{\mathbb R}^n$ where our interest is in the case $n\geq 3$. We shall solve the Cauchy problem in subsequent time steps $l\geq 1$ on the domains
\begin{equation}
D_l=[T_{l-1},T_{l}]\times {\mathbb R}^n,
\end{equation}
where $T_0=0$ and $T_{l}=T_{l-1}+\rho_l$ for $l\geq 1$. Instead of considering a scheme with small time step size in original coordinates we may consider a equidistant scheme in transformed coordinates with time step size $1$, i.e., 
\begin{equation}\label{timeder}
\tau\rightarrow t_l(\tau)=\rho_l\tau~~\mbox{if}~~\tau\in [l-1,l].
\end{equation}
The transformed domains are denoted by $D^{\tau}_l=[l-1,l]\times {\mathbb R}^n$. The original time coordinate $t$ has an index here in (\ref{timeder}) in order to indicate that we are actually considering infinitely many different time transformations (one for each time step number $l$). However, in order to keep notation simple we drop such indices in the following. The index $l$ makes clear which domain we consider. On each domain $D^{\tau}_l$ we have a transformed Cauchy problem
\begin{equation}\label{Navlerayl}
\left\lbrace \begin{array}{ll}
\frac{\partial v^{\rho,l}_i}{\partial \tau}-\rho_l\nu\sum_{j=1}^n \frac{\partial^2 v^{\rho,l}_i}{\partial x_j^2} 
+\rho_l\sum_{j=1}^n v^{\rho,l}_j\frac{\partial v^{\rho,l}_i}{\partial x_j}=\\
\\ \hspace{3cm}\rho_l\int_{{\mathbb R}^n}\left( \frac{\partial}{\partial x_i}K_n(x-y)\right) \sum_{j,k=1}^n\left( \frac{\partial v^{\rho,l}_k}{\partial x_j}\frac{\partial v^{\rho,l}_j}{\partial x_k}\right) (\tau,y)dy,\\
\\
\mathbf{v}^{\rho,l}(l-1,.)=\mathbf{v}^{\rho,l-1}(l-1,.).
\end{array}\right.
\end{equation}
Hence the initial data of the Cauchy problem for $\mathbf{v}^{\rho,l}$ on the domain $D^{\tau}_l$ (the $l$th time step) are the final data of the Cauchy problem for $\mathbf{v}^{\rho,l-1}$ on the domain $D^{\tau}_{l-1}$. Note that $\mathbf{v}^{\rho,1}(0,.)=\mathbf{h}$.
However, as indicated above we introduce another sequence of real functions $(\mathbf{r}^l)$ and consider for each time step $l\geq 1$ and on the domain $D^{\tau}_l$ the function
\begin{equation}
(\tau,x)\rightarrow \mathbf{v}^{r,\rho,l}(\tau,x):=\mathbf{v}^{\rho,l}(\tau,x)+\mathbf{r}_l.
\end{equation}
If for time step number $l$ the function $\mathbf{v}^{\rho,l}$ solves (\ref{Navlerayl}), then   the function $\mathbf{v}^{r,\rho,l}$ solves the equation
\begin{equation}\label{Navleraylr}
\left\lbrace \begin{array}{ll}
\frac{\partial v^{r,\rho,l}_i}{\partial \tau}-\rho_l\nu\sum_{j=1}^n \frac{\partial^2 v^{r,\rho,l}_i}{\partial x_j^2} 
+\rho_l\sum_{j=1}^n v^{r,\rho,l}_j\frac{\partial v^{r,\rho,l}_i}{\partial x_j}=L^{\rho ,l}_i(\mathbf{r}^l,\mathbf{v}^{r,\rho,l})+\\
\\ \hspace{1cm}\rho_l\int_{{\mathbb R}^n}\left( \frac{\partial}{\partial x_i}K_n(x-y)\right) \sum_{j,k=1}^n\left( \frac{\partial v^{r,\rho,l}_k}{\partial x_j}\frac{\partial v^{r,\rho,l}_j}{\partial x_k}\right) (\tau,y)dy+r^l_{i,\tau},\\
\\
\mathbf{v}^{r,\rho,l}(l-1,.)=\mathbf{v}^{r,\rho,l-1}(l-1,.).
\end{array}\right.
\end{equation}
The sequence $(\mathbf{r}^l)$ will be chosen (recursively with respect to $l$) in such a way that the global growth is controlled, i.e., that the components of $\mathbf{v}^{r,\rho,l}$ are bounded and have bounded first orde spatial derivatives independent of $l$. Furthermore, since $\mathbf{r}$ is bounded and H\"{o}lder continuous  this implies a bound  and H\"{o}lder continuity of $\mathbf{v}^{\rho,l}$. Note that the global H\"{o}lder contuity then implies the existence of a global classical solution. The sequence $(\rho_l)$ is chosen in such a way that for each time step number $l$ the local scheme for $\mathbf{v}^{r,\rho,l}$ converges.
At each time step $l$ we approximate $\mathbf{v}^{r,\rho,l}$ iteratively by functions  $\mathbf{v}^{r,\rho,k,l}$ for $k\geq 0$. Indeed, at each time step $l$ we shall construct a solution $\mathbf{v}^{r,\rho,l}$ of (\ref{Navleraylr}) in form of a functional series
\begin{equation}
\mathbf{v}^{r,\rho,l}=\mathbf{v}^{r,\rho,0,l}+\sum_{k\geq 1}\delta \mathbf{v}^{r,\rho,k,l},
\end{equation}
where for each $k\geq 1$ we shall have a contraction of the successive approximations
\begin{equation}
 \mathbf{v}^{r,\rho,k,l}=\mathbf{v}^{r,\rho,0,l}+\sum_{m= 1}^k\delta \mathbf{v}^{r,\rho,m,l},
\end{equation}
where
\begin{equation}\label{contract0}
|\delta \mathbf{v}^{r,\rho,k,l}|=|\mathbf{v}^{r,\rho,k,l}-\mathbf{v}^{r,\rho,k-1,l}|\leq c |\mathbf{v}^{r,\rho,k-1,l}-\mathbf{v}^{r,\rho,k-2,l}|.
\end{equation}
Here the symbol $|.|$ represents some appropriate norm and we have a contraction constant $c\in (0,1)$ which turns out to be independent of the time step number $l$. In general we shall prove a contraction property of type (\ref{contract0}) for specific series $(\mathbf{v}^{\rho,k,l})_k$ defined  as in (iii)  below with a specific first element $\mathbf{v}^{\rho,0,l}$ for each time step $l\geq 1$ (alternatively, we could use the methods in  (i) or (ii)) below to set up a local scheme). 
We have (at least) three possibilities for the local iteration (given $(\mathbf{r}^l)$ which is computed first at each step).
\begin{itemize}
 \item[(i)] We start for each $l$ the iteration with the corresponding multivariate Burgers equation, i.e. for each $l$ the function $\mathbf{v}^{r,\rho,0,l}$ solves the equation
\begin{equation}\label{Navlerayinitl*}
\left\lbrace \begin{array}{ll}
\frac{\partial v^{r,\rho,0,l}_i}{\partial \tau}-\rho_l\nu\sum_{j=1}^n \frac{\partial^2 v^{r,\rho,0,l}_i}{\partial x_j^2} 
+\rho_l\sum_{j=1}^n v^{r,\rho,0,l}_j\frac{\partial v^{r,\rho,0,l}_i}{\partial x_j}\\
\\
=L^{\rho ,l}_i\left(\mathbf{r}^{l}(l-1,.),\mathbf{v}^{r,\rho,l-1}(l-1,.)\right)
 +r^{l}_{i,\tau}(l-1,.)+ \\
 \\
 \rho_l\int_{{\mathbb R}^n}\left( \frac{\partial}{\partial x_i}K_n(x-y)\right) \sum_{j,k=1}^n\left( \frac{\partial v^{r,\rho,l-1}_k}{\partial x_j}\frac{\partial v^{r,\rho,l-1}_j}{\partial x_k}\right) (l-1,y)dy,\\
\\
\mathbf{v}^{r,\rho,0,l}(l-1,.)=\mathbf{v}^{r,\rho,l-1}(l-1,.).
\end{array}\right.
\end{equation}
We shall see below in which sense we have time derivatives at the points where $\tau=l-1$ for some $l\geq 1$.
For the latter equation we have the a priori estimates (\ref{apriorimax}), and hence a global regular solution which is bounded by the respective initial data. The initial data $\mathbf{v}^{r,\rho,l-1}(l-1,.)$ are the final data of the previous time step which are the result of an iteration which we have to define next. We define a series of multivariate Burgers equations for $k\geq 1$ where $\mathbf{v}^{r,\rho,k,l}$ solves
\begin{equation}\label{Navleraylk}
\left\lbrace \begin{array}{ll}
\frac{\partial v^{r,\rho,k,l}_i}{\partial \tau}-\rho_l\nu\sum_{j=1}^n \frac{\partial^2 v^{r,\rho,k,l}_i}{\partial x_j^2} 
+\rho_l\sum_{j=1}^n v^{r,\rho,k,l}_j\frac{\partial v^{r,\rho,k,l}_i}{\partial x_j}\\
\\
=L^{\rho ,l}_i\left(\mathbf{r}^l,\mathbf{v}^{r,\rho,k-1,l}\right) +r^{l}_{i,\tau}\\
\\
+ \rho_l\int_{{\mathbb R}^n}\left( \frac{\partial}{\partial x_i}K_n(x-y)\right) \sum_{j,k=1}^n\left( \frac{\partial v^{r,\rho,k-1,l}_k}{\partial x_j}\frac{\partial v^{r,\rho,k-1,l}_j}{\partial x_k}\right) (\tau,y)dy,\\
\\
\mathbf{v}^{r,\rho,k,l}(l-1,.)=\mathbf{v}^{r,\rho,l-1}(l-1,.).
\end{array}\right.
\end{equation}
Then we may prove - with an appropriate choice of $\rho_l, r_l$-that locally this scheme leads to a fixed point.

\item[(ii)] We may linearize for each time step the equation (\ref{Navleraylr}) and define a sequence of function $\mathbf{v}^{r,\rho,k,l}$ which are solutions of integro-partial-differential equations, i.e., we start with the solution $\mathbf{v}^{r,\rho,0,l}$ of
\begin{equation}\label{Navlerayl0int}
\left\lbrace \begin{array}{ll}
\frac{\partial v^{r,\rho,0,l}_i}{\partial \tau}-\rho_l\nu\sum_{j=1}^n \frac{\partial^2 v^{r,\rho,0,l}_i}{\partial x_j^2} 
+\rho_l\sum_{j=1}^n v^{r,\rho,l-1}_j\frac{\partial v^{r,\rho,0,l}_i}{\partial x_j}\\
\\
=L^{\rho ,l}_i\left(\mathbf{r}^l,\mathbf{v}^{r,\rho,0,l}\right)\\
\\
 +r^l_{i,\tau}+ \rho_l\int_{{\mathbb R}^n}\left( \frac{\partial}{\partial x_i}K_n(x-y)\right) \sum_{j,k=1}^n\left( \frac{\partial v^{r,\rho,l-1}_k}{\partial x_j}\frac{\partial v^{r,\rho,l-1}_j}{\partial x_k}\right) (\tau,y)dy,\\
\\
\mathbf{v}^{r,\rho,0,l}(l-1,.)=\mathbf{v}^{r,\rho,l-1}(l-1,.).
\end{array}\right.
\end{equation}
Then for $k\geq 1$ we define recursively functions $\mathbf{v}^{\rho,k,l}$ to be solutions of
\begin{equation}\label{Navlerayl0int}
\left\lbrace \begin{array}{ll}
\frac{\partial v^{r,\rho,k,l}_i}{\partial \tau}-\rho_l\nu\sum_{j=1}^n \frac{\partial^2 v^{r,\rho,k,l}_i}{\partial x_j^2} 
+\rho_l\sum_{j=1}^n v^{r,\rho,k-1,l}_j\frac{\partial v^{r,\rho,k,l}_i}{\partial x_j}\\
\\
=L^{\rho ,l}_i\left(\mathbf{r}^l,\mathbf{v}^{r,\rho,k-1,l}\right) +r^l_{i,\tau}\\
\\
+ \rho_l\int_{{\mathbb R}^n}\left( \frac{\partial}{\partial x_i}K_n(x-y)\right) \sum_{j,k=1}^n\left( \frac{\partial v^{r,\rho,k-1,l}_k}{\partial x_j}\frac{\partial v^{r,\rho,k,l}_j}{\partial x_k}\right) (\tau,y)dy,\\
\\
\mathbf{v}^{r,\rho,k,l}(l-1,.)=\mathbf{v}^{r,\rho,l-1}(l-1,.).
\end{array}\right.
\end{equation}
Note that this is an iteration of global equations, i.e., linear partial integro-differential equations.
\item[(iii)] Alternatively, we may define a sequence of functions $\mathbf{v}^{r,\rho,k,l}$ which solve linear parabolic partial differential equations. We may choose an iteration starting with $\mathbf{v}^{r,\rho,0,l}$ which solves
\begin{equation}\label{Navlerayl0int}
\left\lbrace \begin{array}{ll}
\frac{\partial v^{r,\rho,0,l}_i}{\partial \tau}-\rho_l\nu\sum_{j=1}^n \frac{\partial^2 v^{r,\rho,0,l}_i}{\partial x_j^2} 
+\rho_l\sum_{j=1}^n v^{r,\rho,l-1}_j\frac{\partial v^{r,\rho,0,l}_i}{\partial x_j}\\
\\
=L^{\rho ,l}_i\left(\mathbf{r}^l,\mathbf{v}^{r,\rho,l-1}\right) +r^l_{i,\tau}\\
\\
+ \rho_l\int_{{\mathbb R}^n}\left( \frac{\partial}{\partial x_i}K_n(x-y)\right) \sum_{j,k=1}^n\left( \frac{\partial v^{r,\rho,l-1}_k}{\partial x_j}\frac{\partial v^{r,\rho,l-1}_j}{\partial x_k}\right) (\tau,y)dy\\
\\
\mathbf{v}^{r,\rho,0,l}(l-1,.)=\mathbf{v}^{r,\rho,l-1}(l-1,.).
\end{array}\right.
\end{equation}
Then for $k\geq 1$ we define recursively functions $\mathbf{v}^{r,\rho,k,l}$ to be solutions of
\begin{equation}\label{Navlerayl0int}
\left\lbrace \begin{array}{ll}
\frac{\partial v^{r,\rho,k,l}_i}{\partial \tau}-\rho_l\nu\sum_{j=1}^n \frac{\partial^2 v^{r,\rho,k,l}_i}{\partial x_j^2} 
+\rho_l\sum_{j=1}^n v^{r,\rho,k-1,l}_j\frac{\partial v^{r,\rho,k-1,l}_i}{\partial x_j}\\
\\
=L^{\rho ,l}_i\left(\mathbf{r}^l,\mathbf{v}^{r,\rho,k-1,l}\right)\\
\\ +r^l_{i,\tau}+ \rho_l\int_{{\mathbb R}^n}\left( \frac{\partial}{\partial x_i}K_n(x-y)\right) \sum_{j,k=1}^n\left( \frac{\partial v^{r,\rho,k-1,l}_k}{\partial x_j}\frac{\partial v^{r,\rho,k-1,l}_j}{\partial x_k}\right) (\tau,y)dy,\\
\\
\mathbf{v}^{r,\rho,k,l}(l-1,.)=\mathbf{v}^{r,\rho,l-1}(l-1,.).
\end{array}\right.
\end{equation}
\end{itemize}
 Let us discuss the different advantages/disadvantages of the three approaches for the local solution at each time step $l\geq 1$.
\begin{itemize}
 \item[ad(i)] The solution of the multivariate Burgers equation is closer to the solution of the Navier-Stokes equation than the linear approximations in (ii) and (iii). However, at each substep $k$ of the $l$th time step the function $\mathbf{v}^{\rho,k,l}$ may not be divergence-free (a divergence free vector field is obtained in the limit). We have a priori estimates (essentially a maximum principle) for the multivariate Burgers equation. On the other hand from a numerical point of view linearized equation are preferable in iteration schemes.
 \item[ad(ii)] This iteration has the advantage that we can ensure that at each time step $l\geq 1$ each approximation we can ensure that $\mathbf{v}^{\rho,k,l}$ is divergence free, i.e, $\div \mathbf{v}^{\rho,k,l}=0$ for all $k\geq 0$. However, at each substep $k$ of a time step $l$ we have to solve linear partial integro-differential equations which may be complicated from a numerical point of view.
 \item[ad(iii)] In this case we have to solve local scalar linear parabolic equations in order to determine $\mathbf{v}^{\rho,k,l}$ at each substep $k$ of a time step $l$. From a numerical point of view this is interesting since we have good approximations of solutions for the involved parabolic equations where the second order terms form a Laplacian (cf. (\cite{KKS}), \cite{FrKa}). On the other hand, the local iteration does not take place in a space of divergence-free vector fields in general, i.e. a divergence free vector field is obtained in the limit. 
\end{itemize}

In this paper we choose the third alternative approach for the local solutions and construct for each time step $l$ a fixed point in some appropriate function space of maps on domains $D^{\tau}_l$. This allows us to define the most efficient algorithm among the three alternatives. Compared to standard discretization schemes all three versions of our analytical  scheme have the advantage that there is no spatial discretization. Next let us consider the local iteration in the form of the third item (iii) above in more detail. At each time step $l$ we consider maps of the form
\begin{equation}
\mathbf{f}\rightarrow \mathbf{v}^{r,f,\rho,l}=F^r_l(\mathbf{f}),
\end{equation}
where $\mathbf{f}=\left(f_1,\cdots ,f_n\right)^T$, and where $\mathbf{v}^{r,f,\rho,l}=\left(v^{r,f,\rho,l}_1,\cdots , v^{r,f,\rho,l}_n\right)^T$ satisfies the equation
\begin{equation}\label{Navlerayiterl}
\left\lbrace \begin{array}{ll}
\frac{\partial v^{r,f,\rho,l}_i}{\partial \tau}-\rho_l\nu\sum_{j=1}^n \frac{\partial^2 v^{r,f,\rho,l}_i}{\partial x_j^2} 
+\rho_l\sum_{j=1}^n f_j\frac{\partial v^{r,f,\rho,l}_i}{\partial x_j}=L^{\rho ,l,f}_i\left(\mathbf{r},\mathbf{v} \right)+ \\
\\ \hspace{1cm}\rho_l\int_{{\mathbb R}^n}\left( \frac{\partial}{\partial x_i}K_n(x-y)\right) \sum_{j,k=1}^n\left( \frac{\partial f_k}{\partial x_j}\frac{\partial f_j}{\partial x_k}\right) (\tau,y)dy+r^l_{i,\tau},\\\\
\\
\mathbf{v}^{r,f,\rho,l}(l-1,.)=\mathbf{v}^{r,\rho,l-1}(l-1,.).
\end{array}\right.
\end{equation}
Here we assume that we have solved the Navier-Stokes equation in transformed coordinates for $\tau\leq l-1$ and the initial data at $l-1$ are given by the solution at this time. The domain of the map $F^r_l$ is 
\begin{equation}
D_{F^r_l}:=\left\lbrace \mathbf{f}\in \left[ C^{1,2}_b\left(\left[(l-1),l\right]\times {\mathbb R}^n\right)\right]^n|\mathbf{f}(l-1,.)=\mathbf{v}^{r,\rho,l-1}(l-1,.) \right\rbrace .
\end{equation}
In case $l=1$ we have $\mathbf{v}^{r,f,\rho,1}(0,.)=\mathbf{h}$, of course. 
With the appropriate choice of $\rho_l$ and $r^l_i$ a local scheme for $\mathbf{v}^{r,\rho,l}$ may be defined in terms of a functional series $(\mathbf{v}^{r,\rho,k,l})_k$ with $\lim_{k\uparrow \infty} \mathbf{v}^{r,\rho,k,l}=\mathbf{v}^{r,\rho,l}$. We start the iteration determing for $1\leq i\leq n$ the functions $\phi^l_i$ and then the functions $r^l_i$ (solving a certain equation as scetched in $\beta$ above). Then for each $k\geq 0$ we have 
\begin{equation}\label{Navlerayl0int*}
\left\lbrace \begin{array}{ll}
\frac{\partial v^{r,\rho,k,l}_i}{\partial \tau}-\rho_l\nu\sum_{j=1}^n \frac{\partial^2 v^{r,\rho,k,l}_i}{\partial x_j^2} 
+\rho_l\sum_{j=1}^n v^{r,\rho,k-1,l}_j\frac{\partial v^{r,\rho,k,l}_i}{\partial x_j}=L^{\rho ,l}_i\left(\mathbf{r}^l,\mathbf{v}^{r,\rho,k-1,l} \right)+\\
\\ \rho_l\int_{{\mathbb R}^n}\left( \frac{\partial}{\partial x_i}K_n(x-y)\right) \sum_{j,m=1}^n\left( \frac{\partial v^{r,\rho,k-1,l}_m}{\partial x_j}\frac{\partial v^{r,\rho,k-1,l}_j}{\partial x_m}\right) (\tau,y)dy+r^l_{i,\tau},\\
\\
\mathbf{v}^{r,\rho,l}(l-1,.)=\mathbf{v}^{r,\rho,l-1}(l-1,.),
\end{array}\right.
\end{equation} 
along with $\mathbf{v}^{r,\rho,-1,l}=\mathbf{v}^{r,\rho,l-1}$. 
Let us look at the difference of two successive approximations $\mathbf{v}^{r,\rho,k-1,l}$ and $\mathbf{v}^{r,\rho,k,l}$ where we consider fixed functions $\mathbf{f}$ and $\mathbf{g}$ instead of $\mathbf{v}^{r,\rho,k-1,l}$ etc. as coefficient functions.
Comparing $\mathbf{v}^{f,\rho,l}$ with $\mathbf{v}^{g,\rho,l}$ leads us to an expression for the difference which we denote by
\begin{equation}
\delta \mathbf{v}^{r,f,g,\rho,l}:=\mathbf{v}^{r,f,\rho,l}-\mathbf{v}^{r,g,\rho,l}.
\end{equation}
This function satisfies the equation
\begin{equation}\label{Navlerayiterl**}
\left\lbrace \begin{array}{ll}
\frac{\partial \delta v^{r,f,g,\rho,l}_i}{\partial \tau}-\rho_l\nu\sum_{j=1}^n \frac{\partial^2 \delta v^{r,f,g,\rho,l}_i}{\partial x_j^2} 
+\rho_l\sum_{j=1}^n f_j\frac{\partial v^{r,f,g,\rho,l}_i}{\partial x_j}\\
\\
\hspace{2cm}=L^{\rho ,l,f,g,0}_i\left(\mathbf{r}^l,\delta\mathbf{v}^{r,f,g,\rho,l} \right)+\\
\\
\hspace{2cm}-\rho_l\sum_{j=1}^n\left( f_j-g_j\right)\frac{\partial v^{r,g,\rho,l}_i}{\partial x_j}\\
\\ \hspace{2cm}+\rho_l\int_{{\mathbb R}^n}\left( \frac{\partial}{\partial x_i}K_n(x-y)\right) \sum_{j,k=1}^n\left( \frac{\partial f_k}{\partial x_j}\frac{\partial f_j}{\partial x_k}\right) (\tau,y)dy,\\
\\
\hspace{2cm}-\rho_l\int_{{\mathbb R}^n}\left( \frac{\partial}{\partial x_i}K_n(x-y)\right) \sum_{j,k=1}^n\left( \frac{\partial g_k}{\partial x_j}\frac{\partial g_j}{\partial x_k}\right) (\tau,y)dy,
\\
\\
\delta\mathbf{v}^{r,f,g,\rho,l}(l-1,.)=0.
\end{array}\right.
\end{equation}
Here we have 
\begin{equation}
\begin{array}{ll}
L^{\rho ,l,f,g,0}_i\left(\mathbf{r}^l,\delta\mathbf{v}^{r,f,g,\rho,l} \right):=\\
\\
L^{\rho ,l,f,0}_i\left(\mathbf{r}^l,\mathbf{v}^{r,f,\rho,l} \right)
-L^{\rho ,l,g,0}_i\left(\mathbf{r}^l,\mathbf{v}^{r,g,\rho,l} \right)
\end{array}
\end{equation}

Note that the source term $r^l_{i,\tau}$ does not appear on the right side of this equation for the difference, furthermore all the terms involving only $r^l_i$ and the derivatives of $r^l_i$ (because they cancel out).
Hence the first term $\mathbf{v}^{r,\rho ,0,l}$ of the functional series $\mathbf{v}^{r,\rho,k,l}$ should contain the essential information concerning the growth of the solution at time step $l$ and the higher order terms of the local iteration. In the first iterative time step of an iteration scheme related to (\ref{Navlerayiterl**}) we shall have $f_j=v^{r,\rho,1,l}_j$ and  $g_j=v^{r,\rho,0,l}_j$, such that the first term on the right side related to (\ref{Navlerayiterl**}) becomes  
\begin{equation}
-\rho_l\sum_{j=1}^n\left( f_j-g_j\right)\frac{\partial v^{r,g,\rho,l}_i}{\partial x_j}=-\rho_l\sum_{j=1}^n\left( v^{r,\rho,1,l}_j-v^{r,\rho,0,l}_j\right)\frac{\partial v^{r,\rho,0,l}_i}{\partial x_j}.
\end{equation}
In \cite{K} we considered a bound of $v^{\rho,0,l}_i$ and $v^{\rho,0,1}_{i,j}$ of form $C^*C_l$ where $C_l$ depends linearly on $l$ and $C^*$ is a constant independent of the time step number $l$. Linear dependence of $C_l$ of both terms with respect to $l$ is sufficient in order to make our scheme global as the sum of time step sizes $\sum_{l\geq 1}\rho_l$ is unbounded. However, in this paper we shall construct a uniform bound $C_1C^*$ for $v^{r,\rho,0,l}_{i,j}$ which is independent of the time step number $l$. Let us stick to this second term of right side of (\ref{Navlerayiterl**}) for a moment. 
In the $k$-th iteration step the contribution to
the first term on the right side related to (\ref{Navlerayiterl**}) becomes  
\begin{equation}
\begin{array}{ll}
-\rho_l\sum_{j=1}^n\left( v^{r,\rho,k,l}_j-v^{r,\rho,k-1,l}_j\right)\frac{\partial v^{r,\rho,k-1,l}_i}{\partial x_j}=\\
\\
-\rho_l\sum_{j=1}^n\left( v^{r,\rho,k,l}_j-v^{r,\rho,k-1,l}_j\right)\left( \frac{\partial v^{r,\rho,0,l}_i}{\partial x_j}+\sum_{m=1}^{k-1}\frac{\partial}{\partial x_j}\delta v^{r,\rho,m,l}_i\right),
\end{array}
\end{equation} 
such that we may estimate these terms by using a contraction property of the differences $\delta \mathbf{v}^{r,\rho,k,l}$ which we shall observe for the specific series 
\begin{equation}
\begin{array}{ll}
\mathbf{v}^{r,\rho,k,l}=\mathbf{v}^{r,\rho,0,l}+\sum_{m=1}^k\delta\mathbf{v}^{r,\rho,m,l}=\\
\\
\mathbf{v}^{r,\rho,0,l}
+\sum_{m=1}^k\left( \mathbf{v}^{r,\rho,m,l}-\mathbf{v}^{r,\rho,m-1,l}\right).
\end{array}
\end{equation}
Now consider again the equation (\ref{Navlerayiterl**}). Let us consider $r^l_i=0$ for $1\leq i\leq n$, since we have no source terms $r^l_i$ in the higher correction terms for $\delta \mathbf{v}^{r,\rho,k,l}$ (in the proof of the main theorem below we shall see the additional terms $L^{\rho ,l,f,g,0}_i\left(\mathbf{r},\delta\mathbf{v}^{r,\rho,k,l} \right)$ with $\mathbf{f}=\mathbf{v}^{\rho,k,l}$ and $\mathbf{g}=\mathbf{v}^{\rho,k,l}$ do not alter the reasoning of time local convergence essentially). Note that for $r^l_i=0$ we denote
\begin{equation}
\mathbf{v}^{\rho,k,l}=\mathbf{v}^{0,\rho,k,l}=\mathbf{v}^{r,\rho,k,l},
\end{equation}
and similar for all components $v^{\rho,k,l}_i$.
Since $\mathbf{f}\in D_{F_l}$ we have that for each $1\leq i\leq n$ the fundamental solution $\Gamma^l_f$ of the scalar equation 
\begin{equation}\label{fundf}
\frac{\partial \Gamma^l_f}{\partial \tau}-\rho_l\nu\sum_{j=1}^n \frac{\partial^2 \Gamma^l_f}{\partial x_j^2} 
+\rho_l\sum_{j=1}^n f_j\frac{\partial \Gamma^l_f}{\partial x_j}=0
\end{equation}
exists. Similar, since $\mathbf{g}\in D_{F_l}$ we have that for each $1\leq i\leq n$ the fundamental solution $\Gamma^l_g$ of the scalar equation 
\begin{equation}
\frac{\partial \Gamma^l_g}{\partial \tau}-\rho_l\nu\sum_{j=1}^n \frac{\partial^2 \Gamma^l_g}{\partial x_j^2} 
+\rho_l\sum_{j=1}^n g_j\frac{\partial \Gamma^l_g}{\partial x_j}=0
\end{equation}
exists. Then formally we may represent the solution of the equation (\ref{Navlerayiterl}) in terms of the this fundamental solution (recall that we consider $\mathbf{r}=0$ for simplicity). Since $\mathbf{v}^{f,\rho,l}$ and $\mathbf{v}^{g,\rho,l}$ have the same initial data we have for $1\leq i\leq n$ (recall that ${\mathbf r}\equiv 0$ this time)
\begin{equation}\label{burgfg}
\begin{array}{ll}
v^{f,\rho,l}_i(\tau,x)-v^{g,\rho,l}_i(\tau,x)=\\
\\
-\int_0^{\tau}\int_{{\mathbb R}^n}\sum_{j=1}^n\left( f_j-g_j\right)(s,y)\frac{\partial v^{g,\rho,l}_i}{\partial x_j}(s,y)\Gamma^l_f(\tau,x;s,y)dyds+\\
\\
\int_0^{\tau}\rho_l\int_{{\mathbb R}^n}\int_{{\mathbb R}^n}K_{n,i}(z-y)\times\\
\\
{\Big (} \sum_{j,k=1}^n\left( f_{k,j}f_{j,k}\right) (s,y)- \sum_{j,k=1}^n\left( g_{k,j}g_{j,k}\right) (s,y){\Big)}\Gamma^l_f(\tau,x;s,z)dydzds,\\
\end{array}
\end{equation}
where we denote 
\begin{equation}\label{der}
 f_{k,j}=\frac{\partial f_k}{\partial x_j}~~\mbox{etc.}
\end{equation}
for simplicity. (Note that in (\ref{burgfg}) we may express the first spatial derivatives of the function $\mathbf{v}^g$ in terms of an integral involving the respective first spatial derivatives of the fundamental solution $\Gamma^l_g$).
In order to construct a fixed point we shall use some decay at spatial infinity. 
Indeed we need a certain decay at spatial infinity of the approximating functions $\mathbf{v}^{\rho,k,l}$ of $\mathbf{v}^{f,\rho,l}$ in order to estimate the limit of the functional series $\left( \mathbf{v}^{\rho,k,l}\right)_k$ (with respect to some appropriate norm, for example a Sobolev norm $|.|_{H^s}$ for $s\geq \frac{n}{2}+\alpha$. In order to construct an iteration scheme for the higher order corrections at each time step $l$, i.e., the functions $\delta \mathbf{v}^{r,\rho,k,l}$ for $k\geq 1$, we observe that
\begin{equation}\label{rightsiderel}
\begin{array}{ll}
\sum_{j,k=1}^n\left( f_{k,j}f_{j,k}\right) (s,y)- \sum_{j,k=1}^n\left( g_{k,j}g_{j,k}\right) (s,y)=\\
\\
\sum_{j,k=1}^n\left( f_{k,j}f_{j,k}\right) (s,y)-\sum_{j,k=1}^n\left( f_{k,j}g_{j,k}\right) (s,y)+\\
\\
\sum_{j,k=1}^n\left( g_{k,j}f_{j,k}\right) (s,y)-\sum_{j,k=1}^n\left( g_{k,j}g_{j,k}\right) (s,y)=\\
\\
\left( \sum_{j,k=1}^n\left( f_{k,j}(s,y)+g_{k,j}(s,y)\right) \right) \times\\
\\
\left( \sum_{j,k=1}^n\left( f_{j,k}(s,y)-g_{j,k}(s,y)\right) \right).
\end{array}
\end{equation}
In order to deal with the problem of constructing a fixed point in a function space of infinite domain we use the standard assumption concerning decay of regular initial data at spatial infinity, i.e. we assume that the map $x\rightarrow \mathbf{h}(x)=\left(h_1(x),\cdots ,h_n(x) \right) $ is a given function with components $h_i$ in $C^ {\infty}\left({\mathbb R}^n \right)\cap H^s$ for all $1\leq i\leq n$ and $s$ large enough. Here, we write $H^s=H^s\left({\mathbb R}^n\right)$. 
 Now three different procedures are possible in order to construct a fixed point for each time step $l$ in an appropriate function space, i.e. locally in time. The first method a) below makes use of embedding theorems for convergence of local schemes. It is remarkable that Hilbert space theory suffices in order to deal with the most interesting case of dimension $n=3$. The following method can be generalized in order to include arbitrary dimensions by considering embedding theorems for spaces of H\"{o}lder type (so called Zygmund spaces $C^s_*$). These spaces coincide with classical H\"{o}lder spaces for noninteger values $s$. We have the standard embedding theorem
 \begin{thm} For all $s\in {\mathbb R}^n$ and $p\in (1,\infty)$
  \begin{equation}
  H^{s,p}\left({\mathbb R}^n\right)\subset C^{r}_*\left({\mathbb R}^n\right)
  \end{equation}
for $r=s-\frac{n}{p}$
 \end{thm}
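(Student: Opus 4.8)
The plan is to prove the embedding by means of a Littlewood--Paley (dyadic) decomposition, reducing the inclusion to two frequency-localized estimates. Fix a dyadic partition of unity and write $f=\sum_{j\geq -1}\Delta_j f$, where each block $\Delta_j f$ has Fourier transform supported in an annulus $\{|\xi|\sim 2^j\}$ (with the single low-frequency block $\Delta_{-1}f=S_0f$ supported in a ball). Since the Zygmund space $C^r_*$ is by definition the Besov space $B^r_{\infty,\infty}$, the target norm is $\|f\|_{C^r_*}=\sup_{j}2^{jr}\|\Delta_j f\|_{L^\infty}$; thus it suffices to bound $2^{jr}\|\Delta_j f\|_{L^\infty}$ uniformly in $j$ by $\|f\|_{H^{s,p}}$.

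First I would invoke Bernstein's inequality: since $\Delta_j f$ is band-limited to frequencies $\lesssim 2^j$, one has $\|\Delta_j f\|_{L^\infty}\lesssim 2^{jn/p}\|\Delta_j f\|_{L^p}$, with a constant independent of $j$ and $f$. Inserting this and using $r=s-\frac{n}{p}$ gives
\begin{equation}
2^{jr}\|\Delta_j f\|_{L^\infty}\lesssim 2^{j(r+n/p)}\|\Delta_j f\|_{L^p}=2^{js}\|\Delta_j f\|_{L^p},
\end{equation}
so the problem is reduced to the uniform bound $\sup_j 2^{js}\|\Delta_j f\|_{L^p}\lesssim\|f\|_{H^{s,p}}$, i.e. to the continuous embedding $H^{s,p}\hookrightarrow B^s_{p,\infty}$.

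The second (and deeper) ingredient is to identify the Bessel potential space $H^{s,p}$ with the Triebel--Lizorkin space $F^s_{p,2}$ and to use the elementary inclusion $F^s_{p,q}\hookrightarrow B^s_{p,\infty}$: for every fixed $j$ the pointwise inequality $2^{js}|\Delta_j f|\leq\big(\sum_k 2^{ksq}|\Delta_k f|^q\big)^{1/q}$ yields $2^{js}\|\Delta_j f\|_{L^p}\leq\|f\|_{F^s_{p,q}}$ after taking $L^p$ norms and then the supremum over $j$. The identification $H^{s,p}=F^s_{p,2}$, together with the equivalence of $\|f\|_{L^p}$ with the Littlewood--Paley square function $\|(\sum_j|\Delta_j f|^2)^{1/2}\|_{L^p}$, is precisely where the restriction $1<p<\infty$ enters. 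Combining the three estimates and taking the supremum over $j$ yields $\|f\|_{C^r_*}\lesssim\|f\|_{H^{s,p}}$, which is the claimed inclusion.

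I expect the main obstacle to be exactly this last identification, namely proving that the Bessel-potential (fractional Sobolev) norm is equivalent to the square-function / Triebel--Lizorkin norm. This rests on the $L^p$-boundedness of Fourier multipliers of Mikhlin--H\"{o}rmander type (equivalently, on Calder\'on--Zygmund theory), which both supplies the constant in the square-function equivalence and forces $p\in(1,\infty)$; the boundary cases $p=1$ and $p=\infty$ genuinely fail and are excluded from the statement. The remaining steps (Bernstein's inequality, the trivial $\ell^q\hookrightarrow\ell^\infty$ inclusion, and the treatment of the low-frequency block, for which Bernstein with $s\geq 0$ or a direct multiplier estimate suffices) are routine once this multiplier theorem is in hand.
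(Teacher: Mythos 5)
The paper does not prove this statement at all: it is quoted as "the standard embedding theorem," with the definitions of $H^{s,p}$ and $C^r_*$ explicitly deferred to textbooks, so there is no argument in the paper to compare yours against. Your proposal is the standard Littlewood--Paley proof and it is correct: the reduction of the $C^r_* = B^r_{\infty,\infty}$ norm to $\sup_j 2^{js}\|\Delta_j f\|_{L^p}$ via Bernstein's inequality is right, the exponent bookkeeping $r + n/p = s$ is right, the inclusion $F^s_{p,q}\hookrightarrow B^s_{p,\infty}$ via the pointwise $\ell^q\hookrightarrow\ell^\infty$ bound is right, and you correctly locate the only nontrivial ingredient in the identification $H^{s,p}=F^s_{p,2}$, which rests on Mikhlin--H\"ormander multiplier theory and is exactly where the hypothesis $p\in(1,\infty)$ is used. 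The handling of the low-frequency block is also fine (the multiplier $\chi(\xi)(1+|\xi|^2)^{-s/2}$ with $\chi$ compactly supported and smooth has a Schwartz kernel, so no sign condition on $s$ is actually needed there). One cosmetic remark: the paper's statement reads "$s\in{\mathbb R}^n$," which is evidently a typo for $s\in{\mathbb R}$; your argument, correctly, works for every real $s$, including $r=s-n/p\le 0$, where $C^r_*$ must be read as the Besov space rather than a classical H\"older space.
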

Here the spaces $H^{s,p}$ are defined similarly as in the case $p=2$ and coincide with the spaces $H^{k,p}$ for integers $k$ which have weak derivatives up to order $k$ in $L^p$. We do not repeat the definitions here (which can be looked up in standard textbooks) since this of marginal importance for us, i.e., in order to remark that the following method a) can be realized for arbitrary dimension $n\geq 1$. We shall see later that the integral magnitude in (\ref{vnorm}) is finite.
 \begin{itemize}
  \item[a)]  The following method can applied in the case of arbitrary dimension (cf. preceding remarks), but for simplicity we consider the case $n=3$. This is a construction in $\left[ C^{1,2}_b\left(\left[l-1,l\right] \times {\mathbb R}^n \right) \right]^n\cap \left[H^2_l\right]^n$, where
   \begin{equation}
   H^2_l:=\left\lbrace f\in C_b\left(\left[l-1,l\right] \times {\mathbb R}^n \right)|f(t,.)\in H^2\left({\mathbb R}^n\right),~\forall~t\in [l-1,l] \right\rbrace .
   \end{equation}
This is the space of vector-valued functions ${\mathbf h}=(h_1,\cdots ,h_n)^T$ with $h_i\in C^{1,2}_b$, i.e.,  $h_i$ is in $C^{1,2}$ with bounded derivatives of first order with respect to time and second order with respect to space. 
   For the series $\mathbf{v}^{\rho,k,l}$ defined in $(iii)$ above we can establish  that $\mathbf{v}^{\rho,k,l}\in \left[ C^{1,2}_b\left(\left[l-1,l\right] \times {\mathbb R}^n \right) \right]^n$ such that for some $0<c<1$ we have 
  \begin{equation}\label{vfg12}
   |\mathbf{v}^{r,\rho,k+1,l}-\mathbf{v}^{r,\rho,k,l}|^n_{1,2}\leq c|\mathbf{v}^{r,\rho,k,l}-\mathbf{v}^{r,\rho,k-1,l}|^n_{1,2},
  \end{equation}
where 
\begin{equation}
|\mathbf{f}|^n_{1,2}:=\sum_{i=1}^n\left[ |f_{i}|_0+\sum_{j=1}^n|f_{i,j}|_0+\sum_{j,k=1}^n|f_{i,j,k}|_0\right],
\end{equation}
and where $|.|_0$ denotes the supremums norm. Then we can show that
\begin{equation}
 \mathbf{f}\in \left[H^2_l\right]^n\rightarrow \mathbf{v}^f\in \left[  H^2_l\right]^n.
\end{equation}
Then our scheme leads to a series $\left( \mathbf{v}^{\rho,k,l}\right)_k\in \left[ C^{1,2}_b\left(\left[l-1,l\right] \times {\mathbb R}^n \right) \right]^n\cap \left[  H^2_l\right]^n$ with a limit
\begin{equation}
\mathbf{v}^{r,\rho,l}\in \left[  H^2_l\right]^n.
\end{equation}
Since $n=3$ the functions $v^{\rho,l}_i$ are H\"{o}lder continuous with respect to the spatial variable (uniformly in time $\tau$) and the fundamental solution $\Gamma^{r,l}_v$ of
\begin{equation}
\frac{\partial \Gamma^{r,l}_v}{\partial \tau}-\rho_l\nu\sum_{j=1}^n \frac{\partial^2 \Gamma^{r,l}_v}{\partial x_j^2} 
+\rho_l\sum_{j=1}^n v^{\rho,l}_j\frac{\partial \Gamma^{r,l}_v}{\partial x_j}-L^{\rho,l,0}_i(\mathbf{r},\mathbf{v})=0
\end{equation}
exists. Then we can use the representation
\begin{equation}\label{navvl}
\begin{array}{ll}
v^{r,\rho,l}_i(r,\tau,x)=\int_{{\mathbb R}^n}h_i(y)\Gamma^{r,l}_v(\tau,x;0,y)dy+\\
\\
\int_0^{\tau}\rho_l\int_{{\mathbb R}^n}\int_{{\mathbb R}^n}K_{n,i}(z-y)\times\\
\\
\sum_{j,k=1}^n\left( v^{r,\rho,l}_{k,j}v^{r,\rho ,l}_{j,k}\right) (s,y)\Gamma^{r,l}_v(\tau,x;s,z)dydzds\\
\\
-\int_0^{\tau}\int_{{\mathbb R}^n}r^l_{i,\tau}(s,y)\Gamma^{r,l}_v(\tau,x;s,z)dydzds,\\
\\
+\int_0^{\tau}\int_{{\mathbb R}^n} L^{\rho,l,0}_i(\mathbf{r},\mathbf{v})(s,y) \Gamma^{r,l}_v(\tau,x;s,z)dydzds,
\end{array}
\end{equation}
where the existence of the first spatial derivatives on right side of (\ref{navvl}) may be established directly. Existence of second order spatial derivatives and first order time derivatives may be derived using partial integration.
 \begin{rem} In a subsequent paper concerning Navier-Stokes equations on manifolds we shall show that
 from a closer look at (\ref{vfg12}) we may even conclude that for some $c\in (0,1)$ we have a time-local contraction with respect to H\"{o}lder norms, i.e.,
\begin{equation}\label{vfgalpha}
   |\mathbf{v}^{r,\rho,k+1,l}-\mathbf{v}^{\rho,k,l}|^n_{\alpha/2,2+\alpha}|\leq c|\mathbf{v}^{r,\rho,k,l}-\mathbf{v}^{\rho,k-1,l}|^n_{\alpha/2,2+\alpha}.
  \end{equation} 
 For compact manifolds  a system of localized equations can be derived of where we have a local contraction on a  Banach space of H\"{o}lder type, and this simplifies the analysis a bit. Note, however, that the usual the introduction of another time transformation which introduces a potential term of a specific sign is critical. A naive use of this Schauder type estimates within our scheme does not lead to a global scheme. 
 Here we use standard notation for H\"{o}lder norms, where the first subscript $\alpha/2$ refers to the modulus of H\"{o}lder continuity with respect to the time variable and the second subscript refers to the modulus of H\"{o}lder continuity of the (second derivatives) of the functions $v^{r,\rho,k,l}$ with respect to the spatial variables. More precisely, define the Euclidean distance in ${\mathbb R}^{n+1}$ between the points $y_1=(t_1,x_1), y_2=(t_2,x_2)$ by
\begin{equation}
e(z_1,z_2)=\sqrt{|t_1-t_2|}+|x_1-x_2|.
\end{equation}  
If $w$ is a function in a domain $D\subset{\mathbb R}^{n+1}$ we denote for $\alpha \in (0,1)$
\begin{equation}
[w]_{\alpha/2,\alpha, D}=\sup_{y_1\neq y_2;y_1,y_2\in D}\frac{|w(y_1)-w(y_2)|}{e^{\alpha}(y_1,y_2)}.
\end{equation}
Next define
\begin{equation}
|w|_{\alpha/2,\alpha;D}=|w|_{0 ,D}+[w]_{\alpha/2,\alpha;D}.,
\end{equation}
Similarly, we define 
\begin{equation}
[w]_{\alpha/2,1+\alpha, D}=\sum_{j=1}^n[w_{,j}]_{\alpha/2,\alpha, D},
\end{equation}
and with the notation $w_{t}:=\frac{\partial w}{\partial t}$ we have 
\begin{equation}
[w]_{1+\alpha/2,2+\alpha, D}=\sum_{j=1}^n[w_{t}]_{\alpha/2,\alpha, D}+\sum_{j,k=1}^n[w_{,j,k}]_{\alpha/2,\alpha, D}.
\end{equation}
This leads to the notation of more regular H\"{o}Der spaces
\begin{equation}
|w|_{\alpha/2,1+\alpha;D}:=|w|_{0,1;D}+[w]_{\alpha/2,1+\alpha, D}
\end{equation}
and
\begin{equation}
|w|_{1+\alpha/2,2+\alpha;D}:=|w|_{1,2;D}+[w]_{1+\alpha/2,2+\alpha, D},
\end{equation}
where we use the notation
\begin{equation}\label{classnorm0}
|w|_{0,1;D}:=|w|_{0;D}+\sum_{i=1}^n|w_{x_i}|_{0;D},~\mbox{and}
\end{equation}
\begin{equation}\label{classnorm}
|w|_{0,2;D}:=|w|_{0;D}+\sum_{i=1}^n|w_{x_i}|_{0;D}++\sum_{i,j=1}^n|w_{x_ix_j}|_{0;D},
\end{equation}
and
\begin{equation}\label{classnorm}
|w|_{1,2;D}:=|w|_{0;D}+\sum_{i=1}^n|w_{x_i}|_{0;D}+|w_t|_{0;D}+\sum_{i,j=1}^n|w_{x_ix_j}|_{0;D}.
\end{equation}
Note that the latter norms do not define Banach spaces in general (they do on compact domains). However, below we shall use the fact that a functional series which is uniformly and absolutely bounded with uniformly and absolutely bounded derivatives can be differentiated term by term. For this matter (\ref{classnorm}) is useful. If the domain $D$ is determined from the context we shall suppress it in notation, especially if $D$ is of the form $[S,T]\times {\mathbb R}^n$.
\end{rem}
 \item[b)] Another possibility (a more elementary way) is the following. We use the relation (\ref{vfg12}) and apply it iteratively starting with the function $\mathbf{v}^{\rho,l-1}$ and get a series $\left( \mathbf{v}^{r,\rho,k,l}\right)_k\in C^{1,2}_b$ with
 \begin{equation}
 \mathbf{v}^{\rho,k,l}=\mathbf{v}^{r,\rho,l-1}+\sum_{m=0}^k\delta \mathbf{v}^{r,\rho,m,l}\in \left[ C^{1,2}_b\left(\left[l-1,l\right] \times {\mathbb R}^n \right) \right]^n,
 \end{equation}
where
\begin{equation}
\delta \mathbf{v}^{r,\rho,m,l}=\mathbf{v}^{r,\rho,m,l}-\mathbf{v}^{r,\rho,m-1,l}.
\end{equation}
Next assume that for all $x\in\mathbb{R}^n$ and for multiindices $\alpha$ with $|\alpha|\leq 2$ and $k\leq 5$ we have constants $C_{\alpha k}$ such that 
\begin{equation}\label{growthh}
    \vert \partial^\alpha_x \mathbf{h}(x)\vert\le \frac{C_{\alpha k}}{(1+\vert x\vert)^k}\qquad 
\end{equation}
For each substep $k$ of each timestep $l$ we shall show that for $|\alpha|\leq 2$ the approximations $\mathbf{v}^{r,\rho,k,l}$ satisfy
\begin{equation}\label{growthuburg}
    \vert \partial^\alpha_x \mathbf{v}^{r,\rho,k,l}(t,x)\vert\le \frac{C_{\alpha k}}{(1+\vert x\vert)^5}~\mbox{if}~|\alpha|\leq 2\qquad . 
\end{equation}
Then the decay relation for the solution helps us to control convergence of our scheme by consideration of convergence of a functional series in transformed spatial coordinates. In transformed spatial coordinates we may use
\begin{prop}
Let $K$ be a compact domain and assume that a functional series $(h_k)_k:C_b(K)\rightarrow {\mathbb R}$ is given. 
Assume that for all $x\in K$ the series $\sum_{k\in{\mathbb N}} h_k(x_0)$ converges, and assume that for $1\leq j\leq n$ the series $\sum_{k\in {\mathbb N}}h_{k,j}$ converges uniformly in $K$. Then the series $\sum_{k\in {\mathbb N}}h_{k}$ converges uniformly to a function $h^*$ which is differentiable and such that for $1\leq j\leq n$ we have 
\begin{equation}
h_{,j}(x)=\sum_{k\in {\mathbb N}}h_{k,j}(x).
\end{equation}
\end{prop}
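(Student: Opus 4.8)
The plan is to reduce the statement to the classical term-by-term differentiation theorem for sequences by passing to partial sums, and then to run the standard interchange-of-limits argument coordinate by coordinate. Write $S_N:=\sum_{k=1}^N h_k$ for the $N$-th partial sum; assuming, as the notation $h_{k,j}$ indicates, that each $h_k$ is continuously differentiable, each $S_N$ lies in $C^1(K)$. By hypothesis the gradient series converges uniformly, so for each $1\le j\le n$ the functions $\partial_j S_N=\sum_{k=1}^N h_{k,j}$ converge uniformly on $K$ to a limit $g_j$, which is continuous as a uniform limit of continuous functions.

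First I would establish uniform convergence of $(S_N)_N$ itself. Taking $K$ convex for simplicity (for a connected compact domain one chains finitely many segments that remain in $K$), for any two points $x,x_0\in K$ the fundamental theorem of calculus along the segment gives
\[
(S_N-S_M)(x)-(S_N-S_M)(x_0)=\int_0^1 \nabla(S_N-S_M)\big(x_0+t(x-x_0)\big)\cdot(x-x_0)\,dt .
\]
Because $\sum_k h_{k,j}$ is uniformly Cauchy, the gradient $\nabla(S_N-S_M)$ is uniformly small once $M,N$ are large, and since $K$ is bounded the factor $|x-x_0|$ is controlled; combined with convergence of the scalar series at the base point $x_0$, this shows $(S_N)_N$ is uniformly Cauchy on $K$ and hence converges uniformly to a limit $h^*\in C_b(K)$. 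This already yields the first assertion.

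It remains to prove $h^*$ is differentiable with $\partial_j h^*=g_j$. Here I would fix an interior point $x\in K$ and a coordinate direction $e_j$ and apply the one-variable argument to the functions $t\mapsto S_N(x+te_j)$. Forming the difference quotients
\[
\phi_N(t):=\frac{S_N(x+te_j)-S_N(x)}{t},\qquad t\neq 0,
\]
the mean value theorem applied to $S_N-S_M$ in the $j$-direction, together with the uniform Cauchy property of $\partial_j S_N$, shows that $(\phi_N)_N$ is uniformly Cauchy in $t$ on a punctured interval around $0$, hence converges uniformly to $\phi(t)=\big(h^*(x+te_j)-h^*(x)\big)/t$. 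Since $\lim_{t\to0}\phi_N(t)=\partial_j S_N(x)\to g_j(x)$, the uniform convergence licenses the interchange of the two limits $\lim_{t\to0}$ and $\lim_N$, giving $\lim_{t\to0}\phi(t)=g_j(x)$, i.e.\ $\partial_j h^*(x)=g_j(x)=\sum_k h_{k,j}(x)$.

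The main obstacle, and the only point requiring genuine care, is precisely this interchange of limits: one may not differentiate $\lim_N S_N$ by naively passing the limit through the difference quotient. The uniform convergence of the difference quotients $\phi_N$, which is exactly what the uniform convergence of the gradient series buys us via the mean value theorem, is the hypothesis that justifies swapping $\lim_{t\to0}$ with $\lim_N$. I would finally note that continuity of each $g_j$ upgrades the conclusion to $h^*\in C^1(K)$, which is the form actually needed downstream in order to differentiate the approximating series for $\mathbf{v}^{r,\rho,k,l}$ term by term.
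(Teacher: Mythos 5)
Your proof is correct: it is the classical term-by-term differentiation argument (uniform Cauchy gradients plus the fundamental theorem of calculus along segments to get uniform convergence of the partial sums, then uniformly convergent difference quotients to license the interchange of $\lim_{t\to 0}$ with $\lim_N$), and you rightly note that only convergence at a single base point $x_0$ is needed and that a non-convex compact domain is handled by chaining segments. The paper offers no proof of this proposition at all — it is invoked as a standard fact — so your write-up simply supplies the argument the paper implicitly relies on, and there is nothing to compare against.
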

The transformation uses the polynomial decay of order $5$ described above.
Application of this lemma then leads to a classical solution $\mathbf{w}^l$ (locally in time) on a compact space $K_l$ (the image of a coordinate transformation of $D^{\tau}_l$) and then to a classical solution $v^{l}$ on $D^{\tau}_l$.
\item[c)]
The third possibility is to do the contraction estimate in a stronger H\"{o}lder norm, i.e.
\begin{equation}\label{vfgalphaclass}
   |\mathbf{v}^{r,\rho,k+1,l}-\mathbf{v}^{r,\rho,k,l}|^n_{1+\alpha/2,2+\alpha}|\leq c|\mathbf{v}^{r,\rho,k,l}-\mathbf{v}^{r,\rho,k-1,l}|^n_{1+\alpha/2,2+\alpha}.
  \end{equation}
  and then establish decay at spatial infinity as in b). Then in transformed coordinates for the equivalent series $\left( \mathbf{w}^{r,\rho,k,l}\right)
  _k$ we have $\mathbf{w}^{r,\rho,k,l}\in C_{1+\alpha/2,2+\alpha}\left(K_l\right)$, and $\mathbf{w}^{r,\rho,l}\in C_{1+\alpha/2,2+\alpha}\left(K_l\right)$ as $k\uparrow \infty$. Note that the latter space is a Banach space.
 \end{itemize}

We may summarize our construction as follows. A solution of the Navier Stokes equation system which is given by a global fixed point
\begin{equation}
\mathbf{f}\rightarrow \mathbf{v}^{f}=F(\mathbf{f}),
\end{equation}
where $\mathbf{f}=\left(f_1,\cdots ,f_n\right)^T$ is defined on $[0,\infty)\times {\mathbb R}^n$, and $\mathbf{v}^{f}$ satisfies the equation
\begin{equation}\label{Navlerayiterglob}
\left\lbrace \begin{array}{ll}
\frac{\partial v^{f}_i}{\partial t}-\nu\sum_{j=1}^n \frac{\partial^2 v^{f}_i}{\partial x_j^2} 
+\sum_{j=1}^n f_j\frac{\partial v^{f}_i}{\partial x_j}=\\
\\ \hspace{3cm}\int_{{\mathbb R}^n}\left( \frac{\partial}{\partial x_i}K_n(x-y)\right) \sum_{j,k=1}^n\left( \frac{\partial f_k}{\partial x_j}\frac{\partial f_j}{\partial x_k}\right) (\tau,y)dy,\\\\
\\
\mathbf{v}^{f}(0,.)=\mathbf{h}(0,.).
\end{array}\right.
\end{equation}
This fixed point may be denoted by
\begin{equation}
\mathbf{v}^{*}:=F(\mathbf{v}^{*})=\mathbf{v}^{\mathbf{v}^{*}}.
\end{equation}
It seems difficult to construct this fixed point directly. Therefore, we construct
a fixed point of an equivalent problem which has the solution $\mathbf{v}^r=\mathbf{v}+\mathbf{r}$ in successive time steps $l$ on the domains 
$[l-1,l]\times {\mathbb R}^n$, where $\mathbf{r}=(r_1,\cdots ,r_n)$ is a bounded function.
We solve for $\mathbf{v}^r$ in time steps with functions $\mathbf{v}^{r,\rho,l}=\left(v^{r,\rho,l}_1,\cdots , v^{r,\rho,l}_n\right)^T$, 
via a functional series
\begin{equation}
 \mathbf{v}^{r,\rho,l}=\mathbf{v}^{r,\rho,0,l}+\sum_{k\geq 1}\delta \mathbf{v}^{r,\rho,k,l}.
\end{equation}
We  establish a time-local contraction property for the specific functional series $\mathbf{v}^{r,\rho,k,l}$ and prove convergence to $\mathbf{v}^{r,\rho,l}$ for each time step $l>0$. The function $\mathbf{r}$ equals $\mathbf{r}^l$ on each domain $(l-1,l]\times {\mathbb R}^n$, and is defined recursively in such a way that the solution $\mathbf{v}^r$ and the control function $\mathbf{r}$ and theire first spatial derivaitves are bounded  globally in time. Here, $\mathbf{r}^l$ solves at each time step a linearized equation of Navier-Stokes type with 'consumption' source term.  Note that this is crucial for global convergence. The local contraction property for the higher order corrections follow from a priori estimates of fundamental solutions of approximative subproblems. We shall use the Levy expansion of scalar linear fundamental solutions. 
\begin{rem}
Note that the application of standard Schauder estimates (as will be used in the second part of this article) requires a certain sign of the coefficient of the potential term. This sign is not given for the successive problems for $\mathbf{v}^{r,\rho,l}$, $l\geq 1$. However, since the functions $r^l_i$ are bounded,  we can always consider equivalent problems related to
\begin{equation}\label{pot}
u^{r,\rho,l,\lambda}_i(\tau,x)=e^{-\lambda \tau}v_i^{r,\rho,l}(\tau,x)
\end{equation}
with $\lambda>0$ large enough have coefficients for potential terms with the same sign as the time derivative. Once we have established global H\"{o}lder-estimates we can use the Schauder estimates for the transformed functions in order to improve the regularity estimates locally using our local scheme of a a first linear parabolic approximation and iterative scalar linear parabolic corrections. 
\end{rem}
 Note the difference to our construction in \cite{K} for the multidimensional Burgers equation. If the transformation (\ref{pot}) and standard Schauder estimates are used in order to establish a local contraction property of our scheme, then it is not sufficient to prove linear growth of the solution globally. However, since we prove he boundedness of the solution we can even use standard Schauder estimates at each time step in order to prove local convergence. We emphasize that the global H\"{o}lder continuity of the $\mathbf{v}^{r,\rho}$ and $\mathbf{r}$ are essential. Let us point out this from the point of view of Schauder type estmates. This view is not equally constructive as the point of view of the Levy expansion. Nevertheless, it may be useful. For the sake of completeness let us refer to this standard results which may be found in \cite{Kr}.

 Let $L$ be a scalar parabolic partial differential operator defined by
\begin{equation}
\begin{array}{ll}
Lf(t,x)\equiv \frac{\partial}{\partial t}f(t,x)- \sum_{i,j=1}^n a_{ij}(t,x)\frac{\partial^2}{\partial x_i\partial x_j}f(t,x)\\
\\
+\sum_{i=1}^n b_i(t,x)\frac{\partial }{\partial x_i}f(t,x)+c(t,x)f(t,x)
\end{array}
\end{equation}
for all $f\in C^{1,2}\left(Q_T\right)$ for arbitrary $T\in (0,\infty)$.
\begin{rem}\label{rempot}
 Note that any potential term $cf$ has the same (positive ) sign as the time derivative.
\end{rem}

Let us assume that the spatial part of the operator $L$ is uniformly elliptic with ellipticity constant $\lambda\in (0,\infty)$ (ellipticity from below) and $\Lambda\in (\lambda ,\infty)$ (ellipticity from above), i.e.
for all 
\begin{equation}\label{ell}
\forall~\xi \in {\mathbb R}^n\setminus\left\lbrace 0\right\rbrace,~~\xi \in {\mathbb R}^n:~\lambda |\xi|^2\leq \sum_{ij}a_{ij}(t,x)\xi_i\xi_j\leq \Lambda |\xi|^2
\end{equation}
Furthermore we assume that the coefficients have bounded H\"{o}lder norms, i.e.
\begin{equation}\label{coeff}
|a_{ij}|_{\delta/2,\delta,Q_T}\leq K,~|b_{i}|_{\delta/2,\delta,Q_T} \leq K,~|c|_{\delta/2,\delta,Q_T} \leq K
\end{equation}
for some constant $K>0$.
\begin{thm}\label{cauchylin}
Assume that (\ref{ell}) and (\ref{coeff}) hold and that $c\leq r$ for some constant $r>0$ (note that minus sign in front of the second order coefficients in the definition of the parabolic operator $L$, cf. remark (\ref{rempot})). Let $g\in C^{2+\delta}\left({\mathbb R}^n\right) $ and $f\in C^{\delta/2,\delta}(Q_T)$. Then there exists a unique function $u\in C^{1+\delta/2,2+\delta}\left(Q_T\right)$ which solves the Cauchy problem
\begin{equation}
\left\lbrace \begin{array}{ll}
Lu=f~\mbox{on}~Q_T,\\
\\
u(0,.)=g(.)~\mbox{on}~{\mathbb R}^n. 
\end{array}\right.
\end{equation}
Moreover the estimate
\begin{equation}\label{schauderapriori}
|u|_{1+\delta/2,2+\delta,Q_T}\leq C\left(|g|_{2+\delta,{\mathbb R}^n}+|f|_{\delta/2,\delta,Q_T} \right) 
\end{equation}
holds for some constant $C>0$ depending only on $n,\lambda,\Lambda, \delta ,K$.
\end{thm}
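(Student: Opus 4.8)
The plan is to follow the classical route for parabolic Schauder theory: establish the a priori estimate (\ref{schauderapriori}) first, derive existence from it by the method of continuity, and then obtain uniqueness. As a preliminary normalization I would pass to the function $e^{-\lambda t}u$ (cf. (\ref{pot})): if $u$ solves $Lu=f$, then $w=e^{-\lambda t}u$ solves a Cauchy problem of the same type with $c$ replaced by $c-\lambda$ and $f$ by $e^{-\lambda t}f$, while the structure conditions (\ref{ell}) and (\ref{coeff}) persist (with constants adjusted by $\lambda$ and $T$). Since $c\leq r$, choosing $\lambda\geq r$ makes the new zeroth-order coefficient nonpositive; this is harmless because the final constant $C$ is permitted to depend on $n,\lambda,\Lambda,\delta,K$, and it puts us in position to invoke the parabolic maximum principle.

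The heart of the matter, and the main obstacle, is the a priori estimate. Its engine is the model estimate for the constant-coefficient operator $\frac{\partial}{\partial t}-\sum_{i,j}a^0_{ij}\frac{\partial^2}{\partial x_i\partial x_j}$ obtained by freezing the leading coefficients at a point: the fundamental solution is an explicit Gaussian, and the associated parabolic volume (Newtonian) potential maps $C^{\delta/2,\delta}(Q_T)$ boundedly into $C^{1+\delta/2,2+\delta}(Q_T)$, the delicate point being the H\"older continuity of the second spatial derivatives and of the first time derivative of the potential. I would then pass to variable coefficients by writing
\[
Lu=(L-L^0)u+L^0u,
\]
regarding $(L-L^0)u$ as a perturbation whose H\"older seminorm is small on small parabolic cylinders (by the continuity of the coefficients encoded in (\ref{coeff})), deriving a local estimate on each such cylinder, and patching these together with a partition of unity. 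An interpolation inequality is used to absorb the lower-order norms into the left-hand side, yielding (\ref{schauderapriori}). For the technical potential-theoretic lemmas I would cite \cite{Kr} rather than reproduce them.

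Given the a priori estimate, existence follows by the method of continuity. I would connect $L$ to the heat operator through the family $L_s:=(1-s)\left(\frac{\partial}{\partial t}-\Delta\right)+sL$, $s\in[0,1]$. Each $L_s$ satisfies (\ref{ell}) and (\ref{coeff}) with constants independent of $s$, so (\ref{schauderapriori}) holds for every $L_s$ with one common constant $C$. Viewing $u\mapsto(L_su,u(0,\cdot))$ as a bounded linear map from $C^{1+\delta/2,2+\delta}(Q_T)$ into $C^{\delta/2,\delta}(Q_T)\times C^{2+\delta}\left({\mathbb R}^n\right)$, the uniform estimate shows that the set of $s$ for which this map is onto is both open and closed in $[0,1]$; since the problem is solvable at $s=0$ (the heat equation, solved by convolution with the Gaussian kernel), it is solvable at $s=1$, i.e. for $L$ itself.

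Uniqueness is then immediate: the difference of two solutions solves the homogeneous problem $Lu=0$, $u(0,\cdot)=0$, so (\ref{schauderapriori}) forces $u\equiv 0$; alternatively, after the normalization above the parabolic maximum principle applies directly. The routine verifications (boundedness of each $L_s$, stability of the constants under the homotopy, the interpolation step) I would leave implicit, since the entire statement is standard and the only genuinely substantive ingredient is the frozen-coefficient potential estimate drawn from \cite{Kr}.
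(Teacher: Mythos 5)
The paper offers no proof of Theorem \ref{cauchylin}: it is presented as a standard result and the reader is referred to \cite{Kr}. So there is no internal argument to compare yours against; what your proposal does is reconstruct the classical proof that the cited reference contains, and in outline it is the right one. The sequence frozen-coefficient Gaussian potential estimate, perturbation $L=(L-L^0)+L^0$ on small parabolic cylinders, partition of unity and interpolation to absorb lower-order terms into (\ref{schauderapriori}), method of continuity along $L_s=(1-s)(\partial_t-\Delta)+sL$ starting from the explicitly solvable heat equation, and uniqueness from the estimate or the maximum principle is exactly the standard route, and deferring the potential-theoretic lemmas to \cite{Kr} is consistent with the level of detail the paper itself adopts (it uses the same exponential normalization in (\ref{pot})).

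One piece of bookkeeping should be fixed. With the paper's convention $Lu=u_t-\sum_{i,j}a_{ij}u_{x_ix_j}+\sum_i b_iu_{x_i}+cu$, the substitution $w=e^{-\lambda t}u$ replaces $c$ by $c+\lambda$, not $c-\lambda$, and the sign you want for the parabolic maximum principle is a \emph{nonnegative} zeroth-order coefficient, not a nonpositive one. Moreover the hypothesis $c\leq r$ is not what delivers this; it is the two-sided bound $|c|\leq K$ from (\ref{coeff}) (take $\lambda\geq K$). Indeed, given (\ref{coeff}) the hypothesis ``$c\leq r$ for some $r>0$'' is automatically satisfied with $r=K$, so it carries no real content here. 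None of this affects the architecture of your argument, but as written the normalization step would not compile into a correct maximum-principle application.
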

Note that in the estimate (\ref{schauderapriori}) the norm $|g|_{2+\delta,{\mathbb R}^n}$ is with respecto the intial data $g$ which are zero for the linear problems $\delta {\bf v}^{r,\rho,l,k}$. This means that the Schauder estimates can be applied for a local fixed point iteration, i.e. a contraction
\begin{equation}
|\delta v^{r,\rho,l,k+1}_i|_{1+\delta/2,2+\delta,Q_T}\leq c |\delta v_i^{r,\rho,l,k}|_{1+\delta/2,2+\delta,Q_T}
\end{equation}
can be derived for some $c\in (0,1)$ if $\rho_l$ is small enough. However it is more difficult in this case to combine the local Schauder estmates to a global estimate since the Schauder estimate requires a negative coefficient of the potential term. Standard transformations would lead to a time step size which is exponentially decreasing and this is not sufficient in order to ensure that the scheme is global.    
 Therefore, in this paper we shall show constructively that we have a classical bounded solution with an integral magnitude (\ref{vnorm}) which has linear growth with respect to time. This is achieved by a dynamical choice of the functions $r^l_i$, i.e., a recursive construction with respect to the time step number $l$, and where we use classical expansions of fundamental solutions.
In case of the Navier-Stokes equation on compact manifolds (the flat $n$-torus is a simple example of a flat one) we do not have to deal with the integral norm seperately. However, some different difficulties arise if the coefficient of second order depend on the spatial variables. We shall treat this case in a subsequent paper.  

In order to prove the existence of a bounded solution $\mathbf{v}=\left(v_1,\cdots ,v_n \right)$ of the incompressible Navier Stokes equation in its Leray projection form we construct a bounded function $\mathbf{r}=\left(r_1,\cdots ,r_n \right)$ which is smooth with respect to the spatial variables and differentiable with respect to time except at a discrete set of points. Moreover the functions $\tau\rightarrow r_i(\tau,x)$ are H\"{o}lder continuous across the time step points $l>0$, where classical time derivatives exist elsewhere. The values where the time derivatives of $\tau\rightarrow r_i(\tau,x)$ do not exist in a classical sense coincide with the time discretizaion of our scheme, i.e. with the values of natural numbers ($\tau\in {\mathbb N}$). The bounded function $\mathbf{r}$ is constructed recursively at each time step $l$. Hence $\mathbf{r}$ is determined by a sequence of functions $\mathbf{r}^l=(r^l_1,\cdots ,r^l_n)$, where each $\mathbf{r}^l$ is defined on the domain $[l-1,l]\times {\mathbb R}^n$. Instead of solving the incompressible Navier-Stokes equation we solve a corresponding equation for the function
\begin{equation}
(\tau,x)\rightarrow \mathbf{v}^{r,\rho}:=\mathbf{v}^{\rho}+\mathbf{r},
\end{equation}
time step by time step, i.e., in the form $\mathbf{v}^{r,\rho,l}=\mathbf{v}^{\rho,l}+\mathbf{r}^l$ for $l\geq 1$ where each $\mathbf{v}^{r,\rho,l}$
is defined on the domain $[l-1,l]\times {\mathbb R}^n$. We first determine $\mathbf{r}^l$ from data of the previous time step, i.e., from  $\mathbf{v}^{r,\rho,l-1}$ (or from the initial data $\mathbf{h}$ if $l=1$. This is done in such a way that the locally constructed solution for $\mathbf{v}^{r,\rho,l}$ stays bounded independently of the time step number $l$. 

The outline of this paper is as follows. In the next Section 2 we prove the main theorem, i.e., the existence of a bounded regular function $\mathbf{r}$  such that $\mathbf{v}^r:=\mathbf{v}+\mathbf{r}$ satisfies an equation equivalent to the incompressible Navier-Stokes equation in an (almost) classical sense. This leads to a classical solution $\mathbf{v}$ for the Navier-Stokes equation itself.   Then in Section 3  we draw conclusions concerning regularity, uniqueness, and extensions to equations with external forces. Finally, in Section 4 we sketch an algorithm for the solution of the Navier-Stokes equation by iterative use of local analytic expansions of scalar parabolic equations. This algorithm simplifies the construction of the global solution via the function $\mathbf{v}^r$ since we can use the information that the solution is bounded. Nevertheless the control function $\mathbf{r}$ may be useful also in numerical practice, since it can make the computation of the solution scheme more stable.

\section{Main theorem}
We shall assume that the initial data function
 $x\rightarrow \mathbf{h}(x)=\left(h_1(x),\cdots ,h_n(x) \right)^T$ with components $h_i$ in $C^ {\infty}\left({\mathbb R}^n \right)$ for all $1\leq i\leq n$ satisfies
\begin{equation}\label{growthh}
    \vert \partial^\alpha_x \mathbf{h}(x)\vert\le \frac{C_{\alpha k}}{(1+\vert x\vert)^k}\qquad 
\end{equation} 
for all $x\in\mathbb{R}^n$, and for all multiindices $\alpha$ and and integers $k$ with some constants $C_{\alpha k}$. 
\begin{rem}
If there is an external force term  $\mathbf{f}_{ex}$ included on the right then we may assume that
\begin{equation}\label{growthf}
    \vert \partial^m_t\partial^\alpha_x \mathbf{f}_{ex}(t, x)\vert\le \frac{C_{\alpha m k}}{(1+\vert x\vert + t)^k}\qquad ~~\mbox{for all}~~ \qquad (t,x)\in [0,\infty)\times \mathbb{R}^n,
\end{equation}
for any multiindices $\alpha$, integers $k$, and nonnegative integers $m$, and with some constants $C_{\alpha m k}$. 
\end{rem}
Essentially, neglecting regularity requirements with respect to time for the external force function $\mathbf{f}_{ex}$ for a moment, the assumptions on the external forces ${\mathbf f}_{ex}$ and the initial data ${\mathbf h}$ mean that these functions are located in Sobolev spaces $H^s$ of arbitrary order $s\in {\mathbb R}$, i.e., for all $s\in {\mathbb R}$ we have
\begin{equation}\label{hHs}
{\mathbf h}\in \left[ H^s\left( {\mathbf R}^n\right)\right]^n,
\end{equation}
and for all $t\in \left[0,\infty \right)$ 
\begin{equation}\label{fHs}
{\mathbf f}_{ex}(t,.)\in 
\left[ H^s\left({\mathbf R}^n\right)\right]^n.
\end{equation}
Here, for $s\in {\mathbb R}$ $H^s$ is an ordinary Hilbert space with
\begin{equation}
H^s\equiv H^s\left({\mathbb R}^n\right)=\left\lbrace f\in {\cal S}'\left({\mathbb R}^n\right)|\int\hat{f}(\xi)^2\left( 1+|\xi|^2 \right)^s d\xi <\infty\right\rbrace,
\end{equation}
where $\hat{f}$ denotes the Fourier transform and ${\cal S}'\left({\mathbb R}^n\right)$ denotes the space of tempered distributions on ${\mathbb R}^n$. The time dependence in the estimate (\ref{growthf}) implies a similar Sobolev regularity with respect to time of course. In order to keep notation simple we consider the case $f_i=0$ in this proof of a classical global solution (however, this is not essential and it will be clear that our construction can be generalized to the case involving source terms satisfying (\ref{fHs}) or (\ref{growthf})).
As indicated in the introduction our proof of a bounded regular solution of the incompressible Navier-Stokes equation consists of three main ideas: a) we introduce a time discretization and a series of linear time transformations $t=\rho_l \tau$ such that time step size $1$ in $\tau$-coordinates is related to a small time step size in original coordinates and small coefficients of spatial derivatives in transformed time coordinates such that an iteration procedure leads to a local solution in time. Furthermore , the time step size has to be large enough such that the scheme becomes global. This means that we should have $\sum_{l=1}^N\rho_l\uparrow \infty$ as $N\uparrow \infty$. We mentioned already that we can even construct a lower bound, i.e., $\rho_l\geq \rho$ for some $\rho>0$ independent of the time step number $l$. This is a difference to \cite{KB2} where we had $\rho_l\sim\frac{1}{l}$ only; 
 b) in order to control the growth of the solution we introduce a locally regular function $\mathbf{r}$ and solve an equivalent problem for the function $\mathbf{v}^r:=\mathbf{v}+\mathbf{r}$ at each time step where $\mathbf{r}$ is itself a bounded function with bounded spatial derivatives and which is continuous across the time points $l>0$. The function $\mathbf{r}$ solves a linearized  Navier-Stokes type equation, but with a 'consumption' source term $\mathbf{\phi}^l$ which has been explained in the introduction to some extent.  
This source term consists of $n$ components $\mathbf{\phi}^l=\left(\phi^l_1,\cdots ,\phi^l_n\right)$ 
such that the functions $\mathbf{v}^{r,\rho,l}$ and $\mathbf{r}^l$ are bounded independent of  the time step number $l$. Here, the 'consumption' function $\mathbf{\phi}^l$ and the function $\mathbf{r}^l$ are defined recursively on the domains $D^{\tau}_l$ of each time step number $l$ and depend on the solution $\mathbf{v}^{r,l-1}$ and $\mathbf{r}^{l-1}$ defined at the previous time step or dependent on the data $\mathbf{h}$ if $l=1$; c) At each time step we ensure that the solution has some decay at spatial infinity which is necessary in order to prove the existence of a time local fixed point. In the case $n=3$ it is sufficient to have $v^{r,\rho,l}_i(\tau,.)\in H^2$ for all $l\geq 1$ and $\tau\in [l-1,l]$.

\begin{rem}
In the introduction of this paper we mentioned that global regular solutions of the multivariate Burgers equation can be obtained by a priori estimates of the form (\ref{aprioriest}). If we consider the method (i) described above and start at each time step with the solution of the associated Burgers equation with a certain source term then we need to prove that solutions of the associated Burgers equation satisfy a certain decay at spatial infinity (in case the initial condition at each time step satisfies a certain decay condition at spatial infinity). This is true in any case a), b), and c) of local fixed point construction above. For example, if we consider the method c) of the fixed point construction, then a decay of polynomial order $5$ at spatial infinity is sufficient in order to prove the global convergence of our scheme to a classical solution. Hence, in order to apply method i) for the local iteration part it is sufficient to prove the following:
\begin{prop}
The Cauchy problem for the multivariate Burgers equation with initial data $\mathbf{h}\in H^s$ 
for $s\in {\mathbb R}$ has a classical solution $\mathbf{u}\in \left[ C_{1+\alpha/2,2+\alpha}\left( \left[0,\infty \right)\times {\mathbb R}^n\right)\right) ^n$
  such that $\mathbf{u}(t,.)$ satisfies (\ref{growthh}) for $k\leq 5$, i.e. we have for all $t\in [0,\infty)$
 \begin{equation}\label{growthu}
    \vert \partial^\alpha_x \mathbf{u}(t,.)\vert\le \frac{C_{\alpha k}}{(1+\vert x\vert)^k}~\mbox{for}~k\leq 5~\mbox{and}~|\alpha|\leq 2.\qquad 
\end{equation}  
More over the solution is bounded by the initial data, i.e., (\ref{apriorimax}) holds.   
\end{prop}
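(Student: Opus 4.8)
The plan is to treat each component equation as a scalar convection--diffusion equation whose drift is the velocity field itself, to build a global smooth solution by combining a contraction argument with the a priori bounds already recorded in the introduction, and then to propagate the polynomial decay of $\mathbf{h}$ in time by a weighted energy estimate. Concretely, I would first set up local existence and uniqueness in $\bigcap_s H^s$: writing the Cauchy problem (\ref{qparasyst1}) in its mild (Duhamel) form with the heat semigroup $e^{\nu t\Delta}$ and nonlinearity $-\sum_j u_j\partial_j u_i$, a standard contraction in $C([0,T];[H^s]^n)$ for $s>\tfrac{n}{2}+2$ (so that $H^s$ is a Banach algebra controlling first derivatives) yields a unique smooth local solution which, since $\mathbf{h}\in H^s$ for every $s$, lies in every $H^s$ on its interval of existence. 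Each component $u_i$ then solves the linear scalar equation $\partial_t u_i-\nu\Delta u_i+\sum_j u_j\partial_j u_i=0$ with bounded, H\"older drift $\mathbf{u}$, so the classical maximum principle for scalar parabolic equations gives $\sup_x|u_i(t,\cdot)|\le\sup_x|h_i|$, which is precisely (\ref{apriorimax}).

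Next I would make the solution global and pin down its regularity. The energy inequality (\ref{aprioriest}), together with the sup-norm control (\ref{apriorimax}), bounds $\|\mathbf{u}(t,\cdot)\|_{H^s}$ on every finite interval $[0,T]$, so the local solution cannot blow up and extends to $[0,\infty)$. Viewing each component once more as a linear parabolic equation with its own H\"older-continuous coefficients (the H\"older bounds coming from the uniform $H^s$ control via Sobolev embedding), Theorem \ref{cauchylin} and a bootstrap then place $\mathbf{u}$ in $[C_{1+\alpha/2,2+\alpha}([0,\infty)\times{\mathbb R}^n)]^n$.

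The heart of the matter is the propagation of the polynomial decay (\ref{growthu}). Here I would run a weighted energy estimate: with $\langle x\rangle=(1+|x|^2)^{1/2}$ set $E_{k,s}(t):=\sum_i\|\langle x\rangle^{k}u_i(t,\cdot)\|_{H^s}^2$ and differentiate in time along the equation. The diffusion term produces a good negative contribution plus commutators $[\langle x\rangle^{k},\Delta]$, whose weight derivatives are of relative size $O(\langle x\rangle^{-1})$ and are therefore absorbed; the convective term, after integration by parts, produces a factor $\operatorname{div}\mathbf{u}$ together with a commutator with the weight, both controlled by the uniform $H^s$ and sup bounds already in hand. One thus obtains a differential inequality $\tfrac{d}{dt}E_{k,s}\le C\,E_{k,s}$ and, by Gronwall, finiteness of $E_{k,s}(t)$ on each $[0,T]$ for $k=5$ and $s>\tfrac{n}{2}+2$ (admissible since $\mathbf{h}$ decays to every polynomial order). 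Sobolev embedding applied to $\langle x\rangle^{5}u_i$ then bounds it and its derivatives up to order two in the sup norm, and a Leibniz expansion of $\partial^\alpha u_i=\partial^\alpha(\langle x\rangle^{-5}\cdot\langle x\rangle^{5}u_i)$ transfers the decay to $\partial^\alpha u_i$, giving (\ref{growthu}) for $|\alpha|\le2$.

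The main obstacle is exactly this weighted estimate: one must verify that every term generated by commuting the polynomial weight past the Laplacian and past the convective derivative is genuinely lower order and can be absorbed using the unweighted bounds, i.e. that the diffusive spreading of mass does not destroy the localization inherited from $\mathbf{h}$. This is where the interplay of the two estimates is essential --- the plain $H^s$ bound from the global-existence step must already be available before the weighted inequality can be closed --- and where the constants $C_{\alpha k}$ acquire their (time-interval-dependent) size. Since the ambient scheme invokes (\ref{growthu}) only on the bounded time steps $[l-1,l]$, the exponential-in-$T$ growth of the Gronwall constant is harmless.
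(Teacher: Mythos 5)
Your proposal is sound in outline, but it takes a genuinely different route from the paper, which offers no self-contained argument at all: the paper's entire ``proof'' is the sentence that the proposition ``essentially follows from our considerations in \cite{K} and from estimates of type (\ref{aprioriest})'', i.e.\ it defers to the author's earlier constructive treatment of the multivariate Burgers equation, where the solution is built by the same time-discretized iteration with fundamental solutions (Levy expansions) and Gaussian a priori bounds that the present paper uses for Navier--Stokes, and where spatial decay comes from convolution estimates against Gaussian kernels rather than from weighted energies. (The paper even remarks that the proposition is not used in its main proof.) Your route --- Duhamel/heat-semigroup contraction in $C([0,T];[H^s]^n)$ for local existence, the componentwise scalar maximum principle for (\ref{apriorimax}), globalization via the $H^s$ energy inequality with viscous absorption, a Schauder bootstrap (Theorem \ref{cauchylin}) for the $C^{1+\alpha/2,2+\alpha}$ regularity, and a weighted energy estimate for the decay (\ref{growthu}) --- is standard and self-contained. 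The weighted step can even be streamlined: $w=\langle x\rangle^{5}u_i$ satisfies a parabolic equation whose additional coefficients $\langle x\rangle^{-5}\Delta\langle x\rangle^{5}$, $\langle x\rangle^{-5}\nabla\langle x\rangle^{5}$ and $\bigl(\mathbf{u}\cdot\nabla\langle x\rangle^{5}\bigr)\langle x\rangle^{-5}$ are globally bounded once the unweighted sup bound is in hand, so Gronwall closes without delicate commutator bookkeeping. What your approach buys is independence from \cite{K} and from the fundamental-solution machinery; what the paper's (implicit) approach buys is exactly the representation formulas for the Burgers iterates that its algorithmic scheme consumes downstream.

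One caveat deserves emphasis. Your Gronwall constants grow with the length of the time interval, so you obtain (\ref{growthu}) with constants $C_{\alpha k}$ depending on $[0,T]$, not uniformly on $[0,\infty)$ as the bare wording of the statement suggests. This is not a defect you could repair: even for the heat equation in $n=3$, decay of order $5>n$ is not propagated with time-uniform constants (unit mass initially concentrated near the origin yields $u(t,x)\geq c|x|^{-3}$ at times $t\sim|x|^{2}$, so $(1+|x|)^{5}u$ is unbounded over all $t$). Hence the locally-uniform-in-time reading you adopt --- which is all the surrounding scheme needs, since it invokes the estimate only on unit time steps --- is the only tenable one, and your explicit acknowledgment of this is the correct way to handle it.
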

The proof of this proposition essentially follows from our considerations in \cite{K} and from estimates of type (\ref{aprioriest}).
The result in Proposition 1 can be sharpened in order to construct more regular solutions. In this Section we deal with the construction of classical solutions (which is essential). However, since we use the alternative (iii) in this paper we do not use this proposition in the present proof.
\end{rem}

Next let us consider the construction of the function $\mathbf{r}$ more closely. Assume that we have solved for $\mathbf{v}^{r,\rho,m}:=\mathbf{v}^{\rho,m}+\mathbf{r}^m$ for $1\leq m\leq l-1$ such that the solution for $\mathbf{v}^{r,\rho}$ is constructed on the domain $[0,l-1]\times{\mathbb R}^n$. Especially the functions $\mathbf{r}^m$ for $1\leq m\leq l-1$ have been constructed. Consider the restriction $\mathbf{v}^{r,\rho,l-1}$ of $\mathbf{v}^{r,\rho}$ to the domain $[l-1,l]\times {\mathbb R}^n$. Equation (\ref{Navlerayrrhol}) may be written explicitly in the form
\begin{equation}\label{Navlerayrrhol*++}
\left\lbrace \begin{array}{ll}
\frac{\partial v^{r,\rho,l}_i}{\partial \tau}-\rho_l\nu\sum_{j=1}^n \frac{\partial^2 v^{r,\rho,l}_i}{\partial x_j^2} 
+\rho_l\sum_{j=1}^n v^{r,\rho,l}_j\frac{\partial v^{r,\rho,l}_i}{\partial x_j}=\psi^l_i,\\
\\
\mathbf{v}^{r,\rho,l}(l-1,.)=\mathbf{v}^{r,\rho,l-1}(l-1,.),
\end{array}\right.
\end{equation}
where
\begin{equation}\label{Navlerayrrhol*+++}
\begin{array}{ll}
\psi^l_i=L^{\rho,l}_i\left(\mathbf{r}^l,\mathbf{v}^{r,\rho,l}\right)\\
\\
+\rho_l\int_{{\mathbb R}^n}\left( \frac{\partial}{\partial x_i}K_n(x-y)\right) \sum_{j,k=1}^n\left( \frac{\partial v^{r,\rho,l}_k}{\partial x_j}\frac{\partial v^{r,\rho,l}_j}{\partial x_k}\right) (\tau,y)dy+r^{l}_{i,\tau}
\end{array}
\end{equation}
\begin{equation}
\begin{array}{ll}
=r^l_{i,\tau}-\rho_l\nu \Delta r^l_i+\rho_l\sum_{j=1}^n r^{l}_j\frac{\partial r^{l}_i}{\partial x_j}\\
\\
-\rho_l\int_{{\mathbb R}^n}\left( \frac{\partial}{\partial x_i}K_n(x-y)\right) \sum_{j,k=1}^n\left( \frac{\partial r^{l}_k}{\partial x_j}\frac{\partial r^{l}_j}{\partial x_k}\right) (\tau,y)dy\\
\\
+\rho_l\sum_{j=1}^n r^{\rho,l-1}_j\frac{\partial v^{r,\rho,l-1}_i}{\partial x_j}+\rho_l\sum_{j=1}^n v^{r,\rho,l-1}_j\frac{\partial r^{l-1}_i}{\partial x_j}\\
\\ -2\rho_l\int_{{\mathbb R}^n}\left( \frac{\partial}{\partial x_i}K_n(x-y)\right) \sum_{j,k=1}^n\left( \frac{\partial r^{l-1}_k}{\partial x_j}\frac{\partial v^{r,\rho,l-1}_j}{\partial x_k}\right) (\tau,y)dy\\
\\
 +\rho_l\int_{{\mathbb R}^n}\left( \frac{\partial}{\partial x_i}K_n(x-y)\right) \sum_{j,k=1}^n\left( \frac{\partial v^{r,\rho,l}_k}{\partial x_j}\frac{\partial v^{r,\rho,l}_j}{\partial x_k}\right) (\tau,y)dy.
\end{array}
\end{equation}
In order to control the growth of the functions $\mathbf{v}^{r,\rho,l}$ at time step $l$ we control 1) the growth of $\mathbf{v}^{r,\rho,0,l}$ which satisfies a linearized equation, and 2) ensure that the correction $\mathbf{v}^{r\rho,l}-\mathbf{v}^{r,\rho,0,l}=\sum_{k=1}^{\infty}\delta \mathbf{v}^{r,\rho,k,l}$ is small enough by choosing $\rho_l$ appropriately. Let us look at the linear approximation $\mathbf{v}^{r,\rho,0,l}$ of the function $\mathbf{v}^{r,\rho,l}$ first. The equation for $\mathbf{v}^{r,\rho,0,l}$ is
\begin{equation}\label{Navlerayrrhol*++0-}
\left\lbrace \begin{array}{ll}
\frac{\partial v^{r,\rho,0,l}_i}{\partial \tau}-\rho_l\nu\sum_{j=1}^n \frac{\partial^2 v^{r,\rho,0,l}_i}{\partial x_j^2} 
+\rho_l\sum_{j=1}^n v^{r,\rho,l-1}_j(l-1,.)\frac{\partial v^{r,\rho,0,l}_i}{\partial x_j}=\\
\\
L^{\rho,l}_i\left(\mathbf{r}^l,\mathbf{v}^{r,\rho,l-1}(l-1,.)\right)+r^l_{i,\tau}\\
\\
+\rho_l\int_{{\mathbb R}^n}\left( \frac{\partial}{\partial x_i}K_n(x-y)\right) \sum_{j,k=1}^n\left( \frac{\partial v^{r,\rho,l-1}_k}{\partial x_j}\frac{\partial v^{r,\rho,l-1}_j}{\partial x_k}\right) (l-1,y)dy,\\
\\
\mathbf{v}^{r,\rho,0,l}(l-1,.)=\mathbf{v}^{r,\rho,l-1}(l-1,.).
\end{array}\right.
\end{equation}
Note that all functions with index $l-1$ are defined on the domain $[l-2,l-1]\times {\mathbb R}^n$ of the previous time step. Therefore these functions are alway evaluated at time $l-1$ if they occur in the $l$th time step.
Considering the linearized equation for $\mathbf{v}^{r,\rho,0,l}$ has the advantage that we may represent the solution of (\ref{Navlerayrrhol*++0-}) in terms of the fundamental solution of the equation
\begin{equation}\label{fundvr0}
\frac{\partial v^{r,\rho,0,l}_i}{\partial \tau}-\rho_l\nu\sum_{j=1}^n \frac{\partial^2 v^{r,\rho,0,l}_i}{\partial x_j^2} 
+\rho_l\sum_{j=1}^n v^{r,\rho,l-1}_j(l-1,.)\frac{\partial v^{r,\rho,0,l}_i}{\partial x_j}=0.
\end{equation}
This representation involves a spatial integral with the initial data and a source term integral with respect to space and time involving the right side of the equation (\ref{Navlerayrrhol*++0-}). The classical representation involves an integral with initial data which is solution of an equation where we can apply the maximum principle. The second term can be controlled by choice of $\mathbf{r}^l$ as indicated in the introduction. Next determining $\mathbf{r}^l$ in a straightforward way such that the right side of (\ref{Navlerayrrhol*++0-}) becomes a function $\psi^{l,0}_i$ approximating the function $\psi^l_i$ (\ref{Navlerayrrhol*++}) would lead us to a Navier-Stokes type equation for $\mathbf{r}^l$. The equation is
\begin{equation}\label{Navlerayrrhol*++00}
\left\lbrace \begin{array}{ll}
r^l_{i,\tau}-\rho_l\nu \Delta r^l_i+\rho_l\sum_{j=1}^n r^{l}_j\frac{\partial r^{l}_i}{\partial x_j}\\
\\
-\rho_l\int_{{\mathbb R}^n}\left( \frac{\partial}{\partial x_i}K_n(x-y)\right) \sum_{j,k=1}^n\left( \frac{\partial r^{l}_k}{\partial x_j}\frac{\partial r^{l}_j}{\partial x_k}\right) (\tau,y)dy\\
\\
+L^{\rho,l,0}_i\left(\mathbf{r}^l,\mathbf{v}^{r,\rho,l-1}\right)\\
\\
+\rho_l\int_{{\mathbb R}^n}\left( \frac{\partial}{\partial x_i}K_n(x-y)\right) \sum_{j,k=1}^n\left( \frac{\partial v^{r,\rho,l-1}_k}{\partial x_j}\frac{\partial v^{r,\rho,l-1}_j}{\partial x_k}\right) (l-1,y)dy\\
\\
=\psi^{l,0}_i\\
\\
\mathbf{r}^{l}(l-1,.)=\mathbf{r}^{l-1}(l-1,.).
\end{array}\right.
\end{equation}
However, solving for $\mathbf{r}^l$ in this form would imply that we have transferred the original difficulties to control the growth of the solution to $\mathbf{r}^l$. Instead, we   
consider a different right side $\phi^l_i$ which approximates $\psi^{l,0}_i$ for $\rho_l$ small. Here, 'approximation' is in the sense that $\psi^{l,0}_i=\phi^l_i+\left(\psi^{l,0}_i-\phi^l_i \right) $, and the difference becomes small if $\rho_l$ becomes small (cf. proof of main theorem below). Furthermore $\psi^{l,0}_i$ approximates  $\psi^l_i$. Here, we let  $r^{l}_i$ solve a linearization of (\ref{Navlerayrrhol*++}), i.e. the function $\mathbf{r}^l$ solves
\begin{equation}\label{Navlerayrrhol+++++}
\left\lbrace \begin{array}{ll}
r^l_{i,\tau}-\rho_l\nu \Delta r^l_i+\rho_l\sum_{j=1}^n r^{l-1}_j(l-1,.)\frac{\partial r^{l}_i}{\partial x_j}=\\
\\
-\rho_l\int_{{\mathbb R}^n}\left( \frac{\partial}{\partial x_i}K_n(x-y)\right) \sum_{j,k=1}^n\left( \frac{\partial r^{l-1}_k}{\partial x_j}\frac{\partial r^{l-1}_j}{\partial x_k}\right) (l-1,y)dy\\
\\
+L^{\rho ,l,0}_i(\mathbf{r}^{l-1}(l-1,.);\mathbf{v}^{r,\rho,l-1}(l-1,.))\\
\\-\rho_l\int_{{\mathbb R}^n}\left( \frac{\partial}{\partial x_i}K_n(x-y)\right) \sum_{j,m=1}^n\left( \frac{\partial v^{r,\rho,l-1}_m}{\partial x_j}\frac{\partial v^{r,\rho,l-1}_j}{\partial x_m}\right) (l-1,y)dy+\phi^l_i,\\
\\
\mathbf{r}^{l}(l-1,.)=\mathbf{r}^{l-1}(l-1,.).
\end{array}\right.
\end{equation}
Note that it is the same $\mathbf{r}^l$ which solves this linearized equation which is the equation with a different source term $\phi^l_i$. Hence the difference of $\psi^{l,0}_i$ and $\phi^l_i$ may be obtained by subtracting equation (\ref{Navlerayrrhol+++++}) from equation (\ref{Navlerayrrhol*++00}). We get
\begin{equation}\label{Navlerayrrhol*++000}
\begin{array}{ll}
\rho_l\sum_{j=1}^n (r^{l}_j-r^{l-1}_j)\frac{\partial r^{l}_i}{\partial x_j}\\
\\
-\rho_l\int_{{\mathbb R}^n}\left( \frac{\partial}{\partial x_i}K_n(x-y)\right) \sum_{j,k=1}^n\left( \frac{\partial r^{l}_k}{\partial x_j}\frac{\partial r^{l}_j}{\partial x_k}\right) (\tau,y)dy\\
\\
+\rho_l\int_{{\mathbb R}^n}\left( \frac{\partial}{\partial x_i}K_n(x-y)\right) \sum_{j,k=1}^n\left( \frac{\partial r^{l-1}_k}{\partial x_j}\frac{\partial r^{l-1}_j}{\partial x_k}\right) (l-1,y)dy\\
\\
+L^{\rho,l,0}_i\left(\mathbf{r}^l-\mathbf{r}^{l-1},\mathbf{v}^{r,\rho,l-1}\right)\\
\\
=\psi^{l,0}_i-\phi^l_i.
\end{array}
\end{equation}
We shall indeed see that we can make the right side of (\ref{Navlerayrrhol*++000}) small independently of $l$ with the right choice of $\rho_l$ and $\mathbf{r}^l$. How small do we want to get it? Well, we want to have $\phi^l_i$ close to $\psi^{l,0}_i$ and $\psi^{l,0}_i$ close to $\psi^{l}_i$ where we want to choose $\phi^l_i$ such that it controls both, $\mathbf{v}^{r,\rho,l}$ and $\mathbf{r}^l$. This is done as follows: first we choose $\phi^l_i$ based on the dynamic information of $\mathbf{v}^{r,\rho,l-1}$ and $\mathbf{r}^{l-1}$. We do this as indicated in the introduction, and as we explain in a more elaborative way now. Further details are  provided below in the proof of the main theorem.
 For each $1\leq i\leq n$ and at each time step $l$ the function $\phi^l_i$ is constructed as a sum $\phi^l_i=\phi^{v,l}_i+\phi^{r,l}_i$ as described in the introduction.

Having constructed $\phi^{l}_i=\phi^{v,l}_i+\phi^{r,l}_i$ for all $1\leq i\leq n$  we solve
the linear equation (\ref{Navlerayrrhol+++++}) for $\mathbf{r}^l$. Note hat all terms on the right side of (\ref{Navlerayrrhol+++++}) have the factor $\rho_l$ except for $\phi^l_i$. Choosing $\rho_l$ small enough we can ensure that $\phi^l_i$ dominates the source terms in the critical regions of arguments where the moduli of the function $r^{l-1}_i$ exceed a certain level. Moreover, the control function is designed in such a way that the spatial first order derivatives are also controlled. Indeed the functions $\mathbf{r}^l$ are classical solutions of linear parabolic problems. Then we plug in this function $\mathbf{r}^l$ into the right side of (\ref{Navlerayrrhol*++0-}), and using (\ref{Navlerayrrhol*++000}) we get
\begin{equation}\label{Navlerayrrhol*++0}
\left\lbrace \begin{array}{ll}
\frac{\partial v^{r,\rho,0,l}_i}{\partial \tau}-\rho_l\nu\sum_{j=1}^n \frac{\partial^2 v^{r,\rho,0,l}_i}{\partial x_j^2} 
+\rho_l\sum_{j=1}^n v^{r,\rho,l-1}_j(l-1,.)\frac{\partial v^{r,\rho,0,l}_i}{\partial x_j}=\\
\\
\psi^{l,0}_i=\phi^l_i+\left( \psi^{l,0}_i-\phi^l_i\right),\\
\\
v^{r,\rho,0,l}_i(l-1,.)=v^{r,\rho,l-1}_i(l-1,.).
\end{array}\right.
\end{equation}
Then we may represent the solution of (\ref{Navlerayrrhol*++0}) in terms of the fundamental solution $\Gamma^l$ of the equation (\ref{fundvr0}) and get
\begin{equation}
\begin{array}{ll}
 v^{r,\rho,0,l}_i(\tau,x)=\int_{{\mathbb R}^n}v^{r,\rho,l-1}_i(l-1,y)\Gamma^l(\tau,x;0,y)dy\\
 \\
 +\int_{l-1}^{\tau}\int_{{\mathbb R}^n}\left(\phi^l_i+\left( \psi^{l,0}_i-\phi^l_i\right) \right)(s,y) \Gamma^l(\tau,x;s,y)dsdy.
 \end{array}
\end{equation}
The first term of this representation can be estimated using the maximum principle for the corresponding equation without source term. The second term can be estimated (and becomes small) if $\rho_lC_r$ becomes small (independently of the number $l$). Finally we have to ensure that the 'series of higher order corrections' of the functions $ v^{r,\rho,0,l}_i,~1\leq i\leq n$, i.e., the functional series $\sum_{k=1}^{\infty}\delta\mathbf{v}^{r,\rho,k,l}$ which we have to add in order to compute the function $\mathbf{v}^{r,\rho,l}$ from its linear approximation $\mathbf{v}^{r,\rho,0,l}$, becomes small if $\rho_l$ is small enough.

We shall prove
\begin{thm}\label{mainthm}
Given any dimension $n$ and a viscosity constant $\nu >0$ let $\mathbf{h}$ satisfy (\ref{hHs}) for any $s\in {\mathbb R}$. Then there is a global classical solution
\begin{equation}
\mathbf{v}\in \left[ C^{1,2}\left( \left[0,\infty \right)\times {\mathbb R}^n\right)\right] ^n
\end{equation}
to the Navier-Stokes equation system (\ref{nav}), (\ref{navdiv}), (\ref{navinit}) which satisfies
\begin{equation}
v_i(t,.)\in H^2.
\end{equation}
\end{thm}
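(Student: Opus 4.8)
The plan is to carry out the construction time step by time step on the rescaled domains $D^{\tau}_l=[l-1,l]\times\mathbb{R}^n$ and then assemble the pieces into a global object, exploiting that $\rho_l=C/l$ forces $\sum_l\rho_l=\infty$ so that the original-time intervals $[T_{l-1},T_l]$ exhaust $[0,\infty)$. At the generic step $l$ I would assume inductively that $\mathbf{v}^{r,\rho,l-1}(l-1,\cdot)$ and $\mathbf{r}^{l-1}(l-1,\cdot)$ have been produced with the uniform bounds $|v^{r,\rho,l-1}_i(l-1,\cdot)|_0\le C$ and $|r^{l-1}_i(l-1,\cdot)|_{1,2}\le C_r$, together with the polynomial decay (\ref{growthuburg}) of order $5$, where $C$ and $C_r$ are the a priori constants depending only on $n$, $\nu$, and $\mathbf{h}$. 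The step proceeds in three moves: first fix the consumption data $\phi^l_i=\phi^{v,l}_i+\phi^{r,l}_i$ from the previous step; next solve the linear equation (\ref{Navlerayrrhol+++++}) for $\mathbf{r}^l$ and verify $|r^l_i|_{1,2}\le C_r$; finally solve the linearized equation (\ref{Navlerayrrhol*++0}) for $\mathbf{v}^{r,\rho,0,l}$ and run the local iteration $\mathbf{v}^{r,\rho,k,l}$ to its fixed point $\mathbf{v}^{r,\rho,l}$.

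The heart of the growth control lies in the source terms. For $\mathbf{r}^l$ I would note that every term on the right of (\ref{Navlerayrrhol+++++}) except $\phi^l_i$ carries a factor $\rho_l$; hence for $\rho_l$ small the consumption function $\phi^{r,l}_i$, which equals $-1$ where $r^{l-1}_i$ lies in $[C^0_r/2,C^0_r]$ and $+1$ where it lies in $[-C^0_r,-C^0_r/2]$, dominates in exactly the regions where $\mathbf{r}^l$ threatens to exceed its bound and pushes it back toward the band $[-C^0_r/2,C^0_r/2]$. Representing the solution by the fundamental solution $\Gamma^l$ of (\ref{fundvr0}) and applying the maximum principle to the initial-data integral, together with the smallness $\rho_l C_r\to 0$ of the remaining source integral, yields $|r^l_i|_{1,2}\le C_r$ uniformly in $l$; the linear-in-$l$ growth of the $H^2$ norm (\ref{normH2}) is balanced precisely by the choice $\rho_l\sim 1/l$. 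The same mechanism applied to $\phi^{v,l}_i$ and to the representation following (\ref{Navlerayrrhol*++0}) controls $\mathbf{v}^{r,\rho,0,l}$: the maximum principle keeps the homogeneous part bounded by $\sup|\mathbf{v}^{r,\rho,l-1}(l-1,\cdot)|$, while $\phi^{v,l}_i$ consumes the excess and the difference $\psi^{l,0}_i-\phi^l_i$ of (\ref{Navlerayrrhol*++000}) is made uniformly small by taking $\rho_l$ small.

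For the local convergence I would set up method (iii): each $\mathbf{v}^{r,\rho,k,l}$ solves a scalar linear parabolic equation whose drift is frozen at the previous iterate, so a fundamental solution $\Gamma^l_f$ of (\ref{fundf}) exists and the difference $\delta\mathbf{v}^{r,\rho,k,l}$ obeys (\ref{Navlerayiterl**}) with zero initial data. Using the representation (\ref{burgfg}), the factorization (\ref{rightsiderel}) of the quadratic pressure difference, and a priori bounds on $\Gamma^l_f$ and its first spatial derivatives from the Levy parametrix expansion, I would establish the contraction (\ref{vfg12}) in the $|\cdot|^n_{1,2}$ norm with a constant $c\in(0,1)$ of order $\rho_l$, which is $<1$ uniformly in $l$ once the constant $C$ in $\rho_l=C/l$ is fixed small enough (this does not affect the divergence of $\sum_l\rho_l$). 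Summing the resulting geometric series shows $\sum_{k\ge 1}\delta\mathbf{v}^{r,\rho,k,l}$ is a small perturbation of $\mathbf{v}^{r,\rho,0,l}$, so the growth bound survives in the limit and $\mathbf{v}^{r,\rho,l}\in[H^2_l]^n$ via (\ref{navvl}). The decay (\ref{growthuburg}) must be carried along inductively through every substep $k$ and step $l$: the heat-type kernel preserves decay, and for $n=3$ the Riesz-type kernel $\partial_{x_i}K_n$ convolved against the order-$10$ integrand should leave an integral of order $5$, which is what the term-by-term differentiation Proposition and the compactification in b) require. Boundedness of $\mathbf{r}$ and of $\mathbf{v}^{r,\rho}$ then gives a bounded $\mathbf{v}=\mathbf{v}^{r,\rho}-\mathbf{r}$; H\"older continuity with decay upgrades it to $\mathbf{v}\in[C^{1,2}([0,\infty)\times\mathbb{R}^n)]^n$ with $v_i(t,\cdot)\in H^2$, and the standard energy estimate yields uniqueness.

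I expect the genuine obstacle to be the uniform-in-$l$ bookkeeping of the nonlocal pressure term against the growth and decay control simultaneously. Two points are delicate. First, that $\phi^l_i$ can control $\mathbf{v}^{r,\rho,l}$ and $\mathbf{r}^l$ at once while $\mathbf{r}^l$ itself stays bounded: the sets $D^{v,l-1}_{i}$ and $D^{r,0,l-1}_{i,\pm}$ are arranged disjoint for this reason, but one must check that the partition-of-unity construction keeps $\mathrm{dist}(D^{v,l-1}_{i,+},D^{v,l-1}_{i,-})\ge c$ uniformly, so that $|\phi^{v,s,l}_i|_{0,2}\le C_s$ and $|\phi^{r,s,l}_i|_{0,2}\le C_s$ hold with $C_s$ independent of $l$. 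Second, that the order-$5$ decay is genuinely reproduced, and not degraded, after convolution with $\partial_{x_i}K_n$ at every step, since a loss of even a fraction of a power accumulated over infinitely many steps would destroy the finiteness of (\ref{vnorm}) and with it the very meaning of the Leray projection form. These two estimates are where the scheme stands or falls, and they are exactly the places where the claimed $l$-independence of $C$, $C_r$, and the decay constants must be scrutinized.
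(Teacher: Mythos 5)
Your proposal follows essentially the same route as the paper's own proof: the same time discretization with $\rho_l\sim 1/l$, the same control function $\mathbf{r}$ driven by consumption terms $\phi^l_i$ solving a linearized equation, the same local contraction via fundamental solutions and the representation (\ref{burgfg}) with the factorization (\ref{rightsiderel}), the same maximum-principle plus small-$\rho_l$ growth control, and the same assembly into a global classical solution. The two delicate points you flag — uniform-in-$l$ separation of the sets $D^{v,l-1}_{i,\pm}$ so that $|\phi^{v,s,l}_i|_{0,2}$ stays bounded, and preservation of decay/$H^2$ bounds under the pressure convolution — are exactly the points the paper handles via Lemma \ref{lemr} and the linear-in-$l$ $H^2$ bookkeeping of step 3 of its proof.
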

\begin{rem}
The indicated more elementary methods of the proof below also show that
for all $t\geq 0$ and all $1\leq i\leq n$.
\begin{equation}\label{growthv}
    \vert \partial^\alpha_x \mathbf{v}(t,.)\vert\le \frac{C_{\alpha k}}{(1+\vert x\vert)^k}~\mbox{for}~k\leq 5~\mbox{ and }~|\alpha|\leq 2\qquad 
\end{equation}
for some constants $C_{\alpha k}>0$ which depend on dimension $n$ the viscosity constant $\nu$ and the initial data $\mathbf{h}$.
Moreover, below in Section 4 we shall use the proof of theorem  \ref{mainthm} in order to improve the scheme above for numerical purpose and in order to adapt it to initial-boundary value problems. Especially, the function $\mathbf{r}$ is not needed for the algorithm since the boundedness of the solution $\mathbf{v}$ is shown. The main purpose of the function $\mathbf{r}$ is to show boundedness of the solution $\mathbf{v}$. Once this is done it is not needed in order to define an algorithm. However, it may be useful in order to stabilize computations, especially in case of initial-boundary-value problems which occur in practice. 
\end{rem}

\begin{proof}
We do the proof in four steps.
\begin{itemize}
 \item[1)] In a first step we prove existence of a bounded local classical solution $\mathbf{v}^{\rho,l}$ assuming existence of a regular function $\mathbf{v}^{\rho,l-1}(l-1,.)$, i.e., we prove existence of the local solution $\mathbf{v}^{\rho,l}$ on $[l-1,l]\times{\mathbb R}^n$ for $l-1\geq 0$, where bounded data $v^{\rho,l-1}_i(l-1,.)\in C^2_b\cap H^2$ are given. Here $C^2_b\equiv C^2_b({\mathbb R}^n)$ denotes the space of  functions which have bounded derivatives up to second order. In this first step we may set $\mathbf{r}=0$ in order to simplify the local part of the argument. Recall that $\mathbf{v}^{\rho,l}=\mathbf{v}^{r,\rho,l}-\mathbf{r}$, and that $\mathbf{v}^{\rho,l}=\mathbf{v}^{0,\rho,l}=\mathbf{v}^{r,\rho,l}$ in case $\mathbf{r}=0$. Moreover, we show that the local solution has certain decay at spatial infinity (as indicated in the statement of theorem \ref{mainthm} above) if the data have this property. Especially, we show that for each component function $v^{r,\rho,l}_i$ of the function $\mathbf{v}^{\rho,l}$ we have
 \begin{equation}\label{step02}
 v^{r,\rho,l}_i(\tau,.)\in H^2
 \end{equation}
for all $\tau\in [l-1,l]$. For higher dimension $n>3$ the estimate
 \begin{equation}
 v^{r,\rho,l}_i(\tau,.)\in H^{s,p}
 \end{equation}
along with $\alpha =s-\frac{n}{p}$ for some $\alpha \in (0,1)$ also leads to the conclusion that the velocity functions $v^{r,\rho,l}_i(\tau,.)$ are H\"{o}lder continuous.
 \item[2)] In a second step we show that the local iteration of the first step of this proof can be extended in order to show  existence of a local solution $\mathbf{v}^{r,\rho,l}$  on $[l-1,l]\times{\mathbb R}^n$ for a certain class of functions $\mathbf{r}^l$ assuming existence of a regular function $\mathbf{v}^{r,\rho,l-1}(l-1,.)$ and a bounded regular function $\mathbf{r}^l$ with a linearly bounded integral magnitude for $r^l_i$ (as in (\ref{vnorm})), i.e., we show existence of a bounded  local solution $\mathbf{v}^{r,\rho,l}$ on $[l-1,l]\times{\mathbb R}^n$ for $l-1\geq 0$ where bounded data $v^{r,\rho,l-1}_i(l-1,.)\in C^2_b$ and $\mathbf{r}^{l-1}(l-1,.)\in C^2_b$ are given, and where
 \begin{equation}\label{rmagnitude}
 \int_{{\mathbb R}^n}\sum_{j,k=1}^n{\Big |}\left( \frac{\partial r^l_k}{\partial x_j}\frac{\partial r^l_j}{\partial x_k}\right) (t,y){\Big |}dy<\infty.
 \end{equation}
 Moreover, we show that the local solution has polynomial decay stated in theorem \ref{mainthm} if the initial data of the respective time step have this property.
 \item[3)] In a third step we show that there exists a recursively defined bounded function ${\mathbf r}$ and a global iterative scheme which solves for $\mathbf{v}^{r,\rho}=\mathbf{v}+\mathbf{r}$. Especially, for each time step number $l$ the bounded function $\mathbf{r}^l$ is defined via a linearized Navier-Stokes-type equation which has certain source term $\phi^l=(\phi^l_1,\cdots ,\phi^l_n)$ (among other source terms). These source terms $\phi^l_i$ are defined in terms of the functions $\mathbf{v}^{r,\rho,l-1}(l-1,.)$ and $\mathbf{r}^{l-1}(l-1,.)$ which  are  known from the previous time step. They are defined in order to control the growth of the function $\mathbf{v}^{r,\rho}$ and of the control function $\mathbf{r}^l$ itself. Here we use the fact that we have some freedom in order to define the functions $r^l_i$. Indeed these functions are defined in terms of the 'consumption terms' $\phi^l$. We shall conclude that the functions $\mathbf{v}^{r,\rho,l}$ and $\mathbf{r}$ are globally bounded, i.e., there exists a constant $C>0$ independent of $l$ such that for all $1\leq i\leq n$ and $l\geq 1$ we have
\begin{equation}
|v^{r,\rho,l}_i(\tau,.)|_ {H^2}\leq C,
\end{equation}
and uniformly with respect to $\tau$.
Moreover at each time step $l\geq 1$ we shall ensure that
\begin{equation}
|r^l_i(\tau,.)|_{H^2}\leq C,~|v^{\rho,l}_i(\tau,.)|_{H^2}=|v^{r,\rho,l}_i(\tau,.)-r^l_i(\tau,.)|_{H^2}\leq C.
\end{equation}
for some generic constant $C$. For $n=3$ this implies that we have a global bound in terms of a H\"{o}lder norm. Since we show that 
\begin{equation}
|v^{r,\rho,l}_i(\tau,.)|_ {C^1}\leq C,
\end{equation}
and uniformly with respect to $\tau$, and
\begin{equation}
|r^l_i(\tau,.)|_{C^1}\leq C,~|v^{\rho,l}_i(\tau,.)|_{C^1}=|v^{r,\rho,l}_i(\tau,.)-r^l_i(\tau,.)|_{C^1}\leq C.
\end{equation}
we have global boundedness to H\"{o}lder norms also in higher dimensions $n$, i.e., there exists a constant $C>0$ independent of $l$ and theres exists a H\"{o}lder constant $\alpha$ such that for all $1\leq i\leq n$ and $l\geq 1$ we have
\begin{equation}
|v^{r,\rho,l}_i|_{\alpha}\leq C,
\end{equation}
and
\begin{equation}
|r^l_i|_{\alpha}\leq C,~|v^{\rho,l}_i|_{\alpha}=|v^{r,\rho,l}_i-r^l_i|_{\alpha}\leq C,
\end{equation}
where $C>0$ is generic.
The construction of the function ${\mathbf r}$ which equals $\mathbf{r}^l$ at each time step $l$ is such that a) step 2 above can be applied with $\rho_l$ and ${\mathbf r}^l$ as they are specified in this step, and b) such that the functions ${\mathbf r}^l$ and their spatial derivatives up to second order on $(l-1,l]\times {\mathbb R}^n$ are bounded and have a finite integral magnitude (\ref{rmagnitude}). In \cite{BK2} the integral magnitude increases linearly in time. Hence the choice $\rho_l\sim \frac{1}{l}$. Applying an appropriate control function we can use a uniform step size $\rho_l\geq \rho$ for some $\rho>0$ independent of the tie step number $l$. Issues of computational efficiency make increasing $\rho_l$ with time step number $l\geq 1$ a matter of interest. For this reason we keep the index $l$ for the time stepsize $\rho_l$.
\item[4)] In a fourth step we show the existence of a globally bounded classical solution of the Navier Stokes equation, i.e., we show that $\mathbf{v}^{\rho}=\mathbf{v}^{r,\rho}+\mathbf{r}\in \left[ C^{1,2}\left([0,\infty)\times {\mathbb R} \right)\right]^n $, and that for all $l\geq 1$ we have $\mathbf{v}^{\rho,l}=\mathbf{v}^{r,\rho,l}+\mathbf{r}^l\in \left[ C^{1,2}_b\left([l-1,l]\times {\mathbb R} \right)\right]^n $ accordingly. This is not an immediate consequence of step 3) because for all $x\in {\mathbb R}^n$ the function $\tau \rightarrow \mathbf{r}(\tau,x)$ is only weakly differentiable at the integer values $l\in {\mathbb N}$ in general. Since the sequence $(\rho_l)$ is chosen such that $\sum_{l=1}^{N}\rho_l\uparrow \infty$ as $N\uparrow \infty$ we conclude that the global solution $\mathbf{v}$ in original time coordinates exists.
\end{itemize}

\subsection{step 1: proof of local existence of solutions at each time step} 

 The proof of step i) is in two substeps. First we prove a contraction property on the domain $[l-1,l]\times {\mathbf R}^n$ for the series $(v^{\rho,k,l}_i)_k,~1\leq i\leq n$ with
 \begin{equation}\label{vrhokl*}
 v^{\rho,k,l}_i=v^{\rho,0,l}_i+\sum_{m=1}^k\delta v^{\rho,m,l}_i,
 \end{equation}
 for a fixed time step $l$ where we assume that $\mathbf{v}^{\rho,l-1}(l-1,.)\in C^{2}_b\cap H^2$ at the $l$th time step. In a first substep we construct an iteration in $C^{1,2}_b\left([l-1,l]\times {\mathbb R}^n\right)$ which is not an iteration in a Banach space. However, it is useful to have classical representations of approximating solutions in order to do some estimates. The limit function is in $H^2$ (resp. $H^{2,p}$ for higher dimensions $n$) and therefore H\"{o}lder continuous. Therefore, in a second substep of this first step we show that we have a certain decay at spatial infinity  such that we can use one of the method a) (in case $n=3$ with Hilbert space theory, i.e., showing convergence of the series $v^{r,\rho,k,l}_i$ in the Hilbert space $H^2$, or showing convergence of the series $v^{r,\rho,k,l}_i$ in with respect to $H^{s,p}$ Banach spaces  in the case of general dimension $n$ ) or the even more elementary method  b) (where $n$ is an arbitrary dimension) in order to prove the existence of a local fixed point. The method c) of the introduction is considered in a subsequent paper- it becomes important if one considers the Navier-Stokes equation on manifolds.
  \begin{rem}
 Note that the argument of this first step is local. We establish some property for the function $\mathbf{v}^{\rho,l}$ assuming that some property holds for the function $\mathbf{v}^{\rho,l-1}(l-1,.)$ of the previous time step which serves as initial data for the $l$th time step, i.e., $\mathbf{v}^{\rho,l}(l-1,.)=\mathbf{v}^{\rho,l-1}(l-1,.)$. This means that the argument here is about local existence at an arbitrary time step $l$. The global existence (induction over $l$) is then considered with the introduction of the function ${\mathbf r}$ in the third step of this proof.
 \end{rem} 
For $l\geq 1$ assume that $\mathbf{v}^{\rho,l-1}(l-1,.)$ has been computed, i.e., the initial data of equation (\ref{Navlerayl}) have been computed, and assume that for all $1\leq i\leq n$
\begin{equation}
 |v^{\rho,l-1}_i|_{0}\leq C^{l-1}_{0},~|v^{\rho,l-1}_i|_{0,1}\leq C^{l-1}_{1}|,~|v^{\rho,l-1}_i|_{0,2}\leq C^{l-1}_{0,2},~|v^{\rho,l-1}_i|_{1,2}\leq C^{l-1}_{1,2}
\end{equation}
 for some constant $C^{l-1}_{0},~C^{l-1}_{1},~C^{l-1}_{0,2},~C^{l-1}_{1,2}$ which are given from the previous time step $l-1$. In order to get estimates on Hilbert spaces $H^{2}$  we use an inductive assumption for the $H^{2}$-norm, i.e. we assume that
 \begin{equation}
\sum_{i=1}^n\sum_{|\alpha|\leq 2} \int_{{\mathbb R}^n}|v^{\rho,l-1}_{i,\alpha}(l-1,x)|^2dx\leq C^{l-1}_{1,2}
 \end{equation}
 for some generic $C^{l-1}_{1,2}$
The reason for the difference to \cite{KB2} -where we have a bound which is linear in time- will become apparent in step 3 of this proof.
The superscript $l-1$ of the constants above indicates that we do local estimates with respect to time here, i.e. the constants may depend on the time step number $l$. Recall that at this stage of the argument we have not introduced any control function $r^l$ yet.

 Furthermore, in order to apply the alternative method b) described in the introduction we assume that  initial data function of the previous time step
 $x\rightarrow \mathbf{v}^{\rho,l-1}(l-1,x)=\left(v^{\rho,l-1}_1(l-1,x),\cdots ,v^{\rho,l-1}_n(l-1,x) \right)^T$ with components $v^{\rho,l-1}_i$ which  for all $1\leq i\leq n$ satisfy
\begin{equation}\label{growthh}
    \vert \partial^\alpha_x \mathbf{v}^{\rho,l-1}(l-1,x)\vert\le \frac{C_{\alpha k}}{(1+\vert x\vert)^k}\qquad 
\end{equation} 
for all $x\in\mathbb{R}^n$, and for all multiindices $|\alpha|\leq 2$ and and integers $1\leq k\leq 5$ with some constants $C_{\alpha k}$.

  In case $l=1$ (the first time step) we have $\mathbf{v}^{\rho,l-1}(l-1,.)=\mathbf{h}$ and the constants $C^{0}_{0},~C^{0}_{1}, C^{0}_{0,2}$ are given in terms of upper bounds of the components of the functions $h_i$ and derivatives of the functions $h_i$. The series (\ref{vrhokl*}) can be generated by an iterative application of the map
\begin{equation}\label{contract}
 F_l:\mathbf{f}\rightarrow \mathbf{v}^{f,\rho,l},
\end{equation}
starting with the value of $F_l$ applied to the initial data of step $l$, i.e., starting with the function $\mathbf{v}^{\rho,0,l}=F_l\left( \mathbf{v}^{\rho,l-1}(l-1,.)\right)$. Here, the function $\mathbf{v}^{\rho,l-1}(l-1,.)$  represents the final data of the previous step or the data $\mathbf{h}$ in case $l=1$. The function $\mathbf{v}^{f,\rho,l}$ is determined by the solution of the equation
\begin{equation}\label{Navlerayiterlfff}
\left\lbrace \begin{array}{ll}
\frac{\partial v^{f,\rho,l}_i}{\partial \tau}-\rho_l\nu\sum_{j=1}^n \frac{\partial^2 v^{f,\rho,l}_i}{\partial x_j^2} 
+\rho_l\sum_{j=1}^n f_j\frac{\partial v^{f,\rho,l}_i}{\partial x_j}= \\
\\ \hspace{1cm}\rho_l\int_{{\mathbb R}^n}\left( \frac{\partial}{\partial x_i}K_n(x-y)\right) \sum_{j,k=1}^n\left( \frac{\partial f_k}{\partial x_j}\frac{\partial f_j}{\partial x_k}\right) (\tau,y)dy,\\\\
\\
\mathbf{v}^{f,\rho,l}(l-1,.)=\mathbf{v}^{\rho,l-1}(l-1,.).
\end{array}\right.
\end{equation}
We want to show that the map $F_l$ is a contraction on this series with respect to some appropriate norm, i.e., we want to show that
\begin{equation}\label{contractspec}
|\delta \mathbf{v}^{\rho,k,l}|\leq \frac{1}{4}|\delta \mathbf{v}^{\rho,k-1,l}|
\end{equation}
for some suitable norms $|.|$ and $k\geq 1$, and where we start the series with $
 \mathbf{v}^{\rho,0,l}=F_l\left(\mathbf{v}^{\rho,l-1}\right)$. The norms are the $|.|_{1,2}$ norm in order to have classical representations of approximating solutions in terms of certain fundamental solutions, and the $|.|_{H^2}$- and $|.|_{H^{2,p}}$-norms in order finish the estimate for the limit according to method a). Furthermore, we shall show that we have some decay at infinity such that the alternative method $b)$ can be applied. This implies that we have an upper bound of the integral magnitude for $\mathbf{v}^{\rho,l}$, i.e.,
 we have a  upper bound of the magnitude
 \begin{equation}\label{vrhonorm}
 \int_{{\mathbb R}^n}\sum_{j,k=1}^n{\Big |}\left( \frac{\partial v^{\rho,l}_k}{\partial x_j}\frac{\partial v^{\rho,l}_j}{\partial x_k}\right){\Big |} (t,y)dy.
 \end{equation}
Since products of functions may be pointwise estimated by half the sum of the squares of the factor functions, the upper bound of (\ref{vrhonorm}) follows from upper bound of the $H^2$-norm of the solution function. Here we note that a $H^1$-norm estimate seems sufficient (in case $n=3$), but we shall need first spatial dervatives of the integral term for our estimates. Note that the term (\ref{vrhonorm}) is related to the integral term of the Leray projection form of the Navier-Stokes equation. Since first order derivatives of (\ref{vrhonorm}) can be estimated quite naturally in terms of $H^2$ norms, we use such an  estimate for our solution scheme of the Navier-Stokes equation. Interestingly, in case of $n=3$ this is also sufficient in order to have an embedding in H\"{o}lder continuous spaces.    
Recall that in case $l=1$ we have $\mathbf{v}^{\rho,l-1}=\mathbf{h}$.
 In order to prove a contraction property of type (\ref{contractspec}) we consider the map $F_l(\mathbf{f})=\mathbf{v}^{f,\rho,l}$ for general $\mathbf{f}\in \left[ C^{1,2}_b\right]^n$ first, 
where $\mathbf{v}^{f,\rho,l}$ satisfies the equation (\ref{Navlerayiterlfff}). We assume that the data $v^{\rho,l-1}_i(l-1,.)\in C^{1,2}_b$ are bounded (this is certainly true in the case $l=1$, where $\mathbf{v}^{\rho,l-1}_i(l-1,.)=\mathbf{h}$). Note again that in this first step we let $\mathbf{r}=0$. We consider the methods a) and b) of the introduction and consider first the normed space $C^{1,2}_b\left(D^{\tau}_l\right)$ along with the norm $|.|_{1,2}$ of the introduction. 
 Note that this normed space, i.e., $C^{1,2}_b$ equipped with the norm $|.|_{1,2}$ is not a Banach space (especially, because the domain is not compact), but we can proceed according to the methods a) or b) of the introduction. We shall determine $\mathbf{v}^{\rho,l}=\left(v^{\rho,l}_1,\cdots ,v^{\rho,l}_n\right)^T$ as a limit of a functional series with members in $C^{1,2}_b$, and with an additional decay at infinity, i.e., for all $1\leq i\leq n$ we construct $v^{\rho,l}_i$ as a limit of the approximations $v^{\rho,k,l}_i$
\begin{equation}
C^{1,2}_b\left(D^{\tau}_l\right)\ni v^{\rho,k,l}_i=v^{\rho,0,l}_i+\sum_{m=1}^{k} \delta v^{\rho,m,l}_i,
\end{equation}
where we prove in a second substep that all functions $v^{\rho,k,l}_i$ have a certain decay at infinity.
Here, the function 
$\mathbf{v}^{\rho,0,l}=\left(v^{\rho,0,l}_1,\cdots ,v^{\rho,0,l}_n\right)^T$ solves
\begin{equation}\label{Navlerayl0intr0}
\left\lbrace \begin{array}{ll}
\frac{\partial v^{\rho,0,l}_i}{\partial \tau}-\rho_l\nu\sum_{j=1}^n \frac{\partial^2 v^{\rho,0,l}_i}{\partial x_j^2} 
+\rho_l\sum_{j=1}^n v^{\rho,l-1}_j\frac{\partial v^{\rho,0,l}_i}{\partial x_j}\\
\\
=\rho_l\int_{{\mathbb R}^n}\left( \frac{\partial}{\partial x_i}K_n(x-y)\right) \sum_{j,k=1}^n\left( \frac{\partial v^{\rho,l-1}_k}{\partial x_j}\frac{\partial v^{\rho,l-1}_j}{\partial x_k}\right) (\tau,y)dy,\\
\\
\mathbf{v}^{\rho,0,l}(l-1,.)=\mathbf{v}^{\rho,l-1}(l-1,.).
\end{array}\right.
\end{equation}
Furthermore, for $k\geq 1$ we consider the difference functions $\delta \mathbf{v}^{\rho,k,l}=\mathbf{v}^{\rho,k,l}-\mathbf{v}^{\rho,k-1,l}$   where for all $k$ we have $\mathbf{v}^{\rho,k,l}=\mathbf{v}^{r,\rho,k,l}|_{\mathbf{r}=0}$. Note that we have $\delta \mathbf{v}^{\rho,k,l}$ recursively defined to be solutions of the equations
\begin{equation}\label{Navlerayl0intdelta}
\left\lbrace \begin{array}{ll}
\frac{\partial \delta v^{\rho,k,l}_i}{\partial \tau}-\rho_l\nu\sum_{j=1}^n \frac{\partial^2 \delta v^{\rho,k,l}_i}{\partial x_j^2} 
+\rho_l\sum_{j=1}^n v^{\rho,k-1,l}_j\frac{\partial \delta v^{\rho,k,l}_i}{\partial x_j}=\\
\\
-\rho_l\sum_{j=1}^n \delta v^{\rho,k-1,l}_j\frac{\partial  v^{\rho,k,l}_i}{\partial x_j}\\
\\ + \rho_l\int_{{\mathbb R}^n}\left( \frac{\partial}{\partial x_i}K_n(x-y)\right) \sum_{j,k=1}^n\left( \frac{\partial v^{\rho,k-1,l}_k}{\partial x_j}\frac{\partial v^{\rho,k-1,l}_j}{\partial x_k}\right) (\tau,y)dy\\
\\
- \rho_l\int_{{\mathbb R}^n}\left( \frac{\partial}{\partial x_i}K_n(x-y)\right) \sum_{j,k=1}^n\left( \frac{\partial v^{\rho,k-2,l}_k}{\partial x_j}\frac{\partial v^{\rho,k-2,l}_j}{\partial x_k}\right) (\tau,y)dy,\\
\\
\delta\mathbf{v}^{\rho,k,l}(l-1,.)=0.
\end{array}\right.
\end{equation}
Here, for $k=1$ we denote $v^{\rho,k-2,l}_i=v^{\rho,-1,l}_i:=v^{\rho,l-1}_i$.
Solving these equations recursively leads us successively to the functions 
\begin{equation}\label{seriesk}
\mathbf{v}^{\rho,k,l}=\mathbf{v}^{\rho,0,l}+\sum_{m=1}^k\mathbf{v}^{\rho,m,l}
\end{equation}
 which approximate the local solution $\mathbf{v}^{\rho,l}$. In order to apply the methods a) and b) of the introduction we show first that for $\rho_l$ small enough we have
 \begin{equation}\label{deltacontract}
|\delta \mathbf{v}^{\rho,k,l}|_{1,2}\leq \frac{1}{4}|\delta \mathbf{v}^{\rho,k-1,l}|_{1,2}.
\end{equation}
Then in order to apply method a) of the introduction  we show that for all $\tau\in [l-1,l]$ we have
\begin{equation}\label{estH2*}
|\delta \mathbf{v}^{\rho,k,l}(\tau,.)|_{H^2}\leq \frac{1}{4}|\delta \mathbf{v}^{\rho,k-1,l}(\tau,.)|_{H^2}.
\end{equation} 
uniformly in $\tau$. We shall also consider how polynomial decay of some degree at infinity is preserved by the local scheme if the initial data from the previous time step have this property. 
In this work on the Navier-Stokes equation system we use methods a) and  b) of the introduction in order to show the existence of time-local solutions for general dimension by elementary means. Recall that in case of method b) it is shown that the members of the functional series $(\mathbf{v}^{\rho,k,l})_k$ satisfy  a specific polynomial decay at infinity such that a spatial transformation leads us to a functional series in a Banach space, and this method applies for any dimension $n$. Hence we have two different proofs in the case $n=3$ corresponding to the methods a) and b) of the introduction. 
 \begin{rem}
 Locally a sharper estimate  with respect to
the H\"{o}lder space $C_{1+\alpha/2,2+\alpha}\left(D^{\tau}_l\right)$ is possible. However, it is easier to control the growth with respect to the $|.|_{1,2}$ norm. This control of the growth is essential for the global existence proof. Classical Schauder estimates introduce a potential term with a certain sign, and this would lead to a exponentially decreasing series of time step size numbers $\rho_l$ if applied naively. In this case the sum of the time step size numbers $\rho_l$ is finite and the scheme is not global in time. Hence, method c) of the introduction may be applied locally, but a globally some additional work is needed.
\end{rem}
Note that in case $n=3$ it is essential to show in the second substep that $v^{\rho,k,l}_i(\tau,.)\in H^2\left( {\mathbb R}^n\right) $ for all $1\leq i\leq n$, because the limit $v^{\rho,l}_i(\tau,.)\in H^2\left({\mathbb R}^n\right)$ for all $1\leq i\leq n$ (uniformly in $\tau$) implies that 
\begin{equation}
v^{\rho,l}(\tau,.)_i\in C^{\alpha}\subset H^2\left( {\mathbb R}^3\right)
\end{equation}
for some $\alpha>0$ uniformly in $\tau$ for all $1\leq i\leq n$. 
\begin{rem} In order to extend the proof to general dimension $n$ we may use an estimate based on Zygmund spaces.
For all $s\in {\mathbb R}^n$ and $p\in (1,\infty)$
  \begin{equation}
  H^{s,p}\left({\mathbb R}^n\right)\subset C^{r}_*\left({\mathbb R}^n\right)
  \end{equation}
for $r=s-\frac{n}{p}$. Hilbert space estimates for $n>3$ are harder to obtain. The relation $v^{\rho,k,l}_i(\tau,.)\in H^s\left( {\mathbb R}^n\right)$ for some $s\geq m+\frac{n}{2}+\alpha$ and some $m\geq 0$, and some $\alpha \in (0,1)$ for all $1\leq i\leq n$ 
 implies
 \begin{equation}
v^{\rho,l}_i(\tau,.)\in C^{\alpha}\subset H^s\left( {\mathbb R}^n\right).
\end{equation}
\end{rem}
Concerning method b) and c) of the introduction we note that the spatial transformation on a finite domain does not preserve the uniform ellipticity of the operator (represented by the positive constant $\nu$ in the original equation). In this respect the method a) is a little bit more elementary than methods b) and c) of the introduction, because we do not need a spatial coordinate transformation in this case.
In order to check the contraction property of the functional series $\left( \mathbf{v}^{\rho,k,l}\right)_{k\geq 0}$ we  check the contraction property of the functional (\ref{contract}) on an appropriate functional subset of the function space $\left[ C^{1,2}_b\left({\mathbb R}^n\right)\right]^n$.
First let us stick with the original spatial coordinates and consider the equation for $v^{f,\rho,l}_i$ more closely. Using relation (\ref{rightsiderel}) from equation (\ref{burgfg}) we get 

\begin{equation}\label{burgfg*}
\begin{array}{ll}
v^{f,\rho,l}_i(\tau,x)-v^{g,\rho,l}_i(\tau,x)=\\
\\
=-\rho_l\int_{(l-1)}^{\tau}\int_{{\mathbb R}^n}\sum_{j=1}^n\left( f_j-g_j\right)(s,y)\frac{\partial v^{g,\rho,l}_i}{\partial x_j}(s,y)\Gamma^l_f(\tau,x;s,y)dyds+\\
\\
\int_{l-1}^{\tau}\rho_l\int_{{\mathbb R}^n}\int_{{\mathbb R}^n}K_{n,i}(z-y)\times\\
\\
{\Big (} \left( \sum_{j,k=1}^n\left( f_{k,j}+g_{k,j}\right)(s,y) \right) \times\\
\\
\left( \left( f_{j,k}-g_{j,k}\right)(s,y) \right) {\Big)}\Gamma^l_f(\tau,x;s,z)dydzds,\\
\end{array}
\end{equation}
where  $\Gamma^l_f$ is the fundamental solution of the equation
\begin{equation}
\frac{\partial \Gamma^l_f}{\partial \tau}-\rho_l\nu \Delta \Gamma^l_f+\rho_l\sum_{j=1}^n f_j\frac{\partial \Gamma^l_f}{\partial x_j}=0
\end{equation}
on the domain $D^{\tau}_l$.

Here, and in the following we write $\left( f_j-g_j\right)(s,y):=f_j(s,y)-g_j(s,y)$, $\left( f_{j,k}-g_{j,k}\right)(s,y)= f_{j,k}(s,y)-g_{j,k}(s,y)$ etc. for simplicity of notation.
We show that $\rho_l$ can be chosen such that the iterations
\begin{equation}
F_l^m(\mathbf{v}^{\rho,l-1})\in S_0,
\end{equation}
where $S_0$ is a subset of $C^{1,2}_b$, i.e.,
\begin{equation}\label{S0}
\mathbf{f},\mathbf{g}\in \left\lbrace \mathbf{f}||f_i|_{1,2}\leq 2C^{l-1}_{1,2}\& |f_i(\tau,.)|_{H^{2}}\leq 2 C^{l-1}_{1,2}~\mbox{unif.}~\forall \tau\in [l-1,l]\right\rbrace =:S_{0}.
\end{equation}
Recall that we assumed that
\begin{equation}
|v^{\rho,l-1}_i(l-1,.)|_{1,2}\leq C^{l-1}_{1,2} \mbox{ and }|v^{\rho,l-1}_i(l-1,.)|_{H^2}\leq C^{l-1}_{1,2}
\end{equation}
The choice of the space $S_0$ is related to the fact that we use classical representations of approximating solutions which may be estimated in terms of Gaussian estimates. These Gaussian estimates are estimates of convolutions which are estimated by  applying a generalized Young inequality (cf. below).
\begin{rem}
You may ask why we consider the contraction in the space $C^{1,2}_b$ with norm $|.|_{1,2}$ at all, since this is not a Banach space. The reason is that classical representations are very useful when we establish a local scheme. This will become apparent in step 3 of this proof.
\end{rem}

The factor $2$ in the definition of $S_0$ is due to the fact that the first order coefficients vary with the the local iteration number $k$ (see below). The requirement that $h_i\in H^{2}$ in (\ref{S0}) is due to the fact that we need to establish a contraction with respect to the $H^2$ norm in order to apply method a) of the introduction. 
In order to get a uniform bound of this first order coefficients which are independent of the iteration number $k$ we establish an estimate of the form 
\begin{equation}\label{burgfgestclass}
\begin{array}{ll}
\sum_{i=1}^n|v^{f,\rho,l}_i-v^{g,\rho,l}_i|_{1,2}\leq 
\frac{1}{4}
 |\mathbf{f}-\mathbf{g}|^n_{1,2}.
\end{array}
\end{equation}
Recall from above that we equip the vector valued functions of the classical function space $C^{1,2}_b$ with the norm
\begin{equation}
|\mathbf{f}|^{n}_{1,2}=\sum_{i=1}^n|f_i|_{1,2}
\end{equation}
(we may also consider the maximum etc. instead of the sum, of course, but such considerations do matter only if we consider implementations of associated algorithms; the reference to the domain $D^{\tau}_l$ is not denoted if it is clear from the context for sake of simplicity of notation).

\begin{rem} 
Note that the estimate (\ref{burgfgestclass}) is one step of our construction of a solution in order to apply the methods of item a) and b) of the introduction.
Locally in time there is a refined estimate for H\"{o}lder spaces, i.e., we can get estimates of the form 
\begin{equation}\label{burgfgestalpha}
\begin{array}{ll}
\sum_{i=1}^n|v^{f,\rho,l}_i-v^{g,\rho,l}_i|_{1+\alpha/2,2+\alpha}\leq 
c
 |\mathbf{f}-\mathbf{g}|^n_{1+\alpha/2,2+\alpha}.
\end{array}
\end{equation}
Here, the 
H\"{o}lder norm of the vector-valued function $\mathbf{f}$ is
\begin{equation}
|\mathbf{f}|^{n}_{1+\alpha/2,2+\alpha}=\sum_{i=1}^n|f_i|_{1+\alpha/2,2+\alpha}.
\end{equation}
Then the method c) of the introduction can be used in order to construct a solution at each time step $l$. The application of the methods a) and b) is described below.
\end{rem}
In order to prove the contraction property (\ref{burgfgestclass}) it is essential to estimate the second order derivatives of the difference $v^{f,\rho,l}_{i}-v^{g,\rho,l}_{i}$, i.e., it is essential to estimate 
\begin{equation}
 v^{f,\rho,l}_{i,k,m}-v^{g,\rho,l}_{i,k,m}=\frac{\partial^2}{\partial x_k\partial x_m}\left( v^{f,\rho,l}_{i}-v^{g,\rho,l}_{i}\right). 
\end{equation}
Note that we consider the case $r^l_i=0$ for all $1\leq i\leq n$ in this first step of the proof.

We consider the two summands on the right side of (\ref{burgfg*}) separately. Accordingly, let us define
\begin{equation}\label{firstsum}
\begin{array}{ll}
v^{f,\rho,l,1}_i(\tau,x)-v^{g,\rho,l,1}_i(\tau,x):=\\
\\
=-\rho_l\int_{(l-1)}^{\tau}\int_{{\mathbb R}^n}\sum_{j=1}^n\left( f_j-g_j\right)(s,y)\frac{\partial v^{g,\rho,l}_i}{\partial x_j}(s,y)\Gamma^l_f(\tau,x;s,y)dyds,
\end{array}
\end{equation}
and
\begin{equation}\label{secondsum}
\begin{array}{ll}
v^{f,\rho,l,2}_i(\tau,x)-v^{g,\rho,l,2}_i(\tau,x):=\\
\\
=\int_{(l-1)}^{\tau}\rho_l\int_{{\mathbb R}^n}\int_{{\mathbb R}^n}K_{n,i}(z-y)\times\\
\\
{\Big (} \left( \sum_{j,k=1}^n\left( f_{k,j}(s,y)+g_{k,j}(s,y)\right) \right) \times\\
\\
\left( \sum_{j,k=1}^n\left( f_{j,k}(s,y)-g_{j,k}(s,y)\right) \right) {\Big)}\Gamma^l_f(\tau,x;s,z)dydzds,
\end{array}
\end{equation}

Since $\mathbf{f},\mathbf{g}\in \left[ C^{1,2}_b\left(D^{\tau}_l\right) \right]^n$ we know that the classical fundamental solution $\Gamma^l_f$ exists in its classical Levy expansion form. Since the initial data are 
$C^{1,2}_b$ we know that $v^g_i$ is in $C^{1,2}_b\left(D^{\tau}_l\right)$ for all $1\leq i\leq n$.
 
 Especially, $(\tau,x)\rightarrow \sum_{j=1}^n\left( f_j-g_j\right)(\tau,x) v^{g,\rho,l}_{i,j}(\tau,x)$ is H\"{o}lder continuous in the spatial variables uniformly in $\tau \in [l-1,l]$. 
 
 Hence, classical Levy expansion theory of fundamental solutions (cf. \cite{F2}) shows that the function $v^{f,\rho,l,1}_i(\tau,x)-v^{g,\rho,l,1}_i(\tau,x)$ has spatial derivatives up to second order and that for spatial derivative of second order of the first summand we have the representation
\begin{equation}\label{v1}
\begin{array}{ll}
v^{f,\rho,l,1}_{i,j,k}(\tau,x)-v^{g,\rho,l,1}_{i,j,k}(\tau,x):=\\
\\
-\rho_l\int_{(l-1)}^{\tau}\int_{{\mathbb R}^n}\sum_{j=1}^n\left( f_j-g_j\right)(s,y)\frac{\partial v^{g,\rho,l}_i}{\partial x_j}(s,y)\Gamma^l_{f,j,k}(\tau,x;s,z)dydzds.
\end{array}
\end{equation}
We shall see below how to rewrite this with only one spatial derivative of the adjoint of the fundamental solution $\Gamma^l_f$. 
Next concerning the function  $v^{f,\rho,l,2}_i-v^{g,\rho,l,2}_i$ for fixed time parameter $s$ the integral function part
\begin{equation}\label{poiss1}
\begin{array}{ll}
x\rightarrow \int_{{\mathbb R}^n}K_{n,i}(x-y)\times\\
\\
{\Big (} \left( \sum_{j,k=1}^n\left( f_{k,j}(s,y)+g_{k,j}(s,y)\right) \right) \times\\
\\
\left( \sum_{j,k=1}^n\left( f_{j,k}(s,y)-g_{j,k}(s,y)\right) \right) {\Big)}dy 
\end{array}
\end{equation}
looks formally like the solution of a Poisson equation
\begin{equation}\label{poiss2}
-\Delta p=\sum_{j,k=1}^n\left( f_{k,j}f_{j,k} - g_{k,j}g_{j,k}\right).
\end{equation}
It is indeed a solution if the right side is in $L^1$, if pointwise products of functions of type $x\rightarrow f_k(s,x)$ and $x\rightarrow g_k(s,x)$ are in $H^{1,1}$. Again, since
\begin{equation}
  |f_{k,j}f_{j,k}|(\tau,x)\leq  \frac{1}{2}\left( |f_{k,j}|^2(\tau,x)+|f_{j,k}|^2(\tau,x)\right) 
\end{equation}
it is sufficient that we have $f_k(\tau,.),g_k(\tau,.)\in H^2$ (even $f_k(\tau,.),g_k(\tau,.)\in H^1$ is sufficient at this stage).
We shall see that the functions which are produced by iterations of the functional $F_l$ starting with $v^{\rho,l}_i$ satisfy this requirement. Therefore in the following we may assume that this requirement is satisfied such that the function in (\ref{poiss1}) exists. We shall consider this in more detail later in this first step of the proof, and see that it follows from our polynomial decay condition on the initial data $v^{\rho,l-1}_i$ and the properties of the functional $F_l$. The following fact is rather classical.

\begin{lem}
Let $n\geq 3$, and let $f\in L^1\left({\mathbb R}^n\right)$. Let $K$ be the fundamental solution for the Laplacian. Then 
\begin{equation}
(t,x)\rightarrow u(t,x):=\int_{{\mathbb R}^n}f(y)K(x-y)dy
\end{equation}
is a well-defined locally integrable function which solves the Poisson equation on ${\mathbb R}^n$ with data $f$, i.e.,
\begin{equation}
\Delta u=f.
\end{equation}
\end{lem}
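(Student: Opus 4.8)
The plan is to read the conclusion $\Delta u = f$ in the distributional (weak) sense, which is the correct interpretation for merely integrable data $f$, and to reduce it to the single defining property of the fundamental solution, namely $\Delta K=\delta_0$ in $\mathcal{D}'\left({\mathbb R}^n\right)$ (for the kernel $K_n$ of the excerpt one checks $\Delta\left[\frac{1}{(2-n)\omega_n}|x|^{2-n}\right]=\delta_0$, so the sign in the statement is consistent). Concretely I would split the argument into two independent parts: first the mere well-definedness and local integrability of the convolution $u=K*f$, and then the weak Poisson identity obtained by transposing the Laplacian onto a test function and invoking $\Delta K=\delta_0$. I would also note in passing that the variable $t$ in the statement is a passive parameter, so $u$ is really a function of $x$ alone.

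For local integrability I would use that $K=K_n\in L^1_{\mathrm{loc}}\left({\mathbb R}^n\right)$: since $x\mapsto|x|^{\mu}$ is locally integrable precisely when $\mu>-n$ and here $\mu=2-n>-n$, the kernel is integrable near the origin (exactly the observation already made in the introduction). Fixing a ball $B_R$ and using Tonelli, $\int_{B_R}|u(x)|\,dx\le\int_{{\mathbb R}^n}|f(y)|\left(\int_{B_R}|K(x-y)|\,dx\right)dy$, so it suffices to bound $M_R:=\sup_{y}\int_{B_R(-y)}|K(z)|\,dz$ uniformly in $y$. I would bound this by distinguishing $|y|\le 2R$, where $B_R(-y)\subset B_{3R}$ and the integral is at most $\int_{B_{3R}}|K|<\infty$, from $|y|>2R$, where $|z|\ge R$ on $B_R(-y)$ forces $|K(z)|\le\frac{1}{(n-2)\omega_n}R^{2-n}$ so that the integral is at most a constant times $R^{n}\cdot R^{2-n}$. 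Hence $\int_{B_R}|u|\le M_R\|f\|_{L^1}<\infty$, so $u$ is finite a.e. and $u\in L^1_{\mathrm{loc}}$.

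For the weak equation I would take $\varphi\in C_c^{\infty}\left({\mathbb R}^n\right)$ and compute $\int u\,\Delta\varphi\,dx=\int\!\!\int f(y)K(x-y)\Delta\varphi(x)\,dy\,dx$. The interchange of integrals is legitimate by Fubini because, with $\Delta\varphi$ bounded and compactly supported, the double integral of the absolute value is controlled by $\|\Delta\varphi\|_{\infty}$ times the estimate of the previous paragraph applied to any $B_R\supset\operatorname{supp}\varphi$. After the substitution $z=x-y$ the inner integral becomes $\int K(z)\,\Delta\varphi(z+y)\,dz$, which by the defining property $\Delta K=\delta_0$ (applied to the test function $z\mapsto\varphi(z+y)$) equals $\varphi(y)$. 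Therefore $\int u\,\Delta\varphi\,dx=\int f(y)\varphi(y)\,dy$, i.e. $\Delta u=f$ in $\mathcal{D}'$.

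The only genuinely substantive point — and the one I would flag as the obstacle — is the well-definedness for data that is only $L^1$ with no decay assumed: the uniform bound $M_R<\infty$ is what makes $K*f$ meaningful without extra hypotheses, and the same bound is what licenses Fubini. The identity $\Delta K=\delta_0$ itself is not re-proved, since it is built into the hypothesis that $K$ is the fundamental solution. I would finally remark that the statement is only a weak one: classical (pointwise) second derivatives with $\Delta u=f$ holding in the ordinary sense require more than $f\in L^1$ (e.g. local H\"older continuity of $f$); in the application the right-hand side arising from $\sum_{j,k}(f_{k,j}f_{j,k}-g_{k,j}g_{j,k})$ lies in $L^1$ because $|f_{k,j}f_{j,k}|\le\frac12(|f_{k,j}|^2+|f_{j,k}|^2)$ together with $f(\tau,\cdot)\in H^2$, which is exactly the hypothesis under which this lemma is invoked.
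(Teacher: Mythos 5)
Your proof is correct. The paper does not actually prove this lemma at all --- it introduces it with ``the following fact is rather classical'' and immediately moves on (adding only a remark about the $n=2$ case needing $\int f(z)\log|z|\,dz<\infty$) --- so there is no argument in the source to compare yours against; you have simply supplied the standard proof that the author omitted. Your two ingredients are exactly the right ones for merely $L^1$ data: the uniform-in-$y$ bound $M_R=\sup_y\int_{B_R}|K(x-y)|\,dx<\infty$ (split into $|y|\le 2R$, where one lands inside $B_{3R}$ and uses $K\in L^1_{\mathrm{loc}}$ because $2-n>-n$, and $|y|>2R$, where $|K|\le c_nR^{2-n}$ on the relevant ball) both establishes $u\in L^1_{\mathrm{loc}}$ via Tonelli and licenses the Fubini interchange in the weak identity; and the transposition onto a test function reduces everything to $\Delta K=\delta_0$, whose sign matches the paper's normalization $K_n=\frac{1}{(2-n)\omega_n}|x|^{2-n}$. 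Your closing caveat --- that the conclusion is only distributional and that classical second derivatives need more than $f\in L^1$ --- is also consistent with how the paper uses the lemma: it immediately follows it with a separate H\"older regularity lemma ($f\in C^{k+\alpha}\Rightarrow u\in C^{k+2+\alpha}$) to upgrade the distributional solution, and the $L^1$ hypothesis is verified there exactly as you say, via $|f_{k,j}f_{j,k}|\le\frac12\bigl(|f_{k,j}|^2+|f_{j,k}|^2\bigr)$ and $f(\tau,\cdot)\in H^2$.
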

\begin{rem}
If $n=2$ then we need the stronger assumption $\int f(z)\log |z|dz<\infty$ and the theorem is still true.
\end{rem}
Hence, if $f_j,g_j\in S_0$, then the function (\ref{poiss1}) is indeed the solution of (\ref{poiss2}). Concerning the regularity of the solution $u$ of the Poisson equation we have another classical result due to H\"{o}lder himself.
\begin{lem}
Let $n,k$ be some positive integers. Let $\Omega\subset {\mathbb R}^n$ be an open set, and let $f\in C^{k+\alpha}\left(\Omega\right)$. If $u$ is a distributive solution of
\begin{equation}
\Delta u=f,
\end{equation}
then $u\in C^{2+\alpha+k}\left(\Omega\right)$.
\end{lem}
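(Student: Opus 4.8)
The plan is to reduce the statement to the base case $k=0$ by a bootstrap argument on $k$, and to settle that base case by the classical potential-theoretic Schauder estimate together with Weyl's lemma. Since the assertion $u\in C^{2+\alpha+k}(\Omega)$ is local in nature, it suffices to prove the claimed regularity on every open ball $B'$ with $\overline{B'}\subset\Omega$. First I would fix such a ball $B'$, a slightly larger concentric ball $B$ with $\overline{B'}\subset B\subset\overline{B}\subset\Omega$, and a cutoff $\chi\in C^\infty_c(B)$ with $\chi\equiv 1$ on $B'$.

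For the base case $k=0$ the argument runs as follows. The function $g:=\chi f$ has compact support and lies in $C^\alpha\left({\mathbb R}^n\right)$, hence in $L^1$, so by the preceding lemma its Newtonian potential
\[
w(x):=\int_{{\mathbb R}^n}g(y)K(x-y)\,dy
\]
is a locally integrable function with $\Delta w=g=f$ on $B'$. The heart of the matter is to show $w\in C^{2+\alpha}$. One differentiates under the integral once, which is legitimate because $\partial_l K$ is locally integrable (this is exactly the integrability of $x\mapsto\partial_{x_l}K_n(x)$ recorded in the introduction), giving $\partial_l w(x)=\int\partial_l K(x-y)g(y)\,dy$. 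For the second derivatives naive differentiation fails, since $\partial_l\partial_m K$ is homogeneous of degree $-n$ and thus not integrable; instead one uses, on any ball $B_0\supset\operatorname{supp}g$, the representation
\[
\partial_l\partial_m w(x)=\int_{B_0}\partial_l\partial_m K(x-y)\bigl(g(y)-g(x)\bigr)\,dy-g(x)\int_{\partial B_0}\partial_l K(x-y)\,\nu_m\,dS_y,
\]
with $\nu$ the outward normal. The H\"older continuity of $g$ is precisely what makes the singular integral converge absolutely and, via the standard Calder\'on--Zygmund type estimate exploiting the cancellation of $\partial_l\partial_m K$ over spheres, what makes $\partial_l\partial_m w$ itself $\alpha$-H\"older; this yields $w\in C^{2+\alpha}(B')$.

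Then $u-w$ satisfies $\Delta(u-w)=f-f=0$ on $B'$ in the distributional sense, so by Weyl's lemma $u-w$ is harmonic and therefore (real-analytic, in particular) $C^\infty$ on $B'$. Consequently $u=w+(u-w)\in C^{2+\alpha}(B')$, and since $B'$ was arbitrary the case $k=0$ follows. For the inductive step I would assume the statement for $k-1$ and take $f\in C^{k+\alpha}(\Omega)$. Applying $\partial_m$ to $\Delta u=f$ distributionally gives $\Delta(\partial_m u)=\partial_m f$ with $\partial_m f\in C^{(k-1)+\alpha}(\Omega)$; the induction hypothesis applied to the distributional solution $\partial_m u$ then yields $\partial_m u\in C^{2+(k-1)+\alpha}=C^{k+1+\alpha}$ for every $m$, whence $u\in C^{2+k+\alpha}(\Omega)$. (For $n=2$ one replaces $K$ by the logarithmic kernel $K_2$ and argues identically, using the stronger integrability condition of the preceding remark.)

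The main obstacle is the base case, and within it the H\"older estimate for $\partial_l\partial_m w$: this is the genuine analytic content, namely the interior Schauder estimate. It requires the careful splitting of the singular integral according to the distance from the singularity, the cancellation property of $\partial_l\partial_m K$ on spheres, and a decomposition of the domain into near and far parts in order to bound the $\alpha$-H\"older seminorm of $\partial_l\partial_m w$ uniformly in $x$. Everything else---the localization, the identity $\Delta w=f$, Weyl's lemma, and the differentiation bootstrap---is routine once this estimate is established.
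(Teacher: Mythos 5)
The paper never actually proves this lemma: it is quoted as a classical result (``due to H\"older himself'') and used as a black box, both here and in Section 3. Your proposal therefore cannot coincide with the paper's argument --- there is none --- but it is a correct rendition of the standard potential-theoretic proof (essentially Chapter 4 of Gilbarg--Trudinger): localization by a cutoff, the Newtonian potential $w=g\star K$ of $g=\chi f$, differentiation under the integral for the first derivatives, the classical second-derivative representation in which the difference $g(y)-g(x)$ tames the non-integrable singularity of $\partial_l\partial_m K$ and a spherical boundary term appears, the interior Schauder estimate for $[\partial_l\partial_m w]_\alpha$, Weyl's lemma for the harmonic remainder $u-w$, and induction on $k$ by differentiating the equation. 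Two small points deserve explicit mention. First, in the induction step, to pass from ``$\partial_m u\in C^{k+1+\alpha}$ for all $m$'' to ``$u\in C^{k+2+\alpha}$'' you should note that the base case already gives $u\in C^{2+\alpha}$, so $u$ is a genuine $C^2$ function whose distributional first derivatives coincide with the classical ones; only then is the conclusion immediate. Second, the H\"older bound for the second derivatives of the potential --- the Calder\'on--Zygmund-type estimate exploiting the cancellation of $\partial_l\partial_m K$ on spheres --- is invoked rather than proved; you correctly flag it as the analytic core, and citing it is defensible given that the paper itself cites the entire lemma without proof, but a self-contained write-up would have to carry out that near/far splitting of the singular integral in detail.
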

If $f_j,g_k\in C^{1,2}_b$ for $1\leq j,k\leq n$, then the first order derivatives $f_{j,l}, g_{k,m}$  are H\"{o}lder continuous. This implies that the function of equation (\ref{poiss1}) is in $C^{1+\alpha}$ (note the derivative of the kernel $K$). 
Hence, the integral  
\begin{equation}\label{v2*}
\begin{array}{ll}
v^{f,\rho,l,2}_{i,j,k}(\tau,x)-v^{g,\rho,l,2}_{i,j,k}(\tau,x):=\\
\\
=\int_{(l-1)}^{\tau}\rho_l\int_{{\mathbb R}^n}\int_{{\mathbb R}^n}K_{n,i}(z-y)\times\\
\\
{\Big (} \left( \sum_{j,k=1}^n\left( f_{k,j}(s,y)+g_{k,j}(s,y)\right) \right) \times\\
\\
\left( \sum_{j,k=1}^n\left( f_{j,k}(s,y)-g_{j,k}(s,y)\right) \right) {\Big)}\Gamma^l_{f,j,k}(\tau,x;s,z)dydzds
\end{array}
\end{equation}
exists and represents the second order derivatives of the second term (\ref{secondsum}).
The equations (\ref{v1}) and (\ref{v2*}) contain second order derivatives of the fundamental solution $\Gamma^l_{f}$. Note that the function 
\begin{equation}\label{source}
\begin{array}{ll}
z\rightarrow \int_{{\mathbb R}^n}K_{n,i}(z-y)\times\\
\\
{\Big (} \left( \sum_{j,k=1}^n\left( f_{k,j}+g_{k,j}\right)(s,y) \right) \times\\
\\
\left( \sum_{j,k=1}^n\left( f_{j,k}-g_{j,k}\right)(s,y) \right) {\Big)}dy
\end{array}
\end{equation}
is H\"{o}lder continuous, hence the function $v^{f,\rho,l,2}_{i,j,k}-v^{g,\rho,l,2}_{i,j,k}$  is continuous. In order to do the contraction estimate it is useful to rewrite equation (\ref{v2*}) and equation (\ref{v1}).

Note that the function (\ref{source}) is solution of a Poisson equation with a right side which is H\"{o}lder uniformly in $\tau$. Hence, since the function  (\ref{source}) is in $C^{1+\alpha}$ with respect to the spatial variables and uniformly in $\tau$, we may rewrite (\ref{v2*}) shifting differentiation from the fundamental solution to the function (\ref{source}) (shifting one differentiation is enough). This way we have only first order derivatives of the fundamental solution in the representation of $v^{f,\rho,l,2}_{i,j,k}-v^{g,\rho,l,2}_{i,j,k}$, and first order derivatives of the fundamental solution are integrable and can be handled more easily. Note that the derivative of the fundamental solution is with respect to the $x$-variables and the integrals in (\ref{v1}) and (\ref{v2*}) are with respect to the conjugate $y$ variables. In order to shift the differentiation one may use
a relation of the fundamental solution and its adjoint, i.e., the relation
\begin{equation}\label{adjoint}
\Gamma^l_f(\tau,x;s,y)=\Gamma^{l,*}_f(s,y;\tau,x).
\end{equation}
where $\Gamma^{l,*}_f$ is the adjoint of $\Gamma^{l}_f$. Application of derivatives of this relation (\ref{adjoint}) is exactly what we need. However, let us show this in detail in terms of the Levy expansion of the fundamental solution (as a by-product, this is also an independent proof of (\ref{adjoint})).
First we shift the derivative $K_{,i}$ of the kernel $K$ in (\ref{v2*}). This is no problem since the inner integral with respect to $z$ represents a convolution. Hence, we have  

\begin{equation}\label{v2}
\begin{array}{ll}
v^{f,\rho,l,2}_{i,m,p}(\tau,x)-v^{g,\rho,l,2}_{i,m,p}(\tau,x):=\\
\\
=-\int_{(l-1)}^{\tau}\rho_l\int_{{\mathbb R}^n}\int_{{\mathbb R}^n}K_{n}(z-y)\times\\
\\
{\Big (} 2\left( \sum_{j,k=1}^n\left( f_{k,j,i}+g_{k,j,i}\right)(s,y) \right) \times\\
\\
\left( \sum_{j,k=1}^n\left( f_{j,k}-g_{j,k}\right)(s,y)\right) {\Big)}\Gamma^l_{f,m,p}(\tau,x;s,z)dydzds.
\end{array}
\end{equation}
\begin{rem}
Note that in this representation we have even only first derivatives of the difference $\left( f_{j,k}(s,y)-g_{j,k}(s,y)\right)$. This may be convenient in some circumstances (if we want a sharper estimate) but it is not necessary in case of our modest goal, i.e. the contraction on a subset of $C^{1,2}_b$ with respect to $|.|_{1,2}$ norm.
\end{rem}

Now let us show explicitly how we shift one derivative of $\Gamma^l_{f,m,p}$  
First, recall from classical theory that the fundamental solution $\Gamma^l_f$ in the Levy expansion form is given by
\begin{equation}
\Gamma^l_f(\tau,x;s,y):=N^l(\tau,x;s,y)+\int_s^{\tau}\int_{{\mathbb R}^n}N^l(\tau,x;\sigma,\xi)\phi(\sigma,\xi;s,y)d\sigma d\xi,
\end{equation}
where
\begin{equation}
N^l(\tau,x;s,y)=\frac{1}{\sqrt{4\pi \rho_l\nu (\tau-s)}^n}\exp\left(-\frac{|x-y|^2}{4\rho_l\nu (\tau-s)} \right),
\end{equation}
and $\phi$ is a recursively defined function which is H\"{o}lder continuous in $x$, i.e.,
\begin{equation}
\phi(\tau,x;s,y)=\sum_{m=1}^{\infty}(L_lN^l)_m(\tau,x;s,y),
\end{equation}
along with the recursion
\begin{equation}
\begin{array}{ll}
(L_lN^l)_1(\tau,x;s,y)=L_lN^l(\tau,x;s,y)\\
\\
=\frac{\partial N^l}{\partial \tau}-\rho_l\nu \Delta N^l+\rho_l\sum_{j=1}^n f_j\frac{\partial N^l}{\partial x_j}\\
\\
=\rho_l\sum_{j=1}^n f_j\frac{\partial N^l}{\partial x_j},\\
\\
(LN^l)_{m+1}(\tau ,x):=\int_s^t\int_{\Omega}\left( LN^l(\tau,x;\sigma,\xi)\right)_m LN^l(\sigma,\xi;s,y)d\sigma d\xi.
\end{array}
\end{equation}
First we look at (\ref{v2}). We may write 
\begin{equation}\label{v2a}
\begin{array}{ll}
v^{f,\rho,l,2}_{i,m,p}(\tau,x)-v^{g,\rho,l,2}_{i,m,p}(\tau,x):=\\
\\
=-\int_0^{\tau}\rho_l\int_{{\mathbb R}^n}S(s,y)\Gamma^l_{f,m,p}(\tau,x;s,z)dydzds
\end{array}
\end{equation}
along with a H\"{o}lder continuous function $S$ 
\begin{equation}\label{Ssy}
\begin{array}{ll}
S(s,y):=\int_{{\mathbb R}^n}K_{n}(z-y)\times\\
\\
{\Big (} 2\left( \sum_{j,k=1}^n\left( f_{k,j,i}+g_{k,j,i}\right)(s,y) \right) \times\\
\\
\left( \sum_{j,k=1}^n\left( f_{j,k}-g_{j,k}\right)(s,y) \right) {\Big)}dz,
\end{array}
\end{equation}
which is uniformly H\"{o}lder continuous with respect to the time parameter $s$. 
Now we may write
\begin{equation}\label{v2a}
\begin{array}{ll}
v^{f,\rho,l,2}_{i,m,p}(\tau,x)-v^{g,\rho,l,2}_{i,m,p}(\tau,x):=\\
\\
=-\int_0^{\tau}\rho_l\int_{{\mathbb R}^n}S(s,y)N^l_{,m,p}(\tau,x;s,y)ds dy\\
\\
+\int_s^{\tau}\int_{{\mathbb R}^n}
S(s,y)N^l_{,m,p}(\tau,x;\sigma,\xi)
\left( \sum_{q=1}^{\infty}(LN^l)_q(\sigma,\xi;s,y)\right)S(s,y)
d\sigma d\xi dydzds.
\end{array}
\end{equation}
We may do the partial integral term by term and get exactly the (second order differential of the) Levy expansion for the adjoint. Well it does not matter so much that we get exactly the adjoint. What matters is that we get an absolutely convergent sum where only first order derivatives of the kernel $N_l$ are involved, since such first order derivatives are locally integrable and can be estimated easily. We may denote the result of this partial integration for the Levy expansion sum by $\Gamma^{l,*}_f$ (which happens to be the adjoint).
Hence, from equation (\ref{v1}) we get
\begin{equation}\label{burgfg*2}
\begin{array}{ll}
v^{f,\rho,l,1}_{i,k,m}(\tau,x)-v^{g,\rho,l,1}_{i,k,m}(\tau,x)=\\
\\
 =-\rho_l\int_0^{\tau}\int_{{\mathbb R}^n}\sum_{j=1}^n\left( f_j-g_j\right)(s,y) v^{g,\rho,l}_{i,j}(s,y)\Gamma^l_{f,k,m}(\tau,x;s,y)dyds\\
 \\
=\rho_l\int_{(l-1)}^{\tau}\int_{{\mathbb R}^n}\sum_{j=1}^n\left( f_j-g_j\right)(s,y)v^{g,\rho,l}_{i,j,m}(s,y)\Gamma^{l,*}_{f,k}(s,y;\tau,x)dyds\\
\\
+\rho_l\int_{(l-1)}^{\tau}\int_{{\mathbb R}^n}\sum_{j=1}^n\left( f_{j,m}-g_{j,m}\right)(s,y)v^{g,\rho,l}_{i,j}(s,y)\Gamma^{l,*}_{f,k}(s,y;\tau,x)dyds.
\end{array}
\end{equation}
Next, we estimate the integral with integrand $\Gamma^{l,*}_{f,k}$ by a constant $C_{\Gamma_f}$ using estimates for local integrability of the first order derivatives of the fundamental solution in Levy expansion form. Here, we may use standard estimates of the form
\begin{equation}\label{gammafl}
|\Gamma^{l,*}_{f,k}|\leq \frac{C_f}{\sqrt{4\pi (\tau-s)}^{n+1}}\exp\left(-\lambda_f\frac{|x-y|^2}{4\rho_l\nu(\tau-s)} \right),
\end{equation}
for some constants $C,\lambda$ and for $|x-y|\geq 1$ in order to have integrability at infinity. Furthermore we may use 
\begin{equation}\label{Nlest}
 |N^l(\tau-s,x,y)|\leq \frac{C}{(\tau-s)^{\alpha}|x-y|^{n+1-2\alpha}}
\end{equation}
for $\alpha\in (0.5,1)$ on a domain where $|x-y|\leq 1$ in order to have local integrability. We can formulate have both standard estimates independently of  $\mathbf{f}\in S_0$. We get
\begin{equation}\label{gammafl}
\sup_{\mathbf{f}\in S_0}|\Gamma^{l,*}_{f,k}|\leq \frac{C}{\sqrt{4\pi (\tau-s)}^{n+1}}\exp\left(-\lambda\frac{|x-y|^2}{4\rho_l\nu(\tau-s)} \right),
\end{equation}
where
\begin{equation}
C=\sup_{\mathbf{f}\in S_0}C_f,
\end{equation}
and
\begin{equation}
\lambda=\inf_{\mathbf{f}\in S_0}\lambda_f.
\end{equation}
Furthermore, we get
\begin{equation}
|\Gamma^{l,*}_{f,k}(\tau,x;s,y)|\leq \frac{C^+}{(\tau-s)^{\mu}|x-y|^{n+1-2\mu}},
\end{equation}
for some $\mu\in (0.5,1)$ and with some constant $C^+$, hence, locally integrability with respect to time and space.
This implies that there exists $C_{\Gamma}>0$ such that
 \begin{equation}
\sup_{(\tau,x)\in D^{\tau}_l}\int_{l-1}^{\tau}\int_{{\mathbb R}^n}\sup_{\mathbf{f}\in S_0}|\Gamma^{l,*}_f|dyds \leq  C_{\Gamma},
 \end{equation}
 and such that
\begin{equation}
\sup_{(\tau,x)\in D^{\tau}_l}\int_{l-1}^{\tau}\int_{{\mathbb R}^n}\sup_{\mathbf{f}\in S_0}|\Gamma^{l,*}_{f,i}|dyds \leq  C_{\Gamma},
 \end{equation} 
 where $1\leq i\leq n$.
Hence, since $\mathbf{g}\in S_0$, for the second order derivatives of the first summand (\ref{firstsum}) we get 
\begin{equation}\label{xxx}
\begin{array}{ll}
|v^{f,\rho,l,1}_{i,k,m}-v^{g,\rho,l,1}_{i,k,m}|_0\leq \\
\\
\rho_l\int_{(l-1)}^{\tau}\int_{{\mathbb R}^n}\sum_{j=1}^n| f_j-g_j|_0|\mathbf{v}^{g,\rho,l}|_{0,2}C_{\Gamma}\\
\\
+\rho_l\int_{(l-1)}^{\tau}\int_{{\mathbb R}^n}\sum_{j=1}^n| f_{j,m}-g_{j,m}|_0|\mathbf{v}^{g,\rho,l}|_{0,1}C_{\Gamma}\\
\\
\leq \rho_l\int_{(l-1)}^{\tau}\int_{{\mathbb R}^n}| \mathbf{f}-\mathbf{g}|^n_{0,1}|\mathbf{v}^{g,\rho,l}|^n_{0,2}C_{\Gamma}\\
\\
\leq \rho_lC^{l-1}_{1,2}C_{\Gamma}^2| \mathbf{f}-\mathbf{g}|^n_{0,1}.
\end{array}
\end{equation}
Note that in this estimate the second $C_{\Gamma}$ comes from the estimate of $|\mathbf{v}^{g,\rho,l}|^n_{0,2}$. 
From equation (\ref{v2}) we get
\begin{equation}\label{yyy}
\begin{array}{ll}
|v^{f,\rho,l,2}_{i,l,m}-v^{g,\rho,l,2}_{i,l,m}|_0:=\\
\\
\leq\int_{(l-1)}^{\tau}\rho_l\int_{{\mathbb R}^n}\int_{{\mathbb R}^n}K_{n,m}(z-y)\times\\
\\
{\Big (} \left( \sum_{j,k=1}^n| f_{k,j,i}+g_{k,j,i}|_0 \right) \times\\
\\
\left( \sum_{j,k=1}^n|f_{j,k,i}-g_{j,k,i}|_0\right)  {\Big)}C_{\Gamma}\\
\\
\leq \rho_lC_Kn^24C^{l-1}_{1,2}C_{\Gamma}| \mathbf{f}-\mathbf{g}|^n_{0,2}.
\end{array}
\end{equation}
Here the constant $C_K$ is from the spatial integral with integrand $K_{n,m}$. We may assume without loss of generality that $C_{\Gamma}\geq 1$. The constant $C_K$ depends on dimension only and may be subsumed by a constant $C^*_n$. In the following we shall use the latter constant generically if we want to subsume all constants depending only on dimension.
We summarize that equation (\ref{xxx}) and (\ref{yyy}) gives 
\begin{equation}\label{xx0}
\sum_{j,m=1}^n|v^{f,\rho,l}_{i,j,m}-v^{g,\rho,l}_{i,j,m}|_{0}\leq \rho_lC^*_n C^2_{\Gamma}C^{l-1}_{1,2}| \mathbf{f}-\mathbf{g}|^n_{0,2}.
\end{equation}
Similarly we get
\begin{equation}\label{xxx1}
\begin{array}{ll}
\sum_{m=1}^n|v^{f,\rho,l}_{i,m}-v^{g,\rho,l}_{i,m}|_0\leq 
\rho_lC^*_nC_{\Gamma}^2C^{l-1}_{1,2}| \mathbf{f}-\mathbf{g}|^n_{0,2}.
\end{array}
\end{equation}
and
\begin{equation}\label{xxx2*}
\begin{array}{ll}
|v^{f,\rho,l}_{i}-v^{g,\rho,l}_{i}|_0\leq 
\rho_lC^*_nC^2_{\Gamma}C^{l-1}_{1,2}| \mathbf{f}-\mathbf{g}|^n_{0,2}.
\end{array}
\end{equation}
An estimate for the time derivatives of $v^{f,\rho,l}_{i}$ and $v^{g,\rho,l}_{i}$ can be reduced to the estimates (\ref{xx0}), (\ref{xxx1}), and (\ref{xxx2*}) via the defining equation for $\Gamma^l_f$
\begin{equation}\label{gammaft}
\begin{array}{ll}
\frac{\partial \Gamma^l_f}{\partial \tau}=\rho_l\nu\sum_{j=1}^n \frac{\partial^2 \Gamma^l_f}{\partial x_j^2} 
-\rho_l\sum_{j=1}^n f_j\frac{\partial \Gamma^l_f}{\partial x_j}=0.
\end{array}
\end{equation}
\begin{rem}
Alternatively we could use defining equations 
\begin{equation}\label{Navburgft}
\begin{array}{ll}
\frac{\partial v^{f,\rho,l}_i}{\partial \tau}=\rho_l\nu\sum_{j=1}^n \frac{\partial^2 v^{f,\rho,l}_i}{\partial x_j^2} 
-\rho_l\sum_{j=1}^n f_j\frac{\partial v^{f,\rho,l}_i}{\partial x_j}+\\
\\
\int_{{\mathbb R}^n}\left( \frac{\partial}{\partial x_i}K_n(x-y)\right) \sum_{j,k=1}^n\left( \frac{\partial f_k}{\partial x_j}\frac{\partial f_j}{\partial x_k}\right) (\tau,y)dy
\end{array}
\end{equation}
for the functions $ v^{f,\rho,l}_i$ and analogous defining equations for the functions $ v^{g,\rho,l}_i$. However this would enlarge the estimate constant. In this case we get another factor $C_K$ and another factor $C^l_{1,2}$. The method of growth control described in step 3 still applies. 
 \end{rem}
 
 Summarizing we get
\begin{equation}\label{xxx2}
\begin{array}{ll}
|\mathbf{v}^{f,\rho,l}-\mathbf{v}^{g,\rho,l}|^n_{1,2}=\sum_{i=1}^n|v^{f,\rho,l}_{i}-v^{g,\rho,l}_{i}|_{1,2}\\
\\
\leq 
\rho_lC^*_n C_{\Gamma}^2C^{l-1}_{1,2}| \mathbf{f}-\mathbf{g}|^n_{0,2}\\
\\
\leq \rho_lC^*_nC^2_{\Gamma}C^{l-1}_{1,2}| \mathbf{f}-\mathbf{g}|^n_{1,2}.
\end{array}
\end{equation}
for some generic constant $C^*_n>0$ depending only on dimension $n$. We may choose
\begin{equation}
\rho_l\leq \frac{1}{4C^*_n C_{\Gamma}^2C^{l-1}_{1,2}}.
\end{equation}
Then for $\mathbf{f}, \mathbf{g}\in S_0$ we have 
\begin{equation}\label{xxx2}
\begin{array}{ll}
|\mathbf{v}^{f,\rho,l}-\mathbf{v}^{g,\rho,l}|^n_{1,2}
\leq \frac{1}{4}| \mathbf{f}-\mathbf{g}|^n_{1,2}.
\end{array}
\end{equation}
Note that 
\begin{equation}
\mathbf{f}, \mathbf{g}\in S_0\rightarrow \mathbf{v}^{f,\rho,l}\in S_0
\end{equation}
for this choice of $\rho_l$.
Furthermore, note that the constant $C_{\Gamma}$ depends on the class $S_0$.  Hence, we do not have a contraction on the whole space but on a subspace $S_0$ of $C^{1,2}_b$. Since the definition of $S_0$ contains a factor $2$ and we start with $|v^{\rho,l-1}|\leq C^{l-1}_{1,2}$ the iterations are $F_l^k(\mathbf{v}^{\rho,l-1}),~k\geq 1$ belong to $S_0$ indeed by choice of $\rho_l$.
Hence, the contraction condition leads to an iteration in a subspace $S_0$ of $C^{1,2}_b$. However, note that this is not a Banach space. In order to have the functional series in a Banach space or in order to have the limit of the functional series $\mathbf{v}^{\rho,l}=\lim_{k\uparrow \infty}{\mathbf v}^{\rho,k,l}$ known to be in some regular space, we need some decay at spatial infinity. Let us summarize first what we have achieved so far on the way of the construction of a local solution. Iteration of the map $F_l$ starting with the function $\mathbf{v}^{\rho,l-1}$ leads to a series of functions $\mathbf{v}^{\rho,k,l}=F_l^{k+1}\left(\mathbf{v}^{\rho,l-1}\right) $
where the components of the functions  $\mathbf{v}^{\rho,k,l}$ satisfy
\begin{equation}
v^{\rho,k,l}_i=v^{\rho,0,l}_i+\sum_{m=1}^k\delta v^{\rho,m,l}_i\in C^{1,2}_b,
\end{equation}
and where the increments $\delta v^{\rho,m,l}_i$ satisfy a contraction property 
\begin{equation}\label{xxx3}
\begin{array}{ll}
|\delta \mathbf{v}^{\rho,k,l}|^n_{1,2}
\leq \rho_lC^*_nC_{\Gamma}^2C^{l-1}_{1,2}| \delta \mathbf{v}^{\rho,k-1,l}|^n_{1,2}\\
\\
\leq \frac{1}{4}| \delta \mathbf{v}^{\rho,k-1,l}|^n_{1,2}.
\end{array}
\end{equation}
The approximations $v^{\rho,k,l}_i$ can be represented in terms of  
 fundamental solution $\Gamma^l_k$, where for $k\geq 1$ $\Gamma^l_k$ is the fundamental solution of the equation
\begin{equation}\label{gammak}
\frac{\partial \Gamma^l_k}{\partial \tau}-\rho_l\nu \Delta \Gamma^l_k+\rho_l\sum_{j=1}^n v^{\rho,k-1,l}_j\frac{\partial \Gamma^l_k}{\partial x_j}=0,
\end{equation}
and for $k=0$ the function $\Gamma^l_0$ is the fundamental solution of the equation
\begin{equation}\label{gamma0}
\frac{\partial \Gamma^l_0}{\partial \tau}-\rho_l\nu \Delta \Gamma^l_0+\rho_l\sum_{j=1}^n v^{\rho,l-1}_j\frac{\partial \Gamma^l_0}{\partial x_j}=0.
\end{equation}
The choice of $\rho_l$ ensures that the first order coefficients  in (\ref{gammak}) and (\ref{gamma0}) have a uniform bound.
This leads to the estimate of integrals involving integrands $\Gamma_k$ by a uniform constant $C_{\Gamma}$ such that
\begin{equation}
|\mathbf{v}^{\rho,k,l}|^n_{1,2}=|\mathbf{v}^{\rho,l-1}+\sum_{m=1}^k\delta \mathbf{v}^{\rho,m,l}|^n_{1,2}\leq 2C^{l-1}_{1,2},
\end{equation}
and, hence, to a functional series in $S_0$. This leads to the contraction property of the series $F^k_l\left(\mathbf{v}^{\rho,l-1}\right)$ with respect to the norm $|.|_{1,2}$, and we have (\ref{deltacontract}) especially. 

Next in order to apply method a) of the introduction we show that the elements of the functional series $v^{\rho,k,l}_i(\tau,.)$ are in $H^2$ uniformly with respect to $\tau \in [l-1,l]$ and for all $1\leq i\leq n$. This means that we extend the estimate (\ref{xxx3}) above to the space of functions $\mathbf{f}$ with components 
\begin{equation}
f_i\in H^{2}_l:=\left\lbrace h\in C^{1,2}_b(D^{\tau}_l)| h(\tau,.)\in H^2~~\mbox{unif. for all}~\tau\in [l-1,l] \right\rbrace .
\end{equation}
Here 'unif.' is an abbreviation for uniformly.
This is a good space in order to have a finite magnitude (\ref{vrhonorm}), since we have 
\begin{equation}\label{vrhonormH2}
\begin{array}{ll}
 \int_{{\mathbb R}^n}\sum_{j,k=1}^n{\Bigg |}\left( \frac{\partial v^{\rho,l}_k}{\partial x_j}\frac{\partial v^{\rho,l}_j}{\partial x_k}\right) (t,y){\Bigg |}dy\\
 \\
\int_{{\mathbb R}^n}\sum_{j,k=1}^n{\Bigg |}\left( \frac{1}{2}\left( \frac{\partial v^{\rho,l}_k}{\partial x_j}\right) ^ 2+\frac{1}{2}\left( \frac{\partial v^{\rho,l}_j}{\partial x_k}\right)\right)^2 (t,y){\Bigg |}dy 
 <\infty .
 \end{array}
 \end{equation}
if $v^{\rho,l}_i\in H^1$. We choose the space $H^2$ since we have first derivatives of  (\ref{vrhonorm}) involved in our estimation.

Recall that method a) of the introduction applies in the case $n=3$.
Note that a converging series
\begin{equation}\label{vkH}
\mathbf{v}^{\rho,k,l}(\tau,.)\in \left[ H^s\left({\mathbb R}^n\right)\right]^n~~\mbox{ for some }~s\geq \alpha+\frac{1}{2}n
\end{equation}
(uniformly in $\tau$) implies that
\begin{equation}\label{vH}
\mathbf{v}^{\rho,l}(\tau,.)\in \left[ H^s\left({\mathbb R}^n\right)\right]^n~~\mbox{ for some }~s\geq \alpha+\frac{3}{2}.
\end{equation}
Hence, in case $n=3$ we have
\begin{equation}
\mathbf{v}^{\rho,l}(\tau,.)\in 
\left[ C^{\alpha}\left( {\mathbb R}^n\right)\right]^n
\end{equation}
for $\alpha\in (0,0.5)$ uniformly in $\tau$. Hence, if we can show that (\ref{vkH}) holds (which implies (\ref{vH}), then  we have bounded H\"{o}lder continuous first order coefficients in the equation
\begin{equation}\label{Navlerayl*}
 \begin{array}{ll}
\frac{\partial \Gamma^{l}_v}{\partial \tau}-\rho_l\nu\sum_{j=1}^n \frac{\partial^2 \Gamma^{l}_v}{\partial x_j^2} 
+\rho_l\sum_{j=1}^n v^{\rho,l}_j\frac{\partial \Gamma^{l}_v}{\partial x_j}=0,
\end{array}
\end{equation}
and this means that a fundamental solution $\Gamma^l_v$ of (\ref{Navlerayl*}) exists. Then we have the representation
\begin{equation}
\begin{array}{ll}
v^{\rho,l}_i(\tau,x)=\int_{{\mathbb R}^n}v^{\rho,l-1}_i(l-1,y)\Gamma^{l}_v(\tau,x;l-1,y)dy+\\
\\
\int_{l-1}^{\tau}\int_{{\mathbb R}^n}\rho_l\int_{{\mathbb R}^n}\left( \frac{\partial}{\partial x_i}K_n(y-z)\right) \left( \sum_{j,k=1}^n\left( \frac{\partial v^{\rho,l}_k}{\partial x_j}\frac{\partial v^{\rho,l}_j}{\partial x_k}\right) (\tau,z)\right) \times\\
\\
\times\Gamma^l_v(\tau,x,s,y)dzdyds
\end{array}
\end{equation}
and from this representation we immediately get
\begin{equation}
\mathbf{v}^{\rho,l}\in \left[  C^{1,2}_b\left(D^{\tau}_l\right)\right]^n.   
\end{equation}
Now let us finish the proof in the case $n=3$ using method a) of the introduction. We have to show that
\begin{equation}
v^{\rho,k,l}_i\left(\tau,.\right)\in H^2\left({\mathbb R}^n \right),
\end{equation}
where we show that the latter relation holds uniformly in $\tau$. First, inductively with respect to $l$ we have 
\begin{equation}
v^{\rho,l-1}_i(l-1,.)\in C^{2}_b~\mbox{ and }~v^{\rho,l-1}_i\left(l-1,.\right)\in H^2\left({\mathbb R}^n \right). 
\end{equation}
First we show that
\begin{equation}
v^{\rho,k,l}_i(\tau,.)\in L^{2}
\end{equation}
uniformly in $\tau$. The reasoning for the first and second derivatives below requires only a little more work. We start with $k=0$.
Note that the equation which defines $v^{\rho,0,l}_i$ has first order coefficient functions $v^{\rho,l-1}_i(l-1,.)$ which are independent of time. Hence the fundamental solution $\Gamma^l_0$ of the equation
\begin{equation}\label{Navleraylgamma0}
 \begin{array}{ll}
\frac{\partial \Gamma^{l}_0}{\partial \tau}-\rho_l\nu\sum_{j=1}^n \frac{\partial^2 \Gamma^{l}_0}{\partial x_j^2} 
+\rho_l\sum_{j=1}^n v^{\rho,l-1}_j\frac{\partial \Gamma^{l}_0}{\partial x_j}=0
\end{array}
\end{equation}
exists. For this fundamental solution  we have a priori estimates. Especially, we have
\begin{equation}
|\Gamma^l_0(\tau,x;s,y)|\leq \frac{C}{(\tau-s)^{n/2}}\exp\left(-\lambda\frac{|x-y|^2}{\tau-s}\right)  
\end{equation}
for some constants $C$ and $\lambda$ (the constants $\lambda$ and $C$ are used generically here).
Next we have the representation
\begin{equation}
\begin{array}{ll}
v^{\rho,0,l}_i(\tau,x)=\int_{{\mathbb R}^n}v^{\rho,l-1}_i(l-1,y)\Gamma^{l}_0(\tau,x;l-1,y)dy+\\
\\
\int_{(l-1)}^{\tau}\int_{{\mathbb R}^n}\rho_l\int_{{\mathbb R}^n}\left( K_{n,i}(y-z)\right) \left( \sum_{j,k=1}^n\left( \frac{\partial v^{\rho,l-1}_k}{\partial x_j}\frac{\partial v^{\rho,l-1}_j}{\partial x_k}\right) (\tau,z)\right) \times\\
\\
\times\Gamma^l_0(\tau,x,s,y)dzdyds
\end{array}
\end{equation}
Note that for $\tau =l-1$ we have $v^{\rho,l-1}(l-1,.)\in H^2$ inductively.
We get
\begin{equation}
\begin{array}{ll}
|v^{\rho,0,l}_i(\tau,x)|\leq \int_{{\mathbb R}^n}
|v^{\rho,l-1}_i(l-1,y)|\frac{C}{(\tau-(l-1))^{n/2}}\exp\left(-\lambda\frac{|x-y|^2}{\tau-(l-1)}\right)dy+\\
\\
\int_{(l-1)}^{\tau}\int_{{\mathbb R}^n}\rho_l{\Big |}\int_{{\mathbb R}^n}\left( K_{n,i}(y-z)\right) \left( \sum_{j,k=1}^n\left( \frac{\partial v^{\rho,l-1}_k}{\partial x_j}\frac{\partial v^{\rho,l-1}_j}{\partial x_k}\right) (s,z)\right){\Big |} \times\\
\\
\times\frac{C}{(\tau -s)^{n/2}}\exp\left(-\lambda\frac{|x-y|^2}{\tau -s}\right) dzdyds\\
\\
=:(I)+(II).
\end{array}
\end{equation}
Now (I) and (II) are convolutions. We may apply Young's inequality, i.e.,
\begin{equation}
|f\star g|_r\leq |f|_q|g|_p,
\end{equation}
where
\begin{equation}
1+\frac{1}{r}=\frac{1}{p}+\frac{1}{q},
\end{equation}
and $1\leq p,q,r\leq \infty$.
Note that the family of Gaussian functions
\begin{equation}\label{gaussian}
x\rightarrow \frac{C}{(\tau -s)^{n/2}}\exp\left(-\lambda\frac{|x|^2}{\tau -s}\right)
\end{equation}
(family with respect to time parameters) is in $L^p$ for all $1\leq p<\infty$ (although not in $L^{\infty}$ as $s\uparrow \tau$). 
We know that the first term (I) is in $L^1$ because $v^{\rho,l-1}_i\in H^2$ inductively and the Gaussian is in $L^2$. The idea is that locally the convolution is in $L^2$ for fixed $\tau$ (since it is in $C^1$) and globally you may shift a derivative to the kernel. Second derivatives of the kernel $K$ are square integrable outside a ball, since 
\begin{equation}
K_{,i,j}\sim\frac{x_ix_j}{|x|^{n+2}}
\end{equation}
 for $n\geq 3$ and $i\neq j$ is $L^2$ outside a ball around zero. Similarly for $i=j$.
The details of this estimate are described in \cite{KB2}. Note that we may set the function in (\ref{gaussian}) to zero if $\tau=s$ in the time integral rom $s$ to $\tau$. Furthermore, applying Young's inequality for fixed $t>s$ first we see that the integrand for the second term is in $L^2$ where $v^{\rho,l-1}\in H^2$ inductively. Since the $L^2$-bound of the integrand is uniform the time integral $(II)$ is also $L^2$. Here again, for each $x\in {\mathbb R}^n$ you may use decompose the integral in two parts $[0,\tau]\times B_x$ and $[0,\tau]\times {\mathbb R}^n\setminus \left( [0,\tau]\times B_x\right) $, where $B_x$ is a n-dimensional ball around $x$ of finite radius.
Then on the ball we may use the estimate
\begin{equation}
\frac{1}{(\tau -s)^{n/2}}\exp\left(-\lambda\frac{|x-y|^2}{\tau -s}\right)\leq \frac{C}{(t-s)^{\mu}|x-y|^{n-2\mu}}
\end{equation}
for $\mu \in (0,1)$. Outside the ball we may use the estimate
\begin{equation}
{\big |}\Gamma^l_0(\tau,x;s,y){\big |}\leq \frac{C}{(\tau -s)^{n/2}}\exp\left(-\lambda\frac{|x-y|^2}{\tau -s}\right).
\end{equation}
Hence we have
\begin{equation}
v^{\rho,0,l}_i\left(\tau,.\right)\in L^2\left({\mathbb R}^n \right) 
\end{equation}
uniformly in $\tau$. 
Next note that we have representations of the first and second spatial derivatives of $v^{\rho,0,l}_{i}$ which involve only the first spatial derivative of the fundamental solution (for the second spatial derivative we use the adjoint and partial integration). Then an analogous argument as above which uses the first derivative estimates 
\begin{equation}\label{firstder}
\frac{\partial}{\partial x_i}\frac{1}{(\tau -s)^{n/2}}\exp\left(-\lambda\frac{|x-y|^2}{\tau -s}\right)\leq \frac{C}{(t-s)^{\mu}|x-y|^{n+1-2\mu}}
\end{equation}
for $\mu\in (0.5,1)$,
and 
\begin{equation}
{\big |}\Gamma^l_0(\tau,x;s,y){\big |}\leq \frac{C}{(\tau -s)^{(n+1)/2}}\exp\left(-\lambda\frac{|x-y|^2}{\tau -s}\right).
\end{equation}
leads to the conclusion that
\begin{equation}
v^{\rho,0,l}_i\left(\tau,.\right)\in H^2\left({\mathbb R}^n \right) 
\end{equation}
uniformly in $\tau$. Next assuming that
\begin{equation}
v^{\rho,k-1,l}_i\in C^{1,2}_b~\mbox{ and }~v^{\rho,k-1,l}_i\left(\tau,.\right)\in H^2\left({\mathbb R}^n \right)  
\end{equation}
uniformly in $\tau\in [l-1,l]$ we know that $v^{\rho,k,l}_i\in C^{1,2}_b$ and we want to show that
\begin{equation}
~v^{\rho,k,l}_i\left(\tau,.\right)\in H^2\left({\mathbb R}^n \right)  
\end{equation}
uniformly in $\tau$. It is convenient to use the series $\left( \mathbf{v}^{\rho,k,l}\right)_k$ along with
\begin{equation}\label{vrhokl}
\mathbf{v}^{\rho,k,l}=\mathbf{v}^{\rho,0,l}+\sum_{m=1}^k\delta \mathbf{v}^{\rho,m,l}.
\end{equation} 
Then it suffices to show a
a contraction property with respect to the $H^2$-norm in the sense that
\begin{equation}\label{contractdeltavk}
|\delta v^{\rho,k,l}_i(\tau,.)|_{H^2}\leq \frac{1}{4} |\delta v^{\rho ,k-1,l}_i(\tau,.)|_{H^2}
\end{equation}
for all $k\geq 1$.
This holds for some generic constant $C^*_n$ if we use a constant $\rho_l$ with 
\begin{equation}
\rho_l\leq \frac{1}{4C^*_n C_{\Gamma}^2C^{l-1}_{1,2}}.
\end{equation}
We consider the main steps in order ro prove this relation.
Note that the equation which defines $v^{\rho,k,l}_i$ has first order coefficient functions $v^{\rho,k-1,l}_i\in C^{1,2}_b$.  
Hence the fundamental solution $\Gamma^l_k$ of the equation
\begin{equation}\label{Navleraylgamma0}
 \begin{array}{ll}
\frac{\partial \Gamma^{l}_k}{\partial \tau}-\rho_l\nu\sum_{j=1}^n \frac{\partial^2 \Gamma^{l}_k}{\partial x_j^2} 
+\rho_l\sum_{j=1}^n v^{\rho,k-1,l}_j\frac{\partial \Gamma^{l}_k}{\partial x_j}=0
\end{array}
\end{equation}
exists. Since we have  $|v^{\rho,k-1,l}_j|_{1,2}\leq 2C^l_{1,2}$  independent of the number $k$. For this fundamental solution  we have a priori estimates. Especially, we have
\begin{equation}\label{gammakest}
|\Gamma^l_k(\tau,x;s,y)|\leq \frac{C}{(\tau-s)^{n/2}}\exp\left(-\lambda\frac{|x-y|^2}{\tau-s}\right)  
\end{equation}
for some $\lambda,C>0$ independent of the number $k$.
Next we have the representation
\begin{equation}
\begin{array}{ll}
\delta v^{\rho,k,l}_i(\tau,x)=-\rho_l\int_{(l-1)}^{\tau}\int_{{\mathbb R}^n}\left( \sum_{j=1}^n\delta v^{\rho,k-1,l}_j\frac{\partial v^{\rho ,k-1,l}}{\partial x_j}\right) (s,y)\Gamma^{l}_k(\tau,x;s,y)ds dy+\\
\\
\int_{(l-1)}^{\tau}\int_{{\mathbb R}^n}\rho_l\int_{{\mathbb R}^n}\left( \frac{\partial}{\partial x_i}K_n(y-z)\right) \left( \sum_{j,m=1}^n\left( \frac{\partial v^{\rho,k-1,l}_m}{\partial x_j}\frac{\partial v^{\rho,k-1,l}_j}{\partial x_m}\right) (\tau,z)\right) \times\\
\\
\times\Gamma^l_k(\tau,x,s,y)dzdyds\\
\\
-\int_{(l-1)}^{\tau}\int_{{\mathbb R}^n}\rho_l\int_{{\mathbb R}^n}\left( \frac{\partial}{\partial x_i}K_n(y-z)\right) \left( \sum_{j,m=1}^n\left( \frac{\partial v^{\rho,k-2,l}_m}{\partial x_j}\frac{\partial v^{\rho,k-2,l}_j}{\partial x_m}\right) (\tau,z)\right) \times\\
\\
\times\Gamma^l_k(\tau,x,s,y)dzdyds
\end{array}
\end{equation}
This leads to
\begin{equation}\label{delta v}
\begin{array}{ll}
\delta v^{\rho,k,l}_i(\tau,x)=-\rho_l\int_{(l-1)}^{\tau}\int_{{\mathbb R}^n}\left( \sum_{j=1}^n\delta v^{\rho,k-1,l}_j\frac{\partial v^{\rho ,k-1,l}}{\partial x_j}\right) (s,y)\Gamma^{l}_k(\tau,x;s,y)ds dy+\\
\\
\int_{(l-1)}^{\tau}\rho_l\int_{{\mathbb R}^n}\int_{{\mathbb R}^n}K_{n,i}(z-y)\times\\
\\
{\Big (} \left( \sum_{j,m=1}^n\left( v^{\rho ,k-1,l}_{m,j}(s,y)+v^{\rho,k-2,l}_{m,j}(s,y)\right) \right) \times\\
\\
\left( \sum_{j,m=1}^n\left( v^{\rho,k-1,l}_{j,m}(s,y)-v^{\rho,k-2,l}_{j,m}(s,y)\right) \right) {\Big)}\Gamma^l_k(\tau,x;s,z)dydzds,\\
\end{array}
\end{equation}
Hence, using a priori estimates analogously as above in case $k=0$ we get (\ref{contractdeltavk}). This finishes the proof in the case $n=3$.
The method a) as it is presented above applies in the case $n=3$. In case $n>3$ we have to ensure that the convergence of the series $v^{\rho,k,l}_i(\tau,.)$ is in $H^{s,p}$ for $p$ large enough. This can be done using the same a priori estimates. Method b) provides an alternative which may be even more elementary.  
This direct (more elementary) way shows that the limit of the functional series exists in a Banach space by transforming on a compact domain.
Such a transformation can be applied if for each $k\geq 0$ we have
\begin{equation}\label{growthuburg*}
    \vert \partial^\alpha_x \mathbf{v}^{\rho,k,l}(t,x)\vert\le \frac{C_{\alpha k}}{(1+\vert x\vert)^5}~\mbox{if}~|\alpha|\leq 2\qquad . 
\end{equation}
for $|\alpha|\leq 2$ and $k\leq 5$. Then we may consider  the transformation of spatial variables
\begin{equation}
x_i\rightarrow y_i:=\arctan\left(x_i\right) 
\end{equation}
from ${\mathbb R}^n$ to $\left] -\frac{\pi}{2},\frac{\pi}{2}\right[ ^n$ and consider the transformed function
\begin{equation}
w^{f,\rho,l}_i(\tau,y):=v^{f,\rho,l}_i(\tau,x)
\end{equation}
for $1\leq i\leq n$. We have
\begin{equation}
\frac{\partial v^{f,\rho,l}_i}{\partial x_m}=\frac{\partial w^{f,\rho,l}_i}{\partial y_m}\frac{\partial y_m}{\partial x_m}=\frac{\partial w^{f,\rho,l}_i}{\partial y_m}\frac{1}{1+x_m^2}
=\frac{\partial w^{f,\rho,l}_i}{\partial y_m}\frac{1}{1+\tan^2(y_m)},
\end{equation}
and
\begin{equation}
\begin{array}{ll}
\frac{\partial^2 v^{f,\rho,l}_i}{\partial x_m^2}=
\frac{\partial^2 w^{f,\rho,l}_i}{\partial y_m^2}\left( \frac{\partial y_m}{\partial x_m}\right)^2+
\frac{\partial w^{f,\rho,l}_i}{\partial y_m}\frac{\partial^2 y_m}{\partial x_m^2}=\\
\\
\frac{\partial^2 w^{f,\rho,l}_i}{\partial y_m^2}\left( \frac{1}{1+\tan^2(y_m)}\right)^2
+\frac{\partial w^{f,\rho,l}_i}{\partial y_m}\frac{-2\tan(y_m)}{\left( 1+\tan^2(y_m)\right)^4}.
\end{array}
\end{equation}
Since 
\begin{equation}
\left(1+|x|^4\right)\frac{\partial^2 v^{f,\rho,l}_i}{\partial x_m^2}\downarrow 0~~\mbox{as}~~|x|\uparrow \infty
\end{equation}
Hence we get a series of functions $\left( \mathbf{w}^{\rho,k,l}\right)_k$ defined on transformed domain $D^{\tau,\pi}_l:=[l-1,l]\times \left]-\pi/2,\pi/2 \right[^n$ along with
\begin{equation}
\mathbf{w}^{\rho,l,k}(\tau,y)=\mathbf{v}^{\rho,l,k}(\tau,x)
\end{equation}
and the limit is in the Banach space 
$\left[ C_{1+\alpha/2,2+\alpha}\left(D^{\tau,\pi}_l\right) \right]^n$
 with the norm $|.|^n_l$ restricted to the domain  $D^{\tau,\pi}_l$. 
 Note that we have  $w^{\rho,k,l}_i(\tau,\mathbf{s})=0$ for all $1\leq i\leq n$ 
if for some $s_j$ in $\mathbf{s}=(s_1,\cdots s_n)$ we have $s_j\in \left\lbrace-\pi/2,\pi/2\right\rbrace $ according to the polynomial decay of $\mathbf{v}^{\rho,k,l}$ at infinity. Hence we get a limit $\mathbf{w}^{\rho,l}$ with $\mathbf{w}^{\rho,l}(\tau,y)=\mathbf{v}^{\rho,l}(\tau,x)$ for all $(\tau,x)\in D^{\tau}_l$ and $(\tau,y)\in D^{\tau,\pi}_l$ where satisfies the transformed Navier-Stokes equation (in $\tau,y$-coordinates on $D^{\tau,\pi}_l$).

\subsection{step 2: Extension of local existence to extended equations for $\mathbf{v}^{r,\rho,l}$}
In step 1 we did not consider the growth of the solution. We just proved that the iterations $F_l^m(\mathbf{v}^{\rho,l-1}),~l\geq 1$ of the functional $F_l$ are in a function set $S_0$ such that first order terms of the linear subproblems are uniformly bounded.  
In order to get a globally bounded scheme we introduce a control function $\mathbf{r}=(\mathbf{r}^l)_l$ and set up a globally bounded scheme for $\mathbf{v}^{r,\rho}=\mathbf{v}^{\rho}+\mathbf{r}$.
In this second step we observe that the extension of the local iteration of step 1 to equations with bounded regular functions ${\mathbf r}\neq 0$ does not change the argument of step 1 essentially. Note that the Leray projection from the equation for $\mathbf{v}^{r,\rho}$ includes integral terms for $r^l_i$ as described in the introduction. This means that we have to estimate magnitudes of the from
\begin{equation}\label{rnormstep2}
\int_{{\mathbb R}^n}\sum_{j,k,m,p=1}^n{\Big |}\frac{\partial r^l_k}{\partial x_j}{\Big |}{\Big |}\frac{\partial r^l_m}{\partial x_p}{\Big |}(\tau,y)dy.
\end{equation}
Again this reduces to $H^2$ estimates.
The similarity of the convergence argument of the local scheme for $\mathbf{v}^{r,\rho,l}$ to the argument for convergence of the local scheme of step 1 is due to the fact that the additional terms symbolized by the term $L^{\rho,l}_i\left(\mathbf{r},\mathbf{v}\right)$ in the equation for $\mathbf{v}^{r,\rho,l}$ consist of a source term dependent on the function $\mathbf{r}$ (which we called $S^l(\mathbf{r})$), and a linear operator dependent on the function  $\mathbf{v}^{r,\rho,l}$ which we called $L^{\rho,l,0}_i\left(\mathbf{r},\mathbf{v}\right)$ (resp. $L^{\rho,l,0}_i\left(\mathbf{r},\mathbf{v}^{r,\rho,k,l}\right)$  for each step of the local iteration). Note that both additional terms have a multiplier $\rho_l$ with the exception of the time derivative of $r^l_{i,\tau}$. If $\mathbf{r}^l$ is regular for given time step number $l$, i.e., if the first time derivative and the second time derivatives exist and are integrable on the domain $(l-1,l]\times {\mathbb R}^n$, then we get a regular solution $\mathbf{v}^{r,\rho,0,l}$ of the linearized Navier-Stokes equation at the first step of the local time iteration. 
Furthermore, the additional source term $S^l(\mathbf{r})$ depending on the function $\mathbf{r}$ appears only in the equation for $\mathbf{v}^{r,\rho,0,l}$, because these source terms cancel out in the subtraction $\delta \mathbf{v}^{r,\rho,k,l}=\mathbf{v}^{r,\rho,k,l}-\mathbf{v}^{r,\rho,k-1,l}$ defining the higher order corrections of the local iterative scheme. It is sufficient to find a lower bound $\rho_l\succsim \frac{1}{l}$ for the number $\rho_l$ in order to make the scheme global, but we shall see that an appropriate choice of the control functions $r^l_i$ leads to a global scheme with $\rho_l\geq \rho$ for some $\rho>0$ independent of the time step number $l\geq 1$. This will be done in the next step of the proof.
\begin{rem}
For numerical purposes we shall consider a different solution scheme below. If we know by analytical means that $\mathbf{v}^r$ and $\mathbf{r}$ are bounded then we know that $\mathbf{v}$ is bounded, and this implies that we can set up a scheme where we do not use the function $\mathbf{r}$ at all. The control  function $\mathbf{r}$ may be reintroduced in order to stabilize the computation, especially if viscosity is low and/or if there are boundary conditions. A second feature of a numerical scheme is that diffusion may imply that increasing time step sizes $\rho_l$ may be chosen as the time step number $l$ increases. This makes the scheme more efficient. 
\end{rem}
Next we construct
\begin{equation}\label{vrlseries}
 \mathbf{v}^{r,\rho,l}=\mathbf{v}^{r,\rho,0,l}+\sum_{k=1}^{\infty}\delta \mathbf{v}^{r,\rho,k,l}.
\end{equation}
In this section our goal is make the extension of the argument in step 1 to the case where $\mathbf{r}\neq 0$ as easy as possible. Therefore we consider an iterative scheme for (\ref{vrlseries}) which we would not choose for numerical purposes. More precisely, for each $k\geq 0$ in the computation of the series (\ref{vrlseries}) we have the additional term $L^{r,\rho,l}_i$ with arguments of the previous time step. This means that in step $k=0$ we add a term $L^{r,\rho,l}_i\left(\mathbf{r},\mathbf{v}^{\rho,l-1}\right)$ on the right side in order to compute the approximation $v^{r,\rho,0,l}_i$ (note that the argument $\mathbf{r}$ is considered to be externally given in this second step of the proof; we shall determine the function $\mathbf{r}$ in the third step if this proof). Similarly, in step $k>0$ we add a term $L^{r,\rho,l}_i\left(\mathbf{r},\mathbf{v}^{\rho,k-1,l}\right)$ on the right side in order to compute the approximation $v^{r,\rho,k,l}_i$. 
The first term of this functional series $\mathbf{v}^{r,\rho,0,l}=\left(v^{r,\rho,0,l}_1,\cdots ,v^{r,\rho,0,l}_n\right)^T$ is solution of the equation (for $1\leq i\leq n$)
\begin{equation}\label{Navlerayrrhol*}
\left\lbrace \begin{array}{ll}
\frac{\partial v^{r,\rho,0,l}_i}{\partial \tau}-\rho_l\nu\sum_{j=1}^n \frac{\partial^2 v^{r,\rho,l}_i}{\partial x_j^2} 
+\rho_l\sum_{j=1}^n v^{r,\rho,l-1}_j\frac{\partial v^{r,\rho,0,l}_i}{\partial x_j}=L^{r,\rho,l}_i\left(\mathbf{r},\mathbf{v}^{r,\rho,l-1}\right) \\
\\
+\rho_l\int_{{\mathbb R}^n}\left( \frac{\partial}{\partial x_i}K_n(x-y)\right) \sum_{j,k=1}^n\left( \frac{\partial v^{r,\rho ,l-1}_k}{\partial x_j}\frac{\partial v^{r,\rho,l-1}_j}{\partial x_k}\right) (\tau,y)dy+r^l_{i,\tau},\\
\\
\mathbf{v}^{r,\rho,0,l}(l-1,.)=\mathbf{v}^{r,\rho,l-1}(l-1,.),
\end{array}\right.
\end{equation}
and the other terms $\delta \mathbf{v}^{r,\rho,k,l}$ of this functional series are 
the respective solutions  of the equations (for $1\leq i\leq n$)
\begin{equation}\label{Navlerayrrholdelta}
\left\lbrace \begin{array}{ll}
\frac{\partial \delta v^{r,\rho,k,l}_i}{\partial \tau}-\rho_l\nu\sum_{j=1}^n \frac{\partial^2 \delta v^{r,\rho,k,l}_i}{\partial x_j^2} 
+\rho_l\sum_{j=1}^n v^{r,\rho,k-1,l}_j\frac{\partial \delta v^{r,\rho,k,l}_i}{\partial x_j}=L^{\rho,l,0}_i\left(\mathbf{r},\delta\mathbf{v}^{r,\rho,k-1,l}\right) \\
\\
-\rho_l\sum_{j=1}^n \delta v^{r,\rho,k-1,l}_j\frac{\partial  v^{r,\rho,k-1,l}_i}{\partial x_j}+\\
\\
\rho_l\int_{{\mathbb R}^n}\left( \frac{\partial}{\partial x_i}K_n(x-y)\right) \sum_{j,m=1}^n\left( \frac{\partial v^{r,\rho ,k-1,l}_m}{\partial x_j}\frac{\partial v^{r,\rho,k-1,l}_j}{\partial x_m}\right) (\tau,y)dy\\
\\
-\rho_l\int_{{\mathbb R}^n}\left( \frac{\partial}{\partial x_i}K_n(x-y)\right) \sum_{j,k=1}^n\left( \frac{\partial v^{r,\rho ,k-2,l}_k}{\partial x_j}\frac{\partial v^{r,\rho,k-2,l}_j}{\partial x_k}\right) (\tau,y)dy,\\
\\
\delta \mathbf{v}^{r,\rho,k,l}(l-1,.)=0.
\end{array}\right.
\end{equation}
Here in case $k=1$ the expression $v^{r,\rho,k-2,l}_j$ is defined by
\begin{equation}
v^{r,\rho,-1,l}_j:=v^{r,\rho,l-1}_j.
\end{equation}
We consider local estimates in this step.
We assume that from the previous time step $l-1$ we have the estimates
\begin{equation}
|r^{l-1}_i|_{0}\leq C^{0,l-1}_r,~|r^{l-1}_i|_{0,1}\leq C^{1,l-1}_r,~|r^{l-1}_i|_{1,2}\leq C^{l-1}_r,
\end{equation}
and that
\begin{equation}
|v^{l-1}_i|_{0}\leq C^{l-1}_{0},~|v^{l-1}_i|_{0,1}\leq C^{l-1}_{1},~\sum_{i=1}^n|v^{l-1}_i|_{1,2}\leq C^{l-1}_{1,2}.
\end{equation}
Furthermore we assume that
\begin{equation}\label{rnormstep2est*}
{\Big |} v^{r,\rho,l-1}_k(\tau,.){\Big |}_{H^2}\leq C^*_n\left( C^{l-1}_{1,2}\right) , 
\end{equation}
and that
\begin{equation}\label{rnormstep2est*}
{\Big |}  r^{l-1}_k{\Big |}_{H^2}\leq C^*_n\left( C^{l-1}_r\right)
\end{equation}
for all $1\leq k\leq n$ and generic $C^*_n$ and $C^{l-1}_r$.
Note that this implies
\begin{equation}\label{rnormstep2est}
\int_{{\mathbb R}^n}\sum_{j,k,m,p=1}^n{\Big |} \frac{\partial v^{r,\rho,l-1}_k}{\partial x_j}{\Big |}{\Big |}\frac{\partial v^{r,\rho,l-1}_m}{\partial x_p}{\Big |}(\tau,y)dy\leq C^*_n\left( C^{l-1}_{1,2}\right) , 
\end{equation}
and that
\begin{equation}\label{rnormstep2est}
\int_{{\mathbb R}^n}\sum_{j,k,m,p=1}^n{\Big |} \frac{\partial r^{l-1}_k}{\partial x_j}{\Big |}{\Big |}\frac{\partial r^{l-1}_m}{\partial x_p}{\Big |}(\tau,y)dy\leq C^*_n\left( C_r\right).
\end{equation}
If we establish the latter estimates for all $l$ then we have a bound of the integral terms of Leray projection in the equations for $\mathbf{v}^{r,\rho,l}$.   
\begin{rem}
 In the next subsection we shall define $\mathbf{r}^l$ and determine a global constant $C_r\geq C^{1,l}_r,C^{0,l-1}_r$ which is independent of the time step number $l$, and which is an upper bound for $\mathbf{r}$. More precisely, we shall show that there exists a constant $C_r>0$ (independent of $l$) such that for all $1\leq i\leq n$ and for all $l\geq 1$ the functions $r^{l}_i$ satisfy
\begin{equation}\label{rconst}
|r^{l}_i|_{0,1}\leq C_r.
\end{equation}
 Here the norm is defined on the local domain $D^{\tau}_l=(l-1,l]\times {\mathbb R}^n$, i.e.  we have
\begin{equation}
|r^{l}_i|_{0,1}:=\sum_{|\alpha|\leq 1}\sup_{(\tau,x)\in D^{\tau}_l}|r^l_{,\alpha}(\tau,x)|
\end{equation}
for multiindices $\alpha$ related to the spatial variables.
\end{rem}
In step 3 of this proof below we construct $r^l_i$ such that the relation (\ref{rconst}) and the relation (\ref{rnormstep2est}) are satisfied for all $l\geq 1$. As outlined in the introduction, in our global scheme we first compute the function $\mathbf{r}^{l}$ from the data $\mathbf{r}^{l-1}(l-1,.)$ and $\mathbf{v}^{r,l-1}(l-1,.)$. In this step we need an additional assumption on $C^l_r$. We shall assume that
\begin{equation}
|r^{l}_i|_{1,2}\leq C^{l-1}_r+1.
\end{equation}
Next recall that
\begin{equation}
\begin{array}{ll}
L^{\rho ,l}_i(\mathbf{r};\mathbf{v}^{r,\rho,l-1})\equiv 
-\rho_l\nu \Delta r^l_i+\rho_l\sum_{j=1}^n r^l_j\frac{\partial r^l_i}{\partial x_j}\\
\\
-\rho_l\int_{{\mathbb R}^n}\left( \frac{\partial}{\partial x_i}K_n(x-y)\right) \sum_{j,k=1}^n\left( \frac{\partial r^l_k}{\partial x_j}\frac{\partial r^l_j}{\partial x_k}\right) (\tau,y)dy+L^{\rho ,l,0}_i(\mathbf{r};\mathbf{v}^{r,\rho,l-1}),
\end{array}
\end{equation}
where
\begin{equation}\label{L00}
\begin{array}{ll}
L^{\rho ,l,0}_i(\mathbf{r};\mathbf{v})\equiv 
+\rho_l\sum_{j=1}^n r^l_j\frac{\partial v^{r,\rho,l}_i}{\partial x_j}+\rho_l\sum_{j=1}^n v^{r,\rho,l}_j\frac{\partial r^l_i}{\partial x_j}\\
\\ -2\rho_l\int_{{\mathbb R}^n}\left( \frac{\partial}{\partial x_i}K_n(x-y)\right) \sum_{j,k=1}^n\left( \frac{\partial r^l_k}{\partial x_j}\frac{\partial v^{r,\rho,l}_j}{\partial x_k}\right) (\tau,y)dy.
\end{array}
\end{equation}
We may use the rough estimate
\begin{equation}
\begin{array}{ll}
|L^{\rho ,l}_i(\mathbf{r};\mathbf{v}^{r,\rho,l-1})|\leq 
\rho_l\nu (C^{l-1}_r+1)+\rho_ln
(C^{l-1}_r+1)^2\\
\\
+\rho_lC_K(C^{l-1}_r+1)^2
+|L^{\rho ,l,0}_i(\mathbf{r};\mathbf{v}^{r,\rho,l-1})|.
\end{array}
\end{equation}
which we get from (\ref{rconst}). This is not needed essentially for the proof of local convergence of this step. For this purpose we use (\ref{rconst}) again, and we get from (\ref{L00}) the estimate (generic use of $C^*_n$)
\begin{equation}
\begin{array}{ll}
|L^{\rho ,l,0}_i(\mathbf{r};\mathbf{v}^{r,\rho,l-1})|\leq \rho_lC^*_n|\mathbf{r}^l|_{0,1}|\mathbf{v}^{r,\rho,l-1}|_{0,1} 
\end{array}
\end{equation}

For $l\geq 1$ given the functional series $(v^{r,\rho,k,l}_i)_k$  is determined by the representations
\begin{equation}\label{v0r}
\begin{array}{ll}
v^{r,\rho,0,l}_i(\tau,x)=\int_{{\mathbb R}^n}v^{r,\rho,l-1}_i(l-1,y)\Gamma^{l}_0(\tau,x;l-1,y)dy+\\
\\
\int_{(l-1)}^{\tau}\int_{{\mathbb R}^n}\rho_l\int_{{\mathbb R}^n}\left( \frac{\partial}{\partial x_i}K_n(y-z)\right) \left( \sum_{j,k=1}^n\left( \frac{\partial v^{r,\rho,l-1}_k}{\partial x_j}\frac{\partial v^{r,\rho,l-1}_j}{\partial x_k}\right) (\tau,z)\right) \times\\
\\
\times\Gamma^{r,l}_0(\tau,x,s,y)dzdyds\\
\\
+\int_{(l-1)}^{\tau}\int_{{\mathbb R}^n}L^{\rho,l}_i(\mathbf{r},\mathbf{v}^{r,l-1})(s,y)\Gamma^{r,l}_0(\tau,x,s,y)dzdyds\\
\\
+\int_{(l-1)}^{\tau}\int_{{\mathbb R}^n}r^l_{i,t}(s,y)\Gamma^{r,l}_0(\tau,x,s,y)dzdyds
\end{array}
\end{equation}
and (for $k\geq 1$) 
\begin{equation}\label{deltavr}
\begin{array}{ll}
\delta v^{r,\rho,k,l}_i(\tau,x)=-\rho_l\int_{(l-1)}^{\tau}\int_{{\mathbb R}^n}\left( \sum_{j=1}^n\delta v^{r,\rho,k-1,l}_j\frac{\partial v^{r,\rho ,k-1,l}}{\partial x_j}\right) (s,y)\Gamma^{r,l}_k(\tau,x;s,y)ds dy+\\
\\
\int_{(l-1)}^{\tau}\rho_l\int_{{\mathbb R}^n}\int_{{\mathbb R}^n}K_{n,i}(z-y)\times\\
\\
{\Big (} \left( \sum_{j,m=1}^n\left( v^{r,\rho ,k-1,l}_{m,j}(s,y)+v^{r,\rho,k-2,l}_{m,j}(s,y)\right) \right) \times\\
\\
\left( \sum_{j,m=1}^n\left( v^{r,\rho,k-1,l}_{j,m}(s,y)-v^{r,\rho,k-2,l}_{j,m}(s,y)\right) \right) {\Big)}\Gamma^{r,l}_k(\tau,x;s,z)dydzds,\\
\\
+\int_{(l-1)}^{\tau}\int_{{\mathbb R}^n}L^{\rho,l,0}_i\left(\mathbf{r},\delta \mathbf{v}^{r,\rho,k-1,l} \right) (s,y)\Gamma^{r,l}_k(\tau,x,s,y)dzdyds
\end{array}
\end{equation}
where the functions $\Gamma^{r,l}_k,~k\geq 0$ are fundamental solutions of \begin{equation}\label{Navleraylgamma0r}
 \begin{array}{ll}
\frac{\partial \Gamma^{r,l}_k}{\partial \tau}-\rho_l\nu\sum_{j=1}^n \frac{\partial^2 \Gamma^{r,l}_k}{\partial x_j^2} 
+\rho_l\sum_{j=1}^n v^{r,\rho,k-1,l}_j\frac{\partial \Gamma^{r,l}_k}{\partial x_j}=0
\end{array}
\end{equation}
(recall that $v^{r,\rho,-1,l}_i=v^{r,\rho,l-1}$ in case $k=0$).
Analogously as in the first step of this proof for some time step $l$ let $\mathbf{v}^{r,\rho,l-1}$ with $\mathbf{v}^{r,\rho,l-1}$ for some let $F_{r,l}:C^{1,2}_b(D^{\tau}_l)$ such that $\mathbf{v}^{r,\rho,k,l}=F_{r,l}^k(\mathbf{v}^{r,\rho,l-1})$. We assume that
\begin{equation}
|\mathbf{v}^{r,\rho,l-1}|^n_{1,2}\leq C^{l-1}_{1,2} 
\end{equation}
and start the iteration with
\begin{equation}
\mathbf{v}^{r,\rho,l-1}\in S^r_0:=\left\lbrace \mathbf{f}||\mathbf{f}|^n_{1,2}\leq 4C^{l-1}_{1,2} \right\rbrace .
\end{equation}

Hence, similar as in the first step we get for some generic constant $C^*_n$ dependent on the dimension $n$ 
\begin{equation}\label{xxx3r}
\begin{array}{ll}
|\delta \mathbf{v}^{r,\rho,k,l}|^n_{1,2}
\leq \rho_lC^*_nC_\Gamma\left( C^{l-1}_{1,2}+C^{l-1}_r\right) C_{\Gamma}| \delta \mathbf{v}^{\rho,k-1,l}|^n_{1,2}\\
\\
\leq \frac{1}{4}| \delta \mathbf{v}^{r,\rho,k-1,l}|^n_{1,2}
\end{array}
\end{equation}
for all $k\geq 0$ if we choose
\begin{equation}
 \rho_l\leq \frac{1}{4C^*_nC_{\Gamma}\left(C^{l-1}_{1,2}+C^{l-1}_r\right)C_{\Gamma}}
\end{equation}
In order to have similar estimates with respect to the $|.|_{H^2}$-norm we use constants $\rho_l$ with 
\begin{equation}
 \rho_l\leq \frac{1}{4C^*_nC_{\Gamma}\left( C^{l-1}_{1,2}+C^{l-1}_r\right)C_{\Gamma}}
\end{equation}
Moreover with analogous arguments as in the first step for all $\tau\in [l-1,l]$ we have
\begin{equation}\label{estH2}
|\delta v^{r,\rho,k,l}(\tau,.)|_{H^2}\leq \frac{1}{4}|\delta v^{r,\rho,k-1,l}(\tau,.)|_{H^2}.
\end{equation} 
uniformly in $\tau$. Then we use either method a) or b) completely analogously as in the first step of this proof.  
Hence locally and for $\rho_l$ as above we have got a function $\mathbf{v}^{\rho,l}\in C^{1,2}_b$ which is constructed from a limit of the functional series
\begin{equation}
v^{r,\rho,l}_i=v^{r,\rho,0,l}_i+\sum_{k=1}^{\infty}\delta v^{r,\rho,k,l}_i
\end{equation}
where $\mathbf{v}^{r,\rho,l}$ is a classical solution of

\begin{equation}\label{Navlerayrrhollimit}
\left\lbrace \begin{array}{ll}
\frac{\partial v^{r,\rho,l}_i}{\partial \tau}-\rho_l\nu\sum_{j=1}^n \frac{\partial^2 v^{r,\rho,l}_i}{\partial x_j^2} 
+\rho_l\sum_{j=1}^n v^{r,\rho,l}_j\frac{\partial v^{r,\rho,l}_i}{\partial x_j}=\\
\\
L^{\rho,l}_i(\mathbf{r},\mathbf{v}^{r,\rho,l})+\\
\\
\rho_l\int_{{\mathbb R}^n}\left( \frac{\partial}{\partial x_i}K_n(x-y)\right) \sum_{j,k=1}^n\left( \frac{\partial v^{r,\rho ,l}_k}{\partial x_j}\frac{\partial v^{r,\rho,l}_j}{\partial x_k}\right) (\tau,y)dy+r^l_{i,\tau},\\
\\
\mathbf{v}^{r,\rho,l}(l-1,.)=\mathbf{v}^{r,\rho,l-1}(l-1,.).
\end{array}\right.
\end{equation}
Finally, we note that by adjusting $C^*_n$ in our choice of $\rho_l$ we can ensure that
\begin{equation}
\sum_{k=1}^{\infty}|\delta \mathbf{v}^{r,\rho,k,l}|^n_{1,2}\leq \frac{1}{4}. 
\end{equation}

\subsection{step 3: Construction of the function $\mathbf{r}$ and of a globally convergent iterative scheme for $\mathbf{v}^{r,\rho,l}$}

We have proved the existence of local solutions for $\mathbf{v}^{r,\rho,l}=\mathbf{v}^{r,l}+\mathbf{r}^l$ for some fixed class of functions $\mathbf{r}^l$ and $\mathbf{v}^{r,\rho,l}$ with components in $C^{1,2}_b$ and such that the components are in the Hilbert space $H^2$ or in some Banach spaces $H^{s,p}$ for $s>2+\frac{p}{n}$ in order to have convergence with some suitable regularity of the limit function. A 'suitable regularity' of the limit function is H\"{o}lder continuity which is uniform in time. This implies another simple reason for choosing the classical space: it allows us to have classical representations in terms of fundamental solutions not only for some approximating linear equations but also for the limit function once it is known. The classical representation in terms of fundamental solutions then allows us to get more regularity. Note that we used regularity of coefficients when we applied the adjoint equation in the first and second step of this proof.  Furthermore the existence of classical solutions is useful because some forms of the maximum principle require the existence of classical solutions. These observations are useful for our construction of the function $\mathbf{r}$ which controls the global growth of the function $\mathbf{v}^{r,\rho}$ and is itself bounded (and has an integral magnitude which has linear growth with respect to time).  The time step size at time step number $l$ (measured in original time coordinates $t$) is $\rho_l$ and we concluded that the local scheme converges with any time step size of which satisfies
\begin{equation}\label{rholsec}
 \rho_l\leq \frac{1}{C^*_n\left( C^{l-1}_{1,2}+C^{l-1}_r\right) 4C_{\Gamma}^2},
\end{equation}
is sufficient, where $C^{l-1}_{1,2}$ is an upper bound for $\mathbf{v}^{r,\rho,l-1}$, and where $C^{l-1}_r$ a certain upper bound bound for $\mathbf{r}^l$ with respect to the $|.|_{1,2}$ norm. The next step is to define the function $\mathbf{r}$ such that there is is a global constant $C_r\geq C^{1,l}_r$, i.e., an upper bound for the control function with respect to the $|.|_{0,1}$ norm which is independent of the time step number $l$  (we shall see that a global upper boung for the modulus of the component functions $r_i$ and their spatial derivatives is sufficient). Indeed, we determine the constant $C_r$ in terms of the initial data function $h_i$ in this section. Furthermore, at the end of this section we shall show that this choice of $C_r$ is an uniform upper bound if $\rho_l$ is chosen as in (\ref{rholsec}) with $C_r$ instead of $C^{l-1}_r$. Concerning this constant $C^*_{n}$ we note that it depends only on dimension (and the time step size $\rho$) and may be computed explicitly. The constant $C_{\Gamma}$ in (\ref{rholsec}) is a uniform bound of constants related to the fundamental solutions which appear at the $k$th substep of time step $l$ (cf. step 1 of this proof).  Step 1 and step 2 of this proof show: if the constants $C^{l-1}_{1,2}$ and $C_r$ are given then $C_{\Gamma}$ can be computed and local convergence is given along with the choice of $\rho_l$ as in (\ref{rholsec}). 
(Note that we have shown in step 1 and step 2 of this proof that the other constants in (\ref{rholsec}) depend only on the dimension and the initial data $\mathbf{h}$ and, hence, are independent of the time step number $l$.)

We have to show that the numbers $C^{l}_{1,2},~l\geq 0$ can be estimated in terms of a constant $C_{1,2}$ which is independent of $l$ in order to get a global scheme. We choose a time step size of order $\rho_l\geq \rho$ where we have to choose the control function $\mathbf{r}^l$ recusrsively in order to deal with the growth of the integral magnitudes related to the integral term of the Navier-Stokes equation in Leray projection form. Furthermore we have to show that the function $\mathbf{r}$ can be chosen together with a finite constant $C_r$ independently of the time step number $l$. In this step 3 of the proof we determine the constant $C_{1,2}$ and the constant $C_r$ which is a bound for the global function $\mathbf{r}$. In order to reduce the number of constants we shall determine $C_{1,2}$ such that
\begin{equation}\label{Cr}
|r^l_i|_{1,2}\leq C_r=C_{1,2}
\end{equation}
 for all $l\geq 1$ and all $1\leq i\leq n$.  
 Here in (\ref{Cr}) the  norm is defined on the local domain $D^{\tau}_l=(l-1,l]\times {\mathbb R}^n$, i.e.  we have
\begin{equation}
|r^{l}_i|_{1,2}:=\sum_{|\alpha|\leq 2}\sup_{(\tau,x)\in D^{\tau}_l}|r^l_{,\alpha}(\tau,x)|+\sup_{(\tau,x)\in D^{\tau}_l}|r^l_{,\tau}(\tau,x)|
\end{equation}
 Globally, we shall construct a bound of the norm $|.|_{0,1}$. We shall see that this implies a global bound with respect to the $|.|_{0,2}$-norm, and even a global bound of type (\ref{Cr}), except for the time dervative at the time points $\tau=l$ for some $l\geq 1$.  
  
As we mentioned in the introduction, our construction leads to a global control function which is differentiable with respect to the spatial variables but may not be time-differentiable accross the time points $\tau=l$ for integers $l\geq 1$.    
  Since we have a step size with respect to original time coordinates of order $\rho_lgeq \rho$ for some time step size independent of the time step number $l$ we ensure that for the growth of the integral magnitude we have
 \begin{equation}\label{rlnormstep3}
\int_{{\mathbb R}^n}\sum_{j,k=1}^n{\Big |}\left( \frac{\partial r^l_k}{\partial x_j}\frac{\partial r^l_j}{\partial x_k}\right) (\tau,y){\Big |}dy\leq C^*_n\left( C_{1,2}\right) ,
\end{equation}
This is a difference to the linear growth of the integral magnitude with respect to the time step number $l$ which we have in \cite{KB2}. It is the appropriate choice of a control function which ensures that related integral magnitudes of the equation for $\mathbf{v}^{r,\rho,l}$ in Leray projection form have a bound which is independent of the time step number $l$.
In order to estimate the quantity (\ref{rlnormstep3}) we may estimate
 \begin{equation}\label{rlnormstep31}
\int_{{\mathbb R}^n}\sum_{j,k,m,p=1}^n{\Big |}\frac{\partial r^l_k}{\partial x_j}{\Big |}{\Big |}\frac{\partial r^l_m}
{\partial x_p} {\Big |}(\tau,y)dy\leq C^*_n\left( C_{1,2}\right) 
\end{equation}
inductively. However, it is useful to reduce the estimate (\ref{rlnormstep31}) to an $L^2$ estimate of the form ($C^*_n$ generic)
\begin{equation}\label{rlnormstep32}
\int_{{\mathbb R}^n}\sum_{j,k,m,p=1}^n\left( {\Big |}\frac{\partial r^l_k}{\partial x_j}{\Big |}^2+{\Big |}\frac{\partial r^l_m}{\partial x_p}{\Big |}^2\right)  (\tau,y)dy\leq C^*_n\left( C_{1,2}\right) .
\end{equation}
Note that our local schemes of step 1 and step 2 of this proof shows that we need also 
estimates of derivatives of the integral terms, i.e., estimates of the form
\begin{equation}\label{rlnormstep31}
\int_{{\mathbb R}^n}\sum_{j,k,m,p,q=1}^n{\Big |}\frac{\partial^2 r^l_k}{\partial x_j\partial x_q}{\Big |}{\Big |}\frac{\partial r^l_m}
{\partial x_p} {\Big |}(\tau,y)dy\leq C^*_n\left( C_{1,2}\right).
\end{equation}
However, this can be treated similarly, i.e., all this reduces 
 to an estimate in terms of a $H^2$ norm, i.e., to an estimate of form
\begin{equation}
|r^l_k(\tau,.)|_{H^2}\leq C^*_n\left( C_{1,2}\right),
\end{equation}
where we note that $C^*_n$ is a generic constant.

Depending on the representation of the function $r^l_i$ we may iterate the reduction of mixed products to sums of squares. Therefore we shall establish the estimate
\begin{equation}\label{rlnormstep33}
\int_{{\mathbb R}^n} {\Big |}\frac{\partial r^l_k}{\partial x_j}{\Big |}^2  
(\tau,y)dy\leq C_{1,2},
\end{equation}
inductively when we have constructed the function $r^l_i$. This involves related estimates for functions $v^{r,\rho,l-1}_i$ and $\phi^l_i$ which occur in the construction of $r^l_i$. Note that in the case $n=3$ we may also use product estimates $|fg|_{H^s}\leq C_s|f|_{H^s}|g|_{H^s}$ for $s>\frac{n}{2}$.
We shall show that $\mathbf{r}$ can be constructed via certain source term functions $\phi^l_i$ of linear parabolic equations such that 
\begin{equation}
|r_i|_{0,1}\leq C_r
\end{equation}
for all $1\leq i\leq n$.
Furthermore we shall ensure that  for all $l\geq 1$ we have 
 \begin{equation}\label{globalboundvr}
  |\mathbf{v}^{r,\rho,l}|^n_{1,2}\leq C_{1,2}=C_r,
 \end{equation}
where $C_r$ depends only on the dimension $n$ and the initial data $\mathbf{h}$ of the Navier-Stokes Cauchy problem. We shall also show that
\begin{equation}\label{globalboundvrint}
  |\mathbf{v}^{r,\rho,l}|^n_{H^2}\leq C^*_n\left( C_{1,2}\right) ,
 \end{equation}
and 
\begin{equation}\label{globalboundrr}
  |\mathbf{r}^{\rho,l}|^n_{H^2}\leq C^*_n\left( C_{1,2}\right) ,
 \end{equation}
 and that similar estimates are available with respect to other $H^{s,p}$-Banach spaces. This means that our method can be applied to general dimension $n$.
This is outlined in the introduction, and we shall fill in the details now and in the next step of this proof. In order to have an the equivalent problem (i.e., a problem equivalent to the original Navier-Stokes Cauchy problem for $\mathbf{v}$) for $\mathbf{v}^{r,\rho,l}$ we know that $\mathbf{r}$ has to be bounded (well, a bound linear with respect to the time step number $l$ would be sufficient as well, but we show that we can construct a globally bounded function $\mathbf{r}$ such that the problem for $\mathbf{v}^{r,\rho}$ is equivalent to the problem for $\mathbf{v}$). The problem for $\mathbf{v}^{r,\rho}$ has the advantage that we can solve it step by step where we control the growth by the functions $\mathbf{r}^l$. Note that (\ref{globalboundvr}) implies that for the original equation we have the bound 
 \begin{equation}\label{globalboundvorg}
  |\mathbf{v}^{\rho,l}|^n_{1,2}\leq C_{1,2}+nC_r=(n+1)C_{1,2}.
 \end{equation}
The construction of the function $\mathbf{r}$ and of the global scheme for $\mathbf{v}^{r,\rho}$ is done inductively with respect to the time step number $l-1$. At each time step $l\geq 1$ assume that the functions $\mathbf{v}^{r,\rho,l-1}(l-1,.)$ and $\mathbf{r}^{l-1}(l-1,.)$ have been computed. In case $l=1$ we set $\mathbf{v}^{r,\rho,l-1}(l-1,.)=\mathbf{h}$ and $\mathbf{r}^0\equiv 0$. We use these data and proceed at each time step $l$  in three substeps as we outline next.

\begin{itemize}
 \item[3i)] At each time step $l\geq 1$ we construct first the functions $\phi^l_i$ which serve as 'consumption source terms' and are constructed such that the growth of the functions $\mathbf{v}^{\rho,r}$ and $\mathbf{r}^l$ is controlled on the domain $[l-1,l]\times {\mathbb R}^n$. Control of growth is both in the sense of supremum norms such as $|.|_0$ and $|.|_{0,1}$ and in the sense of integral norms such as $|.|_{L^2}$ and $|.|_{H^2}$. Note that we control the growth with respect to the norms $|.|_0$ and $|.|_{0,1}$ in an absolute sense with respect to time and that we control the growth with respect to the integral norms $|.|_{L^2}$, $|.|_{H^1}$,  and $|.|_{H^2}$ in an absolute sense for each fixed and also  independent of the time step number. This is sufficient since the time step size decreases linearly in our scheme. The functions $\phi^l_i,~1\leq i\leq n$ are constructed as a sum of two functions, i.e., 
 \begin{equation}
 \phi^l_i=\phi^{v,l}_i+\phi^{r,l}_i,
 \end{equation}
 where $\phi^{v,l}_i$ is mainly constructed in order to control the growth of $\mathbf{v}^{r,\rho,l}$ and $\phi^{r,l}_i$ is mainly constructed in order to control the growth of $\mathbf{r}^l$. However, since $\mathbf{v}^{r,\rho,l}=\mathbf{v}^{\rho,l}+\mathbf{r}^l$ involves the control function $\mathbf{r}^l$, the function $\phi^{r,l}_i$ can be defined such that it serves to control the growth of the functions $v^{r,\rho,l}_i$ and the function $r^l_i$ at the same time in regions where the function $r^{l-1}_i$ of the previous time step is large. Here, the small time step size becomes important. We control the growth via source functions and solve nonlinear equations locally as perturbations of linear parabolic equations. The small time step size transformes into small diffusion coefficients by the transformation from original time coordinates $t$ to transformed time coordinates $\tau$ and implies that a) the linear diffusion does not alter the information encoded in  the source terms too much, and b) the perturbation correction of the linear approximation is small.  Note that we control the growth of the supremum and of the first derivative and with repect to the intergral norm.

The superscript $s$ refers to the fact that we define these functions independently of time $\tau$.  The advantage is simplicity of the definition of the source functions.  The disadvantage is that we do not have differentiability with respect to time across the time points $\tau=l$ (cf. the discussion in the introduction).
Another aspect is that the control source term $\phi^{r,s,l}_i$ for $r^l_i$ should not interfer with the effect of the control source term $\phi^{v,s,l}_i$. Hence we define them on different scales, i.e., we define the control source term for $r^l_i$ on a scale with factor $\frac{1}{C}$, i.e.,
\begin{equation}
\phi^{r,l}_i(\tau,x)=\phi^{r,s,l}_i(x):=
 -\frac{2}{C}r^{l-1}_i(l-1,.).
\end{equation}
and we define the control source term for $v^{r,\rho,l}_i$ on a scale with factor $\frac{1}{C^2}$, i.e.,
\begin{equation}\label{phivproof}
\phi^{r,l}_i(\tau,x)=\phi^{r,s,l}_i(x):=
 -\frac{2}{C^2}v^{r,l}_i(l-1,.).
\end{equation}
the constant $C>0$ may be large and the size of the time step size may be of order $\rho\sim \frac{1}{C^3}$ such that the growth of all terms with factor $\rho_l$ close to $\rho$ are controlled by these source terms.  The chioce of $C$ depends on the right side of the equations for $r^l_i$ and $v^{r,\rho,0,l}_i$, and the source terms of the equations for the corrections $\delta v^{r,\rho,k,l}$ which we can estimate using the coniderations of step 1 and step 2. 
The sum $\phi^l_i$ will appear on the right side of a linearized equation for $\mathbf{r}^l$ first. Since $\mathbf{r}^l$ is a summand in $\mathbf{v}^{r,\rho,l}$ we need to estimate the difference between the solution of the linearized equation - which we define to be $\mathbf{r}^l$ and the right side of the equation for $\mathbf{v}^{r,\rho,l}$ above which contains nonlinear terms for $\mathbf{r}^l$. Again, the small  size $\rho_l$, i.e. the small time step size with respect to the original $t$-coordinates- ensures that this difference is suffiently small.  Since products of functions can be pointwise estimated by sums of squares it is sufficient to have linear growth of $H^2$-estimates (indeed, linear growth of $H^1$ estimates would be sufficient). The estimation of the solution expression for $r^l_i$ involves (first derivatives of) fundamental solutions of approximating linear equations integrated over time (one time step) and space. These integrals are absolutely bounded. The functions $r^l_i$ are defined in the second substep which we sketch in advance next.
 \item[3ii)] Once we have defined the functions $\phi^l_i$ we define the function $\mathbf{r}^l$ on the domain $[l-1,l]\times {\mathbb R}^n$ via linearized equations of the form
 \begin{equation}\label{Navlerayrrholdefrl}
\left\lbrace \begin{array}{ll}
r^l_{i,\tau}-\rho_l\nu \Delta r^l_i+\rho_l\sum_{j=1}^n r^{l-1}_j(l-1,.)\frac{\partial r^{l}_i}{\partial x_j}=\\
\\
+\rho_l\int_{{\mathbb R}^n}\left( \frac{\partial}{\partial x_i}K_n(x-y)\right) \sum_{j,k=1}^n\left( \frac{\partial r^{l-1}_k}{\partial x_j}\frac{\partial r^{l-1}_j}{\partial x_k}\right) (l-1,y)dy\\
\\
-\rho_l\sum_{j=1}^n r^{l-1}_j\frac{\partial v^{r,\rho,l-1}_i}{\partial x_j}-\rho_l\sum_{j=1}^n v^{r,\rho,l-1}_j\frac{\partial r^{l-1}_i}{\partial x_j}\\
\\ +2\rho_l\int_{{\mathbb R}^n}\left( \frac{\partial}{\partial x_i}K_n(x-y)\right) \sum_{j,k=1}^n\left( \frac{\partial r^{l-1}_k}{\partial x_j}\frac{\partial v^{r,\rho,l-1}_j}{\partial x_k}\right) (l-1,y)dy\\
\\-\rho_l\int_{{\mathbb R}^n}\left( \frac{\partial}{\partial x_i}K_n(x-y)\right) \sum_{j,m=1}^n\left( \frac{\partial v^{r,\rho,l-1}_m}{\partial x_j}\frac{\partial v^{r,\rho,l-1}_j}{\partial x_m}\right) (l-1,y)dy+\phi^l_i,\\
\\
\mathbf{r}^{l}(l-1,.)=\mathbf{r}^{l-1}(l-1,.).
\end{array}\right.
\end{equation}
Note that the functions on the right side with time index $l-1$ are defined on the domain $[l-2,l-1]\times {\mathbb R}^n$. Hence these functions are evaluated at $\tau=l-1$. Note that the function $\phi^l_i$ is defined on the domain $[l-1,l]\times {\mathbb R}^n$. However, we defined $\phi^l_i$ as functions which depend only on the spatial variables, i.e., they are constant with respect to time. Reasons for this are given in the introduction of this paper.  
Moreover, we assume that $\mathbf{r}^{l-1}(l-1,.)$ has been defined at the previous time step. At time step $l=1$ we have defined $\phi^{r,1}_i\equiv 0$ for all $1\leq i\leq n$.
Note that all functions on the right side of the first equation in (\ref{Navlerayrrholdefrl}) have a factor $\rho_l$ except for the function $\phi^l_i$. Choosing $\rho_l$  small enough we ensure that each component of the function $r^l_i,~1\leq i\leq n$ of the function $\mathbf{r}^l$ is bounded in terms of function $Cr ^l_i(l-1,.)=r^{l-1}_i(l-1,.),~1\leq i\leq n$ respectively for each $i$, and with respect to the releant norms, and where $C>0$ is a constant independent of the time step number $l$.
Especially we shall see that we have a bound to the integral magnitude (\ref{rlnormstep3}) where it is essential at time step $l$ that $\phi^l_k$ is defined in substep 1 such that the $L^p$-norm of
 \begin{equation}\label{philnormstep3}
\int_{{\mathbb R}^n} r^{l-1}_k(l-1,y)\Gamma^l_r(\tau,x;s,y)dy+\int_{l-1}^{\tau}\int_{{\mathbb R}^n} \phi^l_k(s,y)\Gamma^l_r(\tau,x;s,y)dy,
\end{equation}
is finite and bounded independently of the time step number $l$ (we may use $H^{s,p}$-bounds with $p>2$ for higher dimensions $n$). Recall that $\Gamma_r$ denotes the fundamental solution of the linearized equation for $r^l_i$. The term 'essential' means that the other source terms are dominated by the term
\begin{equation}\label{phiterm}
\int_{l-1}^{\tau}\int_{{\mathbb R}^n} \phi^l_k(s,y)\Gamma^l_r(\tau,x;s,y)dy.
\end{equation}
We choose $\rho_l$ small such that the first spatial partial derivatives of (\ref{phiterm}) define a dominant term as well. The bound of the $H^1$-norm will follow directly from our construction of $\phi^l_k$ as well. We shall get a bound of the $H^2$-norm as well. Furthermore, in order to get upper bounds with respect to classical norms it is essential to have a global bound with respect to a H\"{o}lder norm. The definition of $\phi^l_i$ in terms of the functions $r^{l-1}_i$ and $v^{r,\rho,l-1}_i$ makes all this possible.

\item[3iii)] Next we plug $\mathbf{v}^{r,\rho,l}=\mathbf{v}^{\rho,l}+\mathbf{r}^l$ into the Navier-Stokes equation on $[l-1,l]\times {\mathbb R}^n$. Then we get
\begin{equation}\label{Navlerayrrholvr1}
\left\lbrace \begin{array}{ll}
\frac{\partial v^{r,\rho,l}_i}{\partial \tau}-\rho_l\nu\sum_{j=1}^n \frac{\partial^2 v^{r,\rho,l}_i}{\partial x_j^2} 
+\rho_l\sum_{j=1}^n v^{r,\rho,l}_j\frac{\partial v^{r,\rho,l}_i}{\partial x_j}=\psi^l_i,\\
\\
\mathbf{v}^{r,\rho,l}(l-1,.)=\mathbf{v}^{r,\rho,l-1}(l-1,.),
\end{array}\right.
\end{equation}
where
\begin{equation}\label{Navlerayrrhol*psi1}
\begin{array}{ll}
\psi^l_i=r^l_{i,\tau}-\rho_l\nu \Delta r^l_i+\rho_l\sum_{j=1}^n r^l_j\frac{\partial r^l_i}{\partial x_j}\\
\\
-\rho_l\int_{{\mathbb R}^n}\left( \frac{\partial}{\partial x_i}K_n(x-y)\right) \sum_{j,k=1}^n\left( \frac{\partial r^l_k}{\partial x_j}\frac{\partial r^l_j}{\partial x_k}\right) (\tau,y)dy\\
\\
+\rho_l\sum_{j=1}^n r^l_j\frac{\partial v^{r,\rho,l}_i}{\partial x_j}+\rho_l\sum_{j=1}^n v^{r,\rho,l}_j\frac{\partial r^{l}_i}{\partial x_j}\\
\\ -2\rho_l\int_{{\mathbb R}^n}\left( \frac{\partial}{\partial x_i}K_n(x-y)\right) \sum_{j,k=1}^n\left( \frac{\partial r^{l}_k}{\partial x_j}\frac{\partial v^{r,\rho,l}_j}{\partial x_k}\right) (\tau,y)dy\\
\\
 +\rho_l\int_{{\mathbb R}^n}\left( \frac{\partial}{\partial x_i}K_n(x-y)\right) \sum_{j,k=1}^n\left( \frac{\partial v^{r,\rho,l}_k}{\partial x_j}\frac{\partial v^{r,\rho,l}_j}{\partial x_k}\right) (\tau,y)dy.
\end{array}
\end{equation}
In order to control the growth of $\mathbf{v}^{r,\rho,l}$ we consider the  functional series $\mathbf{v}^{r,\rho,l}=\mathbf{v}^{r,\rho,0,l}+\sum_{k=1}^{\infty}\delta \mathbf{v}^{r,\rho,k,l}$ and consider a) the growth of $\mathbf{v}^{r,\rho,0,l}$, and b) the fact the sum of correction terms $\sum_{k=1}^{\infty}\delta \mathbf{v}^{r,\rho,k,l}$ is small for small $\rho_l$. The equation for
\begin{equation}\label{Navlerayrrhol*++0step4}
\left\lbrace \begin{array}{ll}
\frac{\partial v^{r,\rho,0,l}_i}{\partial \tau}-\rho_l\nu\sum_{j=1}^n \frac{\partial^2 v^{r,\rho,0,l}_i}{\partial x_j^2} 
+\rho_l\sum_{j=1}^n v^{r,\rho,l-1}_j(l-1,.)\frac{\partial v^{r,\rho,0,l}_i}{\partial x_j}=\psi^{l,0}_i\\
\\
\mathbf{v}^{r,\rho,0,l}(l-1,.)=\mathbf{v}^{r,\rho,l-1}(l-1,.),
\end{array}\right.
\end{equation}
where
\begin{equation}\label{Navlerayrrhol*++0step4}
\begin{array}{ll}
\psi^{l,0}_i=
r^l_{i,\tau}-\rho_l\nu \Delta r^l_i+\rho_l\sum_{j=1}^n r^l_j\frac{\partial r^l_i}{\partial x_j}\\
\\
-\rho_l\int_{{\mathbb R}^n}\left( \frac{\partial}{\partial x_i}K_n(x-y)\right) \sum_{j,k=1}^n\left( \frac{\partial r^l_k}{\partial x_j}\frac{\partial r^l_j}{\partial x_k}\right) (\tau,y)dy\\
\\
+\rho_l\sum_{j=1}^n r^l_j\frac{\partial v^{r,\rho,l-1}_i}{\partial x_j}+\rho_l\sum_{j=1}^n v^{r,\rho,l-1}_j\frac{\partial r^{l}_i}{\partial x_j}\\
\\ -2\rho_l\int_{{\mathbb R}^n}\left( \frac{\partial}{\partial x_i}K_n(x-y)\right) \sum_{j,k=1}^n\left( \frac{\partial r^{l}_k}{\partial x_j}\frac{\partial v^{r,\rho,l-1}_j}{\partial x_k}\right) (\tau,y)dy\\
\\
+\rho_l\int_{{\mathbb R}^n}\left( \frac{\partial}{\partial x_i}K_n(x-y)\right) \sum_{j,k=1}^n\left( \frac{\partial v^{r,\rho,l-1}_k}{\partial x_j}\frac{\partial v^{r,\rho,l-1}_j}{\partial x_k}\right) (l-1,y)dy.
\end{array}
\end{equation}
Then we analyze the growth of the functions $v^{r,\rho,0,l}_i,~1\leq i\leq n$, where we estimate the  $\psi^{l,0}_i$  using (\ref{Navlerayrrholdefrl}), and then estimate the difference $\psi^l_i-\psi^{l,0}_i$. Our construction of functions $\phi^l_i$ in terms of the functions $v^{r,\rho,l-1}_i(l-1,.)$ and $r^{l-1}_i(l-1,.)$ ensures that for large $C>0$ and small $\rho>0$ the source term has a consumptive effect on the domain 
$[l-1,l]\times {\mathbb R}^n$- especially 
if one of the functions $v^{r,\rho,l-1}_i(l-1,.)$ and $r^{l-1}_i(l-1,.)$  exceeds a certain 
critical level. This ensdsures global boundedness with respect tp the relevant norms $|.|_{1,2}$ and $|.|_{H^2}$ (the latter for functions evaluated at some time $\tau$). 
\end{itemize}

ad 3i)  As we described in the introduction of this section the function $\phi^l_i$ is determined as a sum of two functions $\phi^{r,s,l}_i$ and $\phi^{v,s,l}_i$. A small step size $\rho_l\geq \rho>0$ means that the representations of the functions $r^l_i$ and $v^{r,\rho,0,l}_i$ in terms of fundamental solutions include integrated source terms which are close to
\begin{equation}
\begin{array}{ll}
\int_{l-1}^{\tau}\phi^{r,s,l}_i(\sigma,x)d\sigma=\int_{l-1}^{\tau}\phi^{r,\sigma,l}_i(x)d\sigma\\
\\
:=-\int_{l-1}^{\tau}\frac{2}{C}r^{l-1}_i(l-1,.)d\sigma=-(\tau-l)r^{l-1}_i(l-1,.).
 \end{array}
\end{equation}
and 
\begin{equation}\label{phiv2}
\int_{l-1}^{\tau}\phi^{r,l}_i(\sigma,x)d\sigma=\int_{l-1}^{\tau}\phi^{r,\sigma,l}_i(x)d\sigma:=
 -(\tau-l)\frac{2}{C^2}v^{r,l}_i(l-1,.).
\end{equation}
The choice of $\phi^{r,s,l}_i$ as a constant times the negative of $r^{l-1}_i(l-1,.)$ implies that $\phi^{r,s,l}_i$ has a 'consumption effect' with respect to the relevant norms, i.e., the tendency to decrease the value of the control function with respect to the norms $|.|_{1,2}$ and  the $H^2$-norm (for fixed $\tau\in [l-1,l]$. Similarly the choice of $\phi^{v,s,l}_i$ as a constant times the negative of $v^{r,\rho,l-1}_i(l-1,.)$ implies that $\phi^{v,s,l}_i$ has a 'consumption effect' with respect to the relevant norms, too. Furthermore since $\phi^{r,s,l}_i$ is larger than $\phi^{v,s,l}_i$ (by a large factor $C$)  such that the consumption effect of $\phi^{r,s,l}_i$ is not disturbed by the consumption effect by $\phi^{v,s,l}_i$ (we shall see this in more detail in ad ii) and ad iii) below. On the other hand it seems that the consumption effect of $\phi^{r,s,l}_i$ can disturb the consumption effect of $\phi^{v,s,l}_i$. However, whenever $v^{r,\rho,l-1}_i(l-1,.)=v^{\rho,l-1}_i(l-1,.)+r^{l-1}_i(l-1,.)$ is large and $r^l_i$ is large, then the consumption effect of $\phi^{r,s,l}_i$ is active in order to control the growth of $v^{r,\rho,l}_i$ with respect to the relevant norms. On the other hand, if $r^{l-1}_i(l-1,.)$ becomes small compared to $\frac{2}{C^2}v^{r,l}_i(l-1,.)$ then the consumptive effect of $\phi^{v,s,l}_i$ becomes active. Hence we control the growth of $v^{r,\rho,l}_i$ and $r^l_i$ in any case if a) the time step size $\rho_l$ is small enough and b) $C>0$ is large enough.
A choice $\rho_l$ of order $\frac{1}{C^3}$ for $C$ large enough implies that the moduli of source terms $\phi^{r,s,l}_i$ are larger than the sum of all moduli of the other source terms in the equation for $r^{l}_i$ which all have a factor $\rho_l$. Similarly for $\phi^{v,s,l}_i$ and the equation for $v^{r,\rho,l}_i$. We determine $C$ using the results of the preceding steps in the next addenda ad ii) and ad iii) blow. 

ad 3 ii): Next we construct the functions $r^l_i$ for $1\leq i\leq n$. We want to define constants
$C^0_r, C^1_r, C_r$ which are independent of the time step number $l$ and such that for all $l\geq 1$
\begin{equation}
|r^l_i|_0\leq C^0_r,~|r^l_i|_{0,1}\leq C^1_r,~|r^l_i|_{1,2}\leq C_r
\end{equation}
holds, where the norms are defined as above with respect to the domain $D^{\tau}_l=(l-1,l]\times {\mathbb R}^n$. Furthermore we want to ensure linear growth of the intergal magnitude for the function $\mathbf{r}^l$ (see below). At the first time step $l=1$ we have defined $\phi^{r,1}_i\equiv 0$. We may also define $\phi^{1}_i=0$ for all $1\leq i\leq n$, and consider a local scheme as in the first step of this proof. In this case $\mathbf{r}^1=0$ (it is a matter of taste wether we define a nontrivial consumption function $\phi^l_i$ at the first time step $l=1$). For $l\geq 2$
the functions $r^l_i$ are defined by the Cauchy problem (\ref{Navlerayrrholdefrl}). Note that the right side is evaluated at $\tau=l-1$.  We rewrite (\ref{Navlerayrrholdefrl}) in the form
\begin{equation}\label{Navlerayrrholdefrlrewr}
\left\lbrace \begin{array}{ll}
r^l_{i,\tau}-\rho_l\nu \Delta r^l_i+\rho_l\sum_{j=1}^n r^{l-1}_j(l-1,.)\frac{\partial r^{l}_i}{\partial x_j}=\\
\\
S^l_i(l-1,.)+\phi^l_i(l-1,.),\\
\\
\mathbf{r}^{l}(l-1,.)=\mathbf{r}^{l-1}(l-1,.),
\end{array}\right.
\end{equation}
where
\begin{equation}\label{slrep}
 \begin{array}{ll}
S^l_{i}(l-1,x)=
+\rho_l\int_{{\mathbb R}^n}\left( \frac{\partial}{\partial x_i}K_n(x-y)\right) \sum_{j,k=1}^n\left( \frac{\partial r^{l-1}_k}{\partial x_j}\frac{\partial r^{l-1}_j}{\partial x_k}\right) (l-1,y)dy\\
\\
-\rho_l\sum_{j=1}^n r^{l-1}_j\frac{\partial v^{r,\rho,l-1}_i}{\partial x_j}-\rho_l\sum_{j=1}^n v^{r,\rho,l-1}_j\frac{\partial r^{l-1}_i}{\partial x_j}\\
\\ +2\rho_l\int_{{\mathbb R}^n}\left( \frac{\partial}{\partial x_i}K_n(x-y)\right) \sum_{j,k=1}^n\left( \frac{\partial r^{l-1}_k}{\partial x_j}\frac{\partial v^{r,\rho,l-1}_j}{\partial x_k}\right) (l-1,y)dy\\
\\-\rho_l\int_{{\mathbb R}^n}\left( \frac{\partial}{\partial x_i}K_n(x-y)\right) \sum_{j,m=1}^n\left( \frac{\partial v^{r,\rho,l-1}_m}{\partial x_j}\frac{\partial v^{r,\rho,l-1}_j}{\partial x_m}\right) (l-1,y)dy.
\end{array}
\end{equation}
This is a linear parabolic equation for $\mathbf{r}^l$ which can be solved. Furthermore, the solution has a representation in terms of the fundamental solution $\Gamma^l_r$ of the equation 
\begin{equation}\label{fundrrr}
 \begin{array}{ll}
r^l_{i,\tau}-\rho_l\nu \Delta r^l_i+\rho_l\sum_{j=1}^n r^{l-1}_j(l-1,.)\frac{\partial r^{l}_i}{\partial x_j}=0,
\end{array}
\end{equation}
which allows us to analyze the growth of $\mathbf{r}^l$. We have
\begin{equation}\label{rlrep}
 \begin{array}{ll}
r^l_{i}(\tau,x)=\int_{{\mathbb R}^n}r^{l-1}_i(l-1,y)\Gamma^l_r(\tau,x;0,y)dy\\
\\
+\int_{l-1}^{\tau}\int_{{\mathbb R}^n}S^l_i(l-1,y)\Gamma^l_r(\tau,x;s,y)dsdy\\
\\
+\int_{l-1}^{\tau}\int_{{\mathbb R}^n}\phi^l_i(s,y)\Gamma^l_r(\tau,x;s,y)dsdy,
\end{array}
\end{equation}
We shall exploit the following ideas related to the representation (\ref{rlrep}): concerning the 
the relation of the second and the third term on the right side of (\ref{rlrep}) we note that all terms of the function $S^l$ have the factor $\rho_l$ while the third term $\phi^l_i$ does not have this factor. If the factor $\rho_l>0$ is small, then the diffusive effect of the fundamental solution $\Gamma^l_r$ is small in the sense that the values of the second and third term on the right side of (\ref{rlrep}) depend largely on the source terms $S^l_i(l-1,y)$ and $\phi^l_i(s,y)$ (we shall make this remark precise in a moment). This means that for $\rho_l$ small enough for each $1\leq i\leq n$ the term in (\ref{rlrep}) involving $\phi^l_i$ dominates the term involving $S^l_i$. Furthermore a bound for the first term of the right side in (\ref{rlrep}) can be obtained using the maximum principle. Considering derivatives and shifting first derivatives using the adjoint of the fundamental solution as we have done in the first step of this proof leads to an estimate of the functions $r^l_i$ in the $|.|_{0,1}$ norm, the $|.|_{H^2}$ nor and even the $|.|_{1,2}$ norm. Note that the functions $\phi^{r,l,k}_i$ and $\phi^{v,l,k}_i$ are designed in order to control the spatial derivatives in the critical regions where  $r^l_i(l-1,.)$ and/or $v^{r,\rho,l}_i(l-1,.)$ become large (the time step size may be chosen small enough such that the first spatial derivatives of the functions $S^{l-1}_i$ are dominated by the functions $\phi^l_i$). More precisely, for a small stepsize $\rho_l\sim \frac{1}{C^3}$ along with $C>0$ large enough we have   
\begin{equation}\label{rlrep000}
 \begin{array}{ll}
\sup_{(\tau,x)\in [l-1,l]\times {\mathbb R}^n}|r^l_{i}(\tau,x)|\leq {\Big|}\int_{{\mathbb R}^n}r^{l-1}_i(l-1,y)\Gamma^l_r(\tau,x;0,y)dy\\
\\
+\int_{l-1}^{\tau}\int_{{\mathbb R}^n}S^l_i(l-1,y)\Gamma^l_r(\tau,x;s,y)dsdy\\
\\
+\int_{l-1}^{\tau}\int_{{\mathbb R}^n}\phi^l_i(s,y)\Gamma^l_r(\tau,x;s,y)dsdy{\Big |}\\
\\
\leq \sup_{(\tau,x)\in [l-1,l]\times {\mathbb R}^n}{\Big|}\int_{{\mathbb R}^n}r^{l-1}_i(l-1,y)\Gamma^l_r(\tau,x;0,y)dy{\Big |}
\end{array}
\end{equation}
The right side of (\ref{rlrep000}) is a representation of a linear parabolic Cauchy problem with initial data $r^{l-1}_i$. Therefore, the maximum principle may be applied, and this leads to
\begin{equation}
\sup_{(\tau,x)\in [l-1,l]\times {\mathbb R}^n}|r^l_{i}(\tau,x)|\leq \sup_{x\in {\mathbb R}^n}{\Big|}r^{l-1}_i(l-1,x){\Big |}.
\end{equation}
We shall determine $C>0$ below with respect to a stronger norm.
Similarly, concerning the bound of first order derivatives for a small stepsize $\rho_l$  we have (part of the argument similar as in the first step of this proof) 
\begin{equation}\label{rlrep0000}
 \begin{array}{ll}
\sup_{(\tau,x)\in [l-1,l]\times {\mathbb R}^n}{\Big |}\frac{\partial}{\partial x_k}r^l_{i}(\tau,x){\Big |}\leq {\Big|}\int_{{\mathbb R}^n}\left( \frac{\partial}{\partial x_k}r^{l-1}_i(l-1,y)\right) \Gamma^{l,*}_r(\tau,x;0,y)dy\\
\\
+\int_{l-1}^{\tau}\int_{{\mathbb R}^n}\left( \frac{\partial}{\partial x_k}S^l_i(l-1,y)\right) \Gamma^{l,*}_r(\tau,x;s,y)dsdy\\
\\
+\int_{l-1}^{\tau}\int_{{\mathbb R}^n}\left( \frac{\partial}{\partial x_k}\phi^l_i(s,y)\right) \Gamma^{l,*}_r(\tau,x;s,y)dsdy{\Big |}\\
\\
\leq \sup_{(\tau,x)\in [l-1,l]\times {\mathbb R}^n}{\Big|}\int_{{\mathbb R}^n}\left( \frac{\partial}{\partial x_k}r^{l-1}_i(l-1,y)\right) \Gamma^{l,*}_r(\tau,x;0,y)dy{\Big |}.
\end{array}
\end{equation}
Here we may use the adjoint (indcated by the superscript $*$, but note that the same conclusion can be obtained by looking at the Levy expansion of $\Gamma^{l}_r$ which is a sum whith a Gaussian as its first term. We can shift the spatial derivative of this first term (which contributes to a convolution) and treat the other terms as a pertubation of this convolution which is small since all first order terms of the equation for $r^l_i$ have the factor $\rho_l$ and powers of order $k$ of $\rho^l$ appear in the higher order terms of order $k$ of the Levy exansion. This  leads immediately to an estimate for the perturbation of the Gaussian.
From (\ref{rlrep0000}) we get for small $\rho_l$
\begin{equation}\label{rlrep00000}
 \begin{array}{ll}
\sup_{(\tau,x)\in [l-1,l]\times {\mathbb R}^n}{\Big |}\frac{\partial}{\partial x_k}r^l_{i}(\tau,x){\Big |}\leq\sup_{x\in {\mathbb R}^n}{\Big|}\left( \frac{\partial}{\partial x_k}r^{l-1}_i(l-1,x)\right) {\Big |}.
\end{array}
\end{equation}
Well it is essential to keep the control function bounded. This is done within our construction. In order to see this we may use the following lemmas. Let
\begin{equation}
D^{l-1}_{+,i}:=\left\lbrace x\in {\mathbb R}^n||r^{l-1}_i(l-1,x)|\geq C\right\rbrace, 
\end{equation}
and let
\begin{equation}
D^{l-1}_{+,i}:=\left\lbrace x\in {\mathbb R}^n||r^{l-1}_i(l-1,x)|\leq -C\right\rbrace. 
\end{equation}
\begin{lem} Assume that $\rho_l$ is sufficiently small. Then
 for $1\leq i\leq n$, and for $(\tau,x)\in D^{l-1}_{+,i}$ we have
 \begin{equation}
 \int_{l-1}^{\tau}\phi^l_{i}(s,y)\Gamma^l_r(\tau,x;s,y)dsdy\leq -\frac{3}{4C}\left( \tau-(l-1)\right) , 
 \end{equation}
and for $(\tau,x)\in D^{l-1}_{-,i}$ we have
\begin{equation}
 \int_{l-1}^{\tau}\phi^l_{i}(s,y)\Gamma^l_r(\tau,x;s,y)dsdy\geq \frac{3}{4C}\left( \tau -(l-1)\right).
 \end{equation}
\end{lem}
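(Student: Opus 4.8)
The plan is to exploit two structural facts about the kernel $\Gamma^l_r$ together with the explicit sign pattern of $\phi^l_i$ built in substep 3i). First, $\Gamma^l_r$ solves a uniformly parabolic equation with no zeroth order term, so $\Gamma^l_r>0$ (as already noted via Thm.~11 of Ch.~2 of \cite{F2}); and since the constant function $1$ solves the homogeneous forward equation, the representation formula forces conservation of mass,
\begin{equation}
\int_{{\mathbb R}^n}\Gamma^l_r(\tau,x;s,y)\,dy=1\qquad\text{for all }\;l-1\le s<\tau\le l,\;x\in{\mathbb R}^n .
\end{equation}
Thus for each fixed $s$ the map $y\mapsto \Gamma^l_r(\tau,x;s,y)$ is a probability density, and $\int_{{\mathbb R}^n}\phi^l_i(s,y)\Gamma^l_r(\tau,x;s,y)\,dy$ is a weighted average of the (time independent) spatial profile $\phi^l_i$ against this density.

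Next I would localise. Recall from substep 3i) that $\phi^l_i\le -1$ on $D^{l-1}_{+,i}$, that $\phi^l_i\ge 1$ on $D^{l-1}_{-,i}$, that $|\phi^l_i|\le 2$ globally, and that $|\phi^l_i|_{0,1}\le C_s$ with $C_s$ independent of $l$ (Lemma \ref{lemr}); moreover the two relevant level sets are separated by the uniform distance \eqref{distdelta}. Fix $(\tau,x)$ with $x\in D^{l-1}_{+,i}$. Since $\phi^l_i(x)\le -1$ and $\phi^l_i$ is Lipschitz with constant $\le C_s$, there is a radius $r_0=\min\{\delta/2,\,1/(8C_s)\}$, depending only on $n,\delta,C_s$ and hence not on $l$, such that $\phi^l_i(y)\le -\tfrac78$ for all $y\in B_{r_0}(x)$. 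Splitting the spatial integral into $B_{r_0}(x)$ and its complement and using the mass identity gives, for each $s\in[l-1,\tau]$,
\begin{equation}
\int_{{\mathbb R}^n}\phi^l_i(s,y)\Gamma^l_r(\tau,x;s,y)\,dy\le-\tfrac78\,\big(1-\varepsilon_s\big)+2\,\varepsilon_s,\qquad \varepsilon_s:=\int_{{\mathbb R}^n\setminus B_{r_0}(x)}\Gamma^l_r(\tau,x;s,y)\,dy .
\end{equation}

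The escaping mass $\varepsilon_s$ is then controlled by the Gaussian upper bound of the same type as \eqref{gammakest}: since $\tau-s\le 1$, one gets $\varepsilon_s\le C\exp(-c\,r_0^2/(\rho_l\nu))$ with $C,c$ depending only on the data, uniformly in $s\in[l-1,\tau]$ and in $x$. Hence, choosing $\rho_l$ small enough that $\tfrac{23}{8}\,C\exp(-c r_0^2/(\rho_l\nu))\le\tfrac18$ (this is the quantitative content of ``$\rho_l$ sufficiently small''), the inner integral is $\le-\tfrac34$ for every $s$, and integrating in $s$ over $[l-1,\tau]$ yields the first inequality. The second inequality is the mirror statement on $D^{l-1}_{-,i}$ (the printed inequality should be read on that set), where $\phi^l_i\ge1$ forces $\phi^l_i\ge\tfrac78$ on $B_{r_0}(x)$ and the identical estimate with reversed signs produces the lower bound $\ge\tfrac34(\tau-(l-1))$.

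The main obstacle is the uniform-in-$s$ concentration estimate: near $s=l-1$ the kernel $\Gamma^l_r(\tau,x;s,\cdot)$ is spread over a ball of radius $\sim\sqrt{\rho_l\nu(\tau-s)}$, which may be as large as $\sim\sqrt{\rho_l\nu}$, so I must check that this spread is small compared with the fixed radius $r_0$ — equivalently compared with the uniform separation $\delta$ — simultaneously for \emph{all} $s\in[l-1,\tau]$ and all $x\in D^{l-1}_{+,i}$. This is exactly where the smallness of $\rho_l$ enters, and it explains why the constant $\tfrac34$ rather than $1$ appears: it leaves room to absorb the $O(\varepsilon_s)$ tail error. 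A secondary point to verify carefully is that the two consumption pieces $\phi^{v,l}_i$ and $\phi^{r,l}_i$ do combine to satisfy $\phi^l_i\le-1$ on $D^{l-1}_{+,i}$ (respectively $\ge 1$ on $D^{l-1}_{-,i}$), which is what the disjointness of the sets arranged in substep 3i) is designed to guarantee.
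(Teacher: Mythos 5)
Your argument is correct in substance, but it takes a genuinely different route from the paper's. The paper's own proof (which is only a two-sentence sketch, and in fact trails off mid-sentence) proceeds by reduction to the heat kernel: first one checks both inequalities with $N^l(\tau,x;s,y)$ in place of $\Gamma^l_r(\tau,x;s,y)$, and then one invokes the Levy expansion $\Gamma^l_r=N^l+\int_s^{\tau}\int_{{\mathbb R}^n}N^l\phi_r\,d\sigma d\xi$, in which every term of $\phi_r$ carries a factor $\rho_l$ (because $L^r_lN^l=\rho_l\sum_j r^{l-1}_jN^l_{,j}$), so that for $\rho_l$ small the full kernel is an $O(\rho_l)$ perturbation of the heat kernel and the heat-kernel estimate survives. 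You never split off the heat kernel; instead you work directly with $\Gamma^l_r$, replacing the perturbation step by two intrinsic facts: exact mass normalization $\int_{{\mathbb R}^n}\Gamma^l_r(\tau,x;s,y)\,dy=1$ (correctly justified by the absence of a zeroth-order term plus uniqueness of bounded solutions) together with positivity, and a Gaussian upper bound at the correct variance scale $\rho_l\nu(\tau-s)$, which makes the escaping mass $\varepsilon_s$ outside a fixed ball exponentially small in $1/(\rho_l\nu)$. Combined with the Lipschitz localization $\phi^l_i\le-\tfrac78$ on $B_{r_0}(x)$ (legitimate, since Lemma \ref{lemr} gives $|\phi^l_i|_{0,2}$ bounded independently of $l$), your arithmetic $-\tfrac78+\tfrac{23}{8}\varepsilon_s\le-\tfrac34$ is right, and integration in $s$ gives the claim. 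What each approach buys: the paper's version recycles the Levy-expansion machinery already set up in steps 1 and 2 and keeps the ``perturbation of the heat kernel'' structure explicit; yours is more self-contained and quantitative, and in particular it supplies the uniform-in-$s$ concentration estimate (uniform also over $x$ and over the time step number $l$) that the paper never actually writes down.

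Two caveats you flag are genuine and should be kept. First, the second inequality as printed, stated for all $(\tau,x)\in[l-1,l]\times{\mathbb R}^n$, is inconsistent with the first (the two bounds contradict each other on $D^{l-1}_{+,i}$), so it must indeed be read on $D^{l-1}_{-,i}$, as you do. Second, the hypothesis $\phi^l_i\le-1$ on $D^{l-1}_{+,i}$ is not automatic from the paper's construction: in the proof of step 3i) the paper sets $\phi^{r,l}_i=-1$ on the full set $D^{r,l-1}_{+,i}$, which may intersect the set $D^{v,l-1}_{-,i}$ where $\phi^{v,l}_i=+1$, and on such an intersection $\phi^l_i=0$, so neither inequality can hold there; the disjointness the paper arranges concerns only the reduced sets $D^{r,0,l-1}_{\pm,i}$. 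Thus the lemma is really valid on those reduced sets (or under the paper's informal ``opposite sign'' case discussion), and this is a defect of the paper's bookkeeping rather than of your argument.
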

\begin{proof}
You check first that this follows for the heat kernel instead of $\Gamma^l_r$. Then you use the Levy expansion representation for $\Gamma^l_r$ and show that for $\rho_l$ it is a perturbation of of the heat kernel. 
\end{proof}
Note also that
\begin{lem} Assume that $\rho_l$ is sufficiently small. Then
 for $1\leq i\leq n$, and for $(\tau,x)\in D^{l-1}_{+,i}\cup D^{l-1}_{-,i}$ we have
 \begin{equation}
 {\big |}\int_{l-1}^{\tau}S^l_{i}(s,y)\Gamma^l_r(\tau,x;s,y)dsdy{\big |}\leq \frac{1}{2}\left( \tau-(l-1)\right).
 \end{equation}
\end{lem}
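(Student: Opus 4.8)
The plan is to exploit two structural facts about the fundamental solution $\Gamma^l_r$ of the linear parabolic equation (\ref{fundrrr}): its positivity, which was just recorded above the statement (via Thm.~11 of Ch.~2 of \cite{F2}), and conservation of mass, namely $\int_{\mathbb{R}^n}\Gamma^l_r(\tau,x;s,y)\,dy=1$ for every $s<\tau$. The conservation property holds because the operator in (\ref{fundrrr}) carries no zeroth-order (potential) term: the constant function $w\equiv 1$ solves the homogeneous equation with datum $w(s,\cdot)=1$, and since the first-order coefficients $r^{l-1}_j(l-1,\cdot)$ are bounded and H\"older continuous (they lie in $C^{1,2}_b$ from the previous time step and are independent of time), uniqueness in the class of bounded solutions forces the representation $\int_{\mathbb{R}^n}\Gamma^l_r(\tau,x;s,y)\,dy$ to reproduce $w(\tau,x)=1$.

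With these facts in hand I would simply freeze the source term and pull its supremum out of the spatial integral. Since $S^l_i(l-1,\cdot)$ depends only on the data of the previous time step, it is a fixed bounded function, independent of the integration variable $s$, so using $\Gamma^l_r\ge 0$ and the conservation identity,
\[
\Big|\int_{l-1}^{\tau}\int_{\mathbb{R}^n}S^l_i(l-1,y)\Gamma^l_r(\tau,x;s,y)\,dy\,ds\Big|\leq \int_{l-1}^{\tau}|S^l_i(l-1,\cdot)|_0\Big(\int_{\mathbb{R}^n}\Gamma^l_r(\tau,x;s,y)\,dy\Big)ds=|S^l_i(l-1,\cdot)|_0\,(\tau-(l-1)).
\]
I note that this estimate is in fact uniform in $x\in\mathbb{R}^n$; the hypothesis $(\tau,x)\in D^{l-1}_{+,i}$ plays no role here and is only kept to match the companion lemma that handles the $\phi^l_i$-term.

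It then remains to verify $|S^l_i(l-1,\cdot)|_0\leq \tfrac12$, and this is already available. The termwise estimates (\ref{slrepterm1})--(\ref{slrepterm5}) yield the supremum bound (\ref{slrepsup0}), and, more than enough for the claim, the choice of $\rho_l$ in (\ref{substepiiirho}) forces $|S^l_i(l-1,\cdot)|_{0,1}\leq \tfrac14$, hence $|S^l_i(l-1,\cdot)|_0\leq\tfrac14<\tfrac12$, leaving comfortable room. The only point requiring care is the consistency of the $\rho_l$ used here with the step sizes needed for local convergence (step~2) and for globality. I would resolve this exactly as indicated below (\ref{substepiiirho}): recast (\ref{substepiiirho}) into the form (\ref{rholsec1}), so that with the inductively controlled constants $C^{l-1}_{1,2}\leq C_r$ the admissible $\rho_l$ is still of order $1/l$ and $\sum_l\rho_l$ diverges. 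The main (and rather modest) obstacle is therefore not the estimate itself, which is an immediate consequence of mass conservation and the smallness of $S^l_i$, but the bookkeeping: confirming that a single choice of $\rho_l$ simultaneously delivers $|S^l_i(l-1,\cdot)|_0\leq\tfrac12$, the contraction of step~2, and the divergence of $\sum_l\rho_l$.
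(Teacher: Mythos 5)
Your proof is correct, but it follows a genuinely different route from the one the paper intends. The paper offers no separate argument for this lemma: the word ``Similarly'' refers back to the proof sketch of the companion lemma for $\phi^l_i$, which is perturbative --- verify the estimate first with the Gaussian kernel $N^l$ in place of $\Gamma^l_r$ (the Gaussian integrates to $1$ in $y$), then invoke the Levy expansion of $\Gamma^l_r$ around $N^l$ and observe that every term of the parametrix correction carries a factor $\rho_l$, so its contribution is absorbable for $\rho_l$ small. You instead establish the kernel-mass identity exactly: because the operator in (\ref{fundrrr}) has no zeroth-order term, the constant $w\equiv 1$ solves the Cauchy problem with datum $1$, and uniqueness of bounded solutions (maximum principle with bounded, H\"older continuous coefficients) yields $\int_{{\mathbb R}^n}\Gamma^l_r(\tau,x;s,y)\,dy=1$ for all $s<\tau$; together with the positivity $\Gamma^l_r>0$ recorded in the paper and the bound $|S^l_i(l-1,\cdot)|_0\leq |S^l_i(l-1,\cdot)|_{0,1}\leq \tfrac14$ already secured by the choice (\ref{substepiiirho}), the claim follows with a factor of two to spare, smallness of $\rho_l$ entering only through $S^l_i$. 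What each approach buys: yours is shorter, bypasses the Levy-expansion bookkeeping, and gives a sharper constant; the paper's perturbative template, on the other hand, is the one actually needed for the companion $\phi^l_i$-lemma, where exact mass conservation is not sufficient --- since $\phi^l_i$ changes sign away from $D^{v,l-1}_{+,i}$, one needs the kernel mass to concentrate near $x$ for small $\rho_l$, which is precisely what the perturbation-of-the-heat-kernel argument provides --- so the paper reuses one mechanism for both lemmas at the price of being less sharp here. Your side remarks are also accurate: $S^l_i$ is evaluated at time $l-1$ and is independent of the integration variable $s$, and the hypothesis $(\tau,x)\in D^{l-1}_{+,i}$ is not used, the estimate being uniform in $x$.
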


Next we look at second order spatial derivatives. The main idea is that a global bound for spatial second derivatives follows from a global bound for first spatial derivatives which is independent of the time-step number $l>0$. This implies that the H\"{o}lder norm of the first order coefficients of the linear parabolic subproblems have a global upper bound which is independent of the time step number $l>0$.  Recall that the functions $r^{l-1}_i(l-1,.)=r^{l}(l-1,.)$ are the first order coefficients of the linear parabolic equations which determine the fundamental solutions $\Gamma^l_r$. Hence, in order to have classical solutions $r^l_i$ it is sufficient to control the $|.|_{0,1}$ of the functions $r^{l-1}_i(l-1,.)$ for all $l\geq 1$.  Nevertheless, we can control the growth of the derivatives up to second order since second order derivative estimates can be reduced to first order derivative estimates and the supremum norm of the derivatives of $r^l_i(l-1,.)$ can be estimated in terms of the supremum norm of $r^l_i$ times a constant $C>0$. For the first spatial derivatives we use properties of the fundamental solution. For the second spatial derivatives we use the adjoint in addition. Recall that we use the constant $C^*_n$ -which depends on $n$ essentially- generically. We show
\begin{lem}
There is a constant $C^*_n>0$ independent of the time step number $l$  such that the estimate
\begin{equation}
\sup_{x\in {\mathbb R}^n}|r^l_{i,j}(l,x)|\leq C\sup_{x\in {\mathbb R}^n}|r^l_{i}(l-1,x)|,
\end{equation}
implies that
\begin{equation}\label{2der}
\sup_{x\in {\mathbb R}^n}|r^l_{i,j,k}(l,x)|\leq C^*_n\sup_{x\in {\mathbb R}^n}|r^l_{i}(l-1,x)|
\end{equation}
holds for some generic $C^*_n$. Here, recall the notation
\begin{equation}
r^l_{i,j}(\tau,x):=\frac{\partial}{\partial x_j}r^l_{i}(\tau,x),~r^l_{i,j,k}(\tau,x):=\frac{\partial^2}{\partial x_j\partial x_k}r^l_{i}(\tau,x)
\end{equation}
 
\end{lem}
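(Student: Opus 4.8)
The plan is to read the estimates off the representation (\ref{rlrep}) of $r^l_i$ in terms of the fundamental solution $\Gamma^l_r$, differentiating under the integral sign so that the spatial derivatives fall on the kernel rather than on the data. Throughout I would evaluate at $\tau=l$, where the elapsed transformed time $\tau-(l-1)=1$; this is the reason the bound can be phrased with the sup norm of $r^l_i(l-1,\cdot)$ on the right, since the initial-data term is then governed by a Gaussian kernel with no time singularity, and the only integrable singularities of the kernel derivatives occur near $s=\tau$ in the source integrals.

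First I would treat the gradient. Differentiating (\ref{rlrep}) in $x_j$ gives
\begin{equation}
r^l_{i,j}(\tau,x)=\int_{\mathbb{R}^n}r^{l-1}_i(l-1,y)\Gamma^l_{r,j}(\tau,x;l-1,y)\,dy+\int_{l-1}^{\tau}\int_{\mathbb{R}^n}S^l_i(l-1,y)\Gamma^l_{r,j}(\tau,x;s,y)\,ds\,dy+\int_{l-1}^{\tau}\int_{\mathbb{R}^n}\phi^l_i(s,y)\Gamma^l_{r,j}(\tau,x;s,y)\,ds\,dy.
\end{equation}
For the initial-data term I would use the bound $\int_{\mathbb{R}^n}|\Gamma^l_{r,j}(l,x;l-1,y)|\,dy\leq C^*_n$, which follows from the first-derivative Gaussian estimate of type (\ref{firstder}) with elapsed time one; hence this term is dominated by $C^*_n\sup_x|r^{l-1}_i(l-1,x)|=C^*_n\sup_x|r^l_i(l-1,x)|$. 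For the two source integrals the first derivative of the fundamental solution carries a factor of order $(\tau-s)^{-1/2}$ which is integrable over $s\in[l-1,l]$, so these are bounded by $C^*_n(|S^l_i|_0+|\phi^l_i|_0)$. By the choice of $\rho_l$ fixed in substep 3ii one has $|S^l_i(l-1,\cdot)|_{0,1}\leq\tfrac14$, and $|\phi^l_i|_0\leq2$ by Lemma \ref{lemr}; since $C^0_r\geq2$ these source contributions are absorbed into $C^*_n\sup_x|r^l_i(l-1,x)|$.

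Next the second derivatives. A naive second differentiation of $\Gamma^l_r$ produces a $(\tau-s)^{-1}$ factor which is not locally integrable in time, so I would use exactly the device of step 1: shift one of the two derivatives onto the adjoint kernel via the relation (\ref{adjoint}), which is legitimate term by term in the Levy expansion of $\Gamma^l_r$ because the first-order coefficients $r^{l-1}_i(l-1,\cdot)$ and the source $S^l_i+\phi^l_i$ are H\"older continuous in space uniformly in time (the latter is Lipschitz by the partition-of-unity construction of $\phi^l_i$). After this shift only first-order derivatives of the fundamental solution remain, controlled by (\ref{firstder}) and (\ref{Nlest}), while the remaining differentiation lands on the uniformly bounded, H\"older continuous data and source. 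This reduces the estimate of $r^l_{i,j,k}$ to the same kind of absolutely convergent integral as in the gradient case and yields $\sup_x|r^l_{i,j,k}(l,x)|\leq C^*_n\sup_x|r^l_i(l-1,x)|$ after absorbing the source terms as before.

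The main obstacle is this second-order step: one must justify transferring a derivative to the adjoint kernel and then control the resulting convolution using only the locally integrable first-order kernel derivatives. This is precisely the Levy-expansion computation already carried out in step 1 of this proof, which I would invoke rather than repeat. A secondary point requiring care is that the asserted bound involves only $\sup_x|r^l_i(l-1,x)|$; this rests on the construction forcing $S^l_i$ to be small (because every term of $S^l_i$ carries the factor $\rho_l\sim1/l$) and $\phi^l_i$ to be bounded, together with $C^0_r\geq2$, so that both source contributions are dominated by $C^*_n$ times the initial supremum. Collecting the initial-data and source estimates and subsuming all dimension- and data-dependent constants into the generic $C^*_n$ then gives both stated inequalities.
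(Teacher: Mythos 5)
Your overall route is the paper's: differentiate the representation (\ref{rlrep}) so the derivatives fall on $\Gamma^l_r$, use the Gaussian/Levy-expansion bounds for first kernel derivatives, and invoke the adjoint relation to avoid second kernel derivatives in the time integrals. Your first-derivative estimate is essentially the paper's argument. However, there is a genuine gap in your second-derivative step. You shift one derivative onto ``the data and source'' and claim this yields the stated bound, but for the initial-data term the shift produces $\int r^{l-1}_{i,k}(l-1,y)\,\Gamma^{*,l}_{r,j}(l,x;l-1,y)\,dy$, i.e.\ a \emph{first derivative} of the data at time $l-1$. Its supremum is not controlled by $C^*_n\sup_x|r^l_i(l-1,x)|$: the right-hand side of the lemma contains no derivatives, and invoking the first statement of the lemma for the previous time step would only bound $\sup_x|r^{l-1}_{i,k}(l-1,x)|$ by data at time $l-2$, which is a different quantity. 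The paper closes exactly this hole with a sub-timestep device: first prove the gradient bound at the half-step, $\sup_x|r^l_{i,j}(l-\tfrac12,x)|\leq C^*_n\sup_x|r^l_i(l-1,x)|$, then restart the representation on $[l-\tfrac12,l]$ via the semigroup property, so that the once-differentiated data $r^l_{i,k}(l-\tfrac12,\cdot)$ is already controlled and only one further kernel derivative is required; the resulting $(C^*_n)^2$ is absorbed into the generic constant. (An alternative repair, which you do not state, is to keep \emph{both} derivatives on the kernel in the initial-data term only, which is harmless because at $\tau=l$ the elapsed time is $1$ and $\Gamma^l_{r,j,k}(l,x;l-1,y)$ has no time singularity, and to shift a derivative only inside the source integrals where the singularity as $s\uparrow\tau$ actually occurs.)

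A secondary point: your absorption of the source contributions ``since $C^0_r\geq 2$'' inverts the logic. The constant $C^0_r$ is an \emph{upper} bound for $|r^l_i|_0$, not a lower bound for $\sup_x|r^l_i(l-1,x)|$; the latter can be arbitrarily small (it vanishes at $l=2$, since $\mathbf{r}^1\equiv 0$), so a fixed constant coming from $|S^l_i|_0+|\phi^l_i|_0$ cannot be absorbed into $C^*_n\sup_x|r^l_i(l-1,x)|$. To be fair, the paper's own proof is equally silent on this term and the difficulty is inherited from the way the lemma is phrased, but since you made the absorption explicit, the step as you wrote it does not go through.
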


\begin{proof}
We provide an independent argument for the weaker result that 
\begin{equation}
\sup_{x\in {\mathbb R}^n}|r^l_{i,j}(l,x)|\leq C^*_n\sup_{x\in {\mathbb R}^n}|r^l_{i}(l-1,x)|,
\end{equation}
and then we show that (\ref{2der}) follows for generic $C^*_n>0$. 
For the first derivatives we have the expression
\begin{equation}\label{rlrepder}
 \begin{array}{ll}
\frac{\partial}{\partial x_k}r^l_{i}(\tau,x)=\int_{{\mathbb R}^n}r^{l-1}_i(l-1,y)\Gamma^l_{r,k}(\tau,x;0,y)dy\\
\\
+\int_{l-1}^{\tau}\int_{{\mathbb R}^n}\left( S^l_i(l-1,y)+\phi^l_i(s,y)\right) \Gamma^l_{r,k}(\tau,x;s,y)dsdy
\end{array}
\end{equation}
Next we consider the fundamental solution $\Gamma^l_r$. It has the Levy expansion:
\begin{equation}
\Gamma^l_r(\tau,x;s,y):=N^l(\tau,x;s,y)+\int_s^{\tau}\int_{{\mathbb R}^n}N^l(\tau,x;\sigma,\xi)\phi_r(\sigma,\xi;s,y)d\sigma d\xi,
\end{equation}
where
\begin{equation}
N^l(\tau,x;s,y)=\frac{1}{\sqrt{4\pi \rho_l\nu (\tau-s)}^n}\exp\left(-\frac{|x-y|^2}{4\rho_l\nu (\tau-s)} \right),
\end{equation}
and $\phi_r$ is a recursively defined function which is H\"{o}lder continuous in $x$, i.e.,
\begin{equation}
\phi_r(\tau,x;s,y)=\sum_{m=1}^{\infty}(L^r_lN^l)_m(\tau,x;s,y),
\end{equation}
along with the recursion
\begin{equation}
\begin{array}{ll}
(L^r_lN^l)_1(\tau,x;s,y)=L^r_lN^l(\tau,x;s,y)\\
\\
=\frac{\partial N^l}{\partial \tau}-\rho_l\nu \Delta N^l+\rho_l\sum_{j=1}^n r^{l-1}_j\frac{\partial N^l}{\partial x_j}\\
\\
=\rho_l\sum_{j=1}^n r^{l-1}_j\frac{\partial N^l}{\partial x_j},\\
\\
(L^rN^l)_{m+1}(\tau ,x):=\int_s^t\int_{\Omega}\left( L^rN^l(\tau,x;\sigma,\xi)\right)_m L^rN^l(\sigma,\xi;s,y)d\sigma d\xi.
\end{array}
\end{equation}
The lemma may be proved for $N_l$ instead of $\Gamma^l_r$ in the representation (\ref{rlrepder}) first and then the proof may be extended to the correction terms involving $\int_s^{\tau}\int_{{\mathbb R}^n}N^l(\tau,x;\sigma,\xi)\phi_r(\sigma,\xi;s,y)d\sigma d\xi$ using the classical Levy expansion estimates.
The proof may be obtained in sub-timesteps, i.e. the first the norm 
\begin{equation}
\sup_{x\in {\mathbb R}^n}|r^l_{i,j}(l-\frac{1}{2},x)|\leq C^*_n\sup_{x\in {\mathbb R}^n}|r^l_{i}(l,x)|\leq C^*_n\sup_{x\in {\mathbb R}^n}|r^l_{i}(l,x)|.
\end{equation}

Then we may use
\begin{equation}\label{rlrepder2}
 \begin{array}{ll}
 N^l_{,k}(\tau,x;s,y)= \frac{(x-y)_k}{4\rho_l\nu (\tau-s)}\frac{1}{\sqrt{4\pi \rho_l\nu (\tau-s)}^n}\exp\left(-\frac{|x-y|^2}{4\rho_l\nu (\tau-s)} \right)\\
 \\
 = \frac{(x-y)_k}{4\rho_l\nu (\tau-s)}\exp\left(-\frac{|x-y|^2}{8\rho_l\nu (\tau-s)} \right)\frac{\sqrt{2}^n}{\sqrt{8\pi \rho_l\nu (\tau-s)}^n}\exp\left(-\frac{|x-y|^2}{8\rho_l\nu (\tau-s)} \right),
\end{array}
\end{equation}
and
\begin{equation}\label{rlrepder2}
 \begin{array}{ll}
\frac{(x-y)^2_k}{4\rho_l\nu (\tau-s)}\exp\left(-\frac{|x-y|^2}{16\rho_l\nu (\tau-s)}\right) \frac{1}{(x-y)_k}
\exp\left(-\frac{|x-y|^2}{16\rho_l\nu (\tau-s)}\right) \leq C
\end{array}
\end{equation}
for some $C>0$.
This leads to the first statement of the lemma. 
Finally we use the argument above that
\begin{equation}
 \sup_{x\in {\mathbb R}^n}|r^l_{i}(l,x)|\leq \sup_{x\in {\mathbb R}^n}|r^l_{i}(l-1,x)|.
\end{equation}
For the second derivatives we use partial integration and the adjoint
\begin{equation}\label{rlrepsecder}
 \begin{array}{ll}
\frac{\partial^2}{\partial x_k\partial x_m}r^l_{i,k}(\tau,x)=\int_{{\mathbb R}^n}r^{l-1}_{i,k}(l-1,y)\Gamma^{*,l}_{r,m}(\tau,x;0,y)dy\\
\\
+\int_{l-1}^{\tau}\int_{{\mathbb R}^n}S^l_{i,k}(l-1,y)\Gamma^{*,l}_{r,m}(\tau,x;s,y)dsdy\\
\\
+\int_{l-1}^{\tau}\int_{{\mathbb R}^n}\phi^l_{i,k}(s,y)\Gamma^{*,l}_{r,m}(\tau,x;s,y)dsdy.
\end{array}
\end{equation}
It follows that the estimate can be reduced to the estimate for first derivatives. Since $C^*_n$ is generic we may replace $(C^*_n)^2$ by $C^*_n$.

\end{proof}

Furthermore, concerning the growth
let us first consider the $L^2$-norm. Since we have global boundedness of values and classical derivatives of $r^{l-1}_i(l-1,.)$ and $v^{r,\rho,l-1}_i(l-1,.)$ and global boundedness of the same functions in $|.|_{H^2}$-norm, products can be estimated by considering a supremum norm of one factor (value function or derivative function) and an $H^m$ norm (some $0\leq m\leq 2$) of the other factor, Furthermore, we may use estimates of convolutions by the generalized Young inequality, the fact that the restriction of second partial derivatives of the kernel $K$ to a domain outside a ball are $L^2$, and local regularity of Poisson equation inside a ball, and the $H^2$ bounds for $r^{l-1}_k(l-1,.)$ in order to get (cf. also \cite{KB2}) 
\begin{equation}\label{slrep2}
 \begin{array}{ll}
|S^l_{i}(l-1,.)|_{L^2}\leq 
\rho_l{\Big |}\int_{{\mathbb R}^n}\left( \frac{\partial}{\partial x_i}K_n(x-y)\right) \sum_{j,k=1}^n\left( \frac{\partial r^{l-1}_k}{\partial x_j}\frac{\partial r^{l-1}_j}{\partial x_k}\right) (l-1,y)dy{\Big |}_{L^2}\\
\\
+\rho_l{\Big |}\sum_{j=1}^n r^{l-1}_j\frac{\partial v^{r,\rho,l-1}_i}{\partial x_j}{\Big |}+\rho_l{\Big |}\sum_{j=1}^n v^{r,\rho,l-1}_j\frac{\partial r^{l-1}_i}{\partial x_j}{\Big |}_{L^2}\\
\\ +2\rho_l{\Big |}\int_{{\mathbb R}^n}\left( \frac{\partial}{\partial x_i}K_n(x-y)\right) \sum_{j,k=1}^n\left( \frac{\partial r^{l-1}_k}{\partial x_j}\frac{\partial v^{r,\rho,l-1}_j}{\partial x_k}\right) (l-1,y)dy{\Big |}_{L^2}\\
\\+\rho_l{\Big|}\int_{{\mathbb R}^n}\left( \frac{\partial}{\partial x_i}K_n(x-y)\right) \sum_{j,m=1}^n\left( \frac{\partial v^{r,\rho,l-1}_m}{\partial x_j}\frac{\partial v^{r,\rho,l-1}_j}{\partial x_m}\right) (l-1,y)dy{\Big |}_{L^2}\\
\\
\leq \rho_l2n^2C_K\max_{i\in \left\lbrace 1,\cdots,n \right\rbrace} |r^{l-1}_i(l-1,.)|_{H^2}
+\rho_l2nC^{l-1}_r C^{l-1}_v\\
\\
\rho_l4n^2C_K\left( \max_{i\in \left\lbrace 1,\cdots,n \right\rbrace}\left(  |r^{l-1}_i(l-1,.)|_{H^2}+|v^{l-1}_i(l-1,.)|_{H^2}\right) \right) \\
\\
\leq \rho_l2\left( n^2C_KC^{l-1}_r+2nC^{l-1}_r C^{l-1}_v+4n^2C_K(C^r_{l-1}+C^v_{l-1})\right).
\end{array}
\end{equation}
Hence (being very generous) for 
\begin{equation}\label{Cconst}
C\geq 4\left(1+ n^2C_KC^{l-1}_r+2nC^{l-1}_r C^{l-1}_v+4n^2C_K(C^r_{l-1}+C^v_{l-1})\right)
\end{equation}
 and $\rho_l\leq \frac{1}{C^3}$ we ensure that in classical approximations time intergals involving the source term $S^{l-1}_i(l-1,.)$ are dominated by time integrals involving the source terms $\phi^{l}_i$ or their summands.
Next we have 
\begin{equation}\label{rlrepL2}
 \begin{array}{ll}
\sup_{\tau\in[l-1,l]}|r^l_{i}(\tau,.)|_{L^2}\leq \sup_{\tau\in[l-1,l]}{\Big |}\int_{{\mathbb R}^n}r^{l-1}_i(l-1,y)\Gamma^l_r(\tau,x;0,y)dy\\
\\
+\int_{l-1}^{\tau}\int_{{\mathbb R}^n}S^l_i(l-1,y)\Gamma^l_r(\tau,x;s,y)dsdy\\
\\
+\int_{l-1}^{\tau}\int_{{\mathbb R}^n}\phi^l_i(s,y)\Gamma^l_r(\tau,x;s,y)dsdy{\Big |}_{L^2}\\
\\
\leq |r^{l-1}_i(l-1,.)|_{L^2},
\end{array}
\end{equation}
for small $\rho_l$ and  by construction of $\phi^l_i$.
Similarly, using the adjoint
\begin{equation}\label{phinormH1}
\begin{array}{ll}
|r^l_{i}(\tau,.)|_{H^1}\leq |r^{l-1}_i(l-1,.)|_{H^1}.
\end{array}
\end{equation}
Furthermore 
\begin{equation}\label{phinormH2}
\begin{array}{ll}
|r^l_{i}(\tau,.)|_{H^2}
\leq |r^{l-1}_i(l-1,.)|_{H^2}.
\end{array}
\end{equation} 
Here we use the definition of $\phi^l_i$ and the fact $r^{l-1}_i(l-1,.)$ and $\phi^{l}_i$ are in $H^2$ and the adjoint and partial integration in order to shift derivatives to these functions. Next we tdetermine $C$. The generic constant $C>0$ in (\ref{Cconst}) is determidned in terms of bounds for $C^{l-1}_r$ and $C^{l-1}_v$ for $r^{l-1}_i(l-1,.)$ and $v^{r,\rho,l-1}_i(l-1,.)$ with repsct to the relevant norm. We define the latter independently of $l$ to be $C_r$ and $C_v$, where
We define
\begin{equation}\label{definitioncr}
\begin{array}{ll}
C_v=C_r=C_{1,2}=2+2|\mathbf{h}|^n_{0,2}+\int_{{\mathbb R}^n}\sum_{j,k,l,m=1}^n{\Big |}\frac{\partial h_k}{\partial x_j}(y){\Big |}{\Big |}\frac{\partial h_l}{\partial x_m}(y){\Big |}dy\\
\\
+2n^2|\mathbf{h}|^n_2
\end{array}
\end{equation}
Recall that for multiindex $\alpha$ we have 
\begin{equation}
|\mathbf{h}|^n_{0,2}:=\max_{i\in \left\lbrace 1,\cdots ,n \right\rbrace} \sum_{|\alpha|\leq 2}\sup_{x\in {\mathbb R}^n}{\Big |}\frac{\partial^{\alpha}}{\partial x^{\alpha}}h_i{\Big |}.
\end{equation}
This defines $C>0$ if we consider equality in (\ref{Cconst}).
The last term in the definition (\ref{definitioncr}) is due to the fact that we do the estimates for local $|.|_{1,2}$-norms, i.e., on the domains $(l-1,l]\times {\mathbb R}^n$ and global $H^2$ norms for the functions $v^{r,\rho,l}_i(\tau,.)$ and $r^l_i(\tau,.)$. We have to verify that our choice of $C$ is really suuficient in order to keep the controlled value functions $v^{r,\rho,l}_i$ under control with respect to the relevant norms.
 Hence, we assume inductively that
\begin{equation}
|v^{r,\rho,l-1}_i|_0\leq C^{l-1}_0,~|v^{r,\rho,l-1}_i|_{0,1}\leq C^{l-1}_1,~~|v^{r,\rho,l-1}_i|_{0,2}\leq C^{l-1}_{0,2},~|v^{r,\rho,l-1}_i|_{1,2}\leq C^{l-1}_{1,2}
\end{equation}
for some finite constants $C^{l-1}_0,~C^{l-1}_1,~C^{l-1}_{1,2}$. In the end we shall have
\begin{equation}
C^{l-1}_{1,2}\leq C_{1,2}:=C_r
\end{equation}
and we may set $C^{l-1}_0:=C_{1,2}$, and $C^{l-1}_1=C^{l-1}_{1,2}$, and $C^{l-1}_{0,2}:=C^{l-1}_{1,2}$. This is shown next in (iii) below.

ad iii) We have defined the functions $\phi^l_i$ and $r^l_i$ in terms of the functions $v^{r,\rho,l-1}_i$ and $r^{l-1}_i$. In the case $l=1$ we start with $v^{r,\rho,l-1}_i=h_i$, the initial data and with $r^0_i=r^1_i\equiv 0$. We have to estimate $v^{r,\rho,1}_i$ first in this special situation. Then we proceed with the case $l\geq 2$.
In order to finish the first induction step $l=1$ we have to consider the growth of the local solution of $\mathbf{v}^{r,\rho,1}$. Note that for this first step this equals the local solution of the Navier-Stokes equation $\mathbf{v}^{\rho,l}$ because we have $\mathbf{r}^1\equiv 0$. Therefore at time step $l=1$ we may apply the machinery of the first step of this proof. Hence for $l=1$ we consider the functional series $\left( \mathbf{v}^{\rho,k,1}\right)_k$, where
\begin{equation}\label{vrhok1*}
 v^{\rho,k,1}_i=v^{\rho,0,1}_i+\sum_{m=1}^k\delta v^{\rho,m,1}_i.
 \end{equation}
 This series converges to the local solution $\mathbf{v}^{\rho,l}$ of the Navier-Stokes equation.
 Note that our symbolism allows for two alternative expressions for the members of this series. It can be denoted by $v^{r,\rho,k,1}_i=v^{\rho,k,l}_i+r^1_i=v^{\rho,k,l}_i$, or by the values of the map $F_1$ of the first step of this proof. 
Recall that the series (\ref{vrhokl*}) can be generated by an iterative application of the map
\begin{equation}\label{Fldef}
 F_1:\mathbf{f}\rightarrow \mathbf{v}^{f,\rho,l},
\end{equation}
as described in step 1 above in case of general time step number $l$ starting with the initial data $\mathbf{h}$.
Then the function $F_1(\mathbf{h})=\mathbf{v}^{h,\rho,1}=\mathbf{v}^{\rho,0,1}$ satisfies the equation
\begin{equation}\label{Navlerayiterlfff1}
\left\lbrace \begin{array}{ll}
\frac{\partial v^{h,\rho,1}_i}{\partial \tau}-\rho_l\nu\sum_{j=1}^n \frac{\partial^2 v^{h,\rho,1}_i}{\partial x_j^2} 
+\rho_l\sum_{j=1}^n h_j\frac{\partial v^{h,\rho,1}_i}{\partial x_j}= \\
\\ \hspace{1cm}\rho_l\int_{{\mathbb R}^n}\left( \frac{\partial}{\partial x_i}K_n(x-y)\right) \sum_{j,k=1}^n\left( \frac{\partial h_k}{\partial x_j}\frac{\partial h_j}{\partial x_k}\right) (\tau,y)dy,\\\\
\\
\mathbf{v}^{h,\rho,1}(0,.)=\mathbf{h}.
\end{array}\right.,
\end{equation}
where $1\leq i\leq n$.
Let $\Gamma^1_0$ be the fundamental solution of the equation
\begin{equation}\label{Navlerayiterlfff1fund}
\begin{array}{ll}
\frac{\partial v^{h,\rho,1}_i}{\partial \tau}-\rho_l\nu\sum_{j=1}^n \frac{\partial^2 v^{h,\rho,1}_i}{\partial x_j^2} 
+\rho_l\sum_{j=1}^n h_j\frac{\partial v^{h,\rho,1}_i}{\partial x_j}=0
\end{array}
\end{equation}
Let $C^0_h$, $C^1_h$, and $C^2_h$ be some constants such that
\begin{equation}
|h_i|_0\leq C^0_h,~|h_i|_1\leq C^1_h~,|h_i|_2\leq C^0_{1,2}
\end{equation}
for all $1\leq i\leq n$.
In terms of the fundamental solution $\Gamma^0_0$ of the equation  (\ref{Navlerayiterlfff1fund}) the components $v^{h,\rho,1}_i$ of the solution of (\ref{Navlerayiterlfff1}) have the representation
\begin{equation}\label{repvrho01}
\begin{array}{ll}
v^{\rho,0,1}_i(\tau,x)=\int_{{\mathbb R}^n}h_i(y)\Gamma^1_0(\tau,x;0,y)dy\\
\\
+\int_0^{\tau}\int_{{\mathbb R}^n}\rho_1\int_{{\mathbb R}^n}\left( \frac{\partial}{\partial x_i}K_n(y-z)\right) \sum_{j,k=1}^n\left( \frac{\partial h_k}{\partial x_j}\frac{\partial h_j}{\partial x_k}\right) (s,z)\Gamma^1_0(\tau,x;s,y)dydzds.
\end{array}
\end{equation}
From the maximum principle (cf. also Corollary 8.1.3. of \cite{Kr}) we observe that 
\begin{equation}
{\big |}\int_{{\mathbb R}^n}h_i(y)\Gamma^1_0(\tau,x;0,y)dy{\big |}\leq |h|_0
\end{equation}
for all $(\tau,x)\in [0,1]\times {\mathbb R}^n$.
Hence, the solution $\mathbf{v}^{h,\rho,1}=\mathbf{v}^{\rho,0,1}=\mathbf{v}^{r,\rho,0,1}$ of equation (\ref{repvrho01}) satisfies
\begin{equation}
\begin{array}{ll}
|v^{h,\rho,1}_i|_0\leq C^0+|\rho_1\int_{{\mathbb R}^n}\left( \frac{\partial}{\partial x_i}K_n(x-y)\right) \sum_{j,k=1}^n\left( \frac{\partial h_k}{\partial x_j}\frac{\partial h_j}{\partial x_k}\right) (\tau,y)dy|C_{\Gamma}\\
\\
\leq C^0_h+\rho_1C_{\Gamma}C_{1,2}\leq C_{1,2}+\rho_1C_{\Gamma}C_{1,2}
\end{array}
\end{equation}
where the definition of $C_{1,2}$ in terms of the function $h$ is used. We are quite generous with the use of the bound $C_{1,2}$.  As we described in the first step of this proof we may use the adjoint of the fundamental solution $\Gamma^l_0$ and shift derivatives, we get the estimate 
\begin{equation}\label{repvrho01}
\begin{array}{ll}
|v^{\rho,0,1}_i|_{1,2}\leq C_{\Gamma}^2\left( C^0_{1,2}+\rho_1C_{1,2}\right)
\end{array}
\end{equation}
Choosing
\begin{equation}
\rho_1\leq \frac{1}{  4C_{\Gamma}^2C_{1,2}},
\end{equation}
we get
\begin{equation}\label{repvrho01}
\begin{array}{ll}
|v^{\rho,0,1}_i|_{1,2}\leq C_{\Gamma}^2\left( C^0_{1,2}+\frac{1}{4}\right)
\end{array}
\end{equation}
Since $\mathbf{r}^1\equiv 0$ we have convergence of the local scheme on $[0,1]\times {\mathbb R}^n$ where we choose $C_r=0$ and $l=1$ in the definition of $\rho_l$ of the second step (or first step) of this proof.
We may use $\rho_1$ with
\begin{equation}
\rho_1\leq \frac{1}{C^*_n4C_{\Gamma}^2C_{1,2}},
\end{equation}
in order to get an estimate similar to (\ref{repvrho01}) for the vector-valued function $\mathbf{v}$. 
Adding the estimate of the correction $\mathbf{v}^{\rho,1}-\mathbf{v}^{\rho,0,1}=\sum_{k\geq 1}|\delta v^k_i|_{1,2}\leq 1/4$ from the first step of this proof we get
\begin{equation}\label{repvrho01end}
\begin{array}{ll}
|\mathbf{v}^{\rho,1}|^n_{1,2}=\sum_{i=1}^n|v^{\rho,1}_i|_{1,2}
\leq C_{\Gamma}^2 \left( C^0_{1,2}+1\right)
\end{array}
\end{equation}
where the constant $C_{\Gamma}\geq 1$ is from step 1 of this proof. Now  we have to show that this constant $C_{1,2}=C_r$ is preserved for all $l\geq 2$. It is clear that our definition of the constant $C_r$ fits for the first step $l=1$, since we have chosen $\mathbf{r}^1\equiv 0$ in this first step. Indeed the constant $C_r$ is determined in the induction step with respect to the time step number $l$. Furthermore we have determined $\rho_1$ for the first time step according to the convergence rule of the local scheme established in step 1 of this proof.
Let us assume that $\mathbf{v}^{r,\rho,m}$ and $\mathbf{r}^{m}$ have been constructed for $m=1,\cdots,l-1$ as classical local solutions on the domains $[m-1,m]\times {\mathbb R}^n$. Furthermore assume that we have
\begin{equation}
|v^{r,\rho,m}_i|_0\leq C~~\mbox{and}~~|v^{r,\rho,m}_i|_{1,2}\leq C_{1,2}
\end{equation}
for all $1\leq m\leq l-1$ and $1\leq i\leq n$. Furthermore, let us assume that
\begin{equation}
\begin{array}{ll}
|v^{r,\rho,m}_i(\tau,.)|_{L^2}\leq  C_{1,2}\\
\\
|v^{r,\rho,m}_i(\tau,.)|_{H^2}\leq  C_{1,2} 
\end{array}
\end{equation}
for all $\tau\in [m-1,m]$.
\begin{rem}
Since we consider $C^*_n$ in the definition of the constant $C_{1,2}$ to be generic we note that
\begin{equation}
|\mathbf{v}^{r,\rho,m}|^n_{1,2}\leq C_{1,2}.
\end{equation}
\end{rem}
 
 Let us go back to the local Navier-Stokes equation on the domain $[l-1,l]\times{\mathbb R}^n$ which we wrote in the form (\ref{Navlerayrrhol*++}). Let us repeat this equation here for the convenience of the reader. We have
\begin{equation}\label{Navlerayrrhol*++rep}
\left\lbrace \begin{array}{ll}
\frac{\partial v^{r,\rho,l}_i}{\partial \tau}-\rho_l\nu\sum_{j=1}^n \frac{\partial^2 v^{r,\rho,l}_i}{\partial x_j^2} 
+\rho_l\sum_{j=1}^n v^{r,\rho,l}_j\frac{\partial v^{r,\rho,l}_i}{\partial x_j}=\psi^l_i,\\
\\
\mathbf{v}^{r,\rho,l}(l-1,.)=\mathbf{v}^{r,\rho,l-1}(l-1,.),
\end{array}\right.
\end{equation}
where
\begin{equation}\label{Navlerayrrhol*+++}
\begin{array}{ll}
\psi^l_i=r^l_{i,\tau}-\rho_l\nu \Delta r^l_i+\rho_l\sum_{j=1}^n r^l_j\frac{\partial r^l_i}{\partial x_j}\\
\\
-\rho_l\int_{{\mathbb R}^n}\left( \frac{\partial}{\partial x_i}K_n(x-y)\right) \sum_{j,k=1}^n\left( \frac{\partial r^l_k}{\partial x_j}\frac{\partial r^l_j}{\partial x_k}\right) (\tau,y)dy\\
\\
+\rho_l\sum_{j=1}^n r^l_j\frac{\partial v^{r,\rho,l}_i}{\partial x_j}+\rho_l\sum_{j=1}^n v^{r,\rho,l}_j\frac{\partial r^{l}_i}{\partial x_j}\\
\\ -2\rho_l\int_{{\mathbb R}^n}\left( \frac{\partial}{\partial x_i}K_n(x-y)\right) \sum_{j,k=1}^n\left( \frac{\partial r^{l}_k}{\partial x_j}\frac{\partial v^{r,\rho,l}_j}{\partial x_k}\right) (\tau,y)dy\\
\\
 +\rho_l\int_{{\mathbb R}^n}\left( \frac{\partial}{\partial x_i}K_n(x-y)\right) \sum_{j,k=1}^n\left( \frac{\partial v^{r,\rho,l}_k}{\partial x_j}\frac{\partial v^{r,\rho,l}_j}{\partial x_k}\right) (\tau,y)dy.
\end{array}
\end{equation}
Note that the right side of this equation involves the function $\mathbf{v}^{r,\rho,l}$ which is not known.
However, the function $\mathbf{r}^l$ is known by our construction in the previous substep, and determines together with the function $\mathbf{v}^{r,\rho,l-1}(l-1,.)$ the right side in the equation for the first approximation $\mathbf{v}^{r,\rho,0,l}$ of the function $\mathbf{v}^{r,\rho,0,l}$. We may estimate the growth of the function $\mathbf{v}^{r,\rho,0,l}$ first and then estimate the growth of the function $\mathbf{v}^{r,\rho,l}$ by estimating the difference
\begin{equation}
 \mathbf{v}^{r,\rho,l}-\mathbf{v}^{r,\rho,0,l}=\sum_{k=1}^{\infty}\delta\mathbf{v}^{r,\rho,k,l}
\end{equation}
which we know from the estimates of the previous steps of our proof. We have
\begin{equation}\label{Navlerayrrhol*++0step4}
\left\lbrace \begin{array}{ll}
\frac{\partial v^{r,\rho,0,l}_i}{\partial \tau}-\rho_l\nu\sum_{j=1}^n \frac{\partial^2 v^{r,\rho,0,l}_i}{\partial x_j^2} 
+\rho_l\sum_{j=1}^n v^{r,\rho,l-1}_j(l-1,.)\frac{\partial v^{r,\rho,0,l}_i}{\partial x_j}=\psi^{l,0}_i\\
\\
\mathbf{v}^{r,\rho,0,l}(l-1,.)=\mathbf{v}^{r,\rho,l-1}(l-1,.),
\end{array}\right.
\end{equation}
where
\begin{equation}\label{Navlerayrrhol*++0step4*}
\begin{array}{ll}
\psi^{l,0}_i=
r^l_{i,\tau}-\rho_l\nu \Delta r^l_i+\rho_l\sum_{j=1}^n r^l_j\frac{\partial r^l_i}{\partial x_j}\\
\\
-\rho_l\int_{{\mathbb R}^n}\left( \frac{\partial}{\partial x_i}K_n(x-y)\right) \sum_{j,k=1}^n\left( \frac{\partial r^l_k}{\partial x_j}\frac{\partial r^l_j}{\partial x_k}\right) (\tau,y)dy\\
\\
+\rho_l\sum_{j=1}^n r^l_j\frac{\partial v^{r,\rho,l-1}_i}{\partial x_j}+\rho_l\sum_{j=1}^n v^{r,\rho,l-1}_j\frac{\partial r^{l}_i}{\partial x_j}\\
\\ -2\rho_l\int_{{\mathbb R}^n}\left( \frac{\partial}{\partial x_i}K_n(x-y)\right) \sum_{j,k=1}^n\left( \frac{\partial r^{l}_k}{\partial x_j}\frac{\partial v^{r,\rho,l-1}_j}{\partial x_k}\right) (\tau,y)dy\\
\\
+\rho_l\int_{{\mathbb R}^n}\left( \frac{\partial}{\partial x_i}K_n(x-y)\right) \sum_{j,k=1}^n\left( \frac{\partial v^{r,\rho,l-1}_k}{\partial x_j}\frac{\partial v^{r,\rho,l-1}_j}{\partial x_k}\right) (l-1,y)dy.
\end{array}
\end{equation}
We have determined $\mathbf{r}^l$ in the previous time step such that the first three terms on the right side of (\ref{Navlerayrrhol*++0step4}) can be replaced.  Indeed recall that we have
\begin{equation}\label{Navlerayrrhol+++++r}
\begin{array}{ll}
r^l_{i,\tau}-\rho_l\nu \Delta r^l_i+\rho_l\sum_{j=1}^n r^{l-1}_j(l-1,.)\frac{\partial r^{l}_i}{\partial x_j}=\\
\\
+\rho_l\int_{{\mathbb R}^n}\left( \frac{\partial}{\partial x_i}K_n(x-y)\right) \sum_{j,k=1}^n\left( \frac{\partial r^{l-1}_k}{\partial x_j}\frac{\partial r^{l-1}_j}{\partial x_k}\right) (l-1,y)dy\\
\\
-\rho_l\sum_{j=1}^n r^{l-1}_j\frac{\partial v^{r,\rho,l-1}_i}{\partial x_j}-\rho_l\sum_{j=1}^n v^{r,\rho,l-1}_j\frac{\partial r^{l-1}_i}{\partial x_j}\\
\\ +2\rho_l\int_{{\mathbb R}^n}\left( \frac{\partial}{\partial x_i}K_n(x-y)\right) \sum_{j,k=1}^n\left( \frac{\partial r^{l-1}_k}{\partial x_j}\frac{\partial v^{r,\rho,l-1}_j}{\partial x_k}\right) (\tau,y)dy\\
\\-\rho_l\int_{{\mathbb R}^n}\left( \frac{\partial}{\partial x_i}K_n(x-y)\right) \sum_{j,m=1}^n\left( \frac{\partial v^{r,\rho,l-1}_m}{\partial x_j}\frac{\partial v^{r,\rho,l-1}_j}{\partial x_m}\right) (l-1,y)dy+\phi^l_i.
\end{array}
\end{equation}
We may rewrite the function $\psi^{l,0}_i$ of (\ref{Navlerayrrhol*++0step4*}), i.e., the right side of the first equation in (\ref{Navlerayrrhol*++0step4}) in the form
\begin{equation}
\psi^{l,0}_i=\phi^{l}_i+\left(\psi^{l,0}_i-\phi^l_i\right), 
\end{equation}
where
\begin{equation}\label{Navlerayrrhol*++000+}
\begin{array}{ll}
\psi^{l,0}_i-\phi^l_i=\rho_l\sum_{j=1}^n (r^{l}_j-r^{l-1}_j)\frac{\partial r^{l}_i}{\partial x_j}\\
\\
-\rho_l\int_{{\mathbb R}^n}\left( \frac{\partial}{\partial x_i}K_n(x-y)\right) \sum_{j,k=1}^n\left( \frac{\partial r^{l}_k}{\partial x_j}\frac{\partial r^{l}_j}{\partial x_k}\right) (\tau,y)dy\\
\\
+\rho_l\int_{{\mathbb R}^n}\left( \frac{\partial}{\partial x_i}K_n(x-y)\right) \sum_{j,k=1}^n\left( \frac{\partial r^{l-1}_k}{\partial x_j}\frac{\partial r^{l-1}_j}{\partial x_k}\right) (l-1,y)dy\\
\\
+\rho_l\sum_{j=1}^n (r^l_j-r^{l-1}_j)\frac{\partial v^{r,\rho,l-1}_i}{\partial x_j}+\rho_l\sum_{j=1}^n v^{r,\rho,l-1}_j\left( \frac{\partial r^{l}_i}{\partial x_j}-\frac{\partial r^{l-1}_i}{\partial x_j}\right) \\
\\ -2\rho_l\int_{{\mathbb R}^n}\left( \frac{\partial}{\partial x_i}K_n(x-y)\right) \sum_{j,k=1}^n\left( \left( \frac{\partial r^{l}_k}{\partial x_j}(\tau,y)-\frac{\partial r^{l-1}_k}{\partial x_j}(l-1,y)\right) \frac{\partial v^{r,\rho,l-1}_j}{\partial x_k}(l-1,y)\right) dy.
\end{array}.
\end{equation}
Indeed, this difference of $\psi^{l,0}_i$ and $\phi^l_i$ may be obtained by subtracting equation (\ref{Navlerayrrhol+++++}) from equation (\ref{Navlerayrrhol*++00}) which we computed in (\ref{Navlerayrrhol*++000}) above. 
Note that all terms on the right side have the factor $\rho_l$. Estimating this difference boils down to estimating the difference $\mathbf{r}^l-\mathbf{r}^{l-1}$. In order to control the growth of the functions $v^{r,\rho,0,l}_i$ the estimate should be such that the functions $\phi^l_i$ dominate the differences $\psi^{l,0}_i-\phi^l_i$ in critcal areas where the releavnt notms of the functions $v^{r,\rho,l-1}_i$ exceed a certain level. However, this is staisfied by the construction of $\phi^l_i$: if $r^{l-1}_i(l-1,.)$ and $v^{r,\rho,l-1}_i(l-1,.)$ both are large with repsect to any of the relevant norms, then the control $r^l$ ensures control of $v^{r,\rho,0,l}_i$ on the domain $[l-1,l]\times {\mathbb R}^n$. If the control function $r^l_i$ is small, i.e. comparable to $\rho_l\sim \frac{1}{C^3}$ then the source $\phi^{l,s,v}_i$ ensures the control of the function $v^{r,\rho,l}_i$ on the domain $[l-1,l]\times {\mathbb R}^n$.
 
 Summarizing we first estimate $v^{r,\rho,0,l}_i$, where we rewrite (\ref{Navlerayrrhol*++0step4}) in the form
 \begin{equation}\label{Navlerayrrhol*++0step4+}
\left\lbrace \begin{array}{ll}
\frac{\partial v^{r,\rho,0,l}_i}{\partial \tau}-\rho_l\nu\sum_{j=1}^n \frac{\partial^2 v^{r,\rho,0,l}_i}{\partial x_j^2} 
+\rho_l\sum_{j=1}^n v^{r,\rho,l-1}_j(l-1,.)\frac{\partial v^{r,\rho,0,l}_i}{\partial x_j}=\phi^ l_i+\left( \psi^{l,0}_i-\phi^l_i\right) \\
\\
\mathbf{v}^{r,\rho,0,l}(l-1,.)=\mathbf{v}^{r,\rho,l-1}(l-1,.),
\end{array}\right.
\end{equation}
and then we estimate $v^{r,\rho,l}_i-v^{r,\rho,0,l}_i$, where we use the second step of this proof, and the fact that the difference $\psi^l_i-\psi^{l,0}_i$ is given by
\begin{equation}\label{Navlerayrrhol*+++l}
\begin{array}{ll}
\psi^l_i-\psi^{l,0}_i=\\
\\
+\rho_l\sum_{j=1}^n r^l_j\left( \frac{\partial v^{r,\rho,l}_i}{\partial x_j}-\frac{\partial v^{r,\rho,l-1}_i}{\partial x_j}\right) +\rho_l\sum_{j=1}^n \left( v^{r,\rho,l}_j-v^{r,\rho,l-1}_j\right) \frac{\partial r^{l}_i}{\partial x_j}\\
\\ -2\rho_l\int_{{\mathbb R}^n}\left( \frac{\partial}{\partial x_i}K_n(x-y)\right) \sum_{j,k=1}^n\left( \frac{\partial r^{l}_k}{\partial x_j}\left( \frac{\partial v^{r,\rho,l}_j}{\partial x_k}-\frac{\partial v^{r,\rho,l-1}_j}{\partial x_k}\right) \right) (\tau,y)dy\\
\\
 +\rho_l\int_{{\mathbb R}^n}\left( \frac{\partial}{\partial x_i}K_n(x-y)\right) \sum_{j,k=1}^n\left( \frac{\partial v^{r,\rho,l}_k}{\partial x_j}\frac{\partial v^{r,\rho,l}_j}{\partial x_k}\right) (\tau,y)dy\\
 \\
-\rho_l\int_{{\mathbb R}^n}\left( \frac{\partial}{\partial x_i}K_n(x-y)\right) \sum_{j,k=1}^n\left( \frac{\partial v^{r,\rho,l-1}_k}{\partial x_j}\frac{\partial v^{r,\rho,l-1}_j}{\partial x_k}\right) (\tau,y)dy.
\end{array}
\end{equation} 
Indeed this is essentially estimated in the second step of this proof once the first difference is estimated. 

Recall that
\begin{equation}
 |r^l_i|_0\leq C^0_r,~|r^l_i|_{1,2}\leq C^1_r,~\mbox{and }~|r^l_i|_{1,2}\leq C_r
\end{equation}
We see that we have to estimate the difference $\delta r^l_i\equiv r^{l}_j-r^{l-1}_j(l-1,.)$  in order to get the estimate of that difference. Well, we an afford to do a rough estimate for this difference in the form
\begin{equation}
|\delta r^l_i|_{1,2}= |r^{l}_j-r^{l-1}_j(l-1,.)|_{1,2}\leq 2C_r.
\end{equation}
Next we estimate the difference $\psi^{l,0}_i-\phi^l_i$. From equation (\ref{Navlerayrrhol*++000+}) we get 

\begin{equation}\label{diffest}
\begin{array}{ll}
|\psi^{l,0}_i-\phi^l_i|_{0,1}\leq \rho_l\left( 4nC_r^2+4n^2C_KC_r^2+4nC_rC_{0,2}+8C_Kn^2C_rC_{0,2}\right) \\
\end{array}.
\end{equation}
We get
\begin{equation}\label{diffest}
\begin{array}{ll}
|\psi^{l,0}_i-\phi^l_i|_{0,1}\leq \frac{1}{4}\\
\end{array}.
\end{equation}
where we choose
\begin{equation}
 \rho_l\leq \frac{1}{C^*_nC^2_r }.
\end{equation}
Note the additional factor $C_r$ and recall that $C^*_n$ is a generic constant dependent on dimension $n$. Recall also that we have determined $C_r$ in terms of the initial data function $\mathbf{h}$ above. Hence the scheme defined is global. We can now finish the proof.
In order to control the growth of the functions $\mathbf{v}^{r,\rho,l}$ and the function $\mathbf{r}^l$ at time step $l$ we fir $\mathbf{r}^l$ control 1) the growth of a linearized equation for $\mathbf{v}^{r,\rho,0,l}$, and 2) ensure that the correction $\mathbf{v}^{r\rho,l}-\mathbf{v}^{r,\rho,0,l}=\sum_{k=1}^{\infty}\delta \mathbf{v}^{r,\rho,k,0}$ is small enough by choosing $\rho_l$ appropriately. Let us look at the linear approximation $\mathbf{v}^{r,\rho,0,l}$ of the function $\mathbf{v}^{r,\rho,l}$ first. From equation (\ref{Navlerayrrhol*++0}) we get the representation
\begin{equation}\label{v0rep0}
\begin{array}{ll}
v^{r,\rho,0,l}_i(\tau,x)=\int_{{\mathbb R}^n}v^{r,\rho,l-1}(l-1,y)\Gamma^l_{v,0}(\tau,x;0,y)dy\\
\\
+\int_{l-1}^{\tau}\int_{{\mathbb R}^n}\psi^{l,0}_i(s,y)\Gamma^l_{v,0}(\tau,x;s,y)dyds
\end{array}
\end{equation}
where $\Gamma^l_{v,0}$ is the fundamental solution of
\begin{equation}
\begin{array}{ll}
\frac{\partial v^{r,\rho,0,l}_i}{\partial \tau}-\rho_l\nu\sum_{j=1}^n \frac{\partial^2 v^{r,\rho,0,l}_i}{\partial x_j^2}
+\rho_l\sum_{j=1}^n v^{r,\rho,l-1}_j(l-1,.)\frac{\partial v^{r,\rho,0,l}_i}{\partial x_j}=0,
\end{array}
\end{equation}
and $\psi^{l,0}_i$ is as in (\ref{Navlerayrrhol*++0}). We may rewrite
\begin{equation}\label{v0rep1}
\begin{array}{ll}
v^{r,\rho,0,l}_i(\tau,x)=\int_{{\mathbb R}^n}v^{r,\rho,l-1}(l-1,y)\Gamma^l_{v,0}(\tau,x;0,y)dy\\
\\
+\int_{l-1}^{\tau}\int_{{\mathbb R}^n}\left( \left( \psi^{l,0}_i(s,y)-\phi^l_i(s,y)\right) +\phi^l_i(s,y)\right) \Gamma^l_{v,0}(\tau,x;s,y)dyds
\end{array}
\end{equation}
where $\phi^l_i$ is constructed above and the difference $\psi^{l,0}_i-\phi^l_i$ is as defined in
(\ref{Navlerayrrhol*++000+}).  Applying the maximum principle we may estimate the first term on the right side of equation (\ref{v0rep1}) by the supremum (indeed, maximum) of the initial data. We have for all $(\tau,x)\in [l-1,l]\times {\mathbb R}^n$
\begin{equation}\label{v0rep11}
\begin{array}{ll}
|v^{r,\rho,0,l}_i(\tau,x)|\leq \sup_{(\tau,x)\in [l-1,l]\times {\mathbb R}^n}{\Big |}\int_{{\mathbb R}^n}v^{r,\rho,l-1}(l-1,y)\Gamma^l_{v,0}(\tau,x;0,y)dy\\
\\
+\int_{l-1}^{\tau}\int_{{\mathbb R}^n}\left( \frac{1}{4} +\phi^l_i(s,y)\right) \Gamma^l_{v,0}(\tau,x;s,y)dyds {\Big |}\leq \sup_{x\in {\mathbb R}^n}|v^{r,\rho,l-1}_i(l-1,x)|.
\end{array}
\end{equation}
For this part the reasoning is the same as before in the case of the function $\mathbf{r}^l$. We use the maximum principle in order to estimate the term
\begin{equation}
\int_{{\mathbb R}^n}v^{r,\rho,l-1}(l-1,y)\Gamma^l_{v,0}(\tau,x;0,y)dy
\end{equation}
and then consider the different cases making up the definition of $\phi^l_i$ together with the a priori estimate of the fundamental solution $\Gamma^l_{v,0}$ involved. Next we apply the second step of this proof which gives
\begin{equation}
\sum_{m=1}^{\infty} |\delta v^{r,\rho,k,l}_i|_{1,2}\leq \frac{1}{2}.
\end{equation}
We conclude that
\begin{equation}
|v^{r,\rho,l}_i|_{1,2}\leq C_{1,2}. 
\end{equation}
The reasoning that 
\begin{equation}
|v^{r,\rho,l}_i|_{H^2}\leq C_{1,2}
\end{equation}
reduces to the reasoning that
\begin{equation}\label{vrh2est}
|v^{r,\rho,0,l}_i|_{H^2}\leq C_{1,2}
\end{equation}
The estimate (\ref{vrh2est}) is obtained analogously to the 
$H^2$ estimate for $r^l_i$ in the second substep of this third substep.

\subsection{step 4: Global existence of classical solutions $\mathbf{v}^{\rho}$ and $\mathbf{v}$ resp.}
The functions $\phi^l_i$ are bounded functions with supremum less or equal to $1$ which vary with the time step number in general. At each time step $l\geq 1$ we defined them on the domain $(l-1,l]\times {\mathbb R}^n$. The functions $\mathbf{v}^{r,\rho}$ and $\mathbf{r}^l$ are only weakly differentiable with respect to time across the points $l=0,1,2,\cdots$ in general. They are also Lipschitz. Furthermore, in the construction of each time step the functions $\phi^l_i$ are only Lipschitz with respect to the spatial variables in general. However, this is sufficient in order to find that the solution $\mathbf{v}^{\rho}=\mathbf{v}^{r,\rho}+\mathbf{r}$ constructed is classical. Consider the function $\mathbf{v}^{r,\rho,l}$ constructed at time step $l\geq 1$. The components $v^{r,\rho,l}_i,~1\leq i\leq n$ have the representation
\begin{equation}
v^{r,\rho,l}_i=v^{r,\rho,0,l}_i+\sum_{k=1}^{\infty}\delta v^{r,\rho,k,l}_i,
\end{equation}
where $\delta v^{r,\rho,k,l}_i=v^{r,\rho,k,l}_i-v^{r,\rho,k-1,l}_i$ are determined successively by linear Cauchy problems with zero initial conditions. These linear Cauchy problems have bounded classical solutions with 
\begin{equation}
\lim_{\tau\downarrow l-1}\frac{\partial \delta v^{r,\rho,k,l}}{\partial \tau}(\tau,x)=0.
\end{equation}
This implies that the regular behavior of the function $\mathbf{v}^{r,\rho,l}$ with respect to time is determined by the behavior of the function $\mathbf{v}^{r,\rho,0,l}$ as $\tau\downarrow l-1$ and the initial data (resp. the final data of the previous time step) $\mathbf{v}^{r,\rho,0,l}(l-1,.)=\mathbf{v}^{r,\rho,l-1}(l-1,.)$. We conclude that the function $\mathbf{v}^{r,\rho}$ is uniformly bounded continuous with respect to time. Furthermore it is H\"{o}lder continuous with respect to the spatial variables uniformly with respect to time.
Similarly, the function $\tau \rightarrow \mathbf{r}(\tau,x)$ is only weakly differentiable at the integer values $l\in {\mathbb N}$. Especially it is uniformly continuous with respect to the time variable $\tau$ and it is H\"{o}lder continuous with respect to the spatial variables uniformly in $\tau$. Hence we conclude that the solution $\mathbf{v}^{r,\rho,l}$ on $[l-1,l]\times {\mathbb R}^n$ of the Navier-Stokes equation in transformed time coordinates $\tau$, i.e., the function
\begin{equation}
\mathbf{v}^{\rho,l}=\mathbf{v}^{r,\rho,l}-\mathbf{r}^l
\end{equation}
shares these properties.
 Hence, if we consider the family of fundamental solutions $\Gamma^{\rho,l}_v$ of the equations
\begin{equation}\label{Gammarholstep4}
\frac{\partial \Gamma^{\rho,l}_v}{\partial \tau}-\rho_l\nu \Delta \Gamma^{\rho,l}_v+\rho_l\sum_{j=1}^nv^{\rho,l}_j\frac{\partial \Gamma^{\rho,l}_v}{\partial x_j}=0,
\end{equation}
then we observe 1) that all
exist in their Levy expansion form since the coefficient functions $v^{\rho,l}_j$ are uniformly continuous with respect to time and H\"{o}lder continuous with respect to the spatial variables 2) we can build a global bounded continuous coefficient functions $v^{\rho}_j:[0,\infty)\times {\mathbb R}^n$ out of the local coefficient functions $v^{\rho,l}_j$, where the local restrictions of $v^{\rho}_j$ to the domain $[l-1,l]\times {\mathbb R}^n$ are equal to the functions $v^{\rho,l}_j$. The different coefficients $\rho_l$ in the family of equations (\ref{Gammarholstep4}) are artefacts of the time transformations, of course. We can get rid of them by transforming back to original time coordinates
 \begin{equation}
\tau \rightarrow t(\tau):=\rho_l\tau,~\mbox{if}~\tau\in [l-1,l]
\end{equation}
for all $l\geq 1$. Note that we can consider the function $v^{\rho}_j$ as given because we have constructed it- the turn to the fundamental solution then gives us global classical solutions. 
 Furthermore, the fundamental solutions $\Gamma^l_v$ of
\begin{equation}
\frac{\partial \Gamma^{l}_v}{\partial t}-\nu \Delta \Gamma^{l}_v+\sum_{j=1}^nv^{l}_j\frac{\partial \Gamma^{l}_v}{\partial x_j}=0
\end{equation}
exists on the transformed domains and in original coordinates, and we can build global bounded continuous coefficient functions $v_j:[0,\infty)\times {\mathbb R}^n$ out of the local coefficient functions $v^{l}_j$ in original time coordinates, where the local restrictions of $v_j$ to the domain $[l-1,l]\times {\mathbb R}^n$ are equal to the functions $v^l_j$. The related global fundamental solution can be obtained also by successive use of the the relation
\begin{equation}
\Gamma (t,x;s,y)=\int_{{\mathbb R}^n}\Gamma (t,x;s_1,z)\Gamma (s_1,z;s,y)dz
\end{equation}
for $t >s_1>s$, and where $s_1$ will run through the time step sizes $\sum_{m=1}^l\rho_m$ in orignal time coordinates. We conclude that
the fundamental solution $\Gamma_v$ of
\begin{equation}
\frac{\partial \Gamma_v}{\partial t}-\nu \Delta \Gamma_v+\sum_{j=1}^nv_j\frac{\partial \Gamma_v}{\partial x_j}=0 
\end{equation}
exists on $[0,T]\times {\mathbb R}^n$ for arbitrary $T>0$, where $v_i$ is the function that equals $v^l_i$ on the domain $[l-1,l]\times {\mathbb R}^n$ for all $l\geq 1$.
Now for all $t\in [0,T]$ and $x\in {\mathbb R}^n$ we have the representation
\begin{equation}\label{navstokesfundreg}
\begin{array}{ll}
v_i(t,x)=\int_{{\mathbb R}^n}h_i(y)\Gamma_v(t,x;0,y)dy+\int_0^t\int_{{\mathbb R}^n}\left( \frac{\partial v_m}{\partial x_l}\frac{\partial v_l}{\partial x_m}\right) (s,z)\times\\
\\
\times K_{,i}(z-y)\Gamma_v(t,x;s,y)dsdydz
\end{array}
\end{equation}
for $1\leq i\leq n$. Hence we have that $v_i\in C^{1,2}_b\left(\left[0,T\right]\times {\mathbb R}^n\right)$ for all $T>0$. 
Furthermore the latter representation and the standard a priori estimates for classical fundamental solutions together with our assumptions on the initial data $h_i$ show us that we have
\begin{equation}
v_i(t,.)\in H^{2}
\end{equation}
for all $t\geq 0$, where we use Gaussian estimates and Young's inequality for convolutions.
\end{proof}

\section{Regularity, uniqueness, and extensions with external forces}

The existence of global bounded classical solutions is essential in order to prove further regularity and uniqueness. Furthermore this is essential in order to extend the proof to equations with external forces, and to study asymptotic behavior. It may be well known that bounded classical solutions lead to full regularity. However, we shall show this for the sake of completeness. 

Next we prove regularity, i.e. we prove that for $1\leq i\leq n$ we have
\begin{equation}
v_i\in C^{\infty}\left(\left[0,\infty \right) \times{\mathbb R}^n \right), 
\end{equation}
and for all $t\in [0,\infty)$ and $s\in {\mathbb R}$
\begin{equation}
v_i(t,.)\in H^s\left({\mathbb R}^n\right).
\end{equation}
One way to do this starts from the representation of the solution in (\ref{navstokesfundreg})
Note that we may write
\begin{equation}\label{Navsum}
\begin{array}{ll}
v_i(t,x)=v^b_i(t,x)+p_{,i}(t,x),
\end{array}
\end{equation}
where
\begin{equation}
\begin{array}{ll}
v^b_i(t,x)=\int_{{\mathbb R}^n}h_i(y)\Gamma_v(t,x;0,y)dy
\end{array}
\end{equation}
solves the Cauchy problem for a Burgers type equation (but with first order coefficients $v_i$ of the solution of the Navier-Stokes equation), i.e.,
\begin{equation}\label{Cauchyreg}
\left\lbrace \begin{array}{ll}
\frac{\partial v^b_i}{\partial t}=\nu\sum_{j=1}^n \frac{\partial^2 v^b_i}{\partial x_j^2} 
-\sum_{j=1}^n v_j\frac{\partial v^b_i}{\partial x_j},\\
\\
\mathbf{v}(0,.)=\mathbf{h},
\end{array}\right.
\end{equation}
and $p_{,i}$ describes the gradient of the pressure, i.e., the solutions to the Poisson equations
\begin{equation}\label{poissonreg}
\begin{array}{ll} 
- \Delta p_{,i}=\sum_{j,k=1}^n \frac{\partial}{\partial x_i}\left( \frac{\partial}{\partial x_k}v_j\right) \left( \frac{\partial }{\partial x_j}v_k\right)
\end{array}
\end{equation}
for $1\leq i\leq n$. Note that in terms of the Navier-Stokes solution function $\mathbf{v}$ the first equation in (\ref{Cauchyreg}) is a linear parabolic equation for $v^b_i$ for each $1\leq i\leq n$. Hence a simple strategy for regularity is the following: start with some known regularity of $\mathbf{v}$ and consider (\ref{Navsum}). Then use regularity theory for linear parabolic equations in order to prove more regularity for the summand $\mathbf{v}^b$. Then starting with some regularity for the right side of (\ref{poissonreg}) use elliptic regularity theory for Poission equations in order to get more regularity for $p_{,i}$. This gives more regularity for $\mathbf{v}$. Iteration of the process leads to full regularity.
Since the classical solution $\mathbf{v}$ is known local regularity of scalar linear parabolic equations is sufficient in this context. We may use a local result which we apply to all domains $[m,m+1]\times\alpha+B^3_n(\alpha)$ with $\alpha \in {\mathbb Z^n}$ and natural numbers $m\geq 0$, and where $B^3_n$ is a ball of radius $3$ around $\alpha$, i.e.,
\begin{equation}
B^3_n(\alpha):=\left\lbrace x=\alpha +y||y|\leq 3 \right\rbrace 
\end{equation}
(we may use any radius which leads to a cover of the whole domain as $\alpha \in {\mathbb Z^n}$). We have
\begin{thm}
Consider a linear parabolic equation 
\begin{equation}
Lu\equiv \frac{\partial u}{\partial t}-\sum_{ij=1}^na_{ij}\frac{\partial^2 u}{\partial x_i\partial x_j}-\sum_{i=1}^nb_i\frac{\partial u}{\partial x_i}-cu=f,
\end{equation}
on a bounded domain $D$ and assume that derivatives of the coefficient functions and the source function $f$ satsify the following condition:
\begin{equation}
D^{\alpha}_xD^k_ta_{ij},~D^{\alpha}_xD^k_tb_{i},~D^{\alpha}_xD^k_tc,~D^{\alpha}_xD^k_tf
\end{equation}
are H\"{o}lder continuous with some exponent $\delta \in (0,1)$ on $D$ and $0\leq |\alpha|+2k\leq p$ and $m\leq q$. Then a solution $u$ of the equation $Lu=f$ satisfies that
\begin{equation}
D^{\alpha}_xD^k_t u\in C^{1+\delta, 2+\delta }(D)
\end{equation}
for all $0\leq |\alpha|+2k\leq p$ and $k\leq q$.
\end{thm}
Applying this theorem locally for all $\alpha+B^3_n$ and then inductively with respect to the time $m$ we get $v_i\in C^{1+\alpha,2+\alpha}\left([0,\infty)\times {\mathbb R}^n\right)$. 
 
In a first step we look at the Poisson equation for $p$ itself. Our main theorem tells us that we have 
\begin{equation}\label{l1est}
\sum_{j,k=1}^n \left( \frac{\partial}{\partial x_k}v_j\right) \left( \frac{\partial }{\partial x_j}v_k\right)\in L^1.
\end{equation}
(Well, it tells us that the right side is even in $H^1$).
Hence the convolution with the fundamental solution $n$ of the Poisson equation is a well defined locally integrable function, and we have
\begin{equation}\label{Poissonavreg}
\begin{array}{ll}
\Delta \left( \sum_{j,k=1}^n \left( \frac{\partial}{\partial x_k}v_j\right) \left( \frac{\partial }{\partial x_j}v_k\right)\star N\right)\\
\\
=\sum_{j,k=1}^n \left( \frac{\partial}{\partial x_k}v_j\right) \left( \frac{\partial }{\partial x_j}v_k\right).
\end{array}
\end{equation}
Here recall that an $L^1$ estimate for the term in (\ref{l1est}) can be reduced to an $L^2$ estimate. From our main result we know that for each time $t\in [0,\infty)$ the right side of (\ref{Poissonavreg}) is in $C^{1+\alpha}$.
We get more regularity of this distributive solution for $p$ from the standard result  
\begin{lem}\label{poissonlemreg}
Assume that $k\geq 0$, and $\Omega$ is an open set in ${\mathbb R}^n$. Assume $u$ is a distribution solution of
\begin{equation}
\Delta u=f
\end{equation}
where the  data $f\in C^{k+\alpha}\left( \Omega\right)$ along with $\alpha\in (0,1)$ and $k\geq 0$. Then $u\in C^{k+2+\alpha}\left( \Omega\right)$.
\end{lem}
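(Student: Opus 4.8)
The plan is to reduce the statement to the classical interior Schauder estimate for the Poisson equation and then bootstrap in the order of differentiation $k$ by induction. Since the conclusion $u\in C^{k+2+\alpha}(\Omega)$ is a purely local (interior) assertion, I would first fix an arbitrary point $x_0\in\Omega$ together with a ball $B=B_{2r}(x_0)$ satisfying $\overline{B}\subset\Omega$, choose a cutoff $\eta\in C^{\infty}_c(\Omega)$ with $\eta\equiv 1$ on $B_r(x_0)$, and prove the regularity of $u$ on $B_r(x_0)$. Because $x_0$ is arbitrary this yields the claim on all of $\Omega$. The induction is over the statement $P(k)$: ``$f\in C^{k+\alpha}(\Omega)$ and $\Delta u=f$ distributionally $\Rightarrow u\in C^{k+2+\alpha}(\Omega)$''.

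First I would treat the base case $k=0$, i.e. $f\in C^{\alpha}$. Let $N$ denote the fundamental solution of the Laplacian (as in the lemma preceding the statement) and set $w:=N\star(\eta f)$, the Newtonian potential of the compactly supported, H\"older continuous datum $\eta f$. The classical interior Schauder theory gives $w\in C^{2+\alpha}$ together with the a priori bound
\begin{equation}
\|D^2 w\|_{C^{\alpha}(B_r(x_0))}\le C\,\|\eta f\|_{C^{\alpha}},
\end{equation}
and $\Delta w=\eta f=f$ on $B_r(x_0)$. Consequently $\Delta(u-w)=0$ on $B_r(x_0)$, so $u-w$ is a distributional, hence by Weyl's lemma and the interior estimates for harmonic functions a $C^{\infty}$ solution of Laplace's equation there. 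Therefore $u=w+(u-w)\in C^{2+\alpha}(B_r(x_0))$, which establishes $P(0)$.

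For the inductive step I would assume $P(k-1)$ and suppose $f\in C^{k+\alpha}$. Then $f\in C^{(k-1)+\alpha}$, so $P(k-1)$ already gives $u\in C^{k+1+\alpha}$; in particular the first derivatives $\partial_i u$ are classical. A computation in the sense of distributions shows that each $\partial_i u$ is a distributional solution of
\begin{equation}
\Delta(\partial_i u)=\partial_i f,
\end{equation}
since $\langle \Delta(\partial_i u),\varphi\rangle=-\langle u,\Delta\partial_i\varphi\rangle=-\langle f,\partial_i\varphi\rangle=\langle\partial_i f,\varphi\rangle$ for every test function $\varphi$. Because $\partial_i f\in C^{(k-1)+\alpha}$, applying $P(k-1)$ to $\partial_i u$ yields $\partial_i u\in C^{(k-1)+2+\alpha}=C^{k+1+\alpha}$ for every $1\le i\le n$. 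Hence all first-order derivatives of $u$ lie in $C^{k+1+\alpha}$, which is exactly $u\in C^{k+2+\alpha}$, completing the induction.

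The only genuinely nontrivial ingredient is the base-case Schauder estimate for the second derivatives of the Newtonian potential, whose kernel $D^2 N$ is a Calder\'on--Zygmund singular kernel; the hard part is controlling the $C^{\alpha}$ seminorm of $D^2 w$ by $\|f\|_{C^{\alpha}}$ rather than obtaining mere boundedness. Since this is the classical result attributed to H\"older and recalled in the lemma preceding the statement, I would invoke it directly rather than reprove the singular-integral estimates, and devote the remaining care only to the bookkeeping of the induction and to the distributional identity $\Delta(\partial_i u)=\partial_i f$.
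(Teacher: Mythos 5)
Your proof is correct, but note that the paper itself offers no proof of this lemma at all: it is stated (twice, once in Section 1--2 and again in Section 3) and simply attributed to classical potential theory (``a classical result due to H\"older himself''), so your argument supplies what the paper omits. Your route is the standard textbook one: localize with a cutoff $\eta$, split $u = w + (u-w)$ where $w = N\star(\eta f)$ is the Newtonian potential, invoke the Korn--Lichtenstein--H\"older estimate $\|D^2 w\|_{C^{\alpha}}\leq C\|\eta f\|_{C^{\alpha}}$ for the base case $k=0$, use Weyl's lemma to handle the harmonic remainder $u-w$, and then bootstrap in $k$ by differentiating the equation distributionally, which is a clean and complete induction. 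One point of care: your closing sentence says the base-case estimate is ``recalled in the lemma preceding the statement,'' but the lemma actually preceding it in the paper only treats $f\in L^1$ and yields a locally integrable distributional solution, not the $C^{2+\alpha}$ bound; the H\"older-regularity statement in the paper \emph{is} the lemma you are proving. Your argument avoids circularity only because what you genuinely invoke is the estimate for the specific convolution $N\star g$ with $g\in C^{\alpha}_c$ (a strictly more special fact than the lemma, which concerns arbitrary distributional solutions on arbitrary open sets), so you should cite it as such (e.g., the interior Schauder estimate for Newtonian potentials) rather than point back to the paper's lemmas. With that attribution fixed, the decomposition plus Weyl's lemma plus the induction on $k$ (using $\langle\Delta\partial_i u,\varphi\rangle=\langle\partial_i f,\varphi\rangle$ and the fact that $u$ continuous with all $\partial_i u\in C^{k+1+\alpha}$ gives $u\in C^{k+2+\alpha}$) is a complete and correct proof.
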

Hence we have $p(t,.)\in C^{3+\alpha}$ in the first step. Iteration of this argument leads to spatial regularity.

The Navier-Stokes equation itself then tells us immediately some regularity of the first time derivative. Higher order time derivatives of the velocity can be expressed in terms of spatial derivatives of the velocity, of course. It follows that
\begin{equation}
\left( \frac{\partial}{\partial t}v_i\right) (t,.)\in C^{\infty} \mbox{ for all  }~t> 0.
\end{equation}
Another way to prove regularity is the following which uses both the Leray projection form and the original form of the Navier-Stokes equation. 
We shall use the following
We have $v_i\in C^{1,2}_b$ for all $1\leq i\leq n$. Hence $v_{i,j}(t,.)\in C^{1}_b$, which implies for all $t\in [0,\infty)$ that
\begin{equation}
\sum_{j,k=1}^n\left( v_{j,k}v_{k,j}\right)(t,.)\in C^1_b\subset C^{\alpha}.
\end{equation}
Next consider multiindices $\alpha=(\alpha_1,\cdots,,\alpha_n)$ and the multivariate partial derivative of order $\alpha$, i.e.,
\begin{equation}
\frac{\partial^{|\alpha|}}{\partial x^{|\alpha|}}=:D^{\alpha}_x=:_{,\alpha}.
\end{equation}
The spatial derivative of order $\alpha$ with $|\alpha|\geq 1$ of the Navier Stokes equation leads to
\begin{equation}
v_{i,\alpha,t}-\nu\Delta v_{i,\alpha}+\sum_{j=1}^n v_jv_{i,j,\alpha}=-p_{,i,\alpha}-
\sum_{j=1}^nv_{j,\alpha}v_{i,j}-\sum_{0<\beta <\alpha}\binom{\alpha}{\beta}v_{i,\beta}v_{i,j,\alpha-\beta}.
\end{equation}
Next, assume inductively that
\begin{equation}\label{induca}
v_{i,\beta}\in C^{1,2}_b,~~\mbox{for all }\beta<\alpha \mbox{, and }~1\leq i\leq n,
\end{equation}
and
\begin{equation}\label{inducb}
v_{i,\beta}(t,.)\in L^2,~~\mbox{for all }\beta<\alpha \mbox{ and all }t\in [0,\infty)
\end{equation}
for all $\beta<\alpha$, and $1\leq i\leq n$.
Then
\begin{equation}
v_{i,\alpha}(t,.)\in C^{1}_b\mbox{ $\&$ }\left( \sum_{0<\beta <\alpha}\binom{\alpha}{\beta}v_{i,\beta}v_{i,j,\alpha-\beta}\right)(t,.)\in C^{1}_b\subset C^{\alpha},
\end{equation}
uniformly in the time variable $t$.
Hence, we have the representation with fundamental solution $\Gamma$
\begin{equation}
\begin{array}{ll}
v_{i,\alpha}(t,x)=\int_{{\mathbb R}^n}h_{i,\alpha}(y)\Gamma(t,x,0,y)dy+\\
\\
-\int_0^t p_{,i,\alpha}(s,y)\Gamma(t,x,s,y)dyds\\
\\
+\sum_{j=1}^n\left( v_{j,\alpha}v_{i,j}\right) (s,y)\Gamma(t,x,s,y)dsdy\\
\\
+\left( \sum_{0<\beta <\alpha}\binom{\alpha}{\beta}v_{i,\beta}v_{i,j,\alpha-\beta}\right)(s,y)\Gamma(t,x,s,y)dsdy,
\end{array}
\end{equation}
where $\Gamma$ is the fundamental solution of
\begin{equation}
\frac{\partial \Gamma}{\partial t}-\nu \Delta \Gamma +\sum_{j=1}^n v_j \frac{\partial \Gamma}{\partial x_j}=0.
\end{equation}
Hence, we have
\begin{equation}
v_{i,\alpha}\in C^{1,2}_b
\end{equation} 
Regularity with repect to time can be treated similarly using regularity with respect to the spatial variables.

Next we have uniqueness. Assume that $v_1,v_2\in C^{1,2}_b$ are two solutions of the Navier-Stokes equation. Then from the basic energy estimate
\begin{equation}
\sup_{0\leq t\leq T}|v_1-v_2|_0\leq |(v_1(0,.)-v_2(0,.))|_0\exp\left(\int_0^T|\nabla v_2|_{L^{\infty}}dt \right)
\end{equation}
we get uniqueness. This is an application of Gr\"{o}nwall's lemma and can be found in standard texts. We mention it here for the sake of completeness. Finally we mention extensions to equations with external forces and asymptotic behaviour. The extension of the scheme to the Navier-Stokes equation wih external forces $f_{ex}$ is straightforword. At each time step in the substep $k=0$ a source term is added. The correction terms $\delta v^{r,\rho,l,k}_i$ are estimated using the adjoint and shift of a first spatial derivative to the force term as in step 1 of the proof.

\section{The algorithm}

The construction of the solution above can be extended to boundary value problems straightforwardly. We consider first initial-boundary value problems and second initial-boundary value problems. The observation is that the global existence for these problems boils down to the existence of related scalar first initial-boundary value problems and second initial-boundary value problems of parabolic type. Furthermore, the scheme proposed has linear subproblems of parabolic type which can be computed explicitly. Expansions of this form are considered in \cite{Ka5}. Note that there is a difference to the Taylor expansion (operator form) which applies only for affine coefficients in general (cf. \cite{BKS} and \cite{Kaff}). Furthermore the subproblems considered here have been implemented in the context of weighted Monte-Carlo methods in finance (cf. \cite{KKS}, \cite{FrKa2}, and \cite{FrKa}). The discussion here is rather conceptual. Further details of implementation and error estimates will be considered in subsequent paper.

Since we have proved that the solution is bounded we may set up a uniform time discretization scheme. The size of the time steps is limited by a time step size which ensures the convergence of the local time scheme. A lower bound of this time step size can be extracted the global existence proof. The step size numbers $\rho_l$ may be increased as time goes by if there are smoothing effects due to the strictly parabolic subproblems.

Since we have proved that the solution $v_i,~1\leq i\leq n$ of the incompressible Navier-Stokes equation is globaly bounded and H\"{o}lder continuous with respect to the spatial variables uniformly in time, we know that the fundamental solution $\Gamma_v$ of the equation
\begin{equation}
\frac{\partial \Gamma_v}{\partial t}-\nu \Delta \Gamma_v+\sum_j v_j\frac{\partial \Gamma_v}{\partial x_j}=0
\end{equation}
exists (given $\mathbf{v}$).
Hence, in terms of the solution itself the solution of the Cauchy problem has the representation (original time coordinates)
\begin{equation}
\begin{array}{ll}
v_i(t,x)=\int_{{\mathbb R}^n}h_i(y)\Gamma_v (t,x;0,y)dy+\\
\\
\int_{l-1}^l\int_{{\mathbb R}^n}\sum_{m,l=1}^n\left(v_{l,m}v_{m,l}\right)(s,z)K(y-z) \Gamma_v (t,x;s,y)dy.
\end{array}
\end{equation}
This representation cannot be used for computation of course, since we do not now the solution.
However, we have shown that for a time step size $\rho<1$ which is small enough we may compute the solution in a time -discretized scheme where at each time step $l$ we compute successive approximations $v^{\rho,k,l}$ with initial values from the previous time step.

 Note that the precise values from the the previous time step are $v^{\rho,l-1}$ which we do not know except for the case $l=1$ where $v^{\rho ,l-1}_i=h_i$, i.e. equal the original initial data (well even these have to be approximated upon implementation). Hence, the initial data of the previous time step are some given by a function $v^{\rho,l-1,*}$ which approximates $v^{\rho,l-1}_i$. The approximation of $v^{\rho,l}_i$ is then computed by iterative approximations $v^{\rho,k,l,*}_i$ for $k\geq 0$ where $v^{\rho,0,l,*}_i$ solves   
\begin{equation}\label{Navlerayl0intalg}
\left\lbrace \begin{array}{ll}
\frac{\partial v^{\rho,0,l,*}_i}{\partial \tau}-\rho\nu\sum_{j=1}^n \frac{\partial^2 v^{\rho,0,l,*}_i}{\partial x_j^2} 
+\rho\sum_{j=1}^n v^{\rho,l-1,*}_j\frac{\partial v^{\rho,0,l,*}_i}{\partial x_j}=\\
\\
  \rho\int_{{\mathbb R}^n}\left( \frac{\partial}{\partial x_i}K_n(x-y)\right) \sum_{j,k=1}^n\left( \frac{\partial v^{\rho,l-1,*}_k}{\partial x_j}\frac{\partial v^{\rho,l-1,*}_j}{\partial x_k}\right) (\tau,y)dy,\\
\\
\mathbf{v}^{\rho,0,l,*}(l-1,.)=\mathbf{v}^{\rho,l-1,*}(l-1,.),
\end{array}\right.
\end{equation}
and for $k\geq 1$ recursively defined functions $\mathbf{v}^{\rho,k,l,*}$ are determined by the respective solutions of
\begin{equation}\label{Navleraylkintalg}
\left\lbrace \begin{array}{ll}
\frac{\partial v^{\rho,k,l,*}_i}{\partial \tau}-\rho\nu\sum_{j=1}^n \frac{\partial^2 v^{\rho,k,l,*}_i}{\partial x_j^2} 
+\rho\sum_{j=1}^n v^{\rho,k-1,l,*}_j\frac{\partial v^{\rho,k,l,*}_i}{\partial x_j}=\\
\\ \rho\int_{{\mathbb R}^n}\left( \frac{\partial}{\partial x_i}K_n(x-y)\right) \sum_{j,k=1}^n\left( \frac{\partial v^{\rho,k-1,l,*}_k}{\partial x_j}\frac{\partial v^{\rho,k-1,l,*}_j}{\partial x_k}\right) (\tau,y)dy,\\
\\
\mathbf{v}^{\rho,k,l,*}(l-1,.)=\mathbf{v}^{\rho,l-1,*}(l-1,.).
\end{array}\right.
\end{equation}
We have to stop after finitely many steps. Hence we shall have
\begin{equation}
v^{\rho,l,*}_i=v^{\rho,m,l,*}_i
\end{equation}
for some $m$ if we perform iterations $k=0,\cdots ,m$ at time step $l$.
Let $\Gamma^{l,0}_*$ be the fundamental solution of the equation
\begin{equation}
\frac{\partial \Gamma^{l,0}_*}{\partial \tau}-\nu \Delta \Gamma^{l,0}_*+\sum_j v^{\rho,l-1,*}_j\frac{\partial \Gamma^{l,0}_*}{\partial x_j}=0.
\end{equation}
Then the solution $v^{\rho,l,0,*}_i$ of (\ref{Navlerayl0intalg}) has the representation
\begin{equation}
\begin{array}{ll}
v^{\rho,l,0,*}_i(t,x)=\int_{{\mathbb R}^n}v^{\rho,l-1,*}_i(l-1,y)\Gamma^{l,0}_* (\tau,x;l-1,y)dy+\\
\\
\int_{l-1}^{\tau}\int_{{\mathbb R}^n}\sum_{m,l=1}^n\left(v^{\rho,k-1,l,*}_{l,m}v^{\rho,k-1,l,*}_{m,l}\right)(s,z)K_{,i}(y-z) \Gamma^{l,0}_* (\tau,x;s,y)dy.
\end{array}
\end{equation}
Furthermore, let $\Gamma^{l,k}_*$ be the fundamental solution of the equation
\begin{equation}
\frac{\partial \Gamma^{l,k}_*}{\partial \tau}-\nu \Delta \Gamma^{l,k}_*+\sum_j v^{\rho,k-1,l,*}_j\frac{\partial \Gamma^{l,k}_*}{\partial x_j}=0.
\end{equation}
Then the solution (\ref{Navleraylkintalg}) has the representation
\begin{equation}
\begin{array}{ll}
v^{\rho,k,l,*}_i(t,x)=\int_{{\mathbb R}^n}v^{\rho,l-1,*}_i(y)\Gamma^{l,k}_* (\tau,x;0,y)dy+\\
\\
\int_{l-1}^{\tau}\int_{{\mathbb R}^n}\sum_{m,l=1}^n\left(v^{\rho,k-1,l,*}_{l,m}v^{\rho,k-1,l,*}_{m,l}\right)(s,z)K_{,i}(y-z) \Gamma^{l,k}_* (\tau,x;s,y)dy.
\end{array}
\end{equation}
This scheme involves
the fundamental solutions of equations of type 
\begin{equation}\label{parasystthm}
\frac{\partial u}{\partial t}=\sum_{j=1}^n \frac{\partial^2 u}{\partial x_j^2} 
+\sum_{i=1}^n b_{i}\frac{\partial u}{\partial x_i}
\end{equation}
essentially. For computational point of view it is interesting that 
 the solution has the locally pointwise valid representation
\begin{equation}\label{pj}
p(t,x,0,y)=\frac{1}{\sqrt{4\pi t}^n}\exp\left(-\frac{\sum_{i=1}^n \Delta x_i^2}{4t}\right)\left(\sum_{k=0}^{\infty}d_{k}(t,x,y)t^k \right),  
\end{equation}
for $j=1,\cdots ,n$, i.e., the representation is valid on some time interval. However, this fits with our scheme since this is defined locally in time anyway. Note that we have the coefficients outside the exponential. This implies that the first term is damping the polynomial terms $d_k$ as the moduli of the $\Delta x_i=(x_i-y_i)$ become large.
The coefficient functions $d_k$ have explicit representations in terms of the coefficient functions $b_i$: for $k=0$ we have
\begin{equation}\label{c0}
 d_{0}(t,x,y)= \exp\left( \sum_m (y_m-x_m)\int_0^1 b_m (t,y+s(x-y))ds\right) ,
\end{equation}

\begin{equation}
d_m(t,x,y)=\sum_{k=1}^m \frac{k}{m}d_{m-k}\int_0^1 R_{k-1}(t,y+s(x-y),y)s^{k}ds
\end{equation}
with 
\begin{equation}\label{tk}
\begin{array}{ll}
R_{k-1}(t,x,y)=&\frac{\partial}{\partial t}c_{k-1}+\Delta c_{k-1}+\sum_{l=1}^n\sum_{r=0}^{k-1}\left( \frac{\partial}{\partial x_l}c_{r}\frac{\partial}{\partial x_l}c_{k-1-r}\right)\\
\\
&+\sum_{i} b_i(x)\frac{\partial}{\partial x_i}c_{k-1}
\end{array}
\end{equation}
If the coefficients $b_i$ are given in terms of bounded analytical expansions (finite Fourier series for example), then the functions $d_k$ can be computed explicitly. Note that we cannot use Taylor expansions (operator form) as in \cite{BKS} since complete sets of analytic vectors are difficult to define if the coefficient functions are not affine (which is true in case of the Navier-Stokes equation).
These analytical expansions have been proved to be quite efficient in a different context (cf. \cite{FrKa,FrKa2, KKS, K2}). Fluid dynamical models in applied sciences have boundaries of course, so let have a look how our algorithm can be adapted to these situations. We consider boundary value problems which are related to the second initial-boundary value problem for scalar parabolic equations - cf. \cite{F2} for a classical treatment. 
We consider problems on the domain $[0,T]\times \Omega$, where $\Omega\subset {\mathbb R}^n$ is a bounded domain. Let $B:=\left\lbrace 0\right\rbrace \times \Omega$, $B_T=\left\lbrace 0\right\rbrace \times \Omega$, and let $S=\partial \Omega\setminus \left( B\cup B_T \right)$. Then we consider the following initial-boundary value problem on $[0,T]\times \Omega$. Let $\alpha_i :[0,T]\times \Omega\rightarrow {\mathbb R}$, and $g_i :[0,T]\times \Omega\rightarrow {\mathbb R}$ be $2n$ functions. We consider a problem for $v_i,~1\leq i\leq n$, where

\begin{equation}\label{Navleraybound}
\left\lbrace \begin{array}{ll}
\frac{\partial v_i}{\partial t}-\nu\sum_{j=1}^n \frac{\partial^2 v_i}{\partial x_j^2} 
+\sum_{j=1}^n v_j\frac{\partial v_i}{\partial x_j}=\\
\\ \hspace{1cm}\int\left( \frac{\partial}{\partial x_i}K_n(x-y)\right) \sum_{j,k=1}^n\left( \frac{\partial v_k}{\partial x_j}\frac{\partial v_j}{\partial x_k}\right) (t,y)dy,\\
\\
\frac{\partial}{\partial \nu}v_i(t,x)+\alpha_i(t,x)v_i(t,x)=g_i(t,x) \mbox{ on }[0,T]\times S,\\
\\
\mathbf{v}(0,.)=\mathbf{h}.
\end{array}\right.
\end{equation}
The scheme we proposed for the Cauchy problem can be adapted to this situation straightforwardly. Consider a time discretization in transformed coordinates and assume thhat $\mathbf{v}^{\rho,l-1}$ has computed for $ l-1$, where $l\geq 0$ If $l=0$ we set $\mathbf{v}^{\rho,-1}=\mathbf{h}$ which is known. We choose a fixed $\rho$ independent of the time step number $l$. For $l\geq 1$ we have functions $\mathbf{v}^{\rho,l}:[l-1,l]\times \Omega\rightarrow {\mathbf R}^n$ on successive domains where the final data of the function $\mathbf{v}^{\rho,l-1}$ are the initial data of the function ${\mathbf v}^{\rho,l}$. Each function $\mathbf{v}^{\rho,l}$ is determined as a limit of a functional series $\left( \mathbf{v}^{\rho,k,l}\right)_k$. Note that transformation to coordinates $t=\rho\tau$ leads to a domain $[0,RT]\times {\Omega}$, where $R=\frac{1}{\rho}$. Having computed the $k-1$ iteration step of the $l$th time step the problem for $\mathbf{v}^{\rho,k,l}$ is given by $n$ linear parabolic scalr problem which are classical initial-boundary value problems of second type. 
\begin{equation}\label{Navlerayboundkl}
\left\lbrace \begin{array}{ll}
\frac{\partial v^{\rho,k,l}_i}{\partial \tau}-\rho\nu\sum_{j=1}^n \frac{\partial^2 v^{\rho,k,l}_i}{\partial x_j^2} 
+\rho\sum_{j=1}^n v^{\rho,k-1,l}_j\frac{\partial v^{\rho,k,l}_i}{\partial x_j}=\\
\\ \hspace{1cm}\rho\sum_{j,m=1}^n\int_{{\mathbb R}^n}\left( \frac{\partial}{\partial x_i}K_n(x-y)\right) \left( \frac{\partial v^{\rho,k-1,l}_m}{\partial x_j}\frac{\partial v^{\rho,k-1,l}_j}{\partial x_m}\right) (\tau,y)dy,\\
\\
\frac{\partial}{\partial \nu}v^{\rho,k,l}_i(\tau,x)+\alpha_i(t,x)v^{\rho,k,l}_i(\tau,x)=g_i(t,x) \mbox{ on }[0,RT]\times S,\\
\\
\mathbf{v}^{\rho,k,l}(l-1,.)=\mathbf{v}^{\rho,l-1}(l-1,.),
\end{array}\right.
\end{equation}
where for $k=0$ we set $\mathbf{v}^{\rho,k-1,l}=\mathbf{v}^{\rho,-1,l}=\mathbf{v}^{\rho,l-1}(l-1,.)$ in order to determine the first order coefficients in the first step of a local iteration. We could add external forces in (\ref{Navlerayboundkl}) but we leave it out for simplicity.  The local convergence of the scheme (with the right choice of $\rho$ is proved similarly as in the first step of the proof of the main theorem above, i.e., by proving the convergence of the functional series in the form
\begin{equation}
 \mathbf{v}^{\rho,l}=\mathbf{v}^{\rho,0,l}+\sum_{k=1}^{\infty}\delta \mathbf{v}^{\rho,k,l},
\end{equation}
where
\begin{equation}
\delta \mathbf{v}^{\rho,k,l}\downarrow 0~\mbox{as}\downarrow 0.
\end{equation}
Note that the initial conditions and the boundary conditions for the functions $\delta v^{\rho,k,l}_i$ simplify to
\begin{equation}\label{Navlerayboundkldelta}
 \begin{array}{ll}
\frac{\partial}{\partial \nu}\delta v^{\rho,k,l}_i(\tau,x)+\alpha_i(t,x)\delta v^{\rho,k,l}_i(\tau,x)=0 \mbox{ on }[0,RT]\times S,\\
\\
\mathbf{v}^{\rho,k,l}(l-1,.)=0.
\end{array}
\end{equation}
This leads to representations of the solution in terms of the fundamental solutions $\Gamma^l_k$ of the equations
\begin{equation}\label{Navlerayboundklfund}
\begin{array}{ll}
\frac{\partial v^{\rho,k,l}_i}{\partial \tau}-\rho\nu\sum_{j=1}^n \frac{\partial^2 v^{\rho,k,l}_i}{\partial x_j^2} 
+\rho\sum_{j=1}^n v^{\rho,k-1,l}_j\frac{\partial v^{\rho,k,l}_i}{\partial x_j}=0.
\end{array}
\end{equation}
Let us start with the representation for $\mathbf{v}^{\rho,0,l}$. The solution is given in the form 
\begin{equation}
\begin{array}{ll}
v^{\rho,0,l}_i(\tau,x)=\int_{\Omega}v^{\rho,l-1}_i(l-1,y)\Gamma^l_0(\tau,y;0,y)dy\\
\\
+\int_{l-1}^{\tau}\int_{\Omega}\rho\sum_{j,m=1}^n\int_{\Omega}\left( \frac{\partial}{\partial x_i}K_n(y-z)\right) \left( \frac{\partial v^{\rho,l-}_m}{\partial x_j}\frac{\partial v^{\rho,l-1}_j}{\partial x_m}\right) (l-1,z)dz\Gamma^l_0(\tau,x;s,y)ds dy\\
\\
+\int_{l-1}^{\tau}\int_{S}\phi_i(s,y)\Gamma^l_0(\tau,x;s,y)dsdy.
\end{array}
\end{equation}
where $\phi_i$ is the solution of the integral equation
\begin{equation}
\frac{1}{2}\phi_i(\tau,x)=\int_{l-1}^{\tau}\int_{S}K_{\Gamma}(\tau,x;s,y)\phi_i(s,y)dsdy+f_i(\tau,x)
\end{equation}
along with the kernel
\begin{equation}
K_{\Gamma}(\tau,x;s,y)=\frac{\partial}{\partial \nu}\Gamma^l_0(\tau,x;s,y)+\alpha_i(\tau,x)\Gamma^l_0(\tau,x;s,y),
\end{equation}
and the functions $f_i$ which satisfy
\begin{equation}
 \begin{array}{ll}
f_i(\tau,x)=\int_{\Omega} K_{\Gamma}(\tau,x;0,y)v^{\rho,l-1}_i(l-1,y)dy-g_i(\tau,x)\\
\\
-\rho\int_{\Omega}\int_{l-1}^{\tau}\int_{\Omega} K_{\Gamma}(\tau,x;s,y)\\
\\
\times\sum_{j,m=1}^n\int_{\Omega}\left( \frac{\partial}{\partial x_i}K_n(y-z)\right) \left( \frac{\partial v^{\rho,l-1}_m}{\partial x_j}\frac{\partial v^{\rho,l-1}_j}{\partial x_m}\right) (l-1,z)dzdy 
 \end{array}
\end{equation}
Well for the corrections $\delta v^{\rho,k,l}_i$ these expressions become simplified. We 
\begin{equation}
\begin{array}{ll}
\delta v^{\rho,k,l}_i(\tau,x)=\\
\\
+\int_{l-1}^{\tau}\int_{\Omega}\rho\sum_{j,m=1}^n\int_{\Omega}\left( \frac{\partial}{\partial x_i}K_n(y-z)\right) \left( \frac{\partial v^{\rho,k-1,l}_m}{\partial x_j}\frac{\partial v^{\rho,k-1,l}_j}{\partial x_m}\right) (s,z)dz\Gamma^l_k(\tau,x;s,y)ds dy\\
\\
+\int_{l-1}^{\tau}\int_{S}\phi^k_i(s,y)\Gamma^l_k(\tau,x;s,y)dsdy.
\end{array}
\end{equation}
where $\phi^k_i$ is the solution of the integral equation
\begin{equation}
\frac{1}{2}\phi^k_i(\tau,x)=\int_{l-1}^{\tau}\int_{S}K^k_{\Gamma}(\tau,x;s,y)\phi^k_i(s,y)dsdy+f^k_i(\tau,x)
\end{equation}
along with the kernel
\begin{equation}
K^k_{\Gamma}(\tau,x;s,y)=\frac{\partial}{\partial \nu}\Gamma^l_k(\tau,x;s,y)+\alpha_i(\tau,x)\Gamma^l_k(\tau,x;s,y),
\end{equation}
and the functions $f^k_i$ which satisfy
\begin{equation}
 \begin{array}{ll}
f^k_i(\tau,x)=\\
\\
-\rho\int_{\Omega}\int_{l-1}^{\tau}\int_{\Omega} K_{\Gamma}(\tau,x;s,y)\\
\\
\times\sum_{j,m=1}^n\int_{\Omega}\left( \frac{\partial}{\partial x_i}K_n(y-z)\right) \left( \frac{\partial v^{\rho,k-1,l}_m}{\partial x_j}\frac{\partial v^{\rho,k-1,l}_j}{\partial x_m}\right) (s,z)dzdyds.
 \end{array}
\end{equation}
Well, the functions $\phi_i$ and $\phi^k_i$ have an explicit Levy-type expansion. For $k=0$ we have it in the form
\begin{equation}
\begin{array}{ll}
\frac{1}{2}\phi_i(\tau,x)=\\
\\
f_i(\tau,x)+\sum_{m=1}^{\infty}\int_{l-1}^{\tau}\int_SK^m_{\Gamma}(\tau,x;s,y)f_i(s,y)dsdy,
\end{array}
\end{equation}
where
\begin{equation}
K^1_{\Gamma}(\tau,x;s,y)=K_{\Gamma}(\tau,x;s,y),
\end{equation}
and
\begin{equation}
K^{m+1}_{\Gamma}(\tau,x;s,y)=\int_{l-1}^{\tau}\int_{\Omega}K^1_{\Gamma}(\tau,x;\sigma,z)K^m_{\Gamma}(\sigma,z;s,y)dyds.
\end{equation}
Similarly, for $k>0$ we have it in the form
\begin{equation}
\begin{array}{ll}
\frac{1}{2}\phi^k_i(\tau,x)=\\
\\
f^k_i(\tau,x)+\sum_{m=1}^{\infty}\int_{l-1}^{\tau}\int_SK^{m,k}_{\Gamma}(\tau,x;s,y)f^k_i(s,y)dsdy,
\end{array}
\end{equation}
where
\begin{equation}
K^{1,k}_{\Gamma}(\tau,x;s,y)=K^k_{\Gamma}(\tau,x;s,y),
\end{equation}
and
\begin{equation}
K^{m+1,k}_{\Gamma}(\tau,x;s,y)=\int_{l-1}^{\tau}\int_{\Omega}K^{1,k}_{\Gamma}(\tau,x;\sigma,z)K^{m,k}_{\Gamma}(\sigma,z;s,y)dyds.
\end{equation}

It is possible to extend this work to Navier-Stokes on manifolds. Another interesting problem is the extension to equations where the coefficients satisfy the H\"{o}rmander conditions and may have a stochastic force term. These problems will be studied in a subsequent paper.

\footnotetext[1]{
\texttt{{kampen@wias-berlin.de}, {kampen@mathalgorithm.de}}.}

\newpage


\begin{thebibliography}{19}
\baselineskip=12pt
\bibitem{BKS}
{\sc Belomestny, D., Kampen, J., Schoenmakers, J.G.M.},  {\em Holomorphic transforms with applications to affine processes}, Journal of Functional Analysis, 2009; 257 (4), 1222-1250.

\bibitem{Feff}
{\sc Fefferman, C.L.}, {\em Existence \& Smoothness of the Navier Stokes equation}
$\mbox{www.claymath.org/millennium/Navier-Stokes-Equations}$

\bibitem{F2}
{\sc Friedman, A.} {\em  Partial Differential Equations of Parabolic Type}, Dover Pubn. Inc., 2008.

\bibitem{FrKa}
	{\sc Fries, Christian P.; Kampen, J\"org}: {\em On a class of semi-elliptic diffusion models. Part I: a constructive analytical approach for global existence, densities, and numerical schemes (with applications to the Libor market model)}, arXiv:1002.5031 (February 2010)
	

\bibitem{FrKa2}
	{\sc Fries, Christian P.; Kampen, J\"org}: {\em Proxy Simulation Schemes for generic robust Monte Carlo sensitivities, process oriented importance sampling and high accuracy drift approximation (with applications to the LIBOR market model)}, {\em Journal of Computational Finance,} Vol. 10, Nr. 2, 97-128, winter 2006/2007.
\bibitem{K}
{\sc Kampen, J.} {\em On the multivariate Burgers equation and the incompressible Navier-Stokes equation (part I)}, arXiv:0910.5672v5  [math.AP] (2011)

\bibitem{KB2}
{\sc Kampen, J.} {\em On the multivariate Burgers equation and the incompressible Navier-Stokes equation (part II)}, arXiv:1206.6990v2 (this is in revision, note correction of version 2)  [math.AP] (2012)

\bibitem{KM}
{\sc Kampen, J.} {\em A global scheme for the incompressible Navier-Stokes equation on compact Riemannian manifolds}, arXiv:1205.4888v4  [math.AP] (2011)


\bibitem{K2}
{\sc Kampen, J.,} {\em Global regularity and probabilistic schemes for free boundary surfaces of multivariate American derivatives and their Greeks},  Siam J. Appl. Math. 71, pp. 288-308, 2011.

\bibitem{KKS}
{\sc Kampen, J., Kolodko, A., Schoenmakers, J.}, {\em Monte Carlo Greeks for financial products via approximative transition densities}, Siam J. Sc. Comp., vol. 31 , p. 1-22, 2008.

\bibitem{Kr} 
{Krylov, N.V.}, {\em Lectures on Elliptic and Parabolic Equations in H\"older Spaces,} Graduate Studies in Mathematics, Vol. 12, American Mathematical Society, 1996.



\bibitem{L}
{\sc Leray, J.} {\em Sur le Mouvement d'un Liquide Visquex Emplissent l'Espace}, Acta Math. J. (63), 193-248, (1934).



 \end{thebibliography}
\end{document}